\definecolor{Chocolat}{rgb}{0, 0.15, 0.05}
\definecolor{BleuTresFonce}{rgb}{0, 0.15, 0.05}
 \newcommand{\ac}{{\scriptstyle \text{\rm !`}}}
\theoremstyle{plain}
\newtheorem{thm}{Theorem}
\newtheorem{lemma}{Lemma}
\newtheorem*{claim*}{Claim}
\newtheorem*{thm*}{Theorem}
\theoremstyle{definition}
\newtheorem{definition}{Definition}
\newtheorem{convention}{Convention}
\newtheorem{remark}{Remark}
\newtheorem{example}{\sc Example}
\newcommand{\cal}[1]{\mathcal{#1}}              
\numberwithin{equation}{section}
\definecolor{bluen}{RGB}{0,100,200}
\newcommand\xqed[1]{%
  \leavevmode\unskip\penalty9999 \hbox{}\nobreak\hfill
  \quad\hbox{#1}}
\newcommand\demo{\xqed{$\triangle$}}
  \pgfmathsetlength{\pgfutil@tempdimb}{.5*\pgflinewidth-.5*\pgfinnerlinewidth}%
  \pgfmathsetlength{\pgfutil@tempdima}{.5*\pgfutil@tempdimb+.5*\pgfinnerlinewidth}%
\tikzset{snake it/.style={decorate, decoration=snake}}
\newcommand\mydots{\hbox to 1em{.\hss.\hss.}}
\subjclass[2010]{Primary 18D50; Secondary 52B20, 18G55.}
\keywords{Strongly homotopy operad, hypergraph polytope, combinatorial minimal model}
\thanks{This work was supported  by the Praemium Academiae of M. Markl and RVO:67985840.}
\title[Combinatorial homotopy theory for   operads]{Combinatorial homotopy theory for   operads}
\author{Jovana Obradovi\' c}
\address{Institute of Mathematics CAS, 
\v Zitn\' a 25,
115 67   Prague,
Czech Republic}
\email{obradovic@math.cas.cz}
\date{\today}
\begin{document}

\begin{abstract}
We introduce an explicit combinatorial characterization of the minimal model ${\EuScript O}_{\infty}$ of the coloured  operad   ${\EuScript O}$ encoding non-symmetric  operads. In our description of ${\EuScript O}_{\infty}$, the spaces of operations are defined in terms of hypergraph polytopes and the composition structure  generalizes the one of  the $A_{\infty}$-operad.  As further generalizations of this construction,  we present a combinatorial description of the $W$-construction applied on ${\EuScript O}$, as well as of the minimal model of  the coloured  operad ${\EuScript C}$ encoding non-symmetric cyclic operads.  
\end{abstract}

\maketitle

\thispagestyle{empty}

\tableofcontents

\section*{Introduction}

\noindent Sullivan's classical construction of minimal models of rational homotopy theory
  has been made available to operad theory by Markl, in his  paper \cite{m2}, together with the subsequent papers of Hinich \cite{Hinich}, Spitzweck \cite{Spitzweck}, Vogt \cite{Vogt}, Berger-Moerdijk \cite{BM0}, Cisinski-Moerdijk \cite{CM} and Robertson \cite{Marcy},  in which  model structures of various categories of  operads have been investigated. In  \cite[Theorem 3.1]{m2}, Markl  introduced the notion of a minimal model  of a  monochrome dg operad and he proved that any such operad ${\EuScript P}$, with ${\EuScript P}(0)=0$ and ${\EuScript P}(1)={\Bbbk}$, with ${\Bbbk}$ being a field of characteristic zero, admits a
minimal model, which is unique up to isomorphism. In \cite[Definition 2]{m1},  Markl  generalized the notion of a minimal model to  coloured dg operads. We recall his definition below.

\smallskip

\begin{definition}\label{minimalmodel}
Let ${\EuScript P}=({\EuScript P},d_{\EuScript P})$ be a $C$-coloured dg operad. A minimal model of ${\EuScript P}$ is a $C$-coloured dg operad $\mathfrak{M}_{\EuScript P}=({\cal T}_{C}(E),d_{\mathfrak M})$, where ${\cal T}_C(E)$ is the free $C$-coloured operad on a $C$-coloured collection $E$, together with a map $\alpha_{\EuScript P}:{\mathfrak M}_{\EuScript P}\rightarrow {\EuScript P}$ of dg coloured operads, such that  $\alpha_{\EuScript P}:{\mathfrak M}_{\EuScript P}\rightarrow {\EuScript P}$  is a quasi-isomorphism,
 and
 $d_{\mathfrak M}(E)$ consists of decomposable elements of   ${\cal T}_C(E)$, i.e.  $d_{\mathfrak M}(E)\subseteq {\cal T}_C(E)^{(\geq 2)}$, where ${\cal T}_C(E)^{(\geq 2)}\subseteq {\cal T}_C(E)$ is determined by  trees with at least two vertices. 
\end{definition}

\smallskip

For  Koszul operads, Markl's notion of minimal model coincides with the cobar
construction on the Koszul  dual of an operad,   given by  Ginzburg-Kapranov  \cite{GK94} and  Getzler-Jones \cite{GJ94}, and, in particular, provides us with the structure  encoding higher operations of most classical strongly homotopy algebras, such as $A_{\infty}$-, $L_{\infty}$- and $C_{\infty}$-algebras. A detailed description of these algebras can be found in \cite[Section 3.10]{mss}. 
  In recent applications of   homotopy theory of algebras over operads, especially in theoretical physics, an explicit description of the structure maps of
minimal models remains essential; see \cite{jurco} for an  up to date review on how higher homotopy structures naturally govern field theories.  Such a description is often obtained by a direct calculation of a particular model, which tends to be a rather involved task and calls for new methods and  conceptual approaches for understanding   the homotopy properties of algebraic structures.

\medskip 

In this paper, we introduce an explicit combinatorial characterization of the minimal model of the coloured  operad ${\EuScript O}$ encoding non-symmetric  operads, introduced by Van der Laan in his work \cite{VdL} on extending Koszul duality theory of Ginzburg-Kapranov  and  Getzler–Jones  to coloured operads. The  novelty of our characterization is its   interpretation in terms of  hypergraph polytopes,  introduced  by Do\v sen and Petri\' c in \cite{DP-HP}  and further developed by Curien, Ivanovi\' c and the author in \cite{CIO},   whose hypergraphs arise in a certain way from rooted trees -- we refer to them as operadic polytopes. In particular, each operadic polytope is a truncated simplex displaying the homotopy replacing   the ``pre-Lie'' relations for the  partial composition operations pertinent to the corresponding rooted tree. In this way, our operad  structure generalizes the structure of  Stasheff's topological $A_{\infty}$-operad \cite{Sta1}: the family of (combinatorial) associahedra corresponds to the suboperad determined by linear rooted trees.  We then introduce a combinatorial description of the cubical subdivision of  operadic polytopes, obtaining in this way   the   Boardman-Vogt-Berger-Moerdijk resolution of  ${\EuScript O}$, i.e. its $W$-construction, introduced   in \cite{BM2}. Finally, by modifying the underlying formalism of trees, we obtain   the minimal model of  the coloured  operad ${\EuScript C}$ encoding non-symmetric cyclic operads, whose algebras yield a notion of strongly homotopy cyclic operads for which  the   relations for the partial composition operations are  coherently relaxed up to homotopy, while   the relations involving the action of cyclic permutations are kept strict.

\medskip

We hope that our explicit construction of operadic polytopes, together with the fact that they admit the  structure  of a strict infinity operad, will be of interest in the context of recent developments around Koszulity in operadic categories of Batanin and Markl \cite{BatMar}. From a different, but closely related point of view,    we believe that it provides a valuable addition to Ward's recent work \cite{Ward}, proving that the operad encoding modular operads is Koszul and indicating that such a proof can be given in terms of cellular chains on a family of polytopes that generalizes graph associahedra.

\medskip\smallskip

\noindent{\em Acknowledgements.}  I wish to express my
gratitude to  M. Livernet, M. Markl, F. Wierstra,  and R. Kaufmann for many useful discussions. I am especially indebted to  P.-L. Curien and B. Vallette for detailed comments that greatly improved the final version of this paper. I gratefully acknowledge the financial support of the Praemium Academiae of M. Markl and RVO:67985840.
\subsection*{Notation and conventions} 

\subsubsection*{Operads} We work with ${\mathbb N}$-coloured  reduced operads in the symmetric monoidal category ${\mathsf{dgVect}}$ of dg vector
spaces over a field ${\Bbbk}$ of characteristic 0.    In ${\mathsf{dgVect}}$, the monoidal structure is given by the classical tensor product $\otimes$, and the  switching map $\tau: V\otimes W\rightarrow W\otimes V$ is defined by $\tau(v\otimes w):=(-1)^{|v||w|} w\otimes v$, where $v$ and $w$ are homogeneous elements of degrees $|v|$ and $|w|$, respectively.  We   use the classical Koszul sign convention. We work with homological grading; the differential is a map of degree $-1$. We denote with ${\cal T}_{C}(K)$ the free   $C$-coloured (symmetric) operad on a   $C$-coloured (symmetric) collection $K$. A detailed construction of ${\cal T}_{C}(K)$ is given in \cite[Section 3]{BM2}. Our main references for the general theory of operads and related notions are \cite{mss} and \cite{LV}.

\subsubsection*{Ordinals} We denote by $[n]$ the set $\{1,\dots,n\}$, and by $\Sigma_n$ the symmetric group on $[n]$.

\subsubsection*{Trees} A rooted  tree ${\cal T}$ is a finite connected contractible graph on a non-empty vertex set, together with a distinguished external edge $r({\cal T})$, called the root  of ${\cal T}$.  We shall denote by ${\it v}({\cal T})$,  ${\it e}({\cal T})$ and $l({\cal T})$ the sets of vertices, (internal) edges and external edges (or leaves) of ${\cal T}$, respectively.  We shall write $E({\cal T})$ for the union $e({\cal T})\cup l({\cal T})$ of all the  edges of ${\cal T}$. We shall write $i({\cal T})$ for the set ${\it l}({\cal T})\backslash \{{r}({\cal T})\}$, and we shall refer to the elements of $i({\cal T})$ as the inputs (or the input leaves) of ${\cal T}$. The set of inputs $i(v)$ and the root $r(v)$ of a vertex $v\in v({\cal T})$ are defined in the standard way through the source and  target maps obtained by reading ${\cal T}$ from the input leaves to the root. The notation  for all these various sets defining a rooted tree will often also be used for their respective cardinalities. We shall denote by $\rho({\cal T})$  the unique vertex of ${\cal T}$ whose root is $r({\cal T})$, and we shall refer to it as the root vertex of ${\cal T}$. Throughout   the paper, {\em edge} will always mean an {\em internal edge}.

\medskip

\indent A rooted tree  ${\cal T}$ is called   planar if each vertex of ${\cal T}$ comes equipped with an ordering of its inputs. In this case, the inputs of   ${\cal T}$ admit a canonical labeling  from left to right, relative to the planar embedding of ${\cal T}$, by $1$ through $n$, for $n=|{\it i}({\cal T})|$.  Planar  rooted  trees   are  isomorphic if there exists an isomorphism of the correspondig graphs that  preserves the root and the planar structure. We   denote by ${\tt Tree}(n)$
the set of  planar rooted trees with $n$ inputs.

\medskip

There are two principal constructions on planar rooted trees: grafting and substitution. For  trees ${\cal T}_1\in {\tt Tree}(n)$ and ${\cal T}_2\in {\tt Tree}(m)$ and an index $1\leq i\leq n$,  the   grafting of ${\cal T}_2$  to ${\cal T}_1$   along the  input $i$ is the tree  ${\cal T}_1\circ_i {\cal T}_2$, obtained by identifying the root of ${\cal T}_2$ with the $i$-th input of ${\cal T}_1$. If $v\in V({\cal T}_1)$ is such that $|{\it i}(v)|=m$, the substitution of     the vertex $v$ of ${\cal T}_1$ by ${\cal T}_2$ is the tree  ${\cal T}_1\bullet_v {\cal T}_2$, obtained by replacing the vertex $v$ by the tree ${\cal T}_2$, identifying the $m$ inputs of $v$ with the $m$ inputs of ${\cal T}_2$, using the respective planar structures.  The trees ${\cal T}_1\circ_i {\cal T}_2$ and ${\cal T}_1\bullet_v {\cal T}_2$ can be rigorously defined either in terms of disjoint unions of sets
of vertices, edges and leaves of ${\cal T}_1$ and ${\cal T}_2$, or by preassuming the appropriate disjointness of sets and
taking the ordinary union instead; we   take the latter convention. Moreover, we shall assume
that all the edges and leaves that need to be identified in these two constructions are a priori the same.

\medskip
  
 A  corolla is a rooted tree  with only one vertex.  Each planar rooted tree ${\cal T}$ is either a planar corolla, or there exist planar rooted trees ${\cal T}_1,\dots,{\cal T}_p$, a corolla  $t_n$ with $n$ inputs, for $n\geq p$,  and a monotone injection $(p,n):[p]\rightarrow [n]$,  such that ${\cal T}$ is obtained by identifying the roots of ${\cal T}_i$'s with the inputs $(p,n)(i)$ of     $t_n$. In the latter case, we write ${\cal T}=t_n({\cal T}_1,\dots,{\cal T}_p)$, implicitly bookkeeping the data of the correspondence $(p,n)$. Note that this recursive definition allows one for an inductive reasoning.

\medskip

A   subtree of a planar rooted tree ${\cal T}$ is a connected subgraph ${\cal S}$ of ${\cal T}$ which is itself a planar rooted tree, such that ${r}({\cal S})={r}(v)$, for some $v\in v({\cal T})$, and such that, if a vertex $v$ of ${\cal T}$ is present in ${\cal S}$, then all  the inputs and the root of $v$ in ${\cal T}$  must also be present in ${\cal S}$; it is assumed that the source and the target maps of ${\cal S}$  are the appropriate restrictions of the ones of ${\cal T}$, and that the planar structure of ${\cal S}$ is inherited from ${\cal T}$. In this way, each  subtree of  ${\cal T}$  is completely determined by a subset of vertices of ${\cal T}$, and  therefore  also by a subset of internal edges of ${\cal T}$ (by taking all the  vertices adjacent with those edges). We can, therefore, speak about the   subtree ${\cal T}(X)$    of ${\cal T}$  determined by a subset  $X$ of vertices  (resp. of edges) of ${\cal T}$.    For an edge $e\in e({\cal T})\cup \{r({\cal T})\}$, the subtree of ${\cal T}$   rooted at $e$  is the subtree of ${\cal T}$     determined by all the vertices of ${\cal T}$ that are descendants of the vertex $v$ whose root is $e$, including $v$ itself. 

\medskip

 In this paper, we shall work with three different kinds of rooted trees. In order to help the reader navigate between them, in the following table we briefly summarize their characterizations and the corresponding notational conventions. 
 
 \medskip
 
\begin{center}
  \begin{tabular}{ccccc}  
    \toprule
   {\small\textsf{Composite trees}} &\enspace&  {\small\textsf{Operadic trees}}  &\enspace&   {\small\textsf{Constructs}} \\
    \midrule 
  \multicolumn{1}{m{4.5cm}}{\footnotesize{ planar rooted trees with ${\mathbb N}$-coloured edges, whose vertices encode the operadic $\circ_i$ operations}}    &&    \multicolumn{1}{m{4.5cm}}{\footnotesize{planar rooted trees with monochrome edges and totally ordered vertex sets}}   &&  \multicolumn{1}{m{4.5cm}}{\footnotesize{non-planar trees labeling the faces of hypergraph polytopes}}\\ [0.5cm]
 \raisebox{3em}{\small $T=$} {\resizebox{!}{2.4cm}{\begin{tikzpicture}  
\node(1) [rectangle,draw=none,minimum size=0mm,inner sep=0cm] at (-0.3,0.6) {\tiny $3$};
\node(2) [rectangle,draw=none,minimum size=0mm,inner sep=0cm] at (0.3,0.6) {\tiny $1$};
\node(3) [rectangle,draw=none,minimum size=0mm,inner sep=0cm] at (0.9,0) {\tiny $2$};
\node(n) [rectangle,draw=none,minimum size=0mm,inner sep=0cm] at (-0.39,0.33) {\tiny $5$};
\node(k) [rectangle,draw=none,minimum size=0mm,inner sep=0cm] at (0.39,0.33) {\tiny $2$};
\node(k) [rectangle,draw=none,minimum size=0mm,inner sep=0cm] at (0.975,-0.3) {\tiny $2$};
\node(k1) [rectangle,draw=none,minimum size=0mm,inner sep=0cm] at (0.1,-0.55) {\tiny $6$};
\node(k1) [rectangle,draw=none,minimum size=0mm,inner sep=0cm] at (0.55,-1.4) {\tiny $7$};
\node(j) [circle,draw=black,thick,minimum size=0mm,inner sep=0.05cm] at (0.45,-1) {\small $6$};
 \node(i) [circle,draw=black,thick,minimum size=0.2mm,inner sep=0.05cm] at (0,0) {\small $3$}; 
\draw[-,thick] (i)--(-0.3,0.45);\draw[-,thick] (i)--(0.3,0.45);
\draw[-,thick] (j)--(0.9,-0.15);
\draw[-,thick] (i)--(j)--(0.45,-1.55);\end{tikzpicture}}} &&  \raisebox{3em}{\small ${\cal T}=$} \begin{tikzpicture}
    \node (E)[circle,draw=black,minimum size=4mm,inner sep=0.1mm] at (0,0) {\small $3$};
    \node (F) [circle,draw=black,minimum size=4mm,inner sep=0.1mm] at (0,1) {\small $1$};
    \node (A) [circle,draw=black,minimum size=4mm,inner sep=0.1mm] at (1,1) {\small $2$};
 \draw[-] (-0.2,1.8) -- (F)--(0.2,1.8);   
 \draw[-] (0.8,1.8) -- (A)--(1.2,1.8);   
  \draw[-] (-0.4,0.8)--(E)--(0,-0.55); 
 \draw[-] (0.4,0.8)--(E)--(-1,0.8); 
    \draw[-] (E)--(F) node {};
    \draw[-] (E)--(A) node  {};
   \end{tikzpicture} &&  \raisebox{3em}{\small $C=$} {\resizebox{2cm}{!}{\begin{tikzpicture}
\node (b) [circle,fill=cyan,draw=black,minimum size=0.1cm,inner sep=0.2mm,label={[yshift=-0.45cm]{\footnotesize $x$}}] at (4,-0.2) {};
\node (c) [circle,fill=cyan,draw=black,minimum size=0.1cm,inner sep=0.2mm,label={[xshift=-0.2cm,yshift=-0.3cm]{\footnotesize $y$}}] at (3.8,0.15) {};
 \node (d) [circle,fill=cyan,draw=black,minimum size=0.1cm,inner sep=0.2mm,label={[xshift=0.45cm,yshift=-0.31cm]{\footnotesize $\{u,v\}$}}] at (4.2,0.15) {};
  \node (2d) [circle,fill=none,draw=none,minimum size=0.1cm,inner sep=0.2mm] at (4.2,-1.1) {};
 \draw[thick]  (c)--(b)--(d);\end{tikzpicture}}}     \\ 

\midrule
  \end{tabular}
\end{center}
 
 \medskip
 A planar unrooted tree is  a finite connected contractible graph on a non-empty vertex set, each of whose vertices comes equipped with  a cyclic ordering of all the adjacent edges. Planar unrooted trees are isomorphic if there exists an isomorphism of the corresponding graphs that   preserves the  
cyclic orderings of the  sets of edges adjacent to vertices.  By forgetting the data of the root of a planar rooted tree ${\cal T}$, one canonically obtains a planar unrooted tree that we shall denote by ${\cal U}({\cal T})$.

\section{Hypergraph polytopes} 
\noindent A hypergraph polytope is a  polytope that may be characterized as a truncated simplex, whereby the  truncations are only performed on the faces of the original simplex and not on the faces already  obtained as a result of a truncation. In particular, in each dimension, the family of hypergraph polytopes consists of an interval of simple polytopes starting with a simplex and ending with a permutohedron. As an illustration, here is a sequence of truncations of the 3-dimensional simplex that leads to a polytope called {\em hemiassociahedron}:

\medskip

\begin{center}
\resizebox{2.5cm}{!}{\begin{tikzpicture}[thick,scale=2]
\coordinate (A1) at (0,2);
\coordinate (A2) at (0,0); 
\coordinate (A3) at (-1.1,0.6);
\coordinate (A4) at (1.1,0.6);
 \draw[draw=black,fill=none] (A1) -- (A3) -- (A2) -- (A4) -- cycle;
\draw[dashed]  (A3) -- (A4);
\draw (A1) -- (A2);
\end{tikzpicture}} \quad\quad \resizebox{2.5cm}{!}{\begin{tikzpicture}[thick,scale=2]
\coordinate (A1) at (0,2);
\coordinate (A11) at (-0.39,1.5);
\coordinate (A12) at (0.39,1.5);
\coordinate (A13) at (0,1.25);
\coordinate (A2) at (0,0); 
\coordinate (A3) at (-1.1,0.6);
\coordinate (A4) at (1.1,0.6);
 \draw[draw=black,fill=none] (A11) -- (A3) -- (A2) -- (A4) -- (A12) --cycle;
\draw[draw=ForestGreen] (A11)--(A1)--(A12);
\draw[draw=ForestGreen] (A13)--(A1);
\draw[dashed]  (A3) -- (A4);
\draw (A13) -- (A2);
\draw (A11) -- (A13) -- (A12);
\end{tikzpicture}} \quad\quad \resizebox{2.5cm}{!}{\begin{tikzpicture}[thick,scale=2]
\coordinate (A1) at (0,2);
\coordinate (A11) at (-0.39,1.5);
\coordinate (A12) at (0.39,1.5);
\coordinate (A13) at (0,1.25);
\coordinate (A2) at (0,0); 
\coordinate (A21) at (-0.387,0.213); 
\coordinate (A22) at (0.387,0.213); 
\coordinate (A23) at (0,0.5); 
\coordinate (A3) at (-1.1,0.6);
\coordinate (A4) at (1.1,0.6);
\draw[draw=ForestGreen] (A11)--(A1)--(A12);
\draw[draw=ForestGreen] (A21)--(A2)--(A22);
\draw[draw=ForestGreen] (A2) -- (A23);
\draw[draw=ForestGreen] (A13)--(A1);
 \draw[draw=black,fill=none] (A11) -- (A3) --(A21)-- (A22) -- (A4) -- (A12) --cycle;
\draw  (A21)--(A23)--(A22);
\draw[dashed]  (A3) -- (A4);
\draw (A13) -- (A23);
\draw (A11) -- (A13) -- (A12);
\end{tikzpicture}} \quad\quad \resizebox{2.5cm}{!}{\begin{tikzpicture}[thick,scale=2]
\coordinate (A1) at (0,2);
\coordinate (A11) at (-0.39,1.5);
\coordinate (A12) at (0.39,1.5);
\coordinate (A13) at (0,1.25);
\coordinate (A2) at (0,0); 
\coordinate (A21) at (-0.387,0.213); 
\coordinate (A22) at (0.387,0.213); 
\coordinate (A23) at (0,0.5); 
\coordinate (A3) at (-1.1,0.6);
\coordinate (A31) at (-0.9,0.49);
\coordinate (A32) at (-0.8,0.976);
\coordinate (A33) at (-0.6,0.6);
\coordinate (A4) at (1.1,0.6);
\draw[draw=ForestGreen] (A32)--(A3)--(A31);
\draw[draw=ForestGreen] (A11)--(A1)--(A12);
\draw[draw=ForestGreen] (A21)--(A2)--(A22);
\draw[draw=ForestGreen] (A2) -- (A23);
\draw[draw=ForestGreen] (A13)--(A1);
\draw[draw=ForestGreen,dashed]  (A33) -- (A3);
 \draw[draw=black,fill=none] (A11) -- (A32)--(A31) --(A21)-- (A22) -- (A4) -- (A12) --cycle;
\draw  (A21)--(A23)--(A22);
\draw[dashed]  (A33) -- (A4);
\draw (A13) -- (A23);
\draw[dashed] (A32) -- (A33) -- (A31);
\draw (A11) -- (A13) -- (A12);  
\end{tikzpicture}}\\[0.3cm]  \resizebox{2.5cm}{!}{\begin{tikzpicture}[thick,scale=2]
\coordinate (A1) at (0,2);
\coordinate (A11) at (-0.39,1.5);
\coordinate (A12) at (0.39,1.5);
\coordinate (A13) at (0,1.25);
\coordinate (A131) at (-0.153,1.35); 
\coordinate (A132) at (0.153,1.35); 
\coordinate (A2) at (0,0); 
\coordinate (A21) at (-0.387,0.213); 
\coordinate (A22) at (0.387,0.213); 
\coordinate (A23) at (0,0.5); 
\coordinate (A231) at (-0.153,0.388); 
\coordinate (A232) at (0.153,0.388); 
\coordinate (A3) at (-1.1,0.6);
\coordinate (A31) at (-0.9,0.49);
\coordinate (A32) at (-0.8,0.976);
\coordinate (A33) at (-0.6,0.6);
\coordinate (A4) at (1.1,0.6);
\draw[draw=ForestGreen] (A32)--(A3)--(A31);
\draw[draw=ForestGreen] (A11)--(A1)--(A12);
\draw[draw=ForestGreen] (A21)--(A2)--(A22);
\draw[draw=ForestGreen] (A2) -- (A1);
\draw[draw=ForestGreen] (A13)--(A1);
\draw[draw=ForestGreen,dashed]  (A33) -- (A3);
 \draw[draw=black,fill=none] (A11) -- (A32)--(A31) --(A21)-- (A22) -- (A4) -- (A12) --cycle;
\draw  (A21)--(A231)--(A232)--(A22);
\draw[dashed]  (A33) -- (A4);
\draw (A231) -- (A131) -- (A132) -- (A232) -- cycle;
\draw[dashed] (A32) -- (A33) -- (A31);
\draw (A11) -- (A131) --(A132) -- (A12);
\end{tikzpicture}}  \quad\quad \resizebox{2.5cm}{!}{\begin{tikzpicture}[thick,scale=2]
\coordinate (A1) at (0,2);
\coordinate (A11) at (-0.39,1.5);
\coordinate (A12) at (0.39,1.5);
\coordinate (A121) at (0.25,1.498);
\coordinate (A122) at (0.33,1.46);
\coordinate (A13) at (0,1.25);
\coordinate (A131) at (-0.153,1.35); 
\coordinate (A132) at (0.153,1.35); 
\coordinate (A2) at (0,0); 
\coordinate (A21) at (-0.387,0.213); 
\coordinate (A22) at (0.387,0.213); 
\coordinate (A23) at (0,0.5); 
\coordinate (A231) at (-0.153,0.388); 
\coordinate (A232) at (0.153,0.388); 
\coordinate (A3) at (-1.1,0.6);
\coordinate (A31) at (-0.9,0.49);
\coordinate (A32) at (-0.8,0.976);
\coordinate (A33) at (-0.6,0.6);
\coordinate (A4) at (1.1,0.6);
\coordinate (A41) at (1.027,0.565);
\coordinate (A42) at (0.95,0.6);
\draw[draw=ForestGreen] (A32)--(A3)--(A31);
\draw[draw=ForestGreen] (A11)--(A1)--(A12);
\draw[draw=ForestGreen] (A21)--(A2)--(A22);
\draw[draw=ForestGreen] (A2) -- (A1);
\draw[draw=ForestGreen] (A13)--(A1);
\draw[draw=ForestGreen,dashed]  (A33) -- (A3);
\draw[draw=ForestGreen,dashed]  (A42) -- (A4);
\draw[draw=ForestGreen] (A41)--(A4)--(A12);
 \draw[draw=black,fill=none] (A11) -- (A32)--(A31) --(A21)-- (A22) -- (A41);
\draw (A11)--(A121);
\draw  (A21)--(A231)--(A232)--(A22);
\draw[dashed]  (A33) -- (A42);
\draw (A231) -- (A131) -- (A132) -- (A232) -- cycle;
\draw[dashed] (A32) -- (A33) -- (A31);
\draw (A11) -- (A131) --(A132)--(A122);
\draw[dashed]  (A41) -- (A42) -- (A121); 
\draw (A41)--(A122) --(A121);
\end{tikzpicture}}  \quad\quad \resizebox{2.5cm}{!}{\begin{tikzpicture}[thick,scale=2]
\coordinate (A1) at (0,2);
\coordinate (A11) at (-0.39,1.5);
\coordinate (A111) at (-0.25,1.498);
\coordinate (A112) at (-0.33,1.46);
\coordinate (A12) at (0.39,1.5);
\coordinate (A121) at (0.25,1.498);
\coordinate (A122) at (0.33,1.46);
\coordinate (A13) at (0,1.25);
\coordinate (A131) at (-0.153,1.35); 
\coordinate (A132) at (0.153,1.35); 
\coordinate (A2) at (0,0); 
\coordinate (A21) at (-0.387,0.213); 
\coordinate (A22) at (0.387,0.213); 
\coordinate (A23) at (0,0.5); 
\coordinate (A231) at (-0.153,0.388); 
\coordinate (A232) at (0.153,0.388); 
\coordinate (A3) at (-1.1,0.6);
\coordinate (A31) at (-0.9,0.49);
\coordinate (A32) at (-0.8,0.976);
\coordinate (A321) at (-0.825,0.845);
\coordinate (A322) at (-0.745,0.875);
\coordinate (A33) at (-0.6,0.6);
\coordinate (A4) at (1.1,0.6);
\coordinate (A41) at (1.027,0.565);
\coordinate (A42) at (0.95,0.6);
\draw[draw=ForestGreen]  (A3)--(A31);
\draw[draw=ForestGreen]  (A1)--(A12);
\draw[draw=ForestGreen] (A21)--(A2)--(A22);
\draw[draw=ForestGreen] (A2) -- (A1);
\draw[draw=ForestGreen] (A3)--(A1);
\draw[draw=ForestGreen,dashed]  (A33) -- (A3);
\draw[draw=ForestGreen,dashed]  (A42) -- (A4);
\draw[draw=ForestGreen] (A41)--(A4)--(A12);
 \draw[draw=black,fill=none]   (A321)--(A31) --(A21)-- (A22) -- (A41);
\draw (A111)--(A121);
\draw (A111)--(A112);
\draw (A112)--(A321);
\draw[dashed] (A321)--(A322)--(A111);
\draw  (A21)--(A231)--(A232)--(A22);
\draw[dashed]  (A33) -- (A42);
\draw (A231) -- (A131) -- (A132) -- (A232) -- cycle;
\draw[dashed] (A322) -- (A33) -- (A31);
\draw (A112) -- (A131) --(A132)--(A122);
\draw[dashed]  (A41) -- (A42) -- (A121); 
\draw (A41)--(A122) --(A121);
\end{tikzpicture}}  \quad\quad \resizebox{2.5cm}{!}{\begin{tikzpicture}[thick,scale=2]
\coordinate (A1) at (0,2);
\coordinate (A11) at (-0.39,1.5);
\coordinate (A111) at (-0.25,1.498);
\coordinate (A112) at (-0.33,1.46);
\coordinate (A12) at (0.39,1.5);
\coordinate (A121) at (0.25,1.498);
\coordinate (A122) at (0.33,1.46);
\coordinate (A13) at (0,1.25);
\coordinate (A131) at (-0.153,1.35); 
\coordinate (A132) at (0.153,1.35); 
\coordinate (A2) at (0,0); 
\coordinate (A21) at (-0.387,0.213); 
\coordinate (A211) at (-0.29,0.2795); 
\coordinate (A212) at (-0.28,0.213); 
\coordinate (A22) at (0.387,0.213); 
\coordinate (A23) at (0,0.5); 
\coordinate (A231) at (-0.153,0.388); 
\coordinate (A232) at (0.153,0.388); 
\coordinate (A3) at (-1.1,0.6);
\coordinate (A31) at (-0.9,0.49);
\coordinate (A311) at (-0.88,0.59);
\coordinate (A312) at (-0.84,0.51);
\coordinate (A32) at (-0.8,0.976);
\coordinate (A321) at (-0.825,0.845);
\coordinate (A322) at (-0.745,0.875);
\coordinate (A33) at (-0.6,0.6);
\coordinate (A4) at (1.1,0.6);
\coordinate (A41) at (1.027,0.565);
\coordinate (A42) at (0.95,0.6);
\draw[draw=ForestGreen]  (A3)--(A2);
\draw[draw=ForestGreen]  (A1)--(A12);
\draw[draw=ForestGreen] (A2)--(A22);
\draw[draw=ForestGreen] (A2) -- (A1);
\draw[draw=ForestGreen] (A3)--(A1);
\draw[draw=ForestGreen,dashed]  (A33) -- (A3);
\draw[draw=ForestGreen,dashed]  (A42) -- (A4);
\draw[draw=ForestGreen] (A41)--(A4)--(A12);
 \draw[draw=black,fill=none]   (A321)--(A311);
 \draw[draw=black,fill=none] (A212)-- (A22) -- (A41);
\draw (A311)--(A312);
\draw (A111)--(A121);
\draw (A111)--(A112);
\draw (A311)--(A211)--(A212) --(A312);
\draw (A112)--(A321);
\draw[dashed] (A321)--(A322)--(A111);
\draw  (A211)--(A231)--(A232)--(A22);
\draw[dashed]  (A33) -- (A42);
\draw (A231) -- (A131) -- (A132) -- (A232) -- cycle;
\draw[dashed] (A322) -- (A33) -- (A312);
\draw (A112) -- (A131) --(A132)--(A122);
\draw[dashed]  (A41) -- (A42) -- (A121); 
\draw (A41)--(A122) --(A121);
\end{tikzpicture}} 
\end{center} 

\medskip

\noindent The hemiassociahedron is not as well-known as certain other  notable members of the family of hypergraph polytopes, like simplices, hypercubes, associahedra, cyclohedra and permutohedra,  
but, like all those polytopes, it also has a role in characterizing infinity structures: it displays a particular homotopy of strongly homotopy operads. The hemiassociahedron will be {\em our favourite polytope} in this article. 

\medskip

  The attribute {\em hypergraph} in the designation {\em hypergraph polytopes} is meant to indicate the particular style of  combinatorial description of the polytopes from this family: the face lattice of each hypergraph polytope can be derived from the data of a hypergraph whose hyperedges   encode the truncations of the simplex that lead to the polytope in question. This particular  characterization of truncated simplices has been introduced by Do\v sen and Petri\' c in \cite{DP-HP} and further developed by Curien, Ivanovi\' c and the author in \cite{CIO}. Truncated simplices  were originally investigated by   Feichtner and Sturmfels in \cite{FS05} and by 
Postnikov in \cite{Pos}, by means of different -- and predating --  combinatorial tools: {\em nested sets} and {\em tubings}, while,  in \cite{Pos2}, they first appeared under the name of {\em nestohedra}. The familly of {\em graph-associahedra}, introduced by Carr and Devados in \cite{CD}, is  the subfamily of the family of   hypergraph polytopes determined by polytopes whose face lattices can be encoded by the data of a genuine graph.
 \medskip

This section is a recollection on the combinatorial description  of the familly of hypergraph polytopes and is entiriely based on \cite{DP-HP} and \cite{CIO}. In particular, we shall consider hypergraph polytopes as abstract polytopes only, disregarding their geometric characterization  as a bounded intersection of a finite set of half-spaces, which is also given in the two references. We refer to \cite{abs-pol} for  the definition of an
abstract polytope and related notions.

\smallskip

\subsection{Hypergraph terminology}
A hypergraph is a generalization of a graph for which an edge can relate an arbitrary number of vertices. Formally, a hypergraph ${\bf H}$ is given by a  set $H$ of {\em vertices} and a subset ${\bf H}\subseteq {\cal P}(H)\backslash\emptyset$ of {\em hyperedges}, such that $\bigcup {\bf H}=H$.  Note the abuse of notation here: we used the  bold letter ${\bf H}$ to denote both  the hypergraph itself and its set of hyperedges. We justify this identification by requiring all our hypergraphs to be {\em atomic}, meaning that $\{x\}\in {\bf H}$ for all $x\in H$. Additionally,  we shall  assume that all our hypergraphs are {\em non-empty}, meaning that $H\neq \emptyset$, {\em finite}, meaning that $H$ is finite and {\em connected}, meaning that there are  no non-trivial partitions $H=H_1\cup H_2$, such that ${\bf H}=\{X\in {\bf H}\,|\, X\subseteq H_1\}\cup\{Y\in {\bf H}\,|\, Y\subseteq H_2\}$. There is one more property of hypergraphs that we shall encounter (but not a priori ask for) in the construction of hypergraph polytopes: the property of being {\em saturated}. We say that a hypergraph ${\bf H}$ is     saturated when, for every $X,Y\in{\bf H}$ such that $X\cap Y\neq\emptyset$, we have that $X\cup Y\in {\bf H}$. Every hypergraph can be saturated by adding the missing (unions of) hyperedges. Let us introduce the notation $${\bf{H}}_X:=\{Z\in {\bf{H}}\,|\, Z\subseteq X\},$$ for a hypergraph ${\bf H}$ and $X\subseteq H$.
 The {\em saturation} of    ${\bf{H}}$ is then formally defined  as the hypergraph
$${\it Sat}({\bf{H}}):=\{ X\,|\, {\emptyset\subsetneq X\subseteq H\;\mbox{and}\;{\bf{H}}_X\;\mbox{is connected}}\}.$$ 

\medskip

\begin{example}\label{hypergraphex1} The hypergraph 
 $${\bf H}=\{\{x\},\{y\},\{u\},\{v\},\{x,y\},\{x,u\},\{x,v\},\{u,v\},\{x,u,v\}\}$$    can be represented pictorially as follows:
\begin{center}
\begin{tikzpicture}
\node (x) [circle,draw=black,fill=black,inner sep=0mm,minimum size=1.3mm,label={[xshift=0.2cm,yshift=-0.25cm]{\small $x$}}] at (1,1) {};
\node (u) [circle,draw=black,fill=black,inner sep=0mm,minimum size=1.3mm,label={[xshift=0.2cm,yshift=-0.25cm]{\small $y$}}] at (1,1.9) {};
\node (y) [circle,draw=black,fill=black,inner sep=0mm,minimum size=1.3mm,label={[xshift=-0.2cm,yshift=-0.3cm]{\small $u$}}] at (0.3,0.2) {};
\node (z) [circle,draw=black,fill=black,inner sep=0mm,minimum size=1.3mm,,label={[xshift=0.2cm,yshift=-0.25cm]{\small $v$}}] at (1.7,0.2) {};
\draw (u)--(x)--(y)--(z);
\draw (x)--(z);
\draw [rounded corners=5mm,fill=none] (1,1.5)--(-0.3,0)--(2.3,0)--cycle;
\end{tikzpicture}
\end{center}
Here, the hyperedge $\{x,u,v\}$ is represented by the circled-out  area aroud the vertices $x$, $u$ and $v$.
The hypergraph ${\bf H}$  is not saturated. The saturation of ${\bf H}$ is the hypergraph $${\it Sat}({\bf H})=\{\{x\},\{y\},\{u\},\{v\},\{x,y\},\{x,u\},\{x,v\},\{u,v\},\{x,y,u\},\{x,y,v\},\{x,u,v\},\{x,y,u,v\}\}.$$ \demo
 \end{example}
 
\medskip

We additionally import the following notational conventions and terminology from \cite{CIO}. For a hypergraph ${\bf H}$ and $X\subseteq H$, we set $${\bf H}\backslash X:={\bf{H}}_{H\backslash X}.$$ Observe that for each (not necessarily connected) finite hypergraph there exists a partition
$H=H_1\cup\ldots\cup H_m$, such that each hypergraph ${\bf{H}}_{H_i}$ is connected and ${\bf{H}}=\bigcup({\bf{H}}_{H_i})$.  The ${\bf{H}}_{H_i}$'s are called the {\em connected components} of ${\bf{H}}$. We shall  write
${\bf{H}}_i$  for
${\bf{H}}_{H_i}$.     We shall use the notation 
\begin{center}
${\bf{H}}\backslash X  \leadsto {\bf H}_1,\ldots,{\bf H}_n$ \quad\quad (resp.  ${\bf{H}}\backslash X  \leadsto \{{\bf H}_i \,|\,1\leq i\leq n\}$)
\end{center}
to indicate that  ${\bf H}_1,\ldots,{\bf H}_n$ are  the (resp. $ \{{\bf H}_i \,|\, 1\leq i\leq n\}$ is the set of)  connected components of ${\bf H}\backslash X$.  

\smallskip

 \subsection{The abstract polytope of a hypergraph}\label{abspol} We next define the abstract polytope $${\cal A}({\bf H})=(A({\bf H})\cup\{\emptyset\},\leq_{\bf H})$$ associated to a hypergraph ${\bf H}$.  We shall recall the representation of ${\cal A}({\bf H})$ given in \cite{CIO}, which coincides, up to isomorphism, with the one of \cite{DP-HP}. The advantage of the representation of  ${\cal A}({\bf H})$ given in \cite{CIO} lies in the  {\em tree notation} for all the faces
of hypergraph polytopes that encodes   face
inclusion as  edge contraction --  this combinatorial decription reveals  the operad structure on a particular   subfamily of the family of hypergraph polytopes and was essential for the main purpose of this paper.
 
 \medskip
 
The elements of the set $A({\bf H})$, to which we refer as the {\em  constructs} of   ${\bf H}$, are  the non-planar, vertex-decorated rooted trees defined recursively as follows.  Let $\emptyset\neq Y\subseteq H$ be a non-empty subset of vertices of ${\bf H}$. \\[-0.3cm]
\begin{itemize}
\item  If $Y = H$, then  the  tree with a single vertex decorated with $H$ and without any inputs, is a construct of ${\bf{H}}$; we denote it by $H$.\\[-0.3cm]
\item  If $Y\subsetneq H$, if   ${\bf H}\backslash Y  \leadsto {\bf H}_1,\ldots,{\bf H}_n$, and if  $C_1,\ldots,C_n$ are constructs of ${\bf H}_1,\ldots,{\bf H}_n$, respectively, then the
tree whose root vertex is decorated by $Y$ and that has $n$ inputs, on which the respective $C_i\,$'s are grafted,  
 is a construct of ${\bf H}$; we denote it by $Y\{C_1,\ldots,C_n\}$.\\[-0.3cm]
\end{itemize}
 We   write $C:{\bf H}$ to indicate that $C$ is a  construct of ${\bf H}$.  A {\em construction} is a construct  whose vertices are all decorated with singletons. 

\smallskip
 
\begin{example} Let us go through the   recursive definition of constructs by unwinding the construction of the following three constructs of the hypergraph ${\bf H}$ from Example \ref{hypergraphex1}:  $$C_1=\{x,y,u,v\},\quad C_2=\{x\}\{\{u,v\},\{y\}\}, \quad \mbox{ and }\quad C_3=\{x\}\{\{u\}\{\{v\}\},\{y\}\}.$$
The construct $C_1$ is obtained by the first rule in the above definition. The constructs $C_2$ and $C_3$ are both obtained by choosing the set $\{x\}$ to be the decoration of the root vertex. The connected components of ${\bf H}\backslash\{x\}$ are   
${\bf H}_1=\{\{u\},\{v\},\{u,v\}\}$ and  ${\bf H}_2=\{\{y\}\}$. The construct $C_2$ is then obtained by grafting to the root vertex $\{x\}$ the constructs $\{u,v\}:{\bf H}_1$ and $\{y\}:{\bf H}_2$ (both obtained by the first rule in the above definition), and $C_3$ is obtained by choosing $\{u\}\{\{v\}\}$   instead of $\{u,v\}$ as a construct of ${\bf H}_1$.  The construct $C_3$ is a construction. \demo
\end{example}
 \smallskip
 
 \begin{convention}\label{condr} In order to facilitate the notation for constructs, we shall represent their singleton vertices without the braces. For example, instead of $\{x\}\{\{u,v\},\{y\}\}$ and $\{x\}\{\{u\}\{\{v\}\},\{y\}\}$, we shall write $x\{\{u,v\},y\}$ and $x\{u\{v\},y\}$. Also, we shall
freely confuse the vertices of constructs with the sets decorating them, since they are a fortiori all distinct. In particular, we shall denote the vertices of constructs with capital letters specifying those sets. Finally,  in order to provide more intuition for the partial order on constructs that we are about to define in terms of edge contraction, and later also for the composition of constructs   underlying our infinity operad structure,  we shall use the  graphical representation for constructs indicated in the Introduction. For example, 
\begin{center} the constructs $x\{y,\{u,v\}\}$ and $x\{u\{v\},y\}$ will be drawn as
\raisebox{-1em}{\resizebox{2cm}{!}{\begin{tikzpicture}
\node (b) [circle,fill=cyan,draw=black,minimum size=0.1cm,inner sep=0.2mm,label={[yshift=-0.45cm]{\footnotesize $x$}}] at (4,-0.2) {};
\node (c) [circle,fill=cyan,draw=black,minimum size=0.1cm,inner sep=0.2mm,label={[xshift=-0.45cm,yshift=-0.31cm]{\footnotesize $\{u,v\}$}}] at (3.8,0.15) {};
 \node (d) [circle,fill=cyan,draw=black,minimum size=0.1cm,inner sep=0.2mm,label={[xshift=0.2cm,yshift=-0.3cm]{\footnotesize $y$}}] at (4.2,0.15) {};
 \draw[thick]  (c)--(b)--(d);\end{tikzpicture}}}   and \enspace\raisebox{-1.2em}{\resizebox{1.4cm}{!}{\begin{tikzpicture}
\node (b) [circle,fill=ForestGreen,draw=black,minimum size=0.1cm,inner sep=0.2mm,label={[yshift=-0.45cm]{\footnotesize $x$}}] at (4,-0.2) {};
\node (c) [circle,fill=ForestGreen,draw=black,minimum size=0.1cm,inner sep=0.2mm,label={[xshift=0.2cm,yshift=-0.3cm]{\footnotesize $y$}}] at (4.2,0.15) {};
 \node (d) [circle,fill=ForestGreen,draw=black,minimum size=0.1cm,inner sep=0.2mm,label={[xshift=-0.2cm,yshift=-0.25cm]{\footnotesize $u$}}] at (3.8,0.15) {};
  \node (a) [circle,fill=ForestGreen,draw=black,draw=black,minimum size=0.1cm,inner sep=0.2mm,label={[xshift=-0.2cm,yshift=-0.2cm]{\footnotesize $v$}}] at (3.8,0.5) {};
 \draw[thick]  (c)--(b)--(d)--(a);\end{tikzpicture}}},
\end{center}
respectively. Notice that for constructs we  do not draw the root. 
  \end{convention}

\smallskip

The partial order $\leq_{\bf H}$ on $A({\bf H})\cup\{\emptyset\}$ is defined by the following three rules.\\[-0.3cm]
\begin{itemize}
\item For all $C:{\bf H}$,  $\emptyset \leq_{\bf{H}} C$.\\[-0.3cm]
\item If ${\bf H}\backslash Y\leadsto {\bf H}_1,\dots,{\bf H}_n$, ${\bf H}_1\backslash X\leadsto {\bf H}_{11},\dots,{\bf H}_{1m}$, $C_{1j}:{\bf H}_{1j}$ for $1\leq j\leq m$, and ${C}_i:{\bf H}_i$ for $2\leq i\leq n$, then $$Y\{X\{C_{11},\ldots,C_{1m}\},C_2,\ldots,C_n\}\leq_{\bf H}(Y\cup X)\{C_{11},\ldots,C_{1m},C_2,\ldots,C_n\}.$$
\item If ${\bf H}\backslash Y\leadsto {\bf H}_1,\ldots,{\bf H}_n$, $C_i:{\bf H}_i$ for $2\leq i\leq n$, and $C_1\leq_{{\bf H}_1}C'_1$, then $$Y\{C_1,C_2,\ldots,C_n\}  \leq_{\bf{H}} Y\{C'_1,C_2,\ldots,C_n\}.$$ 
\end{itemize}
  
  \smallskip
  
\noindent Therefore, given a  construct $C:{\bf H}$,  one can obtain a larger  construct  by  contracting an edge of $C$ and merging the decorations of the vertices adjacent with that edge. 
Note that the partial order $\leq_{\bf H}$  is well-defined, in the sense that, if $C_1:{\bf H}$ and if $C_1\leq_{\bf H} C_2$ is inferred, then $C_2:{\bf H}$ can be inferred.  

\medskip

 The faces of ${\cal A}({\bf H})$ are ranked by integers  ranging from $-1$ to $|H|-1$. The face $\emptyset$ is the unique face of rank $-1$, whereas the rank of a construct $C:{\bf H}$ is  $|H|-|v(C)|$. In particular, constructions are faces of rank $0$, whereas  the  construct $H:{\bf H}$   is the unique face of rank $|H|-1$. We take the usual convention to name the faces of rank $0$   {\em vertices}, the faces of rank $1$  {\em edges} and the faces of rank $|H|-2$ {\em facets}. The ranks of the faces of ${\cal A}({\bf H})$  correspond to their actual dimension when realized in Euclidean space. We refer to \cite[Section 9]{DP-HP} and \cite[Section 3]{CIO} for a geometric realization of ${\cal A}({\bf H})$. In the next section, in conformity with this geometric realization, we shall provide examples of hypergraph polytopes.
 
 \medskip
 
  The fact that the poset ${\cal A}({\bf H})$ is indeed an abstract polytope of rank $|H|-1$  follows by translating the definition of  ${\cal A}({\bf H})$, using  the order isomorphism \cite[Proposition 2]{CIO}, to the formalism of hypergraph polytopes presented in \cite{DP-HP}, as a
consequence of \cite[Section 8]{DP-HP}, where the axioms of abstract polytopes  are verified for the latter presentation of ${\cal A}({\bf H})$.

\smallskip

\subsection{Examples} This section contains examples of various hypergraph polytopes; in \cite[Appendix B]{DP-HP} and \cite[Section 2.4, Section 2.6]{CIO},   the reader can find more of them. Given that our hypergraph vocabulary is now settled, before we give the individual examples,  let us first provide the intuition on the very first characterization of hypergraph polytopes that we have mentioned: the geometric description  in terms of  truncated simplices.

\medskip

If ${\bf H}$ is a hypergraph with the vertex set $H=\{x_1,\dots,x_{n+1}\}$, then ${\bf H}$ encodes the truncation instructions to be applied to the $(|H|-1)$-dimensional simplex, as follows. Start by labeling the facets of the $(|H|-1)$-dimensional  simplex by the vertices of ${\bf H}$. Then, for each hyperedge $X\in{\it Sat}({\bf H})\backslash (\{H\}\cup \{\{x_i\}\,|\, x_i\in H\})$, truncate the face of the simplex defined as the intersection of the facets contained in $X$. This intuition is formalized as the geometric realization  of hypergraph polytopes in \cite[Section 8]{DP-HP} and \cite[Section 3]{CIO}.

\smallskip

\subsubsection{Simplex} The hypergraph encoding the $n$-dimensional simplex is the hypergraph with $n+1$ vertices and no non-trivial hyperedges: $${\bf S}_{n+1}=\{\{x_1\},\dots,\{x_{n+1}\},\{x_1,\dots,x_{n+1}\}\}.$$
 In dimension $2$, the poset of constructs of the hypergraph ${\bf S}_3=\{\{x\},\{y\},\{z\},\{x,y,z\}\}$ can be realized as a triangle:
 
 \smallskip
 
 \begin{center}
\resizebox{5.5cm}{!}{\begin{tikzpicture}
\coordinate (A1) at (0,0);
\coordinate (A4) at (2.5,0);
\coordinate (A2) at (1.25,2);
\draw[thick,draw=black]  (A1) --(A2)--(A4)--(A1)--(A2);
\node at (A1) [xshift=-0.7cm,yshift=-0.55cm]  { {\resizebox{1.3cm}{!}{\begin{tikzpicture}
\node (L) [circle,draw=black,fill=ForestGreen,minimum size=0.1cm,inner sep=0.2mm,label={[yshift=-0.45cm]{\footnotesize $x$}}] at (4,-0.25) {};
\node (B1) [circle,draw=black,fill=ForestGreen,minimum size=0.1cm,inner sep=0.2mm,label={[xshift=-0.15cm,yshift=-0.25cm]{\footnotesize $y$}}] at (3.8,0.15) {};
 \node (B4) [circle,draw=black,fill=ForestGreen,minimum size=0.1cm,inner sep=0.2mm,label={[xshift=0.15cm,yshift=-0.25cm]{\footnotesize $z$}}] at (4.2,0.15) {};
 \draw[thick]  (L)--(B1);
\draw [thick] (L)--(B4);\end{tikzpicture}}}};
\node at (A4) [xshift=0.72cm,yshift=-0.55cm] { {\resizebox{1.3cm}{!}{\begin{tikzpicture}
\node (L) [circle,draw=black,fill=ForestGreen,minimum size=0.1cm,inner sep=0.2mm,label={[yshift=-0.5cm]{\footnotesize $z$}}] at (4,-0.25) {};
\node (B1) [circle,draw=black,fill=ForestGreen,minimum size=0.1cm,inner sep=0.2mm,label={[xshift=-0.15cm,yshift=-0.25cm]{\footnotesize $x$}}] at (3.8,0.15) {};
 \node (B4) [circle,draw=black,fill=ForestGreen,minimum size=0.1cm,inner sep=0.2mm,label={[xshift=0.15cm,yshift=-0.25cm]{\footnotesize $y$}}] at (4.2,0.15) {};
 \draw[thick]  (L)--(B1);
\draw [thick] (L)--(B4);\end{tikzpicture}}}};
\node at (A2) [xshift=0cm,yshift=0.65cm]  { {\resizebox{1.3cm}{!}{\begin{tikzpicture}
\node (L) [circle,draw=black,fill=ForestGreen,minimum size=0.1cm,inner sep=0.2mm,label={[yshift=-0.45cm]{\footnotesize $y$}}] at (4,-0.25) {};
\node (B1) [circle,draw=black,fill=ForestGreen,minimum size=0.1cm,inner sep=0.2mm,label={[xshift=-0.15cm,yshift=-0.25cm]{\footnotesize $x$}}] at (3.8,0.15) {};
 \node (B4) [circle,draw=black,fill=ForestGreen,minimum size=0.1cm,inner sep=0.2mm,label={[xshift=0.15cm,yshift=-0.25cm]{\footnotesize $z$}}] at (4.2,0.15) {};
 \draw[thick]  (L)--(B1);
\draw [thick] (L)--(B4);\end{tikzpicture}}}};
\node   at (0.75,-0.48) {{\resizebox{1.2cm}{!}{\begin{tikzpicture}
\node (L) [circle,draw=black,fill=cyan,draw=black,minimum size=0.1cm,inner sep=0.2mm,label={[xshift=-0.47cm,yshift=-0.3cm]{\footnotesize $\{x,z\}$}}] at (0,0) {};
\node (B1) [circle,draw=black,fill=cyan,draw=black,minimum size=0.1cm,inner sep=0.2mm,label={[xshift=-0.22cm,yshift=-0.25cm]{\footnotesize $y$}}] at (0,0.38) {};
 \draw[thick]  (L)--(B1);\end{tikzpicture}}}};
\node   at (2.75,1) {{\resizebox{1.2cm}{!}{\begin{tikzpicture}
\node (L) [circle,draw=black,fill=cyan,draw=black,minimum size=0.1cm,inner sep=0.2mm,label={[xshift=0.47cm,yshift=-0.3cm]{\footnotesize $\{y,z\}$}}] at (0,0) {};
\node (B1) [circle,draw=black,fill=cyan,draw=black,minimum size=0.1cm,inner sep=0.2mm,label={[xshift=0.22cm,yshift=-0.225cm]{\footnotesize $x$}}] at (0,0.38) {};
 \draw[thick]  (L)--(B1);\end{tikzpicture}}}};
\node   at (-0.3,1) {{\resizebox{1.2cm}{!}{\begin{tikzpicture}
\node (L) [circle,draw=black,fill=cyan,minimum size=0.1cm,inner sep=0.2mm,label={[xshift=-0.47cm,yshift=-0.3cm]{\footnotesize $\{x,y\}$}}] at (0,0) {};
\node (B1) [circle,draw=black,fill=cyan,minimum size=0.1cm,inner sep=0.2mm,label={[xshift=-0.22cm,yshift=-0.225cm]{\footnotesize $z$}}] at (0,0.38) {};
 \draw[thick]  (L)--(B1);\end{tikzpicture}}}};
\node   at (1.25,0.55)  {{\resizebox{1.55cm}{!}{\begin{tikzpicture}
\node (L) [circle,fill=WildStrawberry,draw=black,minimum size=0.1cm,inner sep=0.2mm,label={[xshift=0cm,yshift=-0.5cm]{\footnotesize $\{x,y,z\}$}}] at (0,0) {};
\end{tikzpicture}}}};
\end{tikzpicture}}  
 \end{center}
 
  \smallskip
 
 Let us now illustrate  how  truncations arise  by adding non-trivial hyperedges to the ``bare'' simplex hypergraph ${\bf S}_3$. Consider the hypergraph ${\bf S}_3\cup \{\{y,z\}\}$. For this hypergraph, the vertex $x\{y,z\}$ is no longer well-defined, since  $({\bf S}_3\cup \{\{y,z\}\})\backslash\{x\}$ no longer contains two connected components, but only one. In the polytope associated to ${\bf S}_3\cup \{\{y,z\}\}$, the vertex $x\{y,z\}$ gets replaced by two new vertices: $x\{y\{z\}\}$ and $x\{z\{y\}\}$, and the edge $x\{\{y,z\}\}$ between them,
 which can be realized by truncating $x\{y,z\}$ in the above realization of the triangle: 
 
   \smallskip
 
 \begin{center}
 \resizebox{3.6cm}{!}{\begin{tikzpicture}
\coordinate (A11) at (0.8,0);
\coordinate (A12) at (0.4,0.62);
\coordinate (A4) at (2.5,0);
\coordinate (A2) at (1.25,2);
\draw[thick,draw=black]  (A4) --(A11)--(A12) --(A2)--(A4)--(A11);
\node at (A1) [xshift=0.95cm,yshift=-0.675cm]  {\resizebox{0.55cm}{!}{\begin{tikzpicture}
\node (b) [circle,draw=black,fill=ForestGreen,minimum size=0.1cm,inner sep=0.2mm,label={[xshift=0.22cm,yshift=-0.25cm]{\footnotesize $x$}}] at (0,0) {};
\node (c) [circle,draw=black,fill=ForestGreen,minimum size=0.1cm,inner sep=0.2mm,label={[xshift=0.22cm,yshift=-0.25cm]{\footnotesize $z$}}] at (0,0.35) {};
 \node (d) [circle,draw=black,fill=ForestGreen,minimum size=0.1cm,inner sep=0.2mm,label={[xshift=0.22cm,yshift=-0.27cm]{\footnotesize $y$}}] at (0,0.7) {};
 \draw[thick]  (b)--(c)--(d);\end{tikzpicture}}};
\node   at (-0.05,1.1)  {\resizebox{0.55cm}{!}{\begin{tikzpicture}
\node (b) [circle,draw=black,fill=ForestGreen,minimum size=0.1cm,inner sep=0.2mm,label={[xshift=-0.22cm,yshift=-0.25cm]{\footnotesize $x$}}] at (0,0) {};
\node (c) [circle,draw=black,fill=ForestGreen,minimum size=0.1cm,inner sep=0.2mm,label={[xshift=-0.22cm,yshift=-0.27cm]{\footnotesize $y$}}] at (0,0.35) {};
 \node (d) [circle,draw=black,fill=ForestGreen,minimum size=0.1cm,inner sep=0.2mm,label={[xshift=-0.22cm,yshift=-0.25cm]{\footnotesize $z$}}] at (0,0.7) {};
 \draw[thick]  (b)--(c)--(d);\end{tikzpicture}}};
\node   at (-0.05,0) {\resizebox{1.1cm}{!}{\begin{tikzpicture}
\node (c) [circle,fill=cyan,draw=black,minimum size=0.1cm,inner sep=0.2mm,label={[xshift=-0.45cm,yshift=-0.32cm]{\footnotesize $\{y,z\}$}}] at (0,0.35) {};
 \node (d) [circle,fill=cyan,draw=black,minimum size=0.1cm,inner sep=0.2mm,label={[xshift=-0.22cm,yshift=-0.25cm]{\footnotesize $x$}}] at (0,0) {};
 \draw[thick]  (c)--(d);\end{tikzpicture}}};
\end{tikzpicture}}  
 \end{center}
 
   \smallskip
   
\noindent  By additionally adding the hyperedge $\{x,y\}$ to ${\bf S}_3\cup \{\{y,z\}\}$, the vertex $z\{x,y\}$ will also be truncated. This leads us to our second example. 
 
 \smallskip
\subsubsection{Associahedron}\label{assoc} The hypergraph encoding the $n$-dimensional associahedron is the linear graph with $n+1$ vertices: $${\bf A}_{n+1}=\{\{x_1\},\dots,\{x_{n+1}\},\{x_1,x_2\},\dots,\{x_n,x_{n+1}\}\}.$$
  In dimension $2$, the poset of constructs of the hypergraph  ${\bf A}_3=\{\{x\},\{y\},\{z\},\{x,y\},\{y,z\}\}$ (i.e. of the hypergraph ${\bf S}_3\cup \{\{x,y\},\{y,z\}\}$) encodes the face lattice of   a pentagon as follows: 
  
    \smallskip
  
   \begin{center}
 \resizebox{5.5cm}{!}{\begin{tikzpicture}
\coordinate (A1) at (0.15,0);
\coordinate (A4) at (1.85,0);
\coordinate (A2) at (-0.5,1.4);
\coordinate (A3) at (2.5,1.4);
\coordinate (A5) at (1,2.6);
\draw[thick,draw=black]  (A4) --(A1) --(A2)--(A5) --(A3)--(A4)--(A1);
\node at (A1) [xshift=-0.5cm,yshift=-0.5cm] {\resizebox{0.55cm}{!}{\begin{tikzpicture}
\node (b) [circle,fill=ForestGreen,draw=black,minimum size=0.1cm,inner sep=0.2mm,label={[xshift=-0.22cm,yshift=-0.25cm]{\footnotesize $x$}}] at (0,0) {};
\node (c) [circle,fill=ForestGreen,draw=black,minimum size=0.1cm,inner sep=0.2mm,label={[xshift=-0.22cm,yshift=-0.25cm]{\footnotesize $z$}}] at (0,0.35) {};
 \node (d) [circle,fill=ForestGreen,draw=black,minimum size=0.1cm,inner sep=0.2mm,label={[xshift=-0.22cm,yshift=-0.27cm]{\footnotesize $y$}}] at (0,0.7) {};
 \draw[thick]  (b)--(c)--(d);\end{tikzpicture}}};
\node at (A4) [xshift=0.5cm,yshift=-0.5cm] {\resizebox{0.55cm}{!}{\begin{tikzpicture}
\node (b) [circle,fill=ForestGreen,draw=black,minimum size=0.1cm,inner sep=0.2mm,label={[xshift=0.22cm,yshift=-0.25cm]{\footnotesize $z$}}] at (0,0) {};
\node (c) [circle,fill=ForestGreen,draw=black,minimum size=0.1cm,inner sep=0.2mm,label={[xshift=0.22cm,yshift=-0.25cm]{\footnotesize $x$}}] at (0,0.35) {};
 \node (d) [circle,fill=ForestGreen,draw=black,minimum size=0.1cm,inner sep=0.2mm,label={[xshift=0.22cm,yshift=-0.27cm]{\footnotesize $y$}}] at (0,0.7) {};
 \draw[thick]  (b)--(c)--(d);\end{tikzpicture}}};
\node at (A2) [xshift=-0.5cm,yshift=0.3cm]   {\resizebox{0.55cm}{!}{\begin{tikzpicture}
\node (b) [circle,fill=ForestGreen,draw=black,minimum size=0.1cm,inner sep=0.2mm,label={[xshift=-0.22cm,yshift=-0.25cm]{\footnotesize $x$}}] at (0,0) {};
\node (c) [circle,fill=ForestGreen,draw=black,minimum size=0.1cm,inner sep=0.2mm,label={[xshift=-0.22cm,yshift=-0.27cm]{\footnotesize $y$}}] at (0,0.35) {};
 \node (d) [circle,fill=ForestGreen,draw=black,minimum size=0.1cm,inner sep=0.2mm,label={[xshift=-0.22cm,yshift=-0.25cm]{\footnotesize $z$}}] at (0,0.7) {};
 \draw[thick]  (b)--(c)--(d);\end{tikzpicture}}};
\node at (A5) [yshift=0.6cm]    { {\resizebox{1.3cm}{!}{\begin{tikzpicture}
\node (L) [circle,draw=black,fill=ForestGreen,minimum size=0.1cm,inner sep=0.2mm,label={[yshift=-0.45cm]{\footnotesize $y$}}] at (4,-0.25) {};
\node (B1) [circle,draw=black,fill=ForestGreen,minimum size=0.1cm,inner sep=0.2mm,label={[xshift=-0.15cm,yshift=-0.25cm]{\footnotesize $x$}}] at (3.8,0.15) {};
 \node (B4) [circle,draw=black,fill=ForestGreen,minimum size=0.1cm,inner sep=0.2mm,label={[xshift=0.15cm,yshift=-0.25cm]{\footnotesize $z$}}] at (4.2,0.15) {};
 \draw[thick]  (L)--(B1);
\draw [thick] (L)--(B4);\end{tikzpicture}}}};
\node at (A3) [xshift=0.5cm,yshift=0.3cm]   {\resizebox{0.55cm}{!}{\begin{tikzpicture}
\node (b) [circle,fill=ForestGreen,draw=black,minimum size=0.1cm,inner sep=0.2mm,label={[xshift=0.22cm,yshift=-0.25cm]{\footnotesize $z$}}] at (0,0) {};
\node (c) [circle,fill=ForestGreen,draw=black,minimum size=0.1cm,inner sep=0.2mm,label={[xshift=0.22cm,yshift=-0.27cm]{\footnotesize $y$}}] at (0,0.35) {};
 \node (d) [circle,fill=ForestGreen,draw=black,minimum size=0.1cm,inner sep=0.2mm,label={[xshift=0.22cm,yshift=-0.25cm]{\footnotesize $x$}}] at (0,0.7) {};
 \draw[thick]  (b)--(c)--(d);\end{tikzpicture}}};
\node   at (-0.2,2.5) {{\resizebox{1.2cm}{!}{\begin{tikzpicture}
\node (L) [circle,fill=cyan,draw=black,minimum size=0.1cm,inner sep=0.2mm,label={[xshift=-0.47cm,yshift=-0.3cm]{\footnotesize $\{x,y\}$}}] at (0,0) {};
\node (B1) [circle,fill=cyan,draw=black,minimum size=0.1cm,inner sep=0.2mm,label={[xshift=-0.22cm,yshift=-0.225cm]{\footnotesize $z$}}] at (0,0.38) {};
 \draw[thick]  (L)--(B1);\end{tikzpicture}}}};
\node   at (-0.9,0.6) {\resizebox{1.1cm}{!}{\begin{tikzpicture}
\node (c) [circle,fill=cyan,draw=black,minimum size=0.1cm,inner sep=0.2mm,label={[xshift=-0.45cm,yshift=-0.32cm]{\footnotesize $\{y,z\}$}}] at (0,0.35) {};
 \node (d) [circle,fill=cyan,draw=black,minimum size=0.1cm,inner sep=0.2mm,label={[xshift=-0.22cm,yshift=-0.25cm]{\footnotesize $x$}}] at (0,0) {};
 \draw[thick]  (c)--(d);\end{tikzpicture}}};
\node   at (2.9,0.6) {\resizebox{1.1cm}{!}{\begin{tikzpicture}
\node (c) [circle,fill=cyan,draw=black,minimum size=0.1cm,inner sep=0.2mm,label={[xshift=0.45cm,yshift=-0.32cm]{\footnotesize $\{x,y\}$}}] at (0,0.35) {};
 \node (d) [circle,fill=cyan,draw=black,minimum size=0.1cm,inner sep=0.2mm,label={[xshift=0.22cm,yshift=-0.25cm]{\footnotesize $z$}}] at (0,0) {};
 \draw[thick]  (c)--(d);\end{tikzpicture}}};
\node   at (2.15,2.5) {{\resizebox{1.2cm}{!}{\begin{tikzpicture}
\node (L) [circle,fill=cyan,draw=black,minimum size=0.1cm,inner sep=0.2mm,label={[xshift=0.47cm,yshift=-0.3cm]{\footnotesize $\{y,z\}$}}] at (0,0) {};
\node (B1) [circle,fill=cyan,draw=black,minimum size=0.1cm,inner sep=0.2mm,label={[xshift=0.22cm,yshift=-0.225cm]{\footnotesize $x$}}] at (0,0.38) {};
 \draw[thick]  (L)--(B1);\end{tikzpicture}}}};
\node   at (0.85,-0.5) {{\resizebox{1.2cm}{!}{\begin{tikzpicture}
\node (L) [circle,fill=cyan,draw=black,minimum size=0.1cm,inner sep=0.2mm,label={[xshift=-0.47cm,yshift=-0.3cm]{\footnotesize $\{x,z\}$}}] at (0,0) {};
\node (B1) [circle,fill=cyan,draw=black,minimum size=0.1cm,inner sep=0.2mm,label={[xshift=-0.22cm,yshift=-0.25cm]{\footnotesize $y$}}] at (0,0.38) {};
 \draw[thick]  (L)--(B1);\end{tikzpicture}}}};
\node   at (1,1.1) {{\resizebox{1.5cm}{!}{\begin{tikzpicture}
\node (L) [circle,fill=WildStrawberry,draw=black,minimum size=0.1cm,inner sep=0.2mm,label={[xshift=0cm,yshift=-0.53cm]{\footnotesize $\{x,y,z\}$}}] at (0,0) {};
\end{tikzpicture}}}};
\end{tikzpicture}}
 \end{center}
 
   \smallskip
 
 Starting from the construct representation of the $n$-dimensional associahedron, one can retrieve Stasheff's original representation  in terms of (partial) parenthesisations of a word $a_1\cdots a_{n+2}$ on $n+2$ letters, or, equivalently, of planar rooted trees with $n+2$ leaves, as follows. The idea is to consider each  vertex $x_i$ of ${\bf A}_{n+1}$ as the mutiplication of letters $a_{i}$ and $a_{i+1}$, as suggested in the following expression: $$a_1 \cdot_{x_1} a_2 \cdot_{x_2} a_3 \cdot_{x_3} \,\,\cdots\,\, \cdot_{x_{n+1}} a_{n+2}.$$ A given construct should then be read from the leaves to the root, interpreting each vertex as an instruction for inserting a pair of parentheses around the   group of (possibly already partially parenthesised) letters  spanned by all the multiplications  determined by the vertex. For example, in dimension $2$, and taking $$a \cdot_{x} b\cdot_{y} c \cdot_z d$$ as the layout for building the parentheses, the constructs $$x\{y\{z\}\},\enspace \{x,y\}\{z\},\enspace\mbox{ and }\enspace y\{x,z\}$$ correspond to parenthesised words $$(a(b(cd))),\enspace (ab(cd)),\enspace\mbox{ and }\enspace ((ab)(cd)),$$ respectively. In the other direction, the construct corresponding to a planar rooted tree  with $n+2$ leaves is recovered as follows. First, label  the $n+1$ intervals between the  leaves of a given tree
  by $x_1,\dots,x_{n+1}$. Then, considering $x_i$'s as balls, let them fall, and decorate  each vertex of the tree by
the set of balls which end up falling  to that vertex. Finally, remove the input leaves of the starting tree. For example,  in dimension 2 again, and writing $x,y,z$ for $x_1,x_2,x_3$, respectively, the planar rooted trees 
\smallskip
\begin{center}
{\resizebox{1.3cm}{!}{\begin{tikzpicture}
\coordinate (L) at (4,-0.25);
\coordinate (M2) at (4.075,0.05);
\coordinate (M1) at (4.15 ,-0.05);
\coordinate (B1) at (3.6,0.25);
 \coordinate (B2) at (3.925 ,0.25);
 \coordinate (B3) at (4.225,0.25);
 \coordinate (B4) at (4.4,0.25);
 \draw[thick] (M1)--(B2);
\draw[thick] (L)--(B1);
\draw[thick] (M2)--(B3);
\draw[thick] (L)--(B4);\end{tikzpicture}}} \quad\quad \raisebox{0.7em}{and} \quad\quad \resizebox{1.3cm}{!}{\begin{tikzpicture}
\coordinate (Aw) at (4,-0.25);
\coordinate (Cw) at (4,-0);
\coordinate (B1w) at (3.6,0.25);
 \coordinate (B2w) at (3.8,0.25);
 \coordinate (B3w) at (4.2,0.25);
 \coordinate (B4w) at (4.4,0.25);
 \draw[thick] (Aw)--(B1w);
\draw[thick] (Aw)--(Cw);
\draw[thick] (Cw)--(B3w);
\draw[thick] (Cw)--(B2w);
\draw[thick] (Aw)--(B4w);\end{tikzpicture}}
\end{center}
\smallskip
\noindent correspond to constructs 
 
\begin{center}
$x\{z\{y\}\}$ \quad\quad and \quad\quad $\{x,z\}\{y\}$,
\end{center}
 respectively.
 
 \smallskip
\subsubsection{Permutohedron} The hypergraph encoding the $n$-dimensional permutoheron is the complete graph with $n+1$ vertices: $${\bf P}_{n+1}=\{\{x_1\},\dots,\{x_{n+1}\}\}\cup\{\{x_i,x_j\}\,|\, i,j\in\{1,\dots,n+1\} \mbox{ and } i\neq j\}.$$ 
  In dimension $2$, the hypergraph ${\bf P}_3=\{\{x\},\{y\},\{z\},\{x,y\},\{y,z\},\{z,x\}\}$ (i.e. the hypergraph ${\bf A}_3\cup \{\{z,x\}\}$) encodes   the following  set of constructs:
  
    \smallskip
  
    \begin{center}
\raisebox{4.5em}{ \begin{tabular}{rcl}
{\textsf{Rank}} && {\textsf{Faces}}\\
\toprule\\[-0.3cm]
 $2$  &&  $\{x,y,z\}$   \\[0.1cm] 
$1$ && $\{x,y\}\{z\}$, $\{y,z\}\{x\}$, $\{x,z\}\{y\}$, $x\{\{y,z\}\}$, $z\{\{x,y\}\}$, $y\{\{x,z\}\}$\\[0.1cm]
$0$ && $x\{y\{z\}\}$, $x\{z\{y\}\}$, $y\{x\{z\}\}$, $y\{z\{x\}\}$, $z\{x\{y\}\}$, $z\{y\{x\}\}$ \\[0.1cm]
$-1$ && $\emptyset$ 
\end{tabular}} 
\end{center}
The corresponding realization is obtained by truncating the top vertex of the 2-dimensional associahedron from \S\ref{assoc}.
  \smallskip

\subsubsection{Hemiassociahedron}\label{sec.hemiassociahedron} We finish this section with the   description of the  $3$-dimensional hemiassociahedron, whose construction in terms of simplex truncation we illustrated in the introduction to this section. The hypergraph encoding the  $3$-dimensional hemiassociahedron is the hypergraph ${\bf H}$ from Example \ref{hypergraphex1}:
 $${\bf H}=\{\{x\},\{y\},\{u\},\{v\},\{x,y\},\{x,u\},\{x,v\},\{u,v\},\{x,u,v\}\}.$$ 
 As an exercise, the reader may now label the facets of the $3$-dimensional simplex in such a way that the sequence of truncations from page 3 can be read in terms of  non-trivial hyperedges  of ${\it Sat}({\bf H})$. The following table, listing the  constructs of  rank 2 of ${\bf H}$, might come in handy:
 
   \smallskip
 
     \begin{center}
\raisebox{4.5em}{ \begin{tabular}{rcl}
{\textsf{Rank}} && {\textsf{Faces}}\\
\toprule\\[-0.3cm]
 $2$  &&  {\textsf{Hexagons}} \enspace\, $y\{\{x,u,v\}\}$, $\{y,u,v\}\{x\}$,   $\{x,u,v\}\{y\}$\\
 &&  {\textsf{Pentagons}}\enspace\,\,$\{x,y,v\}\{u\}$, $v\{\{x,y,u\}\}$,  $\{x,y,u\}\{v\}$, $u\{\{x,y,v\}\}$\\
 &&  {\textsf{Squares}}\enspace \enspace\enspace \,    $\{x,y\}\{\{u,v\}\}$,  $\{u,v\}\{\{x,y\}\}$, $\{y,v\}\{\{x,u\}\}$, $\{y,u\}\{\{x,v\}\}$ \\[-0.75cm]
\end{tabular}}
\end{center}

  \smallskip

\noindent together with the following realization, in which we   labeled the vertices of the hemiassociahedron:

  \smallskip

\begin{center}
 \resizebox{12.25cm}{!}{\begin{tikzpicture}[thick,scale=23]
\coordinate (A1) at (-0.2,0);
\coordinate (A2) at (0.2,0); 
\coordinate (D1) at (-0.35,0.05);
\coordinate (D2) at (0.35,0.05);
\coordinate (D3) at (-0.18,0.1);
\coordinate (D4)  at (0.18,0.1);
\coordinate (E3) at (-0.18,0.34);
\coordinate (E4) at (0.18,0.34);
\coordinate (A3) at (-0.2,0.22);
\coordinate (A4) at (0.2,0.22);
 \coordinate (C1) at (-0.25,0.33);
\coordinate (C2) at (0.25,0.33);
 \coordinate (F1) at (-0.35,0.275);
\coordinate (F2) at (0.35,0.275);
 \coordinate (G1) at (-0.3,0.385);
\coordinate (G2) at (0.3,0.385);
 \coordinate (B3) at (-0.2,0.44);
\coordinate (B4) at (0.2,0.44);
\draw (A1) -- (A2) -- (A4) -- (A3) -- cycle;
\draw (A3) -- (A4) -- (C2) -- (B4) -- (B3) -- (C1)-- cycle;
\draw[dashed] (D3) -- (D4) -- (E4) -- (E3) -- cycle;
\draw[dashed] (A1) -- (A2) -- (A4) -- (A3) -- (A1);
\draw (A3) -- (A4) -- (C2) -- (B4) -- (B3) -- (C1)-- (A3);
\draw (G1) -- (B3) -- (B4) -- (G2);
\draw (G2) -- (F2);
\draw (G1) -- (F1);
\draw (A3)--(A1) -- (A2)--(A4);
\draw[dashed]   (G1) -- (E3); 
\draw[dashed] (E4) -- (G2);
 \draw  (A1) -- (D1) -- (F1) -- (C1) -- (A3) --  cycle;
 \draw  (A2) -- (D2) -- (F2) -- (C2) -- (A4) --  cycle;
 \draw  (F1) -- (G1) -- (B3) -- (C1)  --  cycle;
\draw (F2) -- (G2) -- (B4) -- (C2)  --  cycle;
\draw[dashed] (D3) -- (D4) -- (E4) -- (E3) -- cycle;
\draw[dashed] (D1)   -- (D3);
\draw[dashed] (D2)   -- (D4);
\draw (F1)--(D1)--(A1)--(A2)--(D2)--(F2);
\node at   (0.145,0.28) {\resizebox{1cm}{!}{\begin{tikzpicture}
\node (b) [circle,fill=ForestGreen,draw=black,minimum size=0.1cm,inner sep=0.2mm,label={[xshift=-0.22cm,yshift=-0.27cm]{\footnotesize $y$}}] at (0,0) {};
\node (c) [circle,fill=ForestGreen,draw=black,minimum size=0.1cm,inner sep=0.2mm,label={[xshift=-0.22cm,yshift=-0.25cm]{\footnotesize $x$}}] at (0,0.35) {};
 \node (d) [circle,fill=ForestGreen,draw=black,minimum size=0.1cm,inner sep=0.2mm,label={[xshift=-0.22cm,yshift=-0.25cm]{\footnotesize $u$}}] at (0,0.7) {};
  \node (a) [circle,fill=ForestGreen,draw=black,minimum size=0.1cm,inner sep=0.2mm,label={[xshift=-0.22cm,yshift=-0.25cm]{\footnotesize $v$}}] at (0,1.05) {};
 \draw[thick]  (b)--(c)--(d)--(a);\end{tikzpicture}}};
\node at   (-0.13,0.15) { {\resizebox{2.4cm}{!}{\begin{tikzpicture}
\node (b) [circle,fill=ForestGreen,draw=black,minimum size=0.1cm,inner sep=0.2mm,label={[yshift=-0.45cm]{\footnotesize $x$}}] at (4,-0.2) {};
\node (c) [circle,fill=ForestGreen,draw=black,minimum size=0.1cm,inner sep=0.2mm,label={[xshift=-0.2cm,yshift=-0.3cm]{\footnotesize $y$}}] at (3.8,0.15) {};
 \node (d) [circle,fill=ForestGreen,draw=black,minimum size=0.1cm,inner sep=0.2mm,label={[xshift=0.2cm,yshift=-0.25cm]{\footnotesize $v$}}] at (4.2,0.15) {};
  \node (a) [circle,fill=ForestGreen,draw=black,draw=black,minimum size=0.1cm,inner sep=0.2mm,label={[xshift=0.2cm,yshift=-0.2cm]{\footnotesize $u$}}] at (4.2,0.5) {};
 \draw[thick]  (c)--(b)--(d)--(a);\end{tikzpicture}}}};
\node at  (-0.145,0.28) {\resizebox{1cm}{!}{\begin{tikzpicture}
\node (b) [circle,fill=ForestGreen,draw=black,minimum size=0.1cm,inner sep=0.2mm,label={[xshift=0.22cm,yshift=-0.27cm]{\footnotesize $y$}}] at (0,0) {};
\node (c) [circle,fill=ForestGreen,draw=black,minimum size=0.1cm,inner sep=0.2mm,label={[xshift=0.22cm,yshift=-0.25cm]{\footnotesize $x$}}] at (0,0.35) {};
 \node (d) [circle,fill=ForestGreen,draw=black,minimum size=0.1cm,inner sep=0.2mm,label={[xshift=0.22cm,yshift=-0.25cm]{\footnotesize $v$}}] at (0,0.7) {};
  \node (a) [circle,fill=ForestGreen,draw=black,minimum size=0.1cm,inner sep=0.2mm,label={[xshift=0.22cm,yshift=-0.25cm]{\footnotesize $u$}}] at (0,1.05) {};
 \draw[thick]  (b)--(c)--(d)--(a);\end{tikzpicture}}};
\node at  (0.13,0.15) { {\resizebox{2.4cm}{!}{\begin{tikzpicture}
\node (b) [circle,fill=ForestGreen,draw=black,minimum size=0.1cm,inner sep=0.2mm,label={[yshift=-0.45cm]{\footnotesize $x$}}] at (4,-0.2) {};
\node (c) [circle,fill=ForestGreen,draw=black,minimum size=0.1cm,inner sep=0.2mm,label={[xshift=0.2cm,yshift=-0.3cm]{\footnotesize $y$}}] at (4.2,0.15) {};
 \node (d) [circle,fill=ForestGreen,draw=black,minimum size=0.1cm,inner sep=0.2mm,label={[xshift=-0.2cm,yshift=-0.25cm]{\footnotesize $u$}}] at (3.8,0.15) {};
  \node (a) [circle,fill=ForestGreen,draw=black,draw=black,minimum size=0.1cm,inner sep=0.2mm,label={[xshift=-0.2cm,yshift=-0.2cm]{\footnotesize $v$}}] at (3.8,0.5) {};
 \draw[thick]  (c)--(b)--(d)--(a);\end{tikzpicture}}}};
\node at  (0.32,0.45) {\resizebox{1cm}{!}{\begin{tikzpicture}
\node (b) [circle,fill=ForestGreen,draw=black,minimum size=0.1cm,inner sep=0.2mm,label={[xshift=0.22cm,yshift=-0.27cm]{\footnotesize $y$}}] at (0,0) {};
\node (c) [circle,fill=ForestGreen,draw=black,minimum size=0.1cm,inner sep=0.2mm,label={[xshift=0.22cm,yshift=-0.25cm]{\footnotesize $u$}}] at (0,0.35) {};
 \node (d) [circle,fill=ForestGreen,draw=black,minimum size=0.1cm,inner sep=0.2mm,label={[xshift=0.22cm,yshift=-0.25cm]{\footnotesize $x$}}] at (0,0.7) {};
  \node (a) [circle,fill=ForestGreen,draw=black,minimum size=0.1cm,inner sep=0.2mm,label={[xshift=0.22cm,yshift=-0.25cm]{\footnotesize $v$}}] at (0,1.05) {};
 \draw[thick]  (b)--(c)--(d)--(a);\end{tikzpicture}}};
\node at    (0.22,0.5)  {\resizebox{1cm}{!}{\begin{tikzpicture}
\node (b) [circle,fill=ForestGreen,draw=black,minimum size=0.1cm,inner sep=0.2mm,label={[xshift=0.22cm,yshift=-0.27cm]{\footnotesize $y$}}] at (0,0) {};
\node (c) [circle,fill=ForestGreen,draw=black,minimum size=0.1cm,inner sep=0.2mm,label={[xshift=0.22cm,yshift=-0.25cm]{\footnotesize $u$}}] at (0,0.35) {};
 \node (d) [circle,fill=ForestGreen,draw=black,minimum size=0.1cm,inner sep=0.2mm,label={[xshift=0.22cm,yshift=-0.25cm]{\footnotesize $v$}}] at (0,0.7) {};
  \node (a) [circle,fill=ForestGreen,draw=black,minimum size=0.1cm,inner sep=0.2mm,label={[xshift=0.22cm,yshift=-0.25cm]{\footnotesize $x$}}] at (0,1.05) {};
 \draw[thick]  (b)--(c)--(d)--(a);\end{tikzpicture}}};
\node at   (-0.22,0.5)  {\resizebox{1cm}{!}{\begin{tikzpicture}
\node (b) [circle,fill=ForestGreen,draw=black,minimum size=0.1cm,inner sep=0.2mm,label={[xshift=-0.22cm,yshift=-0.27cm]{\footnotesize $y$}}] at (0,0) {};
\node (c) [circle,fill=ForestGreen,draw=black,minimum size=0.1cm,inner sep=0.2mm,label={[xshift=-0.22cm,yshift=-0.25cm]{\footnotesize $v$}}] at (0,0.35) {};
 \node (d) [circle,fill=ForestGreen,draw=black,minimum size=0.1cm,inner sep=0.2mm,label={[xshift=-0.22cm,yshift=-0.25cm]{\footnotesize $u$}}] at (0,0.7) {};
  \node (a) [circle,fill=ForestGreen,draw=black,minimum size=0.1cm,inner sep=0.2mm,label={[xshift=-0.22cm,yshift=-0.25cm]{\footnotesize $x$}}] at (0,1.05) {};
 \draw[thick]  (b)--(c)--(d)--(a);\end{tikzpicture}}};
\node at   (-0.32,0.45) {\resizebox{1cm}{!}{\begin{tikzpicture}
\node (b) [circle,fill=ForestGreen,draw=black,minimum size=0.1cm,inner sep=0.2mm,label={[xshift=-0.22cm,yshift=-0.27cm]{\footnotesize $y$}}] at (0,0) {};
\node (c) [circle,fill=ForestGreen,draw=black,minimum size=0.1cm,inner sep=0.2mm,label={[xshift=-0.22cm,yshift=-0.25cm]{\footnotesize $v$}}] at (0,0.35) {};
 \node (d) [circle,fill=ForestGreen,draw=black,minimum size=0.1cm,inner sep=0.2mm,label={[xshift=-0.22cm,yshift=-0.25cm]{\footnotesize $x$}}] at (0,0.7) {};
  \node (a) [circle,fill=ForestGreen,draw=black,minimum size=0.1cm,inner sep=0.2mm,label={[xshift=-0.22cm,yshift=-0.25cm]{\footnotesize $u$}}] at (0,1.05) {};
 \draw[thick]  (b)--(c)--(d)--(a);\end{tikzpicture}}};
\node at  (0.273,0.255) {\resizebox{1cm}{!}{\begin{tikzpicture}
\node (b) [circle,fill=ForestGreen,draw=black,minimum size=0.1cm,inner sep=0.2mm,label={[xshift=0.22cm,yshift=-0.25cm]{\footnotesize $u$}}] at (0,0) {};
\node (c) [circle,fill=ForestGreen,draw=black,minimum size=0.1cm,inner sep=0.2mm,label={[xshift=0.22cm,yshift=-0.27cm]{\footnotesize $y$}}] at (0,0.35) {};
 \node (d) [circle,fill=ForestGreen,draw=black,minimum size=0.1cm,inner sep=0.2mm,label={[xshift=0.22cm,yshift=-0.25cm]{\footnotesize $v$}}] at (0,0.7) {};
  \node (a) [circle,fill=ForestGreen,draw=black,minimum size=0.1cm,inner sep=0.2mm,label={[xshift=0.22cm,yshift=-0.25cm]{\footnotesize $x$}}] at (0,1.05) {};
 \draw[thick]  (b)--(c)--(d)--(a);\end{tikzpicture}}};
\node at  (0.385,0.32)  {\resizebox{1cm}{!}{\begin{tikzpicture}
\node (b) [circle,fill=ForestGreen,draw=black,minimum size=0.1cm,inner sep=0.2mm,label={[xshift=0.22cm,yshift=-0.25cm]{\footnotesize $u$}}] at (0,0) {};
\node (c) [circle,fill=ForestGreen,draw=black,minimum size=0.1cm,inner sep=0.2mm,label={[xshift=0.22cm,yshift=-0.27cm]{\footnotesize $y$}}] at (0,0.35) {};
 \node (d) [circle,fill=ForestGreen,draw=black,minimum size=0.1cm,inner sep=0.2mm,label={[xshift=0.22cm,yshift=-0.25cm]{\footnotesize $x$}}] at (0,0.7) {};
  \node (a) [circle,fill=ForestGreen,draw=black,minimum size=0.1cm,inner sep=0.2mm,label={[xshift=0.22cm,yshift=-0.25cm]{\footnotesize $v$}}] at (0,1.05) {};
 \draw[thick]  (b)--(c)--(d)--(a);\end{tikzpicture}}};
\node at   (-0.273,0.255) {\resizebox{1cm}{!}{\begin{tikzpicture}
\node (b) [circle,fill=ForestGreen,draw=black,minimum size=0.1cm,inner sep=0.2mm,label={[xshift=-0.22cm,yshift=-0.25cm]{\footnotesize $v$}}] at (0,0) {};
\node (c) [circle,fill=ForestGreen,draw=black,minimum size=0.1cm,inner sep=0.2mm,label={[xshift=-0.22cm,yshift=-0.27cm]{\footnotesize $y$}}] at (0,0.35) {};
 \node (d) [circle,fill=ForestGreen,draw=black,minimum size=0.1cm,inner sep=0.2mm,label={[xshift=-0.22cm,yshift=-0.25cm]{\footnotesize $u$}}] at (0,0.7) {};
  \node (a) [circle,fill=ForestGreen,draw=black,minimum size=0.1cm,inner sep=0.2mm,label={[xshift=-0.22cm,yshift=-0.25cm]{\footnotesize $x$}}] at (0,1.05) {};
 \draw[thick]  (b)--(c)--(d)--(a);\end{tikzpicture}}};
\node at   (-0.385,0.32) {\resizebox{1cm}{!}{\begin{tikzpicture}
\node (b) [circle,fill=ForestGreen,draw=black,minimum size=0.1cm,inner sep=0.2mm,label={[xshift=-0.22cm,yshift=-0.25cm]{\footnotesize $v$}}] at (0,0) {};
\node (c) [circle,fill=ForestGreen,draw=black,minimum size=0.1cm,inner sep=0.2mm,label={[xshift=-0.22cm,yshift=-0.27cm]{\footnotesize $y$}}] at (0,0.35) {};
 \node (d) [circle,fill=ForestGreen,draw=black,minimum size=0.1cm,inner sep=0.2mm,label={[xshift=-0.22cm,yshift=-0.25cm]{\footnotesize $x$}}] at (0,0.7) {};
  \node (a) [circle,fill=ForestGreen,draw=black,minimum size=0.1cm,inner sep=0.2mm,label={[xshift=-0.22cm,yshift=-0.25cm]{\footnotesize $u$}}] at (0,1.05) {};
 \draw[thick]  (b)--(c)--(d)--(a);\end{tikzpicture}}};
\node at  (0.2325,0.1625)  {\resizebox{1cm}{!}{\begin{tikzpicture}
\node (b) [circle,fill=ForestGreen,draw=black,minimum size=0.1cm,inner sep=0.2mm,label={[xshift=0.22cm,yshift=-0.25cm]{\footnotesize $u$}}] at (0,0) {};
\node (c) [circle,fill=ForestGreen,draw=black,minimum size=0.1cm,inner sep=0.2mm,label={[xshift=0.22cm,yshift=-0.25cm]{\footnotesize $v$}}] at (0,0.35) {};
 \node (d) [circle,fill=ForestGreen,draw=black,minimum size=0.1cm,inner sep=0.2mm,label={[xshift=0.22cm,yshift=-0.27cm]{\footnotesize $y$}}] at (0,0.7) {};
  \node (a) [circle,fill=ForestGreen,draw=black,minimum size=0.1cm,inner sep=0.2mm,label={[xshift=0.22cm,yshift=-0.25cm]{\footnotesize $x$}}] at (0,1.05) {};
 \draw[thick]  (b)--(c)--(d)--(a);\end{tikzpicture}}};
\node at  (-0.2325,0.1625)  {\resizebox{1cm}{!}{\begin{tikzpicture}
\node (b) [circle,fill=ForestGreen,draw=black,minimum size=0.1cm,inner sep=0.2mm,label={[xshift=-0.22cm,yshift=-0.25cm]{\footnotesize $v$}}] at (0,0) {};
\node (c) [circle,fill=ForestGreen,draw=black,minimum size=0.1cm,inner sep=0.2mm,label={[xshift=-0.22cm,yshift=-0.25cm]{\footnotesize $u$}}] at (0,0.35) {};
 \node (d) [circle,fill=ForestGreen,draw=black,minimum size=0.1cm,inner sep=0.2mm,label={[xshift=-0.22cm,yshift=-0.27cm]{\footnotesize $y$}}] at (0,0.7) {};
  \node (a) [circle,fill=ForestGreen,draw=black,minimum size=0.1cm,inner sep=0.2mm,label={[xshift=-0.22cm,yshift=-0.25cm]{\footnotesize $x$}}] at (0,1.05) {};
 \draw[thick]  (b)--(c)--(d)--(a);\end{tikzpicture}}};
\node at  (-0.22,-0.065)   {\resizebox{1cm}{!}{\begin{tikzpicture}
\node (b) [circle,fill=ForestGreen,draw=black,minimum size=0.1cm,inner sep=0.2mm,label={[xshift=-0.22cm,yshift=-0.25cm]{\footnotesize $v$}}] at (0,0) {};
\node (c) [circle,fill=ForestGreen,draw=black,minimum size=0.1cm,inner sep=0.2mm,label={[xshift=-0.22cm,yshift=-0.25cm]{\footnotesize $u$}}] at (0,0.35) {};
 \node (d) [circle,fill=ForestGreen,draw=black,minimum size=0.1cm,inner sep=0.2mm,label={[xshift=-0.22cm,yshift=-0.25cm]{\footnotesize $x$}}] at (0,0.7) {};
  \node (a) [circle,fill=ForestGreen,draw=black,minimum size=0.1cm,inner sep=0.2mm,label={[xshift=-0.22cm,yshift=-0.27cm]{\footnotesize $y$}}] at (0,1.05) {};
 \draw[thick]  (b)--(c)--(d)--(a);\end{tikzpicture}}};
\node at  (0.22,-0.065)  {\resizebox{1cm}{!}{\begin{tikzpicture}
\node (b) [circle,fill=ForestGreen,draw=black,minimum size=0.1cm,inner sep=0.2mm,label={[xshift=0.22cm,yshift=-0.25cm]{\footnotesize $u$}}] at (0,0) {};
\node (c) [circle,fill=ForestGreen,draw=black,minimum size=0.1cm,inner sep=0.2mm,label={[xshift=0.22cm,yshift=-0.25cm]{\footnotesize $v$}}] at (0,0.35) {};
 \node (d) [circle,fill=ForestGreen,draw=black,minimum size=0.1cm,inner sep=0.2mm,label={[xshift=0.22cm,yshift=-0.25cm]{\footnotesize $x$}}] at (0,0.7) {};
  \node (a) [circle,fill=ForestGreen,draw=black,minimum size=0.1cm,inner sep=0.2mm,label={[xshift=0.22cm,yshift=-0.27cm]{\footnotesize $y$}}] at (0,1.05) {};
 \draw[thick]  (b)--(c)--(d)--(a);\end{tikzpicture}}};
\node at  (-0.38,0)  {\resizebox{2.35cm}{!}{\begin{tikzpicture}
\node (b) [circle,fill=ForestGreen,draw=black,minimum size=0.1cm,inner sep=0.2mm,label={[xshift=-0.22cm,yshift=-0.25cm]{\footnotesize $v$}}] at (0,0) {};
\node (c) [circle,fill=ForestGreen,draw=black,minimum size=0.1cm,inner sep=0.2mm,label={[xshift=-0.22cm,yshift=-0.25cm]{\footnotesize $x$}}] at (0,0.35) {};
 \node (d) [circle,fill=ForestGreen,draw=black,minimum size=0.1cm,inner sep=0.2mm,label={[xshift=-0.22cm,yshift=-0.27cm]{\footnotesize $y$}}] at (-0.15,0.7) {};
  \node (a) [circle,fill=ForestGreen,draw=black,minimum size=0.1cm,inner sep=0.2mm,label={[xshift=0.22cm,yshift=-0.25cm]{\footnotesize $u$}}] at (0.15,0.7) {};
 \draw[thick]  (b)--(c)--(d);
 \draw[thick](c)--(a);\end{tikzpicture}}};
\node at  (0.38,0)  {\resizebox{2.35cm}{!}{\begin{tikzpicture}
\node (b) [circle,fill=ForestGreen,draw=black,minimum size=0.1cm,inner sep=0.2mm,label={[xshift=0.22cm,yshift=-0.25cm]{\footnotesize $u$}}] at (0,0) {};
\node (c) [circle,fill=ForestGreen,draw=black,minimum size=0.1cm,inner sep=0.2mm,label={[xshift=0.22cm,yshift=-0.25cm]{\footnotesize $x$}}] at (0,0.35) {};
 \node (d) [circle,fill=ForestGreen,draw=black,minimum size=0.1cm,inner sep=0.2mm,label={[xshift=-0.22cm,yshift=-0.25cm]{\footnotesize $v$}}] at (-0.15,0.7) {};
  \node (a) [circle,fill=ForestGreen,draw=black,minimum size=0.1cm,inner sep=0.2mm,label={[xshift=0.22cm,yshift=-0.27cm]{\footnotesize $y$}}] at (0.15,0.7) {};
 \draw[thick]  (b)--(c)--(d);
 \draw[thick](c)--(a);\end{tikzpicture}}};
\end{tikzpicture}}
\end{center}

  \smallskip

\indent Notice that we did not specify  the hypergraph for the general case of an $n$-dimensional hemiassociahedron.  Indeed, the question of the generalization of the $3$-dimensional hemiassociahedron to an arbitrary finite dimension has more than one possible answer. For example, we might define the hypergraph encoding the $n$-dimensional hemiassociahedron by ${\bf H}_{n+1}={\bf P}_{n}\cup \{\{y\},\{x_i,y\}\}$, where $y$ is different from all the vertices of the permutohedron hypergraph ${\bf P}_{n}$ and $x_i$ is one of the vertices of ${\bf P}_{n}$, but other possibilities exist as well. In the framework of strongly homotopy structures, an appropriate generalization should be such that the resulting family of hemiassociahedra is closed under the composition product of the structure in question. Finding such a  generalization  seems like an interesting task.

\smallskip

\section{The combinatorial homotopy theory for operads}\label{operads}
\noindent This section contains the combinatorial description of the minimal model ${\EuScript O}_{\infty}$ of the coloured operad ${\EuScript O}$ encoding non-symmetric non-unital reduced operads. The operad ${\EuScript O}$ is the quadratic coloured operad whose generators and relations presentation is given by the ``non-symmetric portion" of   \cite[Definition 5]{DV}, where the coloured operad encoding  symmetric operads is defined. This definition describes ${\EuScript O}$ in terms of {\em composite trees} (i.e. binary  trees whose vertices encode the $\circ_i$ operations) and grafting. Under the name $\mbox{PsOpd}$, and  by specifying its spaces of operations, the operad ${\EuScript O}$  is defined earlier in \cite[Definition 4.1]{VdL},  where it is proven to be self-dual Koszul, and where ${\EuScript O}_{\infty}$ is subsequently defined as  the cobar construction $\Omega\,\mbox{PsOpd}^{\ac}$ of the cooperad $\mbox{PsOpd}^{\ac}$. This alternative characterizatiom describes ${\EuScript O}$ in terms of {\em operadic trees} (i.e., trees with vertices indexed bijectively by $[k]$)  and substitution. The same style of the definition of ${\EuScript O}$ can also be found in \cite[Example 1.5.6]{BM2}, where arity $0$ and units are additionally allowed.

 \medskip

We start this section by recalling and relating in \S\ref{theoperad} the two equivalent definitions of the operad ${\EuScript O}$. The combinatorial description of the minimal  model ${\EuScript O}_{\infty}$ that we  construct in \S \ref{oinf} will be directly tied to  the   characterization of ${\EuScript O}$ given by \cite[Definition 4.1]{VdL}. To each operadic tree ${\cal T}$ with $k$ vertices, we shall associate in a particular way a hypergraph ${\bf H}_{\cal T}$ with $k-1$ vertices, in such a way that  the faces of the abstract  polytope ${\cal A}({\bf H}_{\cal T})$ become the operations of ${\EuScript O}_{\infty}$ that replace (or split) ${\cal T}$, and that the order relation of ${\cal A}({\bf H}_{\cal T})$ determines the differential of ${\EuScript O}_{\infty}$. As further generalizations of our construction, in \S\ref{Wconstruction}, we  introduce a cubical subdivision of  operadic polytopes, obtaining in this way  precisely the  combinatorial Boardman-Vogt-Berger-Moerdijk resolution of ${\EuScript O}$, i.e. the $W$-construction for coloured operads, introduced   in \cite{BM2}, applied on ${\EuScript O}$. Finally, in \S\ref{cyc}, by switching from operadic trees to {\em cyclic operadic trees}, we obtain the combinatorial description of the minimal model of  the coloured  operad ${\EuScript C}$ encoding non-symmetric non-unital reduced cyclic operads.
 \smallskip
 
\subsection{Operads  as algebras over the colored operad ${\EuScript O}$}\label{theoperad}

\noindent In this section, we recall from \cite[Definition 5]{DV} and  \cite[Definition 4.1]{VdL} the two definitions of the coloured operad ${\EuScript O}$ encoding non-symmetric operads. We relate these two characterizations through a correspondence between the underlying formalisms of composite and operadic trees.

\subsubsection{The operad ${\EuScript O}$ in terms of composite trees} Below is the  ``non-symmetric portion" of   \cite[Definition 5]{DV}, obtained from \cite[Definition 5]{DV} by leaving out the generators encoding the action of the symmetric groups. Note that the resulting operad ${\EuScript O}$ (called ${\EuScript O}_{\it ns}$ in \cite{DV}) itself remains a symmetric coloured operad. As a final remark before the definition, we note that ${\EuScript O}$ will incipiently be defined as  a coloured operad in the category of sets, and that it is turned into a dg operad by the   strong symmetric monoidal functor sending a set $X$ to the direct sum $\bigoplus_{x\in X} {\Bbbk}$.

\smallskip

\begin{definition}\label{1} The coloured operad ${\EuScript O}$ is the   ${\mathbb N}$-coloured operad defined by ${\EuScript O}={\cal T}_{\mathbb N}(E)/(R)$, where the set of generators $E$ is given by   binary operations 
\smallskip
   \begin{center} \raisebox{1.8em}{${E}(n,k;n+k-1)=\Bigg\{$}
\raisebox{0.5em}{\resizebox{!}{1.3cm}{\begin{tikzpicture}  
\node(n) [rectangle,draw=none,minimum size=0mm,inner sep=0cm] at (-0.3,0.6) {\tiny $1$};
\node(k) [rectangle,draw=none,minimum size=0mm,inner sep=0cm] at (0.3,0.6) {\tiny $2$};
\node(n) [rectangle,draw=none,minimum size=0mm,inner sep=0cm] at (-0.4,0.4) {\tiny $n$};
\node(k) [rectangle,draw=none,minimum size=0mm,inner sep=0cm] at (0.4,0.4) {\tiny $k$};
\node(k) [rectangle,draw=none,minimum size=0mm,inner sep=0cm] at (0.5,-0.45) {\tiny $n\!+\!k\!-\!1$};
\node(j) [rectangle,draw=none,minimum size=0mm,inner sep=0cm] at (0,-0.5) {};
 \node(i) [circle,draw=black,thick,minimum size=0.2mm,inner sep=0.05cm] at (0,0) {$i$}; 
\draw[-,thick] (i)--(-0.3,0.45);\draw[-,thick] (i)--(0.3,0.45);
\draw[-,thick] (i)--(j);\end{tikzpicture}}} \raisebox{1.8em}{, \enspace $1\leq i\leq n$ \Bigg\} $\cup$\, $\Bigg\{$}
\raisebox{0.5em}{\resizebox{!}{1.3cm}{\begin{tikzpicture}  
\node(n) [rectangle,draw=none,minimum size=0mm,inner sep=0cm] at (-0.3,0.6) {\tiny $2$};
\node(k) [rectangle,draw=none,minimum size=0mm,inner sep=0cm] at (0.3,0.6) {\tiny $1$};
\node(n) [rectangle,draw=none,minimum size=0mm,inner sep=0cm] at (-0.4,0.4) {\tiny $k$};
\node(k) [rectangle,draw=none,minimum size=0mm,inner sep=0cm] at (0.4,0.4) {\tiny $n$};
\node(k) [rectangle,draw=none,minimum size=0mm,inner sep=0cm] at (0.5,-0.45) {\tiny $n\!+\!k\!-\!1$};
\node(j) [rectangle,draw=none,minimum size=0mm,inner sep=0cm] at (0,-0.5) {};
 \node(i) [circle,draw=black,thick,minimum size=0.2mm,inner sep=0.0325cm] at (0,0) {\small $j$}; 
\draw[-,thick] (i)--(-0.3,0.45);\draw[-,thick] (i)--(0.3,0.45);
\draw[-,thick] (i)--(j);\end{tikzpicture}}} \raisebox{1.8em}{, \enspace $1\leq j\leq k$ \Bigg\},}  
\end{center}
\smallskip
for $n,k\geq 1$, equipped with the action of the transposition $(21)$ that sends \raisebox{-1em}{\resizebox{!}{1.3cm}{\begin{tikzpicture}  
\node(n) [rectangle,draw=none,minimum size=0mm,inner sep=0cm] at (-0.3,0.6) {\tiny $1$};
\node(k) [rectangle,draw=none,minimum size=0mm,inner sep=0cm] at (0.3,0.6) {\tiny $2$};
\node(n) [rectangle,draw=none,minimum size=0mm,inner sep=0cm] at (-0.4,0.4) {\tiny $n$};
\node(k) [rectangle,draw=none,minimum size=0mm,inner sep=0cm] at (0.4,0.4) {\tiny $k$};
\node(k) [rectangle,draw=none,minimum size=0mm,inner sep=0cm] at (0.5,-0.45) {\tiny $n\!+\!k\!-\!1$};
\node(j) [rectangle,draw=none,minimum size=0mm,inner sep=0cm] at (0,-0.5) {};
 \node(i) [circle,draw=black,thick,minimum size=0.2mm,inner sep=0.05cm] at (0,0) {$i$}; 
\draw[-,thick] (i)--(-0.3,0.45);\draw[-,thick] (i)--(0.3,0.45);
\draw[-,thick] (i)--(j);\end{tikzpicture}}}\, to  \raisebox{-1em}{\resizebox{!}{1.3cm}{\begin{tikzpicture}  
\node(n) [rectangle,draw=none,minimum size=0mm,inner sep=0cm] at (-0.3,0.6) {\tiny $2$};
\node(k) [rectangle,draw=none,minimum size=0mm,inner sep=0cm] at (0.3,0.6) {\tiny $1$};
\node(n) [rectangle,draw=none,minimum size=0mm,inner sep=0cm] at (-0.4,0.4) {\tiny $n$};
\node(k) [rectangle,draw=none,minimum size=0mm,inner sep=0cm] at (0.41,0.4) {\tiny $k$};
\node(k) [rectangle,draw=none,minimum size=0mm,inner sep=0cm] at (0.5,-0.45) {\tiny $n\!+\!k\!-\!1$};
\node(j) [rectangle,draw=none,minimum size=0mm,inner sep=0cm] at (0,-0.5) {};
 \node(i) [circle,draw=black,thick,minimum size=0.2mm,inner sep=0.05cm] at (0,0) {$i$}; 
\draw[-,thick] (i)--(-0.3,0.45);\draw[-,thick] (i)--(0.3,0.45);
\draw[-,thick] (i)--(j);\end{tikzpicture}}}, and the set $R$ of relations    by relations 
\smallskip
\begin{center}
\begin{tabular}{clccl}
\hypertarget{A1}{\texttt{(A1)}}  & $$\raisebox{-8 ex}{\resizebox{!}{2.5cm}{\begin{tikzpicture}  
\node(1) [rectangle,draw=none,minimum size=0mm,inner sep=0cm] at (-0.85,-0.0) {\tiny $1$};
\node(2) [rectangle,draw=none,minimum size=0mm,inner sep=0cm] at (-0.32,0.6) {\tiny $2$};
\node(3) [rectangle,draw=none,minimum size=0mm,inner sep=0cm] at (0.32,0.6) {\tiny $3$};
\node(n) [rectangle,draw=none,minimum size=0mm,inner sep=0cm] at (-0.42,0.4) {\tiny $n$};
\node(k) [rectangle,draw=none,minimum size=0mm,inner sep=0cm] at (0.42,0.4) {\tiny $k$};
\node(k) [rectangle,draw=none,minimum size=0mm,inner sep=0cm] at (-0.95,-0.25) {\tiny $m$};
\node(j) [circle,draw=black,thick,minimum size=0.2mm,inner sep=0.07cm]  at (-0.45,-1) {\small $i$};
 \node(i) [circle,draw=black,thick,minimum size=0.2mm,inner sep=0.035cm] at (0,0) {\small $j$}; 
\draw[-,thick] (i)--(-0.3,0.45);\draw[-,thick] (i)--(0.3,0.45);
\draw[-,thick] (j)--(-0.85,-0.15);
\draw[-,thick] (i)--(j)--(-0.45,-1.6);\end{tikzpicture}}} \enspace =  \enspace  \raisebox{-8 ex}{\resizebox{!}{2.5cm}{\begin{tikzpicture}  
\node(1) [rectangle,draw=none,minimum size=0mm,inner sep=0cm] at (-0.3,0.6) {\tiny $1$};
\node(2) [rectangle,draw=none,minimum size=0mm,inner sep=0cm] at (0.3,0.6) {\tiny $2$};
\node(3) [rectangle,draw=none,minimum size=0mm,inner sep=0cm] at (0.87,0) {\tiny $3$};
\node(n) [rectangle,draw=none,minimum size=0mm,inner sep=0cm] at (-0.42,0.4) {\tiny $m$};
\node(k) [rectangle,draw=none,minimum size=0mm,inner sep=0cm] at (0.42,0.4) {\tiny $n$};
\node(k) [rectangle,draw=none,minimum size=0mm,inner sep=0cm] at (0.925,-0.25) {\tiny $k$};
\node(j) [circle,draw=black,thick,minimum size=0mm,inner sep=0.02cm] at (0.45,-1) {\small $j'$};
 \node(i) [circle,draw=black,thick,minimum size=0.2mm,inner sep=0.07cm] at (0,0) {\small $i$}; 
\draw[-,thick] (i)--(-0.3,0.45);\draw[-,thick] (i)--(0.3,0.45);
\draw[-,thick] (j)--(0.85,-0.15);
\draw[-,thick] (i)--(j)--(0.45,-1.6);\end{tikzpicture}}} &\quad\quad\quad\quad\quad& \hypertarget{A2}{\texttt{(A2)}} & \raisebox{-8 ex}{\resizebox{!}{2.5cm}{\begin{tikzpicture}  
\node(1) [rectangle,draw=none,minimum size=0mm,inner sep=0cm] at (-0.3,0.6) {\tiny $1$};
\node(2) [rectangle,draw=none,minimum size=0mm,inner sep=0cm] at (0.3,0.6) {\tiny $2$};
\node(3) [rectangle,draw=none,minimum size=0mm,inner sep=0cm] at (0.87,0) {\tiny $3$};
\node(n) [rectangle,draw=none,minimum size=0mm,inner sep=0cm] at (-0.42,0.4) {\tiny $m$};
\node(k) [rectangle,draw=none,minimum size=0mm,inner sep=0cm] at (0.42,0.4) {\tiny $n$};
\node(k) [rectangle,draw=none,minimum size=0mm,inner sep=0cm] at (0.925,-0.25) {\tiny $k$};
\node(j) [circle,draw=black,thick,minimum size=0mm,inner sep=0.07cm] at (0.45,-1) {\small $i$};
 \node(i) [circle,draw=black,thick,minimum size=0.2mm,inner sep=0.035cm] at (0,0) {$\small j$}; 
\draw[-,thick] (i)--(-0.3,0.45);\draw[-,thick] (i)--(0.3,0.45);
\draw[-,thick] (j)--(0.85,-0.15);
\draw[-,thick] (i)--(j)--(0.45,-1.6);\end{tikzpicture}}}  \enspace = \enspace  \raisebox{-8 ex}{\resizebox{!}{2.5cm}{\begin{tikzpicture}  
\node(1) [rectangle,draw=none,minimum size=0mm,inner sep=0cm] at (-0.3,0.6) {\tiny $1$};
\node(2) [rectangle,draw=none,minimum size=0mm,inner sep=0cm] at (0.3,0.6) {\tiny $3$};
\node(3) [rectangle,draw=none,minimum size=0mm,inner sep=0cm] at (0.87,0) {\tiny $2$};
\node(n) [rectangle,draw=none,minimum size=0mm,inner sep=0cm] at (-0.42,0.4) {\tiny $m$};
\node(k) [rectangle,draw=none,minimum size=0mm,inner sep=0cm] at (0.42,0.4) {\tiny $k$};
\node(k) [rectangle,draw=none,minimum size=0mm,inner sep=0cm] at (0.925,-0.25) {\tiny $n$};
\node(j) [circle,draw=black,thick,minimum size=0mm,inner sep=0.02cm] at (0.45,-1) {\small $j'$};
 \node(i) [circle,draw=black,thick,minimum size=0.2mm,inner sep=0.07cm] at (0,0) {\small $i$}; 
\draw[-,thick] (i)--(-0.3,0.45);\draw[-,thick] (i)--(0.3,0.45);
\draw[-,thick] (j)--(0.85,-0.15);
\draw[-,thick] (i)--(j)--(0.45,-1.6);\end{tikzpicture}}} 
\end{tabular}
\end{center}
where, in \hyperlink{A1}{\texttt{(A1)}}, $j'=j+i-1$, and, in \hyperlink{A2}{\texttt{(A2)}},  it is assumed that $i<j$ and $j'=j+k-1$.
\end{definition}
\smallskip
\begin{remark}
The operad ${\EuScript O}$ is a dg coloured operad: for each $k\geq 2$, the vector  space\linebreak ${\cal T}_{\mathbb N}(E)(n_1,\dots,n_k;n)/R(n_1,\dots,n_k;n)$ is concentrated in degree zero, and the differential   is trivial. Therefore, the homology of $${\EuScript O}(n_1,\dots,n_k;n)=\bigoplus_{m\geq 0} ({\cal T}_{\mathbb N}(E)(n_1,\dots,n_k;n)/R(n_1,\dots,n_k;n))_m$$ is trivial for $m\neq 0$, while, for $m=0$, we have $H_0({\EuScript O}(n_1,\dots,n_k;n),0)\cong {\EuScript O}(n_1,\dots,n_k;n)$.
\end{remark}
\smallskip

Let $({\it End}(A),d_{\it End})$ be the  dg coloured endomorphism operad on a dg ${\mathbb N}$-module\linebreak  $A=\{(A(n),d_{A(n)})\}_{n\geq 1}$, i.e. the ${\mathbb N}$-coloured dg operad defined by
$${\it End}(A)(n_1,\dots,n_k;n):= {\mbox{Hom}}(A(n_1)\otimes\cdots\otimes A(n_k);A(n)),\quad\quad k\geq 1,$$ 
where ${\mbox{Hom}}_p(A(n_1)\otimes\cdots\otimes A(n_k);A(n))$ is the vector space of homogeneous degree $p$ linear maps $f:A(n_1)\otimes\cdots\otimes A(n_k)\rightarrow A(n)$, with the partial composition operations (resp. the action of the symmetric groups)  induced by substitution
(resp. permutation respecting the Koszul sign rule) of the tensor factors, and with the differential $d_{\it End}$ defined on $f\in {\mbox{Hom}}_p(A(n_1)\otimes\cdots\otimes A(n_k);A(n))$ by $$d_{\it End}(f):=d_{A(n)}\circ f-(-1)^p \sum_{i=1}^k f\circ_i d_{A(n_i)}.$$
 (Note that when the formula defining $d_{\it End}$ is applied to elements, additional signs appear due to the Koszul sign rule.)
 \smallskip

\begin{lemma}\label{algoperad}
Algebras over the coloured operad ${\EuScript O}$ are  non-unital, non-symmetric, reduced dg operads.
\end{lemma}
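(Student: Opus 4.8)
The plan is to unwind both sides of the claimed equivalence and match them rule by rule. The statement asserts that a morphism of $\mathbb{N}$-coloured dg operads ${\EuScript O}\rightarrow {\it End}(A)$ is the same data as a non-unital, non-symmetric, reduced dg operad structure on the $\mathbb{N}$-module $A=\{(A(n),d_{A(n)})\}_{n\geq 1}$. First I would recall the universal property of the presentation ${\EuScript O}={\cal T}_{\mathbb N}(E)/(R)$: a map out of ${\EuScript O}$ into any $\mathbb{N}$-coloured dg operad ${\EuScript Q}$ is exactly a map of $\mathbb{N}$-coloured collections $E\rightarrow {\EuScript Q}$ that is $\Sigma_2$-equivariant and kills the relations $R$. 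Taking ${\EuScript Q}={\it End}(A)$, such a collection map assigns to the generator indexed by $(n,k;\,i)$ (with $1\leq i\leq n$) a linear map $A(n)\otimes A(k)\rightarrow A(n+k-1)$, which we christen $f\circ_i g$; this is precisely the data of a partial composition operation $\circ_i\colon {\EuScript P}(n)\otimes {\EuScript P}(k)\rightarrow {\EuScript P}(n+k-1)$ for $1\leq i\leq n$, where ${\EuScript P}(n):=A(n)$.

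Next I would check that the constraints match up. The $\Sigma_2$-equivariance condition, relating the two families of generators via the transposition $(21)$, simply records that the second family of generators produces no new operations — it is the opposite-order description of the same $\circ_i$, so equivariance is automatic once one operation per index is chosen and imposes nothing further. The relation \hyperlink{A1}{\texttt{(A1)}} translates, under $\Phi(\text{generator})=\circ_i$, into the sequential (or ``nested'') associativity axiom $(f\circ_i g)\circ_{j'} h = f\circ_i (g\circ_{j} h)$ with $j'=j+i-1$, valid for $1\leq j\leq k$; the relation \hyperlink{A2}{\texttt{(A2)}} translates into the parallel (or ``disjoint'') associativity axiom $(f\circ_i g)\circ_{j'} h = (f\circ_{j}h)\circ_i g$ for $i<j$ and $j'=j+k-1$. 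These two identities are exactly the defining axioms of a non-symmetric non-unital operad (see \cite{mss}, \cite{LV}), so a morphism of the underlying plain coloured operads ${\EuScript O}_{\mathsf{Set}}\to {\it End}(A)$ of sets (or of graded vector spaces forgetting the differential) is precisely a non-symmetric, non-unital operad structure on $A$; reducedness is built in because $E$, and hence ${\EuScript O}$, has no operations in the colour $0$, i.e. $A(n)$ is only considered for $n\geq 1$.

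Finally I would account for the differentials. A morphism of dg operads must commute with the differentials; since ${\EuScript O}$ carries the zero differential, the condition is that the structure maps $\circ_i$ are chain maps with respect to the differential $d_{\it End}$ on ${\it End}(A)$. Spelling out $d_{\it End}(\Phi(\text{generator}))=0$ using the formula $d_{\it End}(f)=d_{A(n)}\circ f-(-1)^p\sum_i f\circ_i d_{A(n_i)}$ yields precisely the Leibniz identity $d_{A(n+k-1)}(f\circ_i g)=d_{A(n)}(f)\circ_i g+(-1)^{|f|}f\circ_i d_{A(k)}(g)$, i.e. the compatibility of $\circ_i$ with the internal differentials of $A$, which is the last axiom of a dg operad. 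Conversely any dg operad structure on $A$ defines such a $\Phi$, and the two assignments are mutually inverse. I expect the only mildly delicate point to be bookkeeping the Koszul signs in the translation of \hyperlink{A1}{\texttt{(A1)}}, \hyperlink{A2}{\texttt{(A2)}} and of the Leibniz rule — making sure the index shifts $j'=j+i-1$ and $j'=j+k-1$ and the sign conventions on $\tau$ exactly reproduce the standard operad axioms — but this is routine once the dictionary $\Phi(\text{generator}_{(n,k;i)})=\circ_i$ is fixed.
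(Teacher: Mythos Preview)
Your proposal is correct and follows essentially the same approach as the paper: identify an ${\EuScript O}$-algebra with a dg operad map ${\EuScript O}\to{\it End}(A)$, read off the $\circ_i$ operations from the generators, the two associativity axioms from \hyperlink{A1}{\texttt{(A1)}} and \hyperlink{A2}{\texttt{(A2)}}, and the Leibniz rule from compatibility with the (zero) differential. The paper's version is much terser---it simply says the relations give ``the obvious associativity axioms'' and that $\chi\circ 0=d_{\it End}\circ\chi$ yields compatibility with $d$---so your write-up is a fuller unwinding of the same argument.
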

\begin{proof}

By definition, an ${{\EuScript O}}$-algebra is a degree 0 homomorphism of ${\mathbb N}$-coloured dg operads $\chi:({{\EuScript O}},0)\rightarrow ({\it End}(A),d_{\it End})$. Therefore, 
an ${{\EuScript O}}$-algebra  is a  dg ${\mathbb N}$-module  $(A,d)=\{(A(n),d_{A(n)})\}_{n\geq 1}$  endowed with operations 
 $$\raisebox{-2.75ex}{\resizebox{!}{1.3cm}{\begin{tikzpicture}  
\node(n) [rectangle,draw=none,minimum size=0mm,inner sep=0cm] at (-0.3,0.6) {\tiny $1$};
\node(k) [rectangle,draw=none,minimum size=0mm,inner sep=0cm] at (0.3,0.6) {\tiny $2$};
\node(n) [rectangle,draw=none,minimum size=0mm,inner sep=0cm] at (-0.4,0.4) {\tiny $n$};
\node(k) [rectangle,draw=none,minimum size=0mm,inner sep=0cm] at (0.4,0.4) {\tiny $k$};
\node(k) [rectangle,draw=none,minimum size=0mm,inner sep=0cm] at (0.5,-0.45) {\tiny $n\!+\!k\!-\!1$};
\node(j) [rectangle,draw=none,minimum size=0mm,inner sep=0cm] at (0,-0.5) {};
 \node(i) [circle,draw=black,thick,minimum size=0.2mm,inner sep=0.05cm] at (0,0) {$i$}; 
\draw[-,thick] (i)--(-0.3,0.45);\draw[-,thick] (i)--(0.3,0.45);
\draw[-,thick] (i)--(j);\end{tikzpicture}}} :\, A(n)\otimes A(k)\rightarrow A(n+k-1)$$ 
satisfying the obvious associativity axioms, whereby  the equality $\chi\circ 0=d_{{\it End}}\circ \chi$  satisfied by $\chi$ guarantees that those operations are compatible with $d$.
\end{proof}
 \smallskip
\subsubsection{The operad ${\EuScript O}$ in terms of operadic trees}\label{operadictrees} We next recall from \cite[Definition 4.1]{VdL} and \cite[Example 1.5.6]{BM2}  the characterization of ${\EuScript O}$ in terms of   operadic trees.

\smallskip

Denote, for $k\geq 2$, $n_1,\dots,n_k\geq 1$, and $n=\big(\sum^k_{i=1} n_i\big)-k+1$, with ${\tt Tree}(n_1,\dots,n_k;n)$ the set of equivalence classes of pairs $(\cal T,\sigma)$, where ${\cal T}\in{\tt Tree}(n)$ has $k$ vertices and $\sigma :[k]\rightarrow v({\cal T})$ is a bijection such that the
vertex $\sigma(i)$ has $n_i$ inputs, under the equivalence relation defined by:
\begin{center} $({\cal T}_1,\sigma_{1})\sim ({\cal T}_2,\sigma_{2})$  if  there exists an isomorphim $\varphi: {\cal T}_1\rightarrow {\cal T}_2$, such that $\varphi\circ\sigma_{1}=\sigma_{2}$.
\end{center}
(In the equality $\varphi\circ\sigma_{1}=\sigma_{2}$ above, we abuse the notation by writing $\varphi$ for what is actually the vertex component of $\varphi$. We shall continue with this practice whenever specifying  compatibilities involving tree isomorphisms.) We refer to pairs $({\cal T},\sigma)$ as {\em operadic trees}.

\smallskip

\begin{lemma}\label{representation}
A ${\Bbbk}$-linear basis of the vector space ${\EuScript O}(n_1,\dots,n_k;n)$ is given by the equivalence classes of operadic trees from ${\tt Tree}(n_1,\dots,n_k;n)$.
\end{lemma}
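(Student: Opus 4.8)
The plan is to exhibit an explicit linear isomorphism between ${\EuScript O}(n_1,\dots,n_k;n)$ and the free vector space on ${\tt Tree}(n_1,\dots,n_k;n)$ by relating the two tree formalisms discussed in \S\ref{theoperad}: composite trees (binary trees with vertices decorated by the $\circ_i$-generators of $E$, modulo the grafting relations $(R)$) and operadic trees (planar rooted trees with a bijective vertex-labelling). First I would recall that, by construction, ${\cal T}_{\mathbb N}(E)(n_1,\dots,n_k;n)$ is spanned by composite trees, i.e. planar rooted binary trees whose $k-1$ internal vertices carry the generators \hyperlink{A1}{\texttt{(A1)}}-style of $E$, with edges coloured so that the colours match along grafting and the $i$-th leaf and root colours come out as $n_i$ and $n$, respectively; the quotient by $(R)$ then identifies two such composite trees whenever they differ by a sequence of applications of the associativity-type relations \hyperlink{A1}{\texttt{(A1)}} and \hyperlink{A2}{\texttt{(A2)}} (together with the $\Sigma_2$-action swapping the two inputs of a generator).

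The core of the argument is a bijection $\Phi$ from the set of $(R)$-equivalence classes of composite trees in ${\cal T}_{\mathbb N}(E)(n_1,\dots,n_k;n)$ to ${\tt Tree}(n_1,\dots,n_k;n)$. Given a composite tree $T$, each vertex of $T$ is an instruction ``$\circ_i$'' that grafts one operadic-tree fragment into the $i$-th input of another; reading $T$ from the leaves towards the root and performing these graftings produces a well-defined planar rooted tree ${\cal T}$ with $n$ inputs, whose vertices are exactly the $k$ ``operation slots'' appearing in $T$, linearly ordered (say) by the order in which they are introduced — this is the bijection $\sigma:[k]\to v({\cal T})$, and the vertex $\sigma(i)$ indeed has $n_i$ inputs. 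One checks that the relations \hyperlink{A1}{\texttt{(A1)}} and \hyperlink{A2}{\texttt{(A2)}} are precisely the two ways a pair of successive graftings can commute — nested ($(A1)$: $(a\circ_i b)\circ_{j'} c = a\circ_i(b\circ_j c)$) and disjoint ($(A2)$: $(a\circ_j b)\circ_i c = (a\circ_i c)\circ_{j'} b$ for $i<j$) — so $\Phi$ is constant on $(R)$-classes and well defined. For surjectivity, the recursive decomposition ${\cal T}=t_n({\cal T}_1,\dots,{\cal T}_p)$ recalled in the Trees preliminaries lets one build, by induction on $k$, a composite tree mapping to any prescribed operadic tree: peel off the root vertex and its sequence of $\circ_i$'s attaching the subtrees ${\cal T}_j$. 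For injectivity, one shows that any two composite trees with the same image are connected by a chain of \hyperlink{A1}{\texttt{(A1)}}/\hyperlink{A2}{\texttt{(A2)}} moves, e.g. by bringing both to a canonical normal form (``left comb'' order on the graftings, determined by a fixed traversal of ${\cal T}$) using a confluence/rewriting argument on the two relations.

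Once $\Phi$ is established as a bijection on basis sets, the Lemma follows immediately: since ${\EuScript O}(n_1,\dots,n_k;n)$ is the free ${\Bbbk}$-vector space on the $(R)$-classes of composite trees — the passage from sets to $\mathsf{dgVect}$ via $X\mapsto\bigoplus_{x\in X}{\Bbbk}$ being strong symmetric monoidal, so free operads and quotients commute with it — the ${\Bbbk}$-linear extension of $\Phi$ is a linear isomorphism onto the free vector space on ${\tt Tree}(n_1,\dots,n_k;n)$, which is exactly the assertion.

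The step I expect to be the main obstacle is the \emph{injectivity} of $\Phi$ on $(R)$-classes, i.e. proving that $\Phi$ does not identify too much: that every coincidence $\Phi(T)=\Phi(T')$ is witnessed by an explicit sequence of \hyperlink{A1}{\texttt{(A1)}} and \hyperlink{A2}{\texttt{(A2)}} rewrites. This is the one place where a genuine (if standard) combinatorial argument is needed — a normal-form/confluence analysis showing the two relations suffice to permute the order of grafting instructions into any order compatible with the partial order imposed by the target tree ${\cal T}$. Everything else (well-definedness, surjectivity, the passage to vector spaces) is routine bookkeeping with the tree formalisms already set up in the Notation section.
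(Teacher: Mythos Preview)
Your proposal is correct and follows essentially the same strategy as the paper's proof. The paper's version is slightly more streamlined: rather than defining $\Phi$ on all composite trees and then checking well-definedness and injectivity separately, it invokes \cite[Proposition~3]{DV} to obtain directly a normal-form basis for ${\EuScript O}(n_1,\dots,n_k;n)$ --- left combs with nondecreasing vertex decorations, arising as the normal forms of the confluent terminating rewriting system obtained by orienting \hyperlink{A1}{\texttt{(A1)}} and \hyperlink{A2}{\texttt{(A2)}} from left to right --- and then exhibits an explicit bijection $\omega$ between these normal forms and operadic trees via iterated grafting of corollas $(\cdots((t_1\circ_{i_1} t_2)\circ_{i_2} t_3)\cdots)\circ_{i_{k-1}} t_k$. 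The injectivity step you flag as the main obstacle (the confluence/normal-form analysis) is precisely what is packaged in that cited result, so the two proofs have the same mathematical content; the paper simply outsources that step and works only with normal forms, which spares it the separate well-definedness check.
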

\begin{proof}
By \cite[Proposition 3]{DV},  the coloured operad ${\EuScript O}$ is spanned by   binary planar ${\mathbb N}$-coloured left combs  whose vertex decorations,  read from top to bottom, are nondecreasing, together with a labeling of the leaves with a permutation on the number of them.
 Formally, this basis is the set of normal forms of the confluent and terminating rewriting system obtained by orienting the relations \hyperlink{A1}{\texttt{(A1)}} and \hyperlink{A2}{\texttt{(A2)}} from left to right.
To each left comb 
\begin{center} ${T}=$ 
\raisebox{-12 ex}{\begin{tikzpicture}
 \node(i_1) [circle,draw=black,thick,minimum size=7mm,inner sep=0.01cm] at (0,0) {\scriptsize $i_{k-1}$}; 
 \node(i_2) [circle,draw=black,thick,minimum size=7mm,inner sep=0.05cm] at (-1,2) {$i_2$}; 
 \node(i_3) [circle,draw=black,thick,minimum size=7mm,inner sep=0.05cm] at (-1.5,3) { $i_1$}; 
 \node (n_1)[circle,draw=none] at (-2,3.6) {\scriptsize $n_1$};
 \node (n_2)[circle,draw=none] at (-0.975,3.6) {\scriptsize $n_2$};
 \node (n_3)[circle,draw=none] at (-0.475,2.6) {\scriptsize $n_3$};
 \node (n_4)[circle,draw=none] at (0.525,0.6) {\scriptsize $n_k$};
  \node (n_1)[circle,draw=none] at (-2,3.9) {\scriptsize $\sigma(1)$};
 \node (n_2)[circle,draw=none] at (-0.975,3.9) {\scriptsize $\sigma(2)$};
 \node (n_3)[circle,draw=none] at (-0.475,2.9) {\scriptsize $\sigma(3)$};
 \node (n_4)[circle,draw=none] at (0.525,0.9) {\scriptsize $\sigma(k)$};
 \node (n)[rectangle,draw=none,inner sep=0.0cm] at (0.125,-0.7) {\scriptsize $n$};
\node(s)[rotate=117] at (-0.5,1) {\small $\dots$};
\draw[thick] (i_3)--(i_2);
\draw[thick] (-1.85,3.7)--(i_3)--(-1.15,3.7);
\draw[thick] (-0.65,1.3)--(i_2)--(-0.65,2.7);
\draw[thick] (-0.35,0.7)--(i_1)--(0.35,0.7);
\draw[thick] (0,-0.75)--(i_1);
\end{tikzpicture}  } 
\end{center}
such that $i_1\leq \cdots\leq i_{k-1}$, we associate an operadic
tree $\omega(T)$, as follows:  denote with $t_i$ the planar corolla with $n_i$ inputs, decorated with $\sigma(i)$, and define $$\omega(T)=(\cdots((t_1\circ_{i_1} t_2)\circ_{i_2} t_3)\cdots)\circ_{i_{k-1}} t_k ,$$ where $\circ_{i_j}$, $1\leq j\leq k-1$, denotes the grafting operation on rooted trees (that preserves vertex decorations).  In particular, the correspondence between the generators of ${\EuScript O}$ and operadic trees with two vertices is given by 
\begin{center}
 $\raisebox{-3ex}{\resizebox{!}{1.3cm}{\begin{tikzpicture}  
 \node(n) [rectangle,draw=none,minimum size=0mm,inner sep=0cm] at (-0.3,0.6) {\tiny $1$};
\node(k) [rectangle,draw=none,minimum size=0mm,inner sep=0cm] at (0.3,0.6) {\tiny $2$};
\node(n) [rectangle,draw=none,minimum size=0mm,inner sep=0cm] at (-0.4,0.4) {\tiny $n$};
\node(k) [rectangle,draw=none,minimum size=0mm,inner sep=0cm] at (0.4,0.4) {\tiny $k$};
\node(k) [rectangle,draw=none,minimum size=0mm,inner sep=0cm] at (0.5,-0.45) {\tiny $n\!+\!k\!-\!1$};
\node(j) [rectangle,draw=none,minimum size=0mm,inner sep=0cm] at (0,-0.5) {};
 \node(i) [circle,draw=black,thick,minimum size=0.2mm,inner sep=0.05cm] at (0,0) {$i$}; 
\draw[-,thick] (i)--(-0.3,0.45);\draw[-,thick] (i)--(0.3,0.45);
\draw[-,thick] (i)--(j);\end{tikzpicture}}}\enspace  \longleftrightarrow$  $\enspace\raisebox{-7ex}{\begin{tikzpicture}
 \node (s1)[circle,draw=none] at (-0.4,0.65) {\tiny $\dots$};
 \node (s2)[circle,draw=none] at (0.45,0.65) {\tiny $\dots$};
 \node (s5)[circle,draw=none] at (0,1.82) {\tiny $\dots$};
 \node (1)[circle,draw=none] at (-0.75,0.65) {\tiny $1$};
 \node (m)[circle,draw=none] at (0.75,0.65) {\tiny $n$};
 \node (2)[circle,draw=none] at (-0.32,1.82) {\tiny $1$};
 \node (n)[circle,draw=none] at (0.32,1.82) {\tiny $k$};
    \node (E)[circle,draw=black,minimum size=4mm,inner sep=0.1mm] at (0,0) {\footnotesize $1$};
    \node (F) [circle,draw=black,minimum size=4mm,inner sep=0.1mm] at (0,1.075) {\footnotesize $2$};
 \draw[-] (0.7,0.525) -- (E)--(-0.7,0.525); 
 \draw[-] (E)--(0,-0.65); 
 \draw[-] (0.3,1.7) -- (F)--(-0.3,1.7); 
    \draw[-] (E)--(F) node [midway,right,xshift=-0.075cm,yshift=0.1cm] {\scriptsize $i$};
\end{tikzpicture}}$ \quad\quad and\quad\quad  $\raisebox{-3ex}{\resizebox{!}{1.3cm}{\begin{tikzpicture}  
 \node(n) [rectangle,draw=none,minimum size=0mm,inner sep=0cm] at (-0.3,0.6) {\tiny $2$};
\node(k) [rectangle,draw=none,minimum size=0mm,inner sep=0cm] at (0.3,0.6) {\tiny $1$};
\node(n) [rectangle,draw=none,minimum size=0mm,inner sep=0cm] at (-0.4,0.4) {\tiny $k$};
\node(k) [rectangle,draw=none,minimum size=0mm,inner sep=0cm] at (0.4,0.4) {\tiny $n$};
\node(k) [rectangle,draw=none,minimum size=0mm,inner sep=0cm] at (0.5,-0.45) {\tiny $n\!+\!k\!-\!1$};
\node(j) [rectangle,draw=none,minimum size=0mm,inner sep=0cm] at (0,-0.5) {};
 \node(i) [circle,draw=black,thick,minimum size=0.2mm,inner sep=0.0325cm] at (0,0) {\small $j$}; 
\draw[-,thick] (i)--(-0.3,0.45);\draw[-,thick] (i)--(0.3,0.45);
\draw[-,thick] (i)--(j);\end{tikzpicture}}}\enspace  \longleftrightarrow$  $\enspace\raisebox{-7ex}{\begin{tikzpicture}
 \node (s1)[circle,draw=none] at (-0.4,0.65) {\tiny $\dots$};
 \node (s2)[circle,draw=none] at (0.45,0.65) {\tiny $\dots$};
 \node (s5)[circle,draw=none] at (0,1.82) {\tiny $\dots$};
 \node (1)[circle,draw=none] at (-0.75,0.65) {\tiny $1$};
 \node (m)[circle,draw=none] at (0.75,0.65) {\tiny $k$};
 \node (2)[circle,draw=none] at (-0.32,1.82) {\tiny $1$};
 \node (n)[circle,draw=none] at (0.32,1.82) {\tiny $n$};
    \node (E)[circle,draw=black,minimum size=4mm,inner sep=0.1mm] at (0,0) {\footnotesize $2$};
    \node (F) [circle,draw=black,minimum size=4mm,inner sep=0.1mm] at (0,1.075) {\footnotesize $1$};
 \draw[-] (0.7,0.525) -- (E)--(-0.7,0.525); 
 \draw[-] (E)--(0,-0.65); 
 \draw[-] (0.3,1.7) -- (F)--(-0.3,1.7); 
    \draw[-] (E)--(F) node [midway,right,xshift=-0.075cm,yshift=0.1cm] {\scriptsize $i$};
\end{tikzpicture}}$
\end{center}
Notice that   the fact that the vertex decorations of ${T}$ are nondecreasing means that $\omega(T)$ is defined by grafting the corollas in the {\em left-recursive way}, i.e.  from bottom to top and from left to right.   This property is used for the definition of the inverse of $\omega$: a composite tree is recovered by traversing an operadic tree in the left-recursive manner,  as we illustrate in Example \ref{compositionexample} that follows.
 \end{proof}

\smallskip
The partial composition operations $\circ_i$  of the coloured operad ${\EuScript O}$ translate to the basis given by Lemma \ref{representation} as follows: for $({\cal T}_1,\sigma_{1})\in {\EuScript O}(n_1,\dots,n_k;n)$ and $({\cal T}_2,\sigma_{2})\in {\EuScript O}(m_1,\dots,m_l;n_i)$, we have
 $$({\cal T}_1,\sigma_{1}) \circ_i ({\cal T}_2,\sigma_{2}) = ({\cal T}_1\bullet_i {\cal T}_2, \sigma_1\bullet_i\sigma_2),$$ 
where ${\cal T}_1\bullet_i {\cal T}_2$ is the planar rooted tree obtained by replacing the vertex $\sigma_{1}(i)$ (i.e. the vertex indexed by $i$) of ${\cal T}_1$ by the tree ${\cal T}_2$, identifying the $n_i$
inputs   of $\sigma_{1}(i)$ in ${\cal T}_1$ with the $n_i$
input
edges of ${\cal T}_2$ using the respective planar structures, and  $\sigma_1\bullet_i\sigma_2$ is   defined by $$(\sigma_1\bullet_i\sigma_2)(j)=\begin{cases} \sigma_{1}(j), \enspace  j<i \\ \sigma_{2}(j-i+1),\enspace   i\leq j\leq i+l-1 \\  \sigma_{1}(j-l+1), \enspace i+l\leq j\leq k+l-1.\end{cases}$$  
Indeed,  it can be shown that 
$$ ({\cal T}_1\bullet_i {\cal T}_2, \sigma_1\bullet_i\sigma_2)=\omega({\it nf}(\omega^{-1}({\cal T}_1,\sigma_1)\circ_i \omega^{-1}({\cal T}_2,\sigma_2))),$$ where  $\omega$ is the bijection from the proof of Lemma \ref{representation}, and  ${\it nf}$ is the normal form function of the rewriting system generated by orienting the relations \hyperlink{A1}{\texttt{(A1)}} and \hyperlink{A2}{\texttt{(A2)}} from left to right.   The action of the symmetric group is defined by $({\cal T},\sigma)^{\kappa}=({\cal T}, \sigma\circ\kappa)$.

\smallskip

The following example illustrates the correspondence between the partial composition operation of ${\EuScript O}$ in terms of composite trees and grafting, and operadic trees and substitution.
\smallskip
\begin{example}\label{compositionexample}
For operadic trees \begin{center}
\begin{tikzpicture}
  \node (t1)[circle,draw=none] at (-1.8,1) { $({\cal T}_1,\sigma_1)\,=$};
    \node (E)[circle,draw=black,minimum size=4mm,inner sep=0.1mm] at (0.75,0) {\small $2$};
    \node (F) [circle,draw=black,minimum size=4mm,inner sep=0.1mm] at (-0.3,1) {\small $1$};
    \node (A) [circle,draw=black,minimum size=4mm,inner sep=0.1mm] at (-0.3,2) {\small $3$};
 \draw[-] (-0.1,2.8) -- (A)--(-0.5,2.8); 
 \draw[-] (0.75,0.8) -- (E)--(-0.75,0.8); 
  \draw[-] (1.5,0.8) -- (E)--(2.25,0.8); 
    \draw[-] (0.45,0.8) -- (E)--(1.05,0.8); 
 \draw[-] (E)--(0.75,-0.55); 
 \draw[-] (0.1,1.8) -- (F)--(-0.7,1.8); 
    \draw[-] (E)--(F) node  {};
    \draw[-] (F)--(A) node {};
\end{tikzpicture}   \quad \quad \raisebox{4.1em}{and} \quad \quad  \raisebox{0.9em}{\begin{tikzpicture}
  \node (t1)[circle,draw=none] at (-2.2,0.7) { $({\cal T}_2,\sigma_2)\,=$};
    \node (E)[circle,draw=black,minimum size=4mm,inner sep=0.1mm] at (0,0) {\small $3$};
    \node (F) [circle,draw=black,minimum size=4mm,inner sep=0.1mm] at (0,1) {\small $1$};
    \node (A) [circle,draw=black,minimum size=4mm,inner sep=0.1mm] at (1,1) {\small $2$};
 \draw[-] (-0.2,1.8) -- (F)--(0.2,1.8);   
 \draw[-] (0.8,1.8) -- (A)--(1.2,1.8);   
  \draw[-] (-0.4,0.8)--(E)--(0,-0.55); 
 \draw[-] (0.4,0.8)--(E)--(-1,0.8); 
    \draw[-] (E)--(F) node {};
    \draw[-] (E)--(A) node  {};
   \end{tikzpicture}}
\end{center}  we  have

\begin{center}
\raisebox{3em}{$\omega^{-1}({\cal T}_1,\sigma_1)=$}\resizebox{!}{2.5cm}{\begin{tikzpicture}  
\node(1) [rectangle,draw=none,minimum size=0mm,inner sep=0cm] at (-0.3,0.6) {\tiny $2$};
\node(2) [rectangle,draw=none,minimum size=0mm,inner sep=0cm] at (0.3,0.6) {\tiny $1$};
\node(3) [rectangle,draw=none,minimum size=0mm,inner sep=0cm] at (0.87,0) {\tiny $3$};
\node(n) [rectangle,draw=none,minimum size=0mm,inner sep=0cm] at (-0.42,0.4) {\tiny $7$};
\node(k) [rectangle,draw=none,minimum size=0mm,inner sep=0.0cm] at (0.42,0.4) {\tiny $3$};
\node(k) [rectangle,draw=none,minimum size=0mm,inner sep=0cm] at (0.925,-0.25) {\tiny $2$};
\node(j) [circle,draw=black,thick,minimum size=0mm,inner sep=0.05cm] at (0.45,-1) {\small $3$};
 \node(i) [circle,draw=black,thick,minimum size=0.2mm,inner sep=0.05cm] at (0,0) {\small $2$}; 
\draw[-,thick] (i)--(-0.3,0.45);\draw[-,thick] (i)--(0.3,0.45);
\draw[-,thick] (j)--(0.85,-0.15);
\draw[-,thick] (i)--(j)--(0.45,-1.6);\end{tikzpicture}} \quad\quad \raisebox{3em}{and} \quad\quad \raisebox{3em}{$\omega^{-1}({\cal T}_2,\sigma_2)=$}\resizebox{!}{2.5cm}{\begin{tikzpicture}  
\node(1) [rectangle,draw=none,minimum size=0mm,inner sep=0cm] at (-0.3,0.6) {\tiny $3$};
\node(2) [rectangle,draw=none,minimum size=0mm,inner sep=0cm] at (0.3,0.6) {\tiny $1$};
\node(3) [rectangle,draw=none,minimum size=0mm,inner sep=0cm] at (0.87,0) {\tiny $2$};
\node(n) [rectangle,draw=none,minimum size=0mm,inner sep=0cm] at (-0.42,0.4) {\tiny $5$};
\node(k) [rectangle,draw=none,minimum size=0mm,inner sep=0.0cm] at (0.42,0.4) {\tiny $2$};
\node(k) [rectangle,draw=none,minimum size=0mm,inner sep=0cm] at (0.925,-0.25) {\tiny $2$};
\node(j) [circle,draw=black,thick,minimum size=0mm,inner sep=0.05cm] at (0.45,-1) {\small $6$};
 \node(i) [circle,draw=black,thick,minimum size=0.2mm,inner sep=0.05cm] at (0,0) {\small $3$}; 
\draw[-,thick] (i)--(-0.3,0.45);\draw[-,thick] (i)--(0.3,0.45);
\draw[-,thick] (j)--(0.85,-0.15);
\draw[-,thick] (i)--(j)--(0.45,-1.6);\end{tikzpicture}}
\end{center}
The normalizing sequence for  $\omega^{-1}({\cal T}_1,\sigma_1)\circ_2 \omega^{-1}({\cal T}_2,\sigma_2)$ is given by
\begin{center}
\resizebox{!}{4.7cm}{\begin{tikzpicture}
 \node(i_0) [circle,draw=black,thick,minimum size=4mm,inner sep=0.05cm] at (0,0) {\small $3$}; 
 \node(i_1) [circle,draw=black,thick,minimum size=4mm,inner sep=0.05cm] at (-0.5,0.9) {\small $2$}; 
 \node(i_2) [circle,draw=black,thick,minimum size=4mm,inner sep=0.05cm] at (-1,1.8) {\small $6$}; 
 \node(i_3) [circle,draw=black,thick,minimum size=4mm,inner sep=0.05cm] at (-1.5,2.7) {\small  $3$}; 
 \node (n_1)[circle,draw=none] at (-1.95,3.2) {\tiny $5$};
 \node (n_2)[circle,draw=none] at (-1.05,3.2) {\tiny $2$};
 \node (n_3)[circle,draw=none] at (-0.55,2.3) {\tiny $2$};
 \node (n_4)[circle,draw=none] at (-0.05,1.4) {\tiny $3$};
 \node (n_5)[circle,draw=none] at (0.45,0.5) {\tiny $2$};
  \node (s_1)[circle,draw=none] at (-1.85,3.45) {\tiny $4$};
 \node (s_2)[circle,draw=none] at (-1.15,3.45) {\tiny $2$};
 \node (s_3)[circle,draw=none] at (-0.65,2.55) {\tiny $3$};
 \node (s_4)[circle,draw=none] at (-0.15,1.65) {\tiny $1$};
 \node (s_5)[circle,draw=none] at (0.35,0.75) {\tiny $5$};
\draw[thick] (i_3)--(i_2)--(i_1)--(i_0);
\draw[thick] (i_0)--(0,-0.6);
\draw[thick] (-1.85,3.3)--(i_3)--(-1.15,3.3);
\draw[thick] (i_2)--(-0.65,2.4);
\draw[thick] (i_1)--(-0.15,1.5);
\draw[thick] (i_0)--(0.35,0.6);
\end{tikzpicture}}  \hspace{-0.4cm} \raisebox{7em}{$\rightarrow$} \hspace{-0.4cm}  \resizebox{!}{4.7cm}{\begin{tikzpicture}
 \node(i_0) [circle,draw=black,thick,minimum size=4mm,inner sep=0.05cm] at (0,0) {\small $3$}; 
 \node(i_1) [circle,draw=black,thick,minimum size=4mm,inner sep=0.05cm] at (-0.5,0.9) {\small $8$}; 
 \node(i_2) [circle,draw=black,thick,minimum size=4mm,inner sep=0.05cm] at (-1,1.8) {\small $2$}; 
 \node(i_3) [circle,draw=black,thick,minimum size=4mm,inner sep=0.05cm] at (-1.5,2.7) {\small  $3$}; 
\node (n_1)[circle,draw=none] at (-1.95,3.2) {\tiny $5$};
 \node (n_2)[circle,draw=none] at (-1.05,3.2) {\tiny $2$};
 \node (n_3)[circle,draw=none] at (-0.55,2.3) {\tiny $3$};
 \node (n_4)[circle,draw=none] at (-0.05,1.4)  {\tiny $2$};
\node (n_5)[circle,draw=none] at (0.45,0.5) {\tiny $2$};
  \node (s_1)[circle,draw=none] at (-1.85,3.45) {\tiny $4$};
 \node (s_2)[circle,draw=none] at (-1.15,3.45) {\tiny $2$};
 \node (s_3)[circle,draw=none] at (-0.65,2.55) {\tiny $1$};
 \node (s_4)[circle,draw=none] at (-0.15,1.65) {\tiny $3$};
 \node (s_5)[circle,draw=none] at (0.35,0.75) {\tiny $5$};
\draw[thick] (i_3)--(i_2)--(i_1)--(i_0);
\draw[thick] (i_0)--(0,-0.6);
\draw[thick] (-1.85,3.3)--(i_3)--(-1.15,3.3);
\draw[thick] (i_2)--(-0.65,2.4);
\draw[thick] (i_1)--(-0.15,1.5);
\draw[thick] (i_0)--(0.35,0.6);
\end{tikzpicture}} \hspace{-0.4cm} \raisebox{7em}{$\rightarrow$} \hspace{-0.4cm}  \resizebox{!}{4.7cm}{\begin{tikzpicture}
 \node(i_0) [circle,draw=black,thick,minimum size=4mm,inner sep=0.05cm] at (0,0) {\small $3$}; 
 \node(i_1) [circle,draw=black,thick,minimum size=4mm,inner sep=0.05cm] at (-0.5,0.9) {\small $8$}; 
 \node(i_2) [circle,draw=black,thick,minimum size=4mm,inner sep=0.05cm] at (-1,1.8) {\small $5$}; 
 \node(i_3) [circle,draw=black,thick,minimum size=4mm,inner sep=0.05cm] at (-1.5,2.7) {\small  $2$}; 
 \node (n_1)[circle,draw=none] at (-1.95,3.2) {\tiny $5$};
 \node (n_2)[circle,draw=none] at (-1.05,3.2) {\tiny $3$};
 \node (n_3)[circle,draw=none] at (-0.55,2.3) {\tiny $2$};
 \node (n_4)[circle,draw=none] at (-0.05,1.4) {\tiny $2$};
 \node (n_5)[circle,draw=none] at (0.45,0.5) {\tiny $2$};
  \node (s_1)[circle,draw=none] at (-1.85,3.45) {\tiny $4$};
 \node (s_2)[circle,draw=none] at (-1.15,3.45) {\tiny $1$};
 \node (s_3)[circle,draw=none] at (-0.65,2.55) {\tiny $2$};
 \node (s_4)[circle,draw=none] at (-0.15,1.65) {\tiny $3$};
 \node (s_5)[circle,draw=none] at (0.35,0.75) {\tiny $5$};
\draw[thick] (i_3)--(i_2)--(i_1)--(i_0);
\draw[thick] (i_0)--(0,-0.6);
\draw[thick] (-1.85,3.3)--(i_3)--(-1.15,3.3);
\draw[thick] (i_2)--(-0.65,2.4);
\draw[thick] (i_1)--(-0.15,1.5);
\draw[thick] (i_0)--(0.35,0.6);
\end{tikzpicture}}   \hspace{-0.4cm} \raisebox{7em}{$\rightarrow$} \hspace{-0.4cm}  \resizebox{!}{4.7cm}{\begin{tikzpicture}
 \node(i_0) [circle,draw=black,thick,minimum size=4mm,inner sep=0.05cm] at (0,0) {\small $9$}; 
 \node(i_1) [circle,draw=black,thick,minimum size=4mm,inner sep=0.05cm] at (-0.5,0.9) {\small $3$}; 
 \node(i_2) [circle,draw=black,thick,minimum size=4mm,inner sep=0.05cm] at (-1,1.8) {\small $5$}; 
 \node(i_3) [circle,draw=black,thick,minimum size=4mm,inner sep=0.05cm] at (-1.5,2.7) {\small  $2$}; 
 \node (n_1)[circle,draw=none] at (-1.95,3.2) {\tiny $5$};
 \node (n_2)[circle,draw=none] at (-1.05,3.2) {\tiny $3$};
 \node (n_3)[circle,draw=none] at (-0.55,2.3) {\tiny $2$};
 \node (n_4)[circle,draw=none] at (-0.05,1.4) {\tiny $2$};
 \node (n_5)[circle,draw=none] at (0.45,0.5) {\tiny $2$};
  \node (s_1)[circle,draw=none] at (-1.85,3.45) {\tiny $4$};
 \node (s_2)[circle,draw=none] at (-1.15,3.45) {\tiny $1$};
 \node (s_3)[circle,draw=none] at (-0.65,2.55) {\tiny $2$};
 \node (s_4)[circle,draw=none] at (-0.15,1.65) {\tiny $5$};
 \node (s_5)[circle,draw=none] at (0.35,0.75) {\tiny $3$};
\draw[thick] (i_3)--(i_2)--(i_1)--(i_0);
\draw[thick] (i_0)--(0,-0.6);
\draw[thick] (-1.85,3.3)--(i_3)--(-1.15,3.3);
\draw[thick] (i_2)--(-0.65,2.4);
\draw[thick] (i_1)--(-0.15,1.5);
\draw[thick] (i_0)--(0.35,0.6);
\end{tikzpicture}} \hspace{-0.4cm} \raisebox{7em}{$\rightarrow$} \hspace{-0.4cm}  \resizebox{!}{4.7cm}{\begin{tikzpicture}
 \node(i_0) [circle,draw=black,thick,minimum size=4mm,inner sep=0.05cm] at (0,0) {\small $9$}; 
 \node(i_1) [circle,draw=black,thick,minimum size=4mm,inner sep=0.05cm] at (-0.5,0.9) {\small $6$}; 
 \node(i_2) [circle,draw=black,thick,minimum size=4mm,inner sep=0.05cm] at (-1,1.8) {\small $3$}; 
 \node(i_3) [circle,draw=black,thick,minimum size=4mm,inner sep=0.05cm] at (-1.5,2.7) {\small  $2$}; 
 \node (n_1)[circle,draw=none] at (-1.95,3.2) {\tiny $5$};
 \node (n_2)[circle,draw=none] at (-1.05,3.2) {\tiny $3$};
 \node (n_3)[circle,draw=none] at (-0.55,2.3) {\tiny $2$};
 \node (n_4)[circle,draw=none] at (-0.05,1.4) {\tiny $2$};
 \node (n_5)[circle,draw=none] at (0.45,0.5) {\tiny $2$};
  \node (s_1)[circle,draw=none] at (-1.85,3.45) {\tiny $4$};
 \node (s_2)[circle,draw=none] at (-1.15,3.45) {\tiny $1$};
 \node (s_3)[circle,draw=none] at (-0.65,2.55) {\tiny $5$};
 \node (s_4)[circle,draw=none] at (-0.15,1.65) {\tiny $2$};
 \node (s_5)[circle,draw=none] at (0.35,0.75) {\tiny $3$};
\draw[thick] (i_3)--(i_2)--(i_1)--(i_0);
\draw[thick] (i_0)--(0,-0.6);
\draw[thick] (-1.85,3.3)--(i_3)--(-1.15,3.3);
\draw[thick] (i_2)--(-0.65,2.4);
\draw[thick] (i_1)--(-0.15,1.5);
\draw[thick] (i_0)--(0.35,0.6);
\end{tikzpicture}} 
\end{center}
The operadic tree corresponding to the last composite tree in the sequence is 
\begin{center}
    \begin{tikzpicture}
 \node (E)[circle,draw=none,minimum size=4mm,inner sep=0.1mm] at (-3.5,1) {  $({\cal T},\sigma)=$};
    \node (E)[circle,draw=black,minimum size=4mm,inner sep=0.1mm] at (-0,0) {\small $4$};
    \node (F) [circle,draw=black,minimum size=4mm,inner sep=0.1mm] at (-1,1) {\small $1$};
    \node (A) [circle,draw=black,minimum size=4mm,inner sep=0.1mm] at (-1,2) {\small $5$};
 \node (q) [circle,draw=black,minimum size=4mm,inner sep=0.1mm] at (0,1) {\small $2$};
    \node (r) [circle,draw=black,minimum size=4mm,inner sep=0.1mm] at (1.65,1) {\small $3$};
 \draw[-] (0.8,0.8) -- (E)--(-1.65,0.8); 
\draw[-] (-1.2,2.8) -- (A)--(-0.8,2.8); 
 \draw[-] (-0.2,1.8) -- (q)--(0.2,1.8);   
 \draw[-] (1.85,1.8) -- (r)--(1.45,1.8); 
 \draw[-] (E)--(0,-0.55); 
 \draw[-] (-1.4,1.8) -- (F)--(-0.6,1.8);   
    \draw[-] (E)--(F) node {};
 \draw[-] (E)--(q) node  {};
 \draw[-] (E)--(r) node {};
    \draw[-] (F)--(A) node {};
   \end{tikzpicture} 
\end{center} and we indeed have that $({\cal T}_1,\sigma_1)\circ_2 ({\cal T}_2,\sigma_2)=({\cal T},\sigma)$.\demo
\end{example}
\smallskip
\begin{remark}  Observe that, although  ${\EuScript O}$ is an  operad with the free action of the symmetric group, the relation \hyperlink{A2}{\texttt{(A2)}} contains a non-trivial  permutation of the inputs, making it a  {\em non-regular} operad. This means that  ${\EuScript O}$ cannot be characterized starting from a non-symmetric operad, by tensoring the space of operations with the regular representation of ${\Bbbk}[\Sigma_n]$, and by tensoring the partial composition operation with the composition map of the symmetric operad ${\it Ass}$. 

\smallskip

Indeed, such a  characterization would require that the restriction of  the structure of  ${\EuScript O}$ to {\em left-recursive} operadic trees,  i.e. operadic trees with a canonical order of vertices that we define below, is closed under the operadic composition of ${\EuScript O}$, which fails to be true.

\end{remark}
  
\smallskip

 A left-recursive operadic tree is an operadic tree $({\cal T},\sigma)$, for which   $\sigma:[k]\rightarrow v({\cal T})$ is the following canonical indexing of the vertices of ${\cal T}$:\\[-0.3cm]

\begin{itemize}
\item if ${\cal T}$ is a corolla $t_n$, then $\sigma:\{1\}\rightarrow v(t_n)$ is trivially defined by $\sigma(1)=\rho(t_n)$;\\[-0.3cm]
\item if ${\cal T}=t_m({\cal T}_1,\dots,{\cal T}_p)$ and if   $\leq_i$ is the linear order on $v({\cal T}_i)$ determined by the left-recursive structure of ${{\cal T}_i}$, then $\sigma$ is derived from the following linear order on $v({\cal T})$:
$$u\leq v \quad \Longleftrightarrow \quad \begin{cases} u=\rho(t_m) \\ u,v\in v({\cal T}_i) \enspace\mbox{ and }\enspace u\leq_i v \\ u\in v({\cal T}_i),\, v\in v({\cal T}_j)  \enspace\mbox{ and }\enspace {\cal T}_i < {\cal T}_j, \end{cases}$$
where ${\cal T}_i<{\cal T}_j$ means that  ${\cal T}_i$ comes before   ${\cal T}_j$ with respect to the order of inputs of $\rho(t_m)$.\\[-0.3cm]
\end{itemize}
Hence, in a left-recursive operadic tree, the vertices are indexed  from bottom to top and from left to right by $1$ through $k$. Observe that this indexing is invariant under planar isomorphisms.  In what follows, when refering to a left-recursive operadic tree $({\cal T},\sigma)$, given that $\sigma$ is canonically determined, we shall write simply ${\cal T}$.

\smallskip

The reader may now want to compose the operadic trees ${\cal T}_1$ and ${\cal T}_2$ from Example \ref{compositionexample}, considered as left-recursive trees, to see that the result will not be a left-recursive operadic tree. Nevertheless, note that the composition $({\cal T}_1,\sigma_1)\circ_2 ({\cal T}_2,\sigma_2)$ from that example can be calculated by  the substitution operation  on ${\cal T}_1$ and ${\cal T}_2$ considered as left-recursive  operadic trees, followed by   the reindexing of the vertices of the resulting (non-left-recursive) tree in a uniquely  determined way.

\medskip

\begin{convention}\label{namesnumbers} The data of an operadic tree ${\cal T}$ involves {\em non-skeletal} and {\em skeletal}  identifications of its the edges and vertices: the non-skeletal data is given by the {\em names} of edges and vertices as elements of  $e({\cal T})\cup i({\cal T})$  and $v({\cal T})$, respectively, and the skeletal data is the index of an edge (resp. vertex) given by the planar structure (resp. by the left-recursive indexing). We shall freely mix these two ways of specifying edges and vertices and use whatever is more suitable for the purpose at hand. In particular, note that the non-skeletal description of edges  eases the portrayal of operadic composition operation, as it bypasses the reindexing involved in the skeletal setting. 
\end{convention} 
 
\medskip 

\subsection{The combinatorial ${\EuScript O}_{\infty}$ operad}\label{oinf} In this section, we define the combinatorial ${\EuScript O}_{\infty}$ operad as the dg operad defined on the faces of {\em operadic polytopes}, i.e. hypergraph polytopes whose hypergraphs are the edge-graphs of operadic  trees, with the differential determined by the partial order on those faces. We start by formalizing the latter type of hypergraphs.
\subsubsection{The edge-graph of a planar rooted tree}\label{edgegraph1}
 
The {\em edge-graph} of a planar rooted tree ${\cal T}$ is the hypergraph ${\bf H}_{\cal T}$ defined as follows: the vertices of ${\bf H}_{\cal T}$ are the (internal) edges of ${\cal T}$ (identified in the non-skeletal manner) and two vertices are connected by an edge in ${\bf H}_{\cal T}$ whenever, as edges of ${\cal T}$, they share a common vertex.    Notice that the names (and possible indexing) of the vertices of ${\cal T}$, as well as the leaves of ${\cal T}$, play no role in the definition of the edge-graph  of ${\cal T}$.  

\smallskip

\begin{example}\label{edgegraph} With the non-skeletal identification of the edges of operadic trees $({\cal T}_1,\sigma_1)$, $({\cal T}_2,\sigma_2)$ and $({\cal T},\sigma)$ from Example \ref{compositionexample} given by
 \begin{center}
\begin{tikzpicture}
    \node (E)[circle,draw=black,minimum size=4mm,inner sep=0.1mm] at (0.75,0) {\small $2$};
    \node (F) [circle,draw=black,minimum size=4mm,inner sep=0.1mm] at (-0.3,1) {\small $1$};
    \node (A) [circle,draw=black,minimum size=4mm,inner sep=0.1mm] at (-0.3,2) {\small $3$};
\node (x) [circle,draw=none,minimum size=4mm,inner sep=0.1mm] at (0.3,0.64) {\small $x$};
    \node (y) [circle,draw=none,minimum size=4mm,inner sep=0.1mm] at (-0.175,1.5) {\small $y$};
 \draw[-] (-0.1,2.8) -- (A)--(-0.5,2.8); 
 \draw[-] (0.75,0.8) -- (E)--(-0.75,0.8); 
  \draw[-] (1.5,0.8) -- (E)--(2.25,0.8); 
    \draw[-] (0.45,0.8) -- (E)--(1.05,0.8); 
 \draw[-] (E)--(0.75,-0.55); 
 \draw[-] (0.1,1.8) -- (F)--(-0.7,1.8); 
    \draw[-] (E)--(F) node  {};
    \draw[-] (F)--(A) node {};
\end{tikzpicture}   \quad \quad\quad  \raisebox{0.9em}{\begin{tikzpicture}
    \node (E)[circle,draw=black,minimum size=4mm,inner sep=0.1mm] at (0,0) {\small $3$};
    \node (F) [circle,draw=black,minimum size=4mm,inner sep=0.1mm] at (0,1) {\small $1$};
    \node (A) [circle,draw=black,minimum size=4mm,inner sep=0.1mm] at (1,1) {\small $2$};
\node (u) [circle,draw=none,minimum size=4mm,inner sep=0.1mm] at (0.1,0.5) {\small $u$};
    \node (v) [circle,draw=none,minimum size=4mm,inner sep=0.1mm] at (0.7,0.5) {\small $v$};
 \draw[-] (-0.2,1.8) -- (F)--(0.2,1.8);   
 \draw[-] (0.8,1.8) -- (A)--(1.2,1.8);   
  \draw[-] (-0.4,0.8)--(E)--(0,-0.55); 
 \draw[-] (0.4,0.8)--(E)--(-1,0.8); 
    \draw[-] (E)--(F) node {};
    \draw[-] (E)--(A) node  {};
   \end{tikzpicture}} \quad \quad\quad \raisebox{4.1em}{and} \quad \quad\quad   \begin{tikzpicture}
    \node (E)[circle,draw=black,minimum size=4mm,inner sep=0.1mm] at (-0,0) {\small $4$};
    \node (F) [circle,draw=black,minimum size=4mm,inner sep=0.1mm] at (-1,1) {\small $1$};
    \node (A) [circle,draw=black,minimum size=4mm,inner sep=0.1mm] at (-1,2) {\small $5$};
 \node (q) [circle,draw=black,minimum size=4mm,inner sep=0.1mm] at (0,1) {\small $2$};
    \node (r) [circle,draw=black,minimum size=4mm,inner sep=0.1mm] at (1.65,1) {\small $3$};
\node (x) [circle,draw=none,minimum size=4mm,inner sep=0.1mm] at (-0.4,0.64) {\small $x$};
    \node (y) [circle,draw=none,minimum size=4mm,inner sep=0.1mm] at (-0.875,1.5) {\small $y$};
\node (u) [circle,draw=none,minimum size=4mm,inner sep=0.1mm] at (0.14,0.5) {\small $u$};
    \node (v) [circle,draw=none,minimum size=4mm,inner sep=0.1mm] at (1.1,0.5) {\small $v$};
 \draw[-] (0.8,0.8) -- (E)--(-1.65,0.8); 
\draw[-] (-1.2,2.8) -- (A)--(-0.8,2.8); 
 \draw[-] (-0.2,1.8) -- (q)--(0.2,1.8);   
 \draw[-] (1.85,1.8) -- (r)--(1.45,1.8); 
 \draw[-] (E)--(0,-0.55); 
 \draw[-] (-1.4,1.8) -- (F)--(-0.6,1.8);   
    \draw[-] (E)--(F) node {};
 \draw[-] (E)--(q) node  {};
 \draw[-] (E)--(r) node {};
    \draw[-] (F)--(A) node {};
   \end{tikzpicture} 
\end{center}
respectively, the corresponding edge-graphs are 
\begin{center}
\raisebox{1.3em}{${\bf H}_{{\cal T}_1}\,=$} {\begin{tikzpicture} \node(m) at (-0.25,0) {\small $x$}; \node(n) at (-0.25,0.9) {\small $y$};\node(a) [circle,draw=black,fill=black,minimum size=1.3mm,inner sep=0mm] at (0,0) {}; \node(b) [circle,draw=black,fill=black,minimum size=1.3mm,inner sep=0mm] at (0,0.9) {};    \draw  (a)--(b); \end{tikzpicture}} \quad\quad\quad \enspace\enspace
\raisebox{1.3em}{${\bf H}_{{\cal T}_2}\,=$} \raisebox{1.3em}{{\begin{tikzpicture}  \node(c) [circle,draw=black,fill=black,minimum size=1.3mm,inner sep=0mm] at (2.5,0) {}; \node(d) [circle,draw=black,fill=black,minimum size=1.3mm,inner sep=0mm] at (3.4,0) {}; \draw (d)--(c); \node(u) at (2.5,0.25) {\small $u$}; \node(v) at (3.4,0.25) {\small $v$};    \end{tikzpicture}}} \quad\quad \raisebox{1.3em}{\mbox{ and }}\quad\quad
\raisebox{1.3em}{${\bf H}_{{\cal T}}\,=$} {\begin{tikzpicture} \node(m) at (-0.25,0) {\small $x$}; \node(n) at (-0.25,0.9) {\small $y$};\node(a) [circle,draw=black,fill=black,minimum size=1.3mm,inner sep=0mm] at (0,0) {}; \node(b) [circle,draw=black,fill=black,minimum size=1.3mm,inner sep=0mm] at (0,0.9) {}; \node(c) [circle,draw=black,fill=black,minimum size=1.3mm,inner sep=0mm] at (0.9,0) {}; \node(d) [circle,draw=black,fill=black,minimum size=1.3mm,inner sep=0mm] at (1.8,0) {}; \draw (d)--(c)--(a)--(b); \node(u) at (0.9,0.25) {\small $u$}; \node(v) at (1.8,0.25) {\small $v$}; \draw (a) to [out=-20,in=200] (d); \end{tikzpicture}}
\end{center}
respectively. Observe that the edge-graph ${\bf H}_{\cal T}$ of the tree ${\cal T}$ is precisely the hypergraph of the hemiassociahedron (cf. \S\ref{sec.hemiassociahedron}), making ${\cal T}$  {\em our favourite operadic tree}.\demo
\end{example}

\smallskip

Observe, in Example \ref{edgegraph}, the additional data given by the relative position of vertices of ${\bf H}_{{\cal T}_1}$ (one above the other) and ${\bf H}_{{\cal T}_2}$ (one next to the other). This data is  implicitly present  in the edge-graphs of planar rooted trees: since edge-graphs inherit their structure from planar rooted trees,  their vertices can naturally be arranged in levels, both vertically (from bottom to top) and horizontally (from left to right). This observation is essential for the interpretation of the edges of operadic polytopes in terms of  homotopies replacing the relations \hyperlink{A1}{\texttt{(A1)}} and \hyperlink{A2}{\texttt{(A2)}} defining the operad ${\EuScript O}$. The latter interpretation has been defined in detail in \cite[Section 4]{CIO}. Let us recall here the idea.

\medskip

Recall from \S\ref{abspol} that the  vertices  of a hypergraph polytope  are encoded by the constructions of the corresponding hypergraph, i.e. by the constructs whose vertices are decorated by singletons only. In addition, the edges of a hypergraph polytope  are encoded by the constructs whose vertices are all singletons,
except one, which is a two-element set. Let ${\cal T}$ be an operadic tree and let $C$ be a construct encoding an edge of the operadic polytope ${\bf H}_{\cal T}$; suppose that $\{x,y\}$ is the unique two-element set vertex of $C$. We  show how ${\bf H}_{\cal T}$,
together with its bipartition of vertical and horizontal edges, determines the type 
of $C$ in terms of   homotopies for the relations \hyperlink{A1}{\texttt{(A1)}} and \hyperlink{A2}{\texttt{(A2)}}, as well as the direction of the corresponding edge corresponding to the orientation of \hyperlink{A1}{\texttt{(A1)}} and \hyperlink{A2}{\texttt{(A2)}} from left to right. (Strictly speaking, in \cite{CIO}, the authors worked in the non-skeletal operadic setting and with the opposite orientation of \hyperlink{A1}{\texttt{(A1)}}. In the non-skeletal environment,  the colours of ${\EuScript O}$ are arbitrary finite sets and  the vertices of composite trees are decorated by the elements of those sets. This in particular means that  the   non-skeletal variant of the relation \hyperlink{A2}{\texttt{(A2)}} does not admit a natural orientation, as opposed to the skeletal one.)
In order to state the criterion, we shall use the fact that, among all the paths between two vertices of ${\bf H}_{\cal T}$, there exists a unique one of minimal length; this fact is proven in \cite[Lemma 11]{CIO}.  The criterion is the following:

\smallskip

\begin{quote}
 If the shortest path between $x$ and $y$ in ${\bf H}_{\cal T}$ is made up of vertical edges only, then the edge encoded by $C$  corresponds to the homotopy for the relation \hyperlink{A1}{\texttt{(A1)}}, and is oriented towards the vertex encoded by the construction in which the vertex $x$ appears above the vertex $y$ if and only if the vertical level of $x$ is inferior
to the vertical level of $y$ in ${\bf H}_{\cal T}$. Otherwise,  the edge encoded by $C$  corresponds to the homotopy for the relation \hyperlink{A2}{\texttt{(A2)}}, and is oriented  towards the vertex encoded by the construction in which the vertex $x$ appears above the vertex $y$ if and only if  the horizontal level of $x$ is inferior to the horizontal level of $y$ in ${\bf H}_{\cal T}$.
\end{quote}
 
 \smallskip
 
 \begin{example}
 Let us derive the edge information for the facet  of the hemiassociahedron given by the marked  square in the realization below:
 \begin{center}
 \resizebox{8cm}{!}{\begin{tikzpicture}[thick,scale=23]
\coordinate (A1) at (-0.2,0);
\coordinate (A2) at (0.2,0); 
\coordinate (D1) at (-0.35,0.05);
\coordinate (D2) at (0.35,0.05);
\coordinate (D3) at (-0.18,0.1);
\coordinate (D4)  at (0.18,0.1);
\coordinate (E3) at (-0.18,0.34);
\coordinate (E4) at (0.18,0.34);
\coordinate (A3) at (-0.2,0.22);
\coordinate (A4) at (0.2,0.22);
 \coordinate (C1) at (-0.25,0.33);
\coordinate (C2) at (0.25,0.33);
 \coordinate (F1) at (-0.35,0.275);
\coordinate (F2) at (0.35,0.275);
 \coordinate (G1) at (-0.3,0.385);
\coordinate (G2) at (0.3,0.385);
 \coordinate (B3) at (-0.2,0.44);
\coordinate (B4) at (0.2,0.44);
\fill[fill=WildStrawberry!20,opacity=0.75] (A1)--(A2)--(A4)--(A3)--(A1);
\draw (A1) -- (A2) -- (A4) -- (A3) -- cycle;
\draw (A3) -- (A4) -- (C2) -- (B4) -- (B3) -- (C1)-- cycle;
\draw[dashed,draw=gray] (D3) -- (D4) -- (E4) -- (E3) -- cycle;
\draw[dashed,draw=gray] (A1) -- (A2) -- (A4) -- (A3) -- (A1);
\draw (A3) -- (A4) -- (C2) -- (B4) -- (B3) -- (C1)-- (A3);
\draw (G1) -- (B3) -- (B4) -- (G2);
\draw (G2) -- (F2);
\draw (G1) -- (F1);
\draw (A3)--(A1) -- (A2)--(A4);
\draw[dashed,draw=gray]   (G1) -- (E3); 
\draw[dashed,draw=gray] (E4) -- (G2);
 \draw  (A1) -- (D1) -- (F1) -- (C1) -- (A3) --  cycle;
 \draw  (A2) -- (D2) -- (F2) -- (C2) -- (A4) --  cycle;
 \draw  (F1) -- (G1) -- (B3) -- (C1)  --  cycle;
\draw (F2) -- (G2) -- (B4) -- (C2)  --  cycle;
\draw[dashed,draw=gray] (D3) -- (D4) -- (E4) -- (E3) -- cycle;
\draw[dashed,draw=gray] (D1)   -- (D3);
\draw[dashed,draw=gray] (D2)   -- (D4);
\draw (F1)--(D1)--(A1)--(A2)--(D2)--(F2);
\node at  (0.19,0.29)  {\resizebox{1.15cm}{!}{\begin{tikzpicture}
\node (b) [circle,fill=ForestGreen,draw=black,minimum size=0.1cm,inner sep=0.2mm,label={[xshift=0.22cm,yshift=-0.25cm]{\footnotesize $u$}}] at (0,0) {};
\node (c) [circle,fill=ForestGreen,draw=black,minimum size=0.1cm,inner sep=0.2mm,label={[xshift=0.22cm,yshift=-0.25cm]{\footnotesize $v$}}] at (0,0.35) {};
 \node (d) [circle,fill=ForestGreen,draw=black,minimum size=0.1cm,inner sep=0.2mm,label={[xshift=0.22cm,yshift=-0.27cm]{\footnotesize $y$}}] at (0,0.7) {};
  \node (a) [circle,fill=ForestGreen,draw=black,minimum size=0.1cm,inner sep=0.2mm,label={[xshift=0.22cm,yshift=-0.25cm]{\footnotesize $x$}}] at (0,1.05) {};
 \draw[thick]  (b)--(c)--(d)--(a);\end{tikzpicture}}};
\node at  (-0.19,0.29)  {\resizebox{1.15cm}{!}{\begin{tikzpicture}
\node (b) [circle,fill=ForestGreen,draw=black,minimum size=0.1cm,inner sep=0.2mm,label={[xshift=-0.22cm,yshift=-0.25cm]{\footnotesize $v$}}] at (0,0) {};
\node (c) [circle,fill=ForestGreen,draw=black,minimum size=0.1cm,inner sep=0.2mm,label={[xshift=-0.22cm,yshift=-0.25cm]{\footnotesize $u$}}] at (0,0.35) {};
 \node (d) [circle,fill=ForestGreen,draw=black,minimum size=0.1cm,inner sep=0.2mm,label={[xshift=-0.22cm,yshift=-0.27cm]{\footnotesize $y$}}] at (0,0.7) {};
  \node (a) [circle,fill=ForestGreen,draw=black,minimum size=0.1cm,inner sep=0.2mm,label={[xshift=-0.22cm,yshift=-0.25cm]{\footnotesize $x$}}] at (0,1.05) {};
 \draw[thick]  (b)--(c)--(d)--(a);\end{tikzpicture}}};
\node at  (-0.22,-0.065)   {\resizebox{1.15cm}{!}{\begin{tikzpicture}
\node (b) [circle,fill=ForestGreen,draw=black,minimum size=0.1cm,inner sep=0.2mm,label={[xshift=-0.22cm,yshift=-0.25cm]{\footnotesize $v$}}] at (0,0) {};
\node (c) [circle,fill=ForestGreen,draw=black,minimum size=0.1cm,inner sep=0.2mm,label={[xshift=-0.22cm,yshift=-0.25cm]{\footnotesize $u$}}] at (0,0.35) {};
 \node (d) [circle,fill=ForestGreen,draw=black,minimum size=0.1cm,inner sep=0.2mm,label={[xshift=-0.22cm,yshift=-0.25cm]{\footnotesize $x$}}] at (0,0.7) {};
  \node (a) [circle,fill=ForestGreen,draw=black,minimum size=0.1cm,inner sep=0.2mm,label={[xshift=-0.22cm,yshift=-0.27cm]{\footnotesize $y$}}] at (0,1.05) {};
 \draw[thick]  (b)--(c)--(d)--(a);\end{tikzpicture}}};
\node at  (0.22,-0.065)  {\resizebox{1.15cm}{!}{\begin{tikzpicture}
\node (b) [circle,fill=ForestGreen,draw=black,minimum size=0.1cm,inner sep=0.2mm,label={[xshift=0.22cm,yshift=-0.25cm]{\footnotesize $u$}}] at (0,0) {};
\node (c) [circle,fill=ForestGreen,draw=black,minimum size=0.1cm,inner sep=0.2mm,label={[xshift=0.22cm,yshift=-0.25cm]{\footnotesize $v$}}] at (0,0.35) {};
 \node (d) [circle,fill=ForestGreen,draw=black,minimum size=0.1cm,inner sep=0.2mm,label={[xshift=0.22cm,yshift=-0.25cm]{\footnotesize $x$}}] at (0,0.7) {};
  \node (a) [circle,fill=ForestGreen,draw=black,minimum size=0.1cm,inner sep=0.2mm,label={[xshift=0.22cm,yshift=-0.27cm]{\footnotesize $y$}}] at (0,1.05) {};
 \draw[thick]  (b)--(c)--(d)--(a);\end{tikzpicture}}};
  \node at  (0.26,0.11)  {\resizebox{2.3cm}{!}{\begin{tikzpicture}
\node (b) [circle,fill=cyan,draw=black,minimum size=0.1cm,inner sep=0.2mm,label={[xshift=0.22cm,yshift=-0.25cm]{\footnotesize $u$}}] at (0,0) {};
\node (c) [circle,fill=cyan,draw=black,minimum size=0.1cm,inner sep=0.2mm,label={[xshift=0.22cm,yshift=-0.3cm]{\footnotesize $v$}}] at (0,0.35) {};
 \node (d) [circle,fill=cyan,draw=black,minimum size=0.1cm,inner sep=0.2mm,label={[xshift=0.45cm,yshift=-0.32cm]{\footnotesize $\{x,y\}$}}] at (0,0.7) {};
 \draw[thick]  (b)--(c)--(d);\end{tikzpicture}}};
  \node at  (-0.26,0.11)  {\resizebox{2.3cm}{!}{\begin{tikzpicture}
\node (b) [circle,fill=cyan,draw=black,minimum size=0.1cm,inner sep=0.2mm,label={[xshift=-0.22cm,yshift=-0.25cm]{\footnotesize $v$}}] at (0,0) {};
\node (c) [circle,fill=cyan,draw=black,minimum size=0.1cm,inner sep=0.2mm,label={[xshift=-0.22cm,yshift=-0.3cm]{\footnotesize $u$}}] at (0,0.35) {};
 \node (d) [circle,fill=cyan,draw=black,minimum size=0.1cm,inner sep=0.2mm,label={[xshift=-0.45cm,yshift=-0.32cm]{\footnotesize $\{x,y\}$}}] at (0,0.7) {};
 \draw[thick]  (b)--(c)--(d);\end{tikzpicture}}};
  \node at  (0,0.28)  {\resizebox{2.3cm}{!}{\begin{tikzpicture}
\node (b) [circle,fill=cyan,draw=black,minimum size=0.1cm,inner sep=0.2mm,label={[xshift=0.22cm,yshift=-0.25cm]{\footnotesize $x$}}] at (0,0.7) {};
\node (c) [circle,fill=cyan,draw=black,minimum size=0.1cm,inner sep=0.2mm,label={[xshift=0.22cm,yshift=-0.26cm]{\footnotesize $y$}}] at (0,0.35) {};
 \node (d) [circle,fill=cyan,draw=black,minimum size=0.1cm,inner sep=0.2mm,label={[xshift=0.45cm,yshift=-0.32cm]{\footnotesize $\{u,v\}$}}] at (0,0) {};
 \draw[thick]  (b)--(c)--(d);\end{tikzpicture}}};
   \node at  (0,-0.06)  {\resizebox{2.3cm}{!}{\begin{tikzpicture}
\node (b) [circle,fill=cyan,draw=black,minimum size=0.1cm,inner sep=0.2mm,label={[xshift=-0.22cm,yshift=-0.25cm]{\footnotesize $y$}}] at (0,0.7) {};
\node (c) [circle,fill=cyan,draw=black,minimum size=0.1cm,inner sep=0.2mm,label={[xshift=-0.22cm,yshift=-0.26cm]{\footnotesize $x$}}] at (0,0.35) {};
 \node (d) [circle,fill=cyan,draw=black,minimum size=0.1cm,inner sep=0.2mm,label={[xshift=-0.45cm,yshift=-0.32cm]{\footnotesize $\{u,v\}$}}] at (0,0) {};
 \draw[thick]  (b)--(c)--(d);\end{tikzpicture}}};
    \node at  (0,0.1)  {\resizebox{2.3cm}{!}{\begin{tikzpicture}
\node (c) [circle,fill=WildStrawberry,draw=black,minimum size=0.1cm,inner sep=0.2mm,label={[xshift=-0.45cm,yshift=-0.32cm]{\footnotesize $\{x,y\}$}}] at (0,0.35) {};
 \node (d) [circle,fill=WildStrawberry,draw=black,minimum size=0.1cm,inner sep=0.2mm,label={[xshift=-0.45cm,yshift=-0.32cm]{\footnotesize $\{u,v\}$}}] at (0,0) {};
 \draw[thick]  (c)--(d);\end{tikzpicture}}};
\end{tikzpicture}}
\end{center}
According to the criterion, the edges $\{u,v\}\{y\{x\}\}$ and $\{u,v\}\{x\{y\}\}$ encode  the homotopies for  \hyperlink{A2}{\texttt{(A2)}}, whereas the edges $v\{u\{\{x,y\}\}\}$ and $u\{v\{\{x,y\}\}\}$ encode  the homotopies for  \hyperlink{A1}{\texttt{(A1)}}. From the point of view of {\em categorified operads} \cite{DP-HP-o}, corresponding to strongly homotopy operads for which the operations given by operadic trees with more than three vertices   vanish, the construct $\{u,v\}\{\{x,y\}\}$, encoding the entire square, is the homotopy identity for the naturality relation
\smallskip
\begin{center}
\begin{tikzpicture}
\node(a) at (0,0) {\small $(((f {\circ_{x}} g)\circ_{y}h)\circ_u k)\circ_v l$};
\node(b) at (5.5,0) {\small  $(((f {\circ_{x}} g)\circ_{y}h)\circ_v l)\circ_u k$};
\node(1) at (0,-1.75) {\small $((f {\circ_{x}} (g\circ_{y}h))\circ_u k)\circ_v l$};
\node(2) at (5.5,-1.75) {\small  $((f {\circ_{x}} (g\circ_{y}h))\circ_v l)\circ_u k$};
\draw[->] (b)--(a) node [midway,above] {\small \hyperlink{A2}{\texttt{(A2)}}};
\draw[->] (1)--(a) node [midway,left] {\small \hyperlink{A1}{\texttt{(A1)}}};
\draw[->] (2)--(1) node [midway,above] {\small \hyperlink{A2}{\texttt{(A2)}}};
\draw[->] (2)--(b) node [midway,right] {\small \hyperlink{A1}{\texttt{(A1)}}};
\end{tikzpicture}
\end{center}
pertaining to the operation
\begin{center}
\begin{tikzpicture}
    \node (E)[circle,draw=black,minimum size=4mm,inner sep=0.1mm] at (-0,0) {\small $f$};
    \node (F) [circle,draw=black,minimum size=4mm,inner sep=0.1mm] at (-1,1) {\small $g$};
    \node (A) [circle,draw=black,minimum size=4mm,inner sep=0.1mm] at (-1,2) {\small $h$};
 \node (q) [circle,draw=black,minimum size=4mm,inner sep=0.1mm] at (0,1) {\small $k$};
    \node (r) [circle,draw=black,minimum size=4mm,inner sep=0.1mm] at (1.65,1) {\small $l$};
            \node (x)[circle,draw=none,minimum size=4mm,inner sep=0.1mm] at (-0.45,0.65) {\small $x$};
    \node (y) [circle,draw=none,minimum size=4mm,inner sep=0.1mm] at (-0.875,1.55) {\small $y$};
                \node (u)[circle,draw=none,minimum size=4mm,inner sep=0.1mm] at (0.13,0.55) {\small $u$};
    \node (v) [circle,draw=none,minimum size=4mm,inner sep=0.1mm] at (1.15,0.55) {\small $v$};
 \draw[-] (0.8,0.8) -- (E)--(-1.65,0.8); 
\draw[-] (-1.2,2.8) -- (A)--(-0.8,2.8); 
 \draw[-] (-0.2,1.8) -- (q)--(0.2,1.8);   
 \draw[-] (1.85,1.8) -- (r)--(1.45,1.8); 
 \draw[-] (E)--(0,-0.55); 
 \draw[-] (-1.5,1.8) -- (F)--(-0.5,1.8);   
    \draw[-] (E)--(F) node {};
 \draw[-] (E)--(q) node  {};
 \draw[-] (E)--(r) node {};
    \draw[-] (F)--(A) node {};
   \end{tikzpicture}\vspace{-0.4cm}
\end{center}
 \demo
 \end{example}
 \medskip
 
The following two lemmas  are straightforward consequences of the definition of the edge-graph of an operadic tree.
\smallskip
\begin{lemma}\label{connectededges}  
The subtrees  of an operadic tree $({\cal T},\sigma)$ that have at least two vertices, considered as left-recursive operadic trees, are in a one-to-one correspondence with the connected subsets of ${H}_{\cal T}$, i.e. non-empty  subsets $X$ of vertices of ${\bf H}_{\cal T}$
such that the hypergraph $({\bf H}_{\cal T})_{X}$ is connected.
\end{lemma}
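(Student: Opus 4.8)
The plan is to establish a bijection between subtrees of $({\cal T},\sigma)$ with at least two vertices and connected subsets of $H_{\cal T}$, directly from the definitions. Recall that $H_{\cal T}$ is the set of internal edges of ${\cal T}$, and that, by the discussion following the definition of subtree in the Notation section, a subtree ${\cal S}$ of ${\cal T}$ is completely determined by the subset of internal edges of ${\cal T}$ it contains (equivalently, by the set of vertices adjacent to those edges). The plan is therefore to send a subtree ${\cal S}$ with at least two vertices to the set $X({\cal S})\subseteq H_{\cal T}$ of its internal edges, and conversely to send a subset $X\subseteq H_{\cal T}$ to the subtree ${\cal T}(X)$ of ${\cal T}$ determined by $X$ in the sense already defined in the excerpt.

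First I would check that $X({\cal S})$ is nonempty: a subtree with at least two vertices must contain at least one internal edge joining two of them (since ${\cal T}$ is connected and contractible, so is ${\cal S}$, and a tree on $\geq 2$ vertices has an edge), so $X({\cal S})\neq\emptyset$. Next, I would verify that $({\bf H}_{\cal T})_{X({\cal S})}$ is connected. This is where the definition of the edge-graph does the work: two edges $e,e'$ of ${\cal T}$ are adjacent in ${\bf H}_{\cal T}$ precisely when they share a vertex of ${\cal T}$; a path in ${\cal S}$ between two of its internal edges $e$ and $e'$ (which exists because ${\cal S}$ is connected) passes through a sequence of vertices and edges, all of which lie in ${\cal S}$, and consecutive internal edges along this path share a vertex — hence they are adjacent in $({\bf H}_{\cal T})_{X({\cal S})}$, giving a path in the hypergraph restricted to $X({\cal S})$. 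Conversely, for $X\subseteq H_{\cal T}$ with $({\bf H}_{\cal T})_X$ connected, the subtree ${\cal T}(X)$ (the union of all vertices adjacent to edges of $X$, with inherited structure) is connected: a path in $({\bf H}_{\cal T})_X$ between edges $e,e'$ translates back into a walk in ${\cal T}(X)$ connecting the endpoints of $e$ to those of $e'$, so any two vertices of ${\cal T}(X)$ are joined inside ${\cal T}(X)$. Being a connected subgraph of the tree ${\cal T}$, it is itself a tree, and it carries a planar rooted structure inherited from ${\cal T}$, with its root being the root $r(v)$ of its uppermost vertex $v$ (the one that is a descendant of no other vertex of ${\cal T}(X)$); one must also check the closure condition in the definition of subtree, namely that whenever a vertex of ${\cal T}$ lies in ${\cal T}(X)$ all its adjacent edges and its root in ${\cal T}$ are present — this is built into the construction ``take all vertices adjacent to those edges''. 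Since $X$ has at least one element, ${\cal T}(X)$ has at least two vertices.

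The two assignments are mutually inverse: ${\cal T}(X({\cal S}))={\cal S}$ because a subtree is determined by its set of internal edges, and $X({\cal T}(X))=X$ because ${\cal T}(X)$ was built to have exactly the edges in $X$ as its internal edges (no extra internal edge of ${\cal T}$ can appear, since an edge of ${\cal T}$ not in $X$ has at most... one endpoint that is forced into ${\cal T}(X)$, and even if both endpoints happen to lie in ${\cal T}(X)$ the edge itself is not declared part of the subtree — here one uses the convention that ${\cal T}(X)$ is the subtree \emph{determined by} the edge subset $X$, so its internal edge set is precisely $X$). Finally, the left-recursive indexing is not an extra datum to match: as noted in the excerpt, the left-recursive indexing of an operadic tree is canonically determined by its planar structure and is invariant under planar isomorphism, so each subtree ${\cal S}$, viewed as a planar rooted tree in its own right, acquires its left-recursive structure canonically. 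I do not expect a genuine obstacle here; the only delicate point is bookkeeping the planar rooted structure on ${\cal T}(X)$ — in particular identifying its root correctly — but this is routine given the ``subtree rooted at an edge'' terminology already set up, and the bijection itself is essentially a restatement of the fact that subtrees $\leftrightarrow$ edge-subsets combined with the fact that connectivity of a subgraph of a tree is equivalent to connectivity of the induced subgraph of its edge-adjacency graph.
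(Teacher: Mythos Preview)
Your approach is correct and is exactly what the paper has in mind: the paper states only that the lemma is ``a straightforward consequence of the definition of the edge-graph of an operadic tree'', and you have supplied the routine details of that bijection.

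One small point: your justification of $X({\cal T}(X))=X$ hedges unnecessarily. You begin the right argument (``an edge of ${\cal T}$ not in $X$ has at most one endpoint forced into ${\cal T}(X)$'') but then retreat to a convention. In fact the convention is not what saves you --- the paper's definition of ${\cal T}(X)$ passes through the \emph{vertex} set (all vertices adjacent to edges of $X$) and then takes the induced subgraph, so any edge of ${\cal T}$ with both endpoints in that vertex set would be forced in. The point is that this cannot happen: if $e\notin X$ had both endpoints $u,v$ in the vertex set of ${\cal T}(X)$, then since $({\bf H}_{\cal T})_X$ is connected the edges of $X$ together with their endpoints form a connected subgraph of ${\cal T}$, hence a subtree containing $u$ and $v$; the unique path in ${\cal T}$ from $u$ to $v$ is the single edge $e$, so $e$ would have to lie in $X$ after all. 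Completing this step removes the only soft spot in your write-up.
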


 \smallskip
\begin{remark}
Thanks to Lemma \ref{connectededges}, for an operadic tree $({\cal T},\sigma)$ and $\emptyset\neq X\subseteq e({\cal T})$, we can   index  the connected components of ${\bf H}_{\cal T}\backslash X$ by the corresponding left-recursive subtrees of ${\cal T}$, by  writing  $${\bf H}_{\cal T}\backslash X\leadsto {\bf H}_{{\cal T}_1},\dots,{\bf H}_{{\cal T}_n}.$$  
However, one must be careful with the induced decomposition on the level of trees! Observe that the subtrees ${\cal T}_1,\dots,{\cal T}_n$ of ${\cal T}$ do not in general make a decomposition of ${\cal T}$, in the sense that the removal  of the edges from the set $X$ may result in a number of subtrees of ${\cal T}$ reduced to a corolla.
\end{remark}
 \smallskip
 
 \begin{lemma}\label{wd0}
Suppose that $({\cal T},\sigma)=({\cal T}_1,\sigma_1)\circ_i ({\cal T}_2,\sigma_2)$, and that, for a subset  $\emptyset\neq X\subseteq e({\cal T}_1)$ of edges of ${\cal T}_1$, we have ${\bf H}_{{\cal T}_1}\backslash X\leadsto {\bf H}_{({\cal T}_1)_1},\dots,{\bf H}_{({\cal T}_1)_p}$. If there exists an index $1\leq j\leq p$,   such that the subtree $({\cal T}_{1})_j$ of ${\cal T}_1$ contains the vertex $v$ indexed by $i$ in ${\cal T}_1$, and if $l$ is the index that the vertex $v$ gets in the left-recursive ordering of the vertices of $({\cal T}_{1})_j$, then
 $${\bf H}_{{\cal T}_1\bullet_i {\cal T}_2}\backslash X\leadsto\{({\bf H}_{1})_k\,|\, 1\leq k\leq p , k\neq j\}\cup\{{\bf H}_{({\cal T}_{1})_j\,\bullet_l\, {\cal T}_2}\}.$$
 Otherwise, we have that $${\bf H}_{{\cal T}_1\bullet_i {\cal T}_2}\backslash X\leadsto\{({\bf H}_{1})_k\,|\, 1\leq k\leq p \}\cup\{{\bf H}_{{\cal T}_2}\}.$$ 
  \end{lemma}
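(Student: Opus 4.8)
The plan is to unwind the definition of $\bullet_i$ on operadic trees and track what happens to the edge‑graph when we delete the edge subset $X$. The key observation is that the edges of ${\cal T}={\cal T}_1\bullet_i{\cal T}_2$ are the disjoint union $e({\cal T}_1)\sqcup e({\cal T}_2)$ (recall we assume the relevant identifications are made a priori), and the adjacency in ${\bf H}_{\cal T}$ between two edges $e,e'$ coincides with the adjacency in ${\bf H}_{{\cal T}_1}$ when $e,e'\in e({\cal T}_1)$, with the adjacency in ${\bf H}_{{\cal T}_2}$ when $e,e'\in e({\cal T}_2)$, and otherwise $e\in e({\cal T}_1)$, $e'\in e({\cal T}_2)$ are adjacent precisely when $e$ is an edge incident to the vertex $v$ of ${\cal T}_1$ indexed by $i$ \emph{and} $e'$ is an edge of ${\cal T}_2$ incident to the corresponding vertex of ${\cal T}_2$ obtained by identifying the inputs of $v$ with those of ${\cal T}_2$. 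So the new adjacencies are controlled entirely by the ``interface'' at $v$.

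First I would establish this adjacency description carefully from the definition of $\bullet_i$ in \S\ref{operadictrees}: replacing $v$ by ${\cal T}_2$ keeps all edges of ${\cal T}_1$ other than those incident to $v$ with unchanged incidences, turns the edges of ${\cal T}_1$ incident to $v$ into edges incident to either $\rho({\cal T}_2)$ (for the root edge of $v$) or to the leaves of ${\cal T}_2$ that get identified with the inputs of $v$, and keeps all edges of ${\cal T}_2$ with their ${\cal T}_2$‑incidences. Then, passing to ${\bf H}_{\cal T}\backslash X = ({\bf H}_{\cal T})_{e({\cal T})\backslash X}$, since $X\subseteq e({\cal T}_1)$, the vertex set of ${\bf H}_{\cal T}\backslash X$ is $(e({\cal T}_1)\backslash X)\sqcup e({\cal T}_2)$, and its induced subhypergraph on $e({\cal T}_1)\backslash X$ is exactly ${\bf H}_{{\cal T}_1}\backslash X$, whose connected components are the $({\bf H}_1)_k=({\bf H}_{({\cal T}_1)_k})$ by hypothesis. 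So ${\bf H}_{\cal T}\backslash X$ is obtained by gluing the connected hypergraph ${\bf H}_{{\cal T}_2}$ onto this decomposition, via the new cross‑edges at the interface.

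Next I would do the case split. If no $({\cal T}_1)_j$ contains $v$, then $v$ is \emph{not} a vertex of any subtree $({\cal T}_1)_k$; equivalently, the root edge of $v$ and all the input edges of $v$ lie in $X$ (being the only edges incident to $v$, and $v$ appearing in no retained subtree forces $v$ to be reduced to a corolla after deletion). Hence none of the new cross‑edges survives in ${\bf H}_{\cal T}\backslash X$: the ${\bf H}_{{\cal T}_2}$ piece sits as a separate connected component, giving ${\bf H}_{\cal T}\backslash X\leadsto\{({\bf H}_1)_k\}\cup\{{\bf H}_{{\cal T}_2}\}$. If instead some (necessarily unique, since the $({\cal T}_1)_k$ are vertex‑disjoint) $({\cal T}_1)_j$ contains $v$, then at least one edge incident to $v$ survives in $e({\cal T}_1)\backslash X$, it lies in $({\bf H}_1)_j$, and the surviving cross‑edges connect $({\bf H}_1)_j$ to ${\bf H}_{{\cal T}_2}$, fusing them into a single component while leaving the other $({\bf H}_1)_k$ untouched. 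It remains to identify this fused component with ${\bf H}_{({\cal T}_1)_j\bullet_l{\cal T}_2}$: by Lemma~\ref{connectededges}, $({\cal T}_1)_j$ is itself a (left‑recursive) operadic tree, $v$ sits in it with some left‑recursive index $l$, and performing $\bullet_l$ of ${\cal T}_2$ into $({\cal T}_1)_j$ produces exactly the same interface identifications at $v$ as were induced inside ${\cal T}$; so the two hypergraphs have the same vertex set and the same adjacencies, hence are equal. The only genuinely delicate point is bookkeeping the index $l$ and checking that the vertex of ${\cal T}_2$ glued to $v$ is the same whether one views the gluing inside all of ${\cal T}$ or inside $({\cal T}_1)_j$; this is immediate once one observes that $\bullet$ only ever touches edges incident to $v$ and leaves of ${\cal T}_2$, none of which depend on the ambient tree, so the verification reduces to matching two finite incidence lists.
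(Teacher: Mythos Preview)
Your argument is correct and follows the same route the paper implicitly takes: the paper simply records this lemma as a ``straightforward consequence of the definition of the edge-graph of an operadic tree'' and gives no proof, and what you wrote is exactly that unwinding. One cosmetic point: when you say ``the root edge of $v$ and all the input edges of $v$ lie in $X$'', you should say all \emph{internal} edges of ${\cal T}_1$ incident to $v$ lie in $X$, since $v$ may be $\rho({\cal T}_1)$ or have some inputs that are leaves; the argument is unaffected.
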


 \smallskip

An isomorphism of  planar rooted trees induces an isomorphism on the corresponding hypergraphs and their constructs  in the natural way: the form of the hypergraph  matters, not the names of the hyperedges. For an  isomorphism $\varphi:{\cal T}_1\rightarrow {\cal T}_2$ of  planar rooted trees and constructs $C_1:{\bf H}_{{\cal T}_1}$ and $C_2:{\bf H}_{{\cal T}_2}$, we shall write $C_1\sim_{\varphi} C_2$ to denote that  $C_1$ and $C_2$ are isomorphic via $\varphi$. In addition, for a hypergraph ${\bf H}$,  the polytope ${\cal A}({\bf H})$ will be considered  modulo  renaming of the vertices of ${\bf H}$.
\smallskip
\subsubsection{The  operad ${\EuScript O}_{\infty}$ as an operad of vector spaces}\label{Oinf}
Define, for $k\geq 2$, $n_1,\dots,n_k\geq 1$, and $n=\big(\sum^k_{i=1} n_i\big)-k+1$, the vector space ${\EuScript O}_{\infty}\displaystyle(n_1,n_2,\dots,n_k;n)$  to be the ${\Bbbk}$-linear span of the set of triples $({\cal T},\sigma,C)$, such that  $({\cal T},\sigma)\in {\tt Tree}(n_1,\dots,n_k;n)$  and $C:{\bf H}_{\cal T}$, subject to the equivalence relation generated by: 
\medskip
\begin{quote} 
$({\cal T}_1,\sigma_1,C_1)\sim ({\cal T}_2,\sigma_2,C_2)$   if  there exists an isomorphism $\varphi:{\cal T}_1\rightarrow {\cal T}_2$, such that  $\varphi\circ\sigma_1=\sigma_2$ and  $ C_1 \sim_{\varphi}C_2$.  \end{quote}
\medskip
Hence, for a fixed operadic tree $({\cal T},\sigma)\in {\tt Tree}(n_1,\dots,n_k;n)$,   the subspace of ${\EuScript O}_{\infty}\displaystyle(n_1,n_2,\dots,n_k;n)$ determined by $({\cal T},\sigma)$ is spanned by all the (isomorphism classes of) constructs of the hypergraph ${\bf H}_{\cal T}$:

 \medskip
$${\EuScript O}_{\infty}(n_1,n_2,\dots,n_k;n):=\displaystyle {\textsf{Span}}_{{\Bbbk}}\bigg(\bigoplus_{({\cal T},\sigma)\in \,{\tt Tree}(n_1,\dots,n_k;n)} {A}({\bf H}_{\cal T})\bigg).$$
 \medskip

\noindent Note that for $n\neq \big(\sum^k_{i=1} n_i\big)-k+1$, we set ${\EuScript O}_{\infty}(n_1,n_2,\dots,n_k;n)$ to be the zero vector space.
 The ${\mathbb N}$-coloured collection
 \smallskip
  $$\{{\EuScript O}_{\infty}(\displaystyle n_1,n_2,\dots,n_k;n) \,\,|\,\, n_1,\dots,n_k\geq 1\}$$ 
  \smallskip
   admits the following  operad structure. The composition operation 
  \smallskip
$$\circ_i : {\EuScript O}_{\infty}\displaystyle(n_1,\dots,n_k;n) \otimes {\EuScript O}_{\infty}\displaystyle(m_1,\dots,m_l;n_i)\rightarrow {\EuScript O}_{\infty}\displaystyle(n_1,\dots,n_{i-1},m_1,\dots,m_l,n_{i+1},\dots n_k;n)$$
\smallskip
 is defined by
 \smallskip
$$({\cal T}_1,\sigma_1,C_1)\circ_i ({\cal T}_2,\sigma_2,C_2)=({\cal T}_1\bullet_i {\cal T}_2,\sigma_1\bullet_i\sigma_2,C_1\bullet_i C_2),$$ 
\smallskip
where the composition on the level of operadic trees  is determined by the composition product of the operad ${\EuScript O}$, and the construct $C_1\bullet_i C_2:{\bf H}_{{\cal T}_1\bullet_i {\cal T}_2}$ is defined as follows.\\[-0.3cm]
\begin{itemize}
\item If $C_1=H_{{\cal T}_1}$, then $$C_1\bullet_i C_2:=H_{{\cal T}_1}\{C_2\}.$$
\item Suppose that $C_1=X\{C_{11},\dots,C_{1p}\}$, where ${\bf H}_{{\cal T}_1}\backslash X\leadsto ({\bf H}_{{\cal T}_1})_1,\dots, ({\bf H}_{{\cal T}_1})_p$ and  $C_{1q}:({\bf H}_{{\cal T}_1})_q$.  If there exists an index $1\leq j\leq p$, such that the subtree $({\cal T}_1)_j$ of ${\cal T}_1$ contains the vertex $v$ indexed by $i$ in ${\cal T}_1$, we define $$C_1\bullet_i C_2:=X\{C_{11},\dots,C_{1j}\bullet_l C_2,\dots,C_{1p}\},$$ where $l$ is the left-recursive index of the vertex $v$ in ${\cal T}_1$. Otherwise, we define $$C_1\bullet_i C_2:=X\{C_{11},\dots,\dots,C_{1p},C_2\}.$$ 
\end{itemize}
\smallskip
The action of the symmetric group is defined by $({\cal T},\sigma,C)^{\kappa}=({\cal T}, \sigma\circ\kappa,C)$.
\smallskip
 \begin{lemma}\label{wd}
The composition operation of ${\EuScript O}_{\infty}$ is well-defined.
\end{lemma}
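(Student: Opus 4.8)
The plan is to verify two independent things: first, that for genuine representatives $({\cal T}_1,\sigma_1,C_1)$ and $({\cal T}_2,\sigma_2,C_2)$ the formula for $C_1\bullet_i C_2$ actually produces a legitimate construct of the hypergraph ${\bf H}_{{\cal T}_1\bullet_i{\cal T}_2}$; and second, that the value $({\cal T}_1\bullet_i{\cal T}_2,\sigma_1\bullet_i\sigma_2,C_1\bullet_i C_2)$ does not depend on the chosen representatives, i.e.\ it descends to the stated equivalence classes. The composition on operadic trees $({\cal T}_1,\sigma_1)\circ_i({\cal T}_2,\sigma_2)=({\cal T}_1\bullet_i{\cal T}_2,\sigma_1\bullet_i\sigma_2)$ is already known to be well-defined, being the composition product of ${\EuScript O}$ via Lemma \ref{representation}; so all the work concentrates on the construct component.

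First I would prove that $C_1\bullet_i C_2$ is a construct of ${\bf H}_{{\cal T}_1\bullet_i{\cal T}_2}$, by induction on the syntactic structure of $C_1$ (equivalently, on the number of vertices of ${\cal T}_1$), using the recursive definition of constructs from \S\ref{abspol}. The base case $C_1=H_{{\cal T}_1}$ is immediate from the first rule: $H_{{\cal T}_1}\{C_2\}$ grafts the construct $C_2$ of ${\bf H}_{{\cal T}_2}$ onto the root vertex decorated by the full vertex set, and by Lemma \ref{wd0} (second clause, applied with $X=\emptyset$, i.e.\ the trivial case where ${\bf H}_{{\cal T}_1}$ is connected so ${\bf H}_{{\cal T}_1}\setminus\emptyset$ has the single component ${\bf H}_{{\cal T}_1}$) the component attached at the new input is exactly ${\bf H}_{{\cal T}_2}$. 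For the inductive step $C_1=X\{C_{11},\dots,C_{1p}\}$, the crucial input is precisely Lemma \ref{wd0}: it tells us that ${\bf H}_{{\cal T}_1\bullet_i{\cal T}_2}\setminus X$ has connected components indexed either by $({\cal T}_1)_k$ for $k\neq j$ together with $({\cal T}_1)_j\bullet_l{\cal T}_2$ (when the vertex $v$ indexed by $i$ lies in $({\cal T}_1)_j$), or by all the $({\cal T}_1)_k$ together with ${\bf H}_{{\cal T}_2}$ (when $v$ was removed, i.e.\ $v$ was an endpoint of an edge in $X$ and no surviving subtree contains it). In the first case, each $C_{1k}$ with $k\neq j$ remains a construct of the unchanged component $({\bf H}_{{\cal T}_1})_k$, and by the induction hypothesis applied to the smaller operadic tree $({\cal T}_1)_j$ (with its left-recursive vertex $l$), $C_{1j}\bullet_l C_2$ is a construct of ${\bf H}_{({\cal T}_1)_j\bullet_l{\cal T}_2}$; so $X\{C_{11},\dots,C_{1j}\bullet_l C_2,\dots,C_{1p}\}$ is a construct of ${\bf H}_{{\cal T}_1\bullet_i{\cal T}_2}$ by the second rule. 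In the second case, appending $C_2$ as a new input matches the extra component ${\bf H}_{{\cal T}_2}$, again by the second rule. I should also check that the left-recursive index $l$ referenced in the definition is genuinely well-defined, which follows from Lemma \ref{connectededges}: the subtree $({\cal T}_1)_j$ corresponds to a connected subset of ${\bf H}_{{\cal T}_1}$ and carries its own canonical left-recursive indexing, invariant under planar isomorphism.

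Next I would treat representative-independence. Suppose $({\cal T}_1,\sigma_1,C_1)\sim({\cal T}_1',\sigma_1',C_1')$ via $\varphi_1:{\cal T}_1\to{\cal T}_1'$ and $({\cal T}_2,\sigma_2,C_2)\sim({\cal T}_2',\sigma_2',C_2')$ via $\varphi_2:{\cal T}_2\to{\cal T}_2'$, with $\varphi_1\circ\sigma_1=\sigma_1'$, $\varphi_2\circ\sigma_2=\sigma_2'$, $C_1\sim_{\varphi_1}C_1'$, $C_2\sim_{\varphi_2}C_2'$. The isomorphisms $\varphi_1,\varphi_2$ assemble into an isomorphism $\varphi:{\cal T}_1\bullet_i{\cal T}_2\to{\cal T}_1'\bullet_i{\cal T}_2'$ (by the compatibility of substitution with tree isomorphism, using $\varphi_1(\sigma_1(i))=\sigma_1'(i)$ so that $\varphi_1$ sends the substituted vertex to the substituted vertex), and one checks $\varphi\circ(\sigma_1\bullet_i\sigma_2)=\sigma_1'\bullet_i\sigma_2'$ directly from the case formula defining $\sigma_1\bullet_i\sigma_2$. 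The remaining point is $C_1\bullet_i C_2\sim_{\varphi}C_1'\bullet_i C_2'$, which I would again prove by induction on the structure of $C_1$: since $\varphi_1$ carries the decorating set $X$ of $C_1$ to the decorating set $X'=\varphi_1(X)$ of $C_1'$ and identifies the component subtree $({\cal T}_1)_j$ containing the $i$-th vertex with $({\cal T}_1')_{j'}$ containing the $i$-th vertex of ${\cal T}_1'$ (so $j'$ is determined and the left-recursive index $l$ is preserved because left-recursive indexing is a planar-isomorphism invariant), the inductive hypothesis applied to the smaller pair $(({\cal T}_1)_j,C_{1j})$ and $(({\cal T}_1)_{j'}',C_{1j'}')$ via $\varphi_1|_{({\cal T}_1)_j}$ gives $C_{1j}\bullet_l C_2\sim C_{1j'}'\bullet_l C_2'$, and assembling over all $p$ inputs yields the claim. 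The main obstacle I expect is purely bookkeeping: matching up, across the two parallel inductions, the indices $j$ (which component of ${\bf H}_{{\cal T}_1}\setminus X$ contains the vertex $v$) and $l$ (the left-recursive position of $v$ within that component) on both sides, and making sure the ``otherwise'' branch (when $v$ is destroyed by removing $X$) is handled consistently. None of this is deep, but it requires the careful distinction between skeletal and non-skeletal descriptions of edges flagged in Convention \ref{namesnumbers}, and the invariance statements of Lemmas \ref{connectededges} and \ref{wd0} do essentially all the conceptual lifting.
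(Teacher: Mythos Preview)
Your argument is correct and follows essentially the same route as the paper: induction on the structure of $C_1$, with Lemma \ref{wd0} carrying the inductive step. Two minor remarks. First, your base case invokes Lemma \ref{wd0} with ``$X=\emptyset$'', which is outside that lemma's hypotheses; the paper instead appeals to Lemma \ref{connectededges}: since ${\cal T}_2$ is a subtree of ${\cal T}_1\bullet_i{\cal T}_2$, removing $H_{{\cal T}_1}$ from ${\bf H}_{{\cal T}_1\bullet_i{\cal T}_2}$ leaves the single connected component ${\bf H}_{{\cal T}_2}$, so $H_{{\cal T}_1}\{C_2\}:{\bf H}_{{\cal T}_1\bullet_i{\cal T}_2}$. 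Second, the paper's proof treats only the ``$C_1\bullet_i C_2$ is a construct'' half and omits representative-independence entirely; your second part is additional rigor rather than a different approach.
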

 \begin{proof} We prove that  $C_1\bullet_i C_2$ is indeed a construct of ${\bf H}_{{\cal T}_1\bullet_i {\cal T}_2}$. As for the first case defining $C_1\bullet_i C_2$,   by Lemma \ref{connectededges}, since ${\cal T}_2$ is a subtree of ${\cal T}_1\bullet_i {\cal T}_2$, we have that  ${\bf H}_{{\cal T}_1\bullet_i {\cal T}_2}\backslash H_{{\cal T}_1}\leadsto {\bf H}_{{\cal T}_2}$. Therefore, since $C_2:{\bf H}_{{\cal T}_2}$, we indeed have that $H_{{\cal T}_1}\{C_2\}:{\bf H}_{{\cal T}_1\bullet_i {\cal T}_2}$. The legitimacy of the second case defining $C_1\bullet_i C_2$ is a direct consequence of Lemma \ref{wd0}.
\end{proof}
\smallskip

The following lemma provides a non-inductive characterization of $C_1\bullet_i C_2$.
\smallskip
\begin{lemma}
The construct $C_1\bullet_i C_2$ is the unique construct of the hypergraph ${\bf H}_{{\cal T}_1\bullet_i {\cal T}_2}$, such that $v(C_1\bullet_i C_2)=v(C_1)\cup v(C_2)$, $\rho(C_1\bullet_i C_2)=\rho(C_1)$, and such that there exists an edge of $C_1\bullet_i C_2$ whose removal results precisely in $C_1$ and $C_2$.
\end{lemma}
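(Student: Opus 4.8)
The statement asserts that the inductively-defined $C_1 \bullet_i C_2$ is characterized by three properties: (1) its vertex set is $v(C_1) \cup v(C_2)$, (2) its root vertex equals $\rho(C_1)$, and (3) it possesses an edge whose contraction-reverse (i.e. whose removal) yields precisely $C_1$ and $C_2$ as the two pieces. My plan is to prove \emph{existence} (that $C_1 \bullet_i C_2$ as defined does satisfy these) by induction on the structure of $C_1$, following exactly the two cases of the recursive definition, and then prove \emph{uniqueness} by showing that any construct of ${\bf H}_{{\cal T}_1 \bullet_i {\cal T}_2}$ satisfying (1)--(3) must coincide with it.

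\emph{Existence.} Recall from Lemma \ref{connectededges} that the subtrees of ${\cal T}_1 \bullet_i {\cal T}_2$ with at least two vertices correspond bijectively to connected subsets of $H_{{\cal T}_1 \bullet_i {\cal T}_2}$; this is what ties the tree-substitution to the hypergraph-deletion throughout. In the base case $C_1 = H_{{\cal T}_1}$, we have $C_1 \bullet_i C_2 = H_{{\cal T}_1}\{C_2\}$; by Lemma \ref{wd0} (or directly, since ${\cal T}_2$ is a subtree of ${\cal T}_1\bullet_i{\cal T}_2$ and its edge-set is exactly the complement of $H_{{\cal T}_1}$), we get ${\bf H}_{{\cal T}_1\bullet_i{\cal T}_2}\backslash H_{{\cal T}_1} \leadsto {\bf H}_{{\cal T}_2}$, so the root vertex of $C_1\bullet_i C_2$ is $H_{{\cal T}_1} = \rho(C_1)$, its vertex set is $\{H_{{\cal T}_1}\}\cup v(C_2) = v(C_1)\cup v(C_2)$, and the unique edge of $C_1\bullet_i C_2$ incident to the root, when removed, splits it into the single-vertex construct $H_{{\cal T}_1}:{\bf H}_{{\cal T}_1}$ and $C_2:{\bf H}_{{\cal T}_2}$ --- these are $C_1$ and $C_2$ exactly, using that $C_1 = H_{{\cal T}_1}$ here. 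In the inductive step $C_1 = X\{C_{11},\dots,C_{1p}\}$, Lemma \ref{wd0} tells us precisely how ${\bf H}_{{\cal T}_1\bullet_i{\cal T}_2}\backslash X$ decomposes: either the branch $C_{1j}$ containing vertex $v$ (the one indexed by $i$) gets replaced by a construct on ${\bf H}_{({\cal T}_1)_j \bullet_l {\cal T}_2}$, or ${\cal T}_2$ appears as a new component. Applying the induction hypothesis to $C_{1j}\bullet_l C_2$ (respectively taking $C_2$ directly) in each branch gives properties (1)--(3): the vertex set is $v(X)\cup\bigcup_q v(C_{1q})\cup v(C_2) = v(C_1)\cup v(C_2)$, the root is $X = \rho(C_1)$, and the separating edge is the one provided (inductively) inside the $j$-th branch (resp. the edge joining the new ${\cal T}_2$-component to $X$). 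Removing that edge reconstitutes $C_1$ on one side --- because the rest of the branches are untouched and by induction $C_{1j}\bullet_l C_2$ splits into $C_{1j}$ and $C_2$ --- and $C_2$ on the other.

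\emph{Uniqueness.} Suppose $D:{\bf H}_{{\cal T}_1\bullet_i{\cal T}_2}$ satisfies (1)--(3), with separating edge $e$. Removing $e$ from $D$ yields two constructs $D'$ and $D''$ with $D' = C_1$ and $D'' = C_2$ (as in the statement). Since $D$ is obtained from $D'$ and $D''$ by grafting $D''$ onto a leaf of $D'$ and then contracting one edge --- equivalently, $D$ is recovered from the pair $(C_1,C_2)$ by the grafting-followed-by-edge-insertion operation --- the construct $D$ is determined by the data of \emph{which} leaf of $C_1$ the root of $C_2$ is attached to. But condition (2), $\rho(D) = \rho(C_1)$, together with condition (1) pinning the vertex set, forces this attachment point: the vertex $v$ of ${\cal T}_1$ indexed by $i$ dictates, via the edge-graph correspondence of Lemma \ref{connectededges}, the unique branch of $C_1$ into which the subtree ${\cal T}_2$ must be inserted; any other choice would produce a construct of a \emph{different} hypergraph (not ${\bf H}_{{\cal T}_1\bullet_i{\cal T}_2}$) or violate the recorded vertex set. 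Hence $D = C_1\bullet_i C_2$.

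\emph{Main obstacle.} The delicate point is the uniqueness argument: making rigorous the claim that ``$D$ is determined by the attachment point of $C_2$ onto $C_1$,'' and that condition (2) together with the fixed underlying hypergraph ${\bf H}_{{\cal T}_1\bullet_i{\cal T}_2}$ pins that attachment point. One must argue carefully that two constructs of the \emph{same} hypergraph, sharing vertex set and root vertex and admitting an edge splitting them into the \emph{same} ordered pair $(C_1,C_2)$, cannot differ --- this rests on the fact that a construct is freely generated by its tree shape over its vertex-decorations, so the only freedom after fixing $C_1$, $C_2$, the vertex set, and the root is the grafting site, and that site is constrained by which subsets of $H_{{\cal T}_1\bullet_i{\cal T}_2}$ are connected (Lemma \ref{connectededges} again). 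I expect the cleanest route is to phrase this as: the pair $(C_1, C_2)$ together with a chosen input leaf of $C_1$ recovers a unique construct via ``graft then contract,'' and the hypothesis $\rho(D)=\rho(C_1)$ plus $v(D)=v(C_1)\cup v(C_2)$ forces that leaf to be the one corresponding to the deleted subtree at vertex $i$.
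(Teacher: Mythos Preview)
The paper states this lemma without proof, so there is nothing to compare against; your proposal is a reasonable way to fill the gap. The existence half is correct and follows the recursive definition of $C_1\bullet_i C_2$ cleanly, using Lemma~\ref{wd0} exactly where it is needed.

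The uniqueness half has the right idea but is not yet a proof. Two issues. First, ``grafting $D''$ onto a leaf of $D'$'' is not quite right: constructs have no open input leaves, so what you mean is that $D$ is recovered by attaching $C_2$ as an extra child of some vertex $V$ of $C_1$. Second, the assertion that ``any other choice would produce a construct of a different hypergraph or violate the recorded vertex set'' is the entire content of uniqueness and needs an argument. The cleanest route is to reuse the induction from your existence proof. Given $D$ satisfying (1)--(3), its root is $X=\rho(C_1)$; Lemma~\ref{wd0} describes the components of ${\bf H}_{{\cal T}_1\bullet_i{\cal T}_2}\backslash X$ in terms of those of ${\bf H}_{{\cal T}_1}\backslash X$. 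Since the children of $X$ in $D$ are constructs of these components and their vertex sets are fixed by (1), every child of $X$ in $D$ coincides with the corresponding child of $X$ in $C_1$, except for the single child living over the component containing $e({\cal T}_2)$. That child inherits properties (1)--(3) for the smaller pair $(C_{1j},C_2)$ (or is $C_2$ itself if vertex $i$ becomes a corolla), so induction gives $D=C_1\bullet_i C_2$. This avoids having to argue directly that the attachment vertex $V$ is unique, which is the step your sketch leaves open.
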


\begin{remark}
Note that, if $({\cal T}_1,\sigma_1,C_1)\circ_i ({\cal T}_2,\sigma_2,C_2)=({\cal T}_1\bullet_i {\cal T}_2,\sigma_1\bullet_i\sigma_2,C_1\bullet_i C_2)$, then $C_1\bullet_i C_2\leq H_{{\cal T}_1}\{H_{{\cal T}_2}\}$ in ${\cal A}({\bf H}_{{\cal T}_1\bullet_i {\cal T}_2})$.
\end{remark}
 
\smallskip

\begin{example}\label{compositionconstructs}
The picture below displays  all the 9 instances of the partial composition 
\smallskip
$$\circ_2:{\EuScript O}_{\infty}(3,7,2;10)\otimes {\EuScript O}_{\infty}(2,2,5;7)\rightarrow {\EuScript O}_{\infty}(3,2,2,5,2;10)$$
\smallskip
 determined by operadic trees $({\cal T}_1,\sigma_1)$ and $({\cal T}_2,\sigma_2)$ from Example \ref{compositionexample}.  The resulting 9 constructs  are the faces of    the square $\{x,y\}\{\{u,v\}\}$ of
 the 3-dimensional hemiassociahedron.    
 \smallskip
\begin{center}
\raisebox{-1.5em}{\begin{tikzpicture}
\coordinate (A1) at (-0.75,0);
\coordinate (A2) at (0.75,0);
\draw[thick] (A1)--(A2);
\node at   (-0.95,0.45) {{\resizebox{0.6cm}{!}{\begin{tikzpicture}
\node (L) [circle,fill=ForestGreen,draw=black,minimum size=0.1cm,inner sep=0.2mm,label={[xshift=-0.22cm,yshift=-0.25cm]{\footnotesize $x$}}] at (0,0) {};
\node (B1) [circle,fill=ForestGreen,draw=black,minimum size=0.1cm,inner sep=0.2mm,label={[xshift=-0.22cm,yshift=-0.25cm]{\footnotesize $y$}}] at (0,0.38) {};
 \draw[thick]  (L)--(B1);\end{tikzpicture}}}};
\node at   (0.95,0.45) {{\resizebox{0.6cm}{!}{\begin{tikzpicture}
\node (L) [circle,fill=ForestGreen,draw=black,minimum size=0.1cm,inner sep=0.2mm,label={[xshift=0.22cm,yshift=-0.25cm]{\footnotesize $y$}}] at (0,0) {};
\node (B1) [circle,fill=ForestGreen,draw=black,minimum size=0.1cm,inner sep=0.2mm,label={[xshift=0.22cm,yshift=-0.25cm]{\footnotesize $x$}}] at (0,0.38) {};
 \draw[thick]  (L)--(B1);\end{tikzpicture}}}};
\node at   (0,0.475) {{\resizebox{1.15cm}{!}{\begin{tikzpicture}
\node (L) [circle,fill=cyan,draw=black,minimum size=0.1cm,inner sep=0.2mm,label={[xshift=0cm,yshift=-0.1cm]{\footnotesize $\{x,y\}$}}] at (0,0) {};
\end{tikzpicture}}}};
\end{tikzpicture}} \enspace $\circ_2$\enspace 
\raisebox{-1.5em}{\begin{tikzpicture}
\coordinate (A1) at (-0.75,0);
\coordinate (A2) at (0.75,0);
\draw[thick] (A1)--(A2);
\node at   (-0.95,0.45) {{\resizebox{0.6cm}{!}{\begin{tikzpicture}
\node (L) [circle,fill=ForestGreen,draw=black,minimum size=0.1cm,inner sep=0.2mm,label={[xshift=-0.22cm,yshift=-0.25cm]{\footnotesize $u$}}] at (0,0) {};
\node (B1) [circle,fill=ForestGreen,draw=black,minimum size=0.1cm,inner sep=0.2mm,label={[xshift=-0.22cm,yshift=-0.25cm]{\footnotesize $v$}}] at (0,0.38) {};
 \draw[thick]  (L)--(B1);\end{tikzpicture}}}};
\node at   (0.95,0.45) {{\resizebox{0.6cm}{!}{\begin{tikzpicture}
\node (L) [circle,fill=ForestGreen,draw=black,minimum size=0.1cm,inner sep=0.2mm,label={[xshift=0.22cm,yshift=-0.25cm]{\footnotesize $v$}}] at (0,0) {};
\node (B1) [circle,fill=ForestGreen,draw=black,minimum size=0.1cm,inner sep=0.2mm,label={[xshift=0.22cm,yshift=-0.25cm]{\footnotesize $u$}}] at (0,0.38) {};
 \draw[thick]  (L)--(B1);\end{tikzpicture}}}};
\node at   (0,0.475) {{\resizebox{1.15cm}{!}{\begin{tikzpicture}
\node (L) [circle,fill=cyan,draw=black,minimum size=0.1cm,inner sep=0.2mm,label={[xshift=0cm,yshift=-0.1cm]{\footnotesize $\{u,v\}$}}] at (0,0) {};
\end{tikzpicture}}}};
\end{tikzpicture}} \enspace $=$ \enspace
\raisebox{-8em}{ \resizebox{8cm}{!}{\begin{tikzpicture}[thick,scale=23]
\coordinate (A1) at (-0.2,0);
\coordinate (A2) at (0.2,0); 
\coordinate (D1) at (-0.35,0.05);
\coordinate (D2) at (0.35,0.05);
\coordinate (D3) at (-0.18,0.1);
\coordinate (D4)  at (0.18,0.1);
\coordinate (E3) at (-0.18,0.34);
\coordinate (E4) at (0.18,0.34);
\coordinate (A3) at (-0.2,0.22);
\coordinate (A4) at (0.2,0.22);
 \coordinate (C1) at (-0.25,0.33);
\coordinate (C2) at (0.25,0.33);
 \coordinate (F1) at (-0.35,0.275);
\coordinate (F2) at (0.35,0.275);
 \coordinate (G1) at (-0.3,0.385);
\coordinate (G2) at (0.3,0.385);
 \coordinate (B3) at (-0.2,0.44);
\coordinate (B4) at (0.2,0.44);
\fill[fill=WildStrawberry!20,opacity=0.75] (D3)--(D4)--(E4)--(E3)--(D3);
\draw (A1) -- (A2) -- (A4) -- (A3) -- cycle;
\draw (A3) -- (A4) -- (C2) -- (B4) -- (B3) -- (C1)-- cycle;
\draw[dashed,draw=gray] (D3) -- (D4) -- (E4) -- (E3) -- cycle;
\draw[dashed,draw=gray] (A1) -- (A2) -- (A4) -- (A3) -- (A1);
\draw (A3) -- (A4) -- (C2) -- (B4) -- (B3) -- (C1)-- (A3);
\draw (G1) -- (B3) -- (B4) -- (G2);
\draw (G2) -- (F2);
\draw (G1) -- (F1);
\draw (A3)--(A1) -- (A2)--(A4);
\draw[dashed,draw=gray]   (G1) -- (E3); 
\draw[dashed,draw=gray] (E4) -- (G2);
 \draw  (A1) -- (D1) -- (F1) -- (C1) -- (A3) --  cycle;
 \draw  (A2) -- (D2) -- (F2) -- (C2) -- (A4) --  cycle;
 \draw  (F1) -- (G1) -- (B3) -- (C1)  --  cycle;
\draw (F2) -- (G2) -- (B4) -- (C2)  --  cycle;
\draw[dashed,draw=gray] (D3) -- (D4) -- (E4) -- (E3) -- cycle;
\draw[dashed,draw=gray] (D1)   -- (D3);
\draw[dashed,draw=gray] (D2)   -- (D4);
\draw (F1)--(D1)--(A1)--(A2)--(D2)--(F2);
\node at  (0.15,0.365)  {\resizebox{1.15cm}{!}{\begin{tikzpicture}
\node (b) [circle,fill=ForestGreen,draw=black,minimum size=0.1cm,inner sep=0.2mm,label={[xshift=0.22cm,yshift=-0.27cm]{\footnotesize $y$}}] at (0,0) {};
\node (c) [circle,fill=ForestGreen,draw=black,minimum size=0.1cm,inner sep=0.2mm,label={[xshift=0.22cm,yshift=-0.25cm]{\footnotesize $x$}}] at (0,0.35) {};
 \node (d) [circle,fill=ForestGreen,draw=black,minimum size=0.1cm,inner sep=0.2mm,label={[xshift=0.22cm,yshift=-0.25cm]{\footnotesize $u$}}] at (0,0.7) {};
  \node (a) [circle,fill=ForestGreen,draw=black,minimum size=0.1cm,inner sep=0.2mm,label={[xshift=0.22cm,yshift=-0.25cm]{\footnotesize $v$}}] at (0,1.05) {};
 \draw[thick]  (b)--(c)--(d)--(a);\end{tikzpicture}}};
\node at  (-0.15,0.365)  {\resizebox{1.15cm}{!}{\begin{tikzpicture}
\node (b) [circle,fill=ForestGreen,draw=black,minimum size=0.1cm,inner sep=0.2mm,label={[xshift=-0.22cm,yshift=-0.27cm]{\footnotesize $y$}}] at (0,0) {};
\node (c) [circle,fill=ForestGreen,draw=black,minimum size=0.1cm,inner sep=0.2mm,label={[xshift=-0.22cm,yshift=-0.25cm]{\footnotesize $x$}}] at (0,0.35) {};
 \node (d) [circle,fill=ForestGreen,draw=black,minimum size=0.1cm,inner sep=0.2mm,label={[xshift=-0.22cm,yshift=-0.25cm]{\footnotesize $v$}}] at (0,0.7) {};
  \node (a) [circle,fill=ForestGreen,draw=black,minimum size=0.1cm,inner sep=0.2mm,label={[xshift=-0.22cm,yshift=-0.25cm]{\footnotesize $u$}}] at (0,1.05) {};
 \draw[thick]  (b)--(c)--(d)--(a);\end{tikzpicture}}};
\node at (-0.16,0.23)  {\resizebox{2.3cm}{!}{\begin{tikzpicture}
\node (b) [circle,fill=cyan,draw=black,minimum size=0.1cm,inner sep=0.2mm,label={[xshift=-0.22cm,yshift=-0.25cm]{\footnotesize $u$}}] at (0,0.7) {};
\node (c) [circle,fill=cyan,draw=black,minimum size=0.1cm,inner sep=0.2mm,label={[xshift=-0.22cm,yshift=-0.26cm]{\footnotesize $v$}}] at (0,0.35) {};
 \node (d) [circle,fill=cyan,draw=black,minimum size=0.1cm,inner sep=0.2mm,label={[xshift=-0.45cm,yshift=-0.32cm]{\footnotesize $\{x,y\}$}}] at (0,0) {};
 \draw[thick]  (b)--(c)--(d);\end{tikzpicture}}};
\node at   (-0.14,0.065)  {{\resizebox{2.9cm}{!}{\begin{tikzpicture}
\node (b) [circle,fill=ForestGreen,draw=black,minimum size=0.1cm,inner sep=0.2mm,label={[yshift=-0.45cm]{\footnotesize $x$}}] at (4,-0.2) {};
\node (c) [circle,fill=ForestGreen,draw=black,minimum size=0.1cm,inner sep=0.2mm,label={[xshift=0.2cm,yshift=-0.3cm]{\footnotesize $y$}}] at (4.2,0.15) {};
 \node (d) [circle,fill=ForestGreen,draw=black,minimum size=0.1cm,inner sep=0.2mm,label={[xshift=-0.2cm,yshift=-0.25cm]{\footnotesize $v$}}] at (3.8,0.15) {};
  \node (a) [circle,fill=ForestGreen,draw=black,draw=black,minimum size=0.1cm,inner sep=0.2mm,label={[xshift=-0.2cm,yshift=-0.2cm]{\footnotesize $u$}}] at (3.8,0.5) {};
 \draw[thick]  (c)--(b)--(d)--(a);\end{tikzpicture}}}};
 \node at (0.02,0.36)  {\resizebox{2.3cm}{!}{\begin{tikzpicture}
\node (b) [circle,fill=cyan,draw=black,minimum size=0.1cm,inner sep=0.2mm,label={[xshift=-0.22cm,yshift=-0.25cm]{\footnotesize $y$}}] at (0,0) {};
\node (c) [circle,fill=cyan,draw=black,minimum size=0.1cm,inner sep=0.2mm,label={[xshift=-0.22cm,yshift=-0.25cm]{\footnotesize $x$}}] at (0,0.35) {};
 \node (d) [circle,fill=cyan,draw=black,minimum size=0.1cm,inner sep=0.2mm,label={[xshift=-0.45cm,yshift=-0.32cm]{\footnotesize $\{u,v\}$}}] at (0,0.7) {};
 \draw[thick]  (b)--(c)--(d);\end{tikzpicture}}};
    \node at  (0,0.22)  {\resizebox{2.3cm}{!}{\begin{tikzpicture}
\node (c) [circle,fill=WildStrawberry,draw=black,minimum size=0.1cm,inner sep=0.2mm,label={[xshift=-0.45cm,yshift=-0.32cm]{\footnotesize $\{x,y\}$}}] at (0,0) {};
 \node (d) [circle,fill=WildStrawberry,draw=black,minimum size=0.1cm,inner sep=0.2mm,label={[xshift=-0.45cm,yshift=-0.32cm]{\footnotesize $\{u,v\}$}}] at (0,0.35) {};
 \draw[thick]  (c)--(d);\end{tikzpicture}}};
 \node at (0,0.1) {{\resizebox{4cm}{!}{\begin{tikzpicture}
\node (b) [circle,fill=cyan,draw=black,minimum size=0.1cm,inner sep=0.2mm,label={[yshift=-0.45cm]{\footnotesize $x$}}] at (4,-0.2) {};
\node (c) [circle,fill=cyan,draw=black,minimum size=0.1cm,inner sep=0.2mm,label={[xshift=-0.45cm,yshift=-0.31cm]{\footnotesize $\{u,v\}$}}] at (3.8,0.15) {};
 \node (d) [circle,fill=cyan,draw=black,minimum size=0.1cm,inner sep=0.2mm,label={[xshift=0.2cm,yshift=-0.3cm]{\footnotesize $y$}}] at (4.2,0.15) {};
 \draw[thick]  (c)--(b)--(d);\end{tikzpicture}}}};
\node at  (0.14,0.065) { {\resizebox{2.9cm}{!}{\begin{tikzpicture}
\node (b) [circle,fill=ForestGreen,draw=black,minimum size=0.1cm,inner sep=0.2mm,label={[yshift=-0.45cm]{\footnotesize $x$}}] at (4,-0.2) {};
\node (c) [circle,fill=ForestGreen,draw=black,minimum size=0.1cm,inner sep=0.2mm,label={[xshift=0.2cm,yshift=-0.3cm]{\footnotesize $y$}}] at (4.2,0.15) {};
 \node (d) [circle,fill=ForestGreen,draw=black,minimum size=0.1cm,inner sep=0.2mm,label={[xshift=-0.2cm,yshift=-0.25cm]{\footnotesize $u$}}] at (3.8,0.15) {};
  \node (a) [circle,fill=ForestGreen,draw=black,draw=black,minimum size=0.1cm,inner sep=0.2mm,label={[xshift=-0.2cm,yshift=-0.2cm]{\footnotesize $v$}}] at (3.8,0.5) {};
 \draw[thick]  (c)--(b)--(d)--(a);\end{tikzpicture}}}};
 \node at (0.12,0.23) {\resizebox{2.3cm}{!}{\begin{tikzpicture}
\node (b) [circle,fill=cyan,draw=black,minimum size=0.1cm,inner sep=0.2mm,label={[xshift=-0.22cm,yshift=-0.25cm]{\footnotesize $v$}}] at (0,0.7) {};
\node (c) [circle,fill=cyan,draw=black,minimum size=0.1cm,inner sep=0.2mm,label={[xshift=-0.22cm,yshift=-0.26cm]{\footnotesize $u$}}] at (0,0.35) {};
 \node (d) [circle,fill=cyan,draw=black,minimum size=0.1cm,inner sep=0.2mm,label={[xshift=-0.45cm,yshift=-0.32cm]{\footnotesize $\{x,y\}$}}] at (0,0) {};
 \draw[thick]  (b)--(c)--(d);\end{tikzpicture}}};
\end{tikzpicture}}}
\end{center}
\smallskip
Observe that the rank of the composition is the sum of the corresponding ranks. 

\medskip

Let us provide the details of the construction of the composition
\begin{center}
\begin{tikzpicture}
\node at   (-0.95,0.45) {{\resizebox{0.525cm}{!}{\begin{tikzpicture}
\node (L) [circle,fill=ForestGreen,draw=black,minimum size=0.1cm,inner sep=0.2mm,label={[xshift=-0.22cm,yshift=-0.25cm]{\footnotesize $x$}}] at (0,0) {};
\node (B1) [circle,fill=ForestGreen,draw=black,minimum size=0.1cm,inner sep=0.2mm,label={[xshift=-0.22cm,yshift=-0.27cm]{\footnotesize $y$}}] at (0,0.38) {};
 \draw[thick]  (L)--(B1);\end{tikzpicture}}}}; 
\end{tikzpicture} \enspace\raisebox{1.25em}{$\circ_2$} \begin{tikzpicture}
\node at   (-0.95,0.45) {{\resizebox{0.525cm}{!}{\begin{tikzpicture}
\node (L) [circle,fill=ForestGreen,draw=black,minimum size=0.1cm,inner sep=0.2mm,label={[xshift=-0.22cm,yshift=-0.25cm]{\footnotesize $v$}}] at (0,0) {};
\node (B1) [circle,fill=ForestGreen,draw=black,minimum size=0.1cm,inner sep=0.2mm,label={[xshift=-0.22cm,yshift=-0.25cm]{\footnotesize $u$}}] at (0,0.38) {};
 \draw[thick]  (L)--(B1);\end{tikzpicture}}}}; 
\end{tikzpicture} \enspace\raisebox{1.25em}{$=$} \begin{tikzpicture}
\node at   (-0.14,0.065)  { {\resizebox{1.35cm}{!}{\begin{tikzpicture}
\node (b) [circle,fill=ForestGreen,draw=black,minimum size=0.1cm,inner sep=0.2mm,label={[yshift=-0.45cm]{\footnotesize $x$}}] at (4,-0.2) {};
\node (c) [circle,fill=ForestGreen,draw=black,minimum size=0.1cm,inner sep=0.2mm,label={[xshift=0.2cm,yshift=-0.3cm]{\footnotesize $y$}}] at (4.2,0.15) {};
 \node (d) [circle,fill=ForestGreen,draw=black,minimum size=0.1cm,inner sep=0.2mm,label={[xshift=-0.2cm,yshift=-0.25cm]{\footnotesize $v$}}] at (3.8,0.15) {};
  \node (a) [circle,fill=ForestGreen,draw=black,draw=black,minimum size=0.1cm,inner sep=0.2mm,label={[xshift=-0.2cm,yshift=-0.2cm]{\footnotesize $u$}}] at (3.8,0.5) {};
 \draw[thick]  (c)--(b)--(d)--(a);\end{tikzpicture}}}};
\end{tikzpicture}
\end{center}
By definition,  we consider the left-recursive subtrees of ${\cal T}_1$ obtained by removing the edge $x$ and we search for the one containing the vertex that used to be indexed by $1$ in ${\cal T}_1$. Since this subtree is reduced to a corolla, the resulting construct will have \raisebox{-1.1em}{\begin{tikzpicture}
\node at   (-0.95,0.45) {{\resizebox{0.525cm}{!}{\begin{tikzpicture}
\node (L) [circle,fill=ForestGreen,draw=black,minimum size=0.1cm,inner sep=0.2mm,label={[xshift=-0.22cm,yshift=-0.25cm]{\footnotesize $v$}}] at (0,0) {};
\node (B1) [circle,fill=ForestGreen,draw=black,minimum size=0.1cm,inner sep=0.2mm,label={[xshift=-0.22cm,yshift=-0.25cm]{\footnotesize $u$}}] at (0,0.38) {};
 \draw[thick]  (L)--(B1);\end{tikzpicture}}}}; 
\end{tikzpicture} } grafted to the root vertex of  \raisebox{-1.1em}{\begin{tikzpicture}
\node at   (-0.95,0.45) {{\resizebox{0.525cm}{!}{\begin{tikzpicture}
\node (L) [circle,fill=ForestGreen,draw=black,minimum size=0.1cm,inner sep=0.2mm,label={[xshift=-0.22cm,yshift=-0.25cm]{\footnotesize $x$}}] at (0,0) {};
\node (B1) [circle,fill=ForestGreen,draw=black,minimum size=0.1cm,inner sep=0.2mm,label={[xshift=-0.22cm,yshift=-0.27cm]{\footnotesize $y$}}] at (0,0.38) {};
 \draw[thick]  (L)--(B1);\end{tikzpicture}}}}; 
\end{tikzpicture}}.
\demo
\end{example}

\smallskip

 The proof that the operad ${\EuScript O}_{\infty}$ is free as an operad of vector spaces will rely on the operation of collapsing an edge in a rooted tree.
 We  recall the relevant  definitions and results below.  
 \smallskip

\begin{definition}\label{collapse}
Let  ${\cal T}\in {\tt Tree}(n)$ and let $e$ be an (internal) edge of ${\cal T}$. We define ${\cal T}\backslash e\in {\tt Tree}(n)$ to be the rooted tree obtained by collapsing the edge $e$ {\em downwards}, i.e. in such a way that the vertex that remains after $e$ is collapsed is the target vertex of $e$, i.e. the root vertex $\rho({\cal T}(\{e\}))$ of the subtree of ${\cal T}$ determined by $e$; after the collapse, the inputs of $\rho({\cal T}(\{e\}))$ will be all the inputs  of  ${\cal T}(\{e\})$  and they will be ordered as in ${\cal T}(\{e\})$. The remaining of the structure of ${\cal T}$ remains the same in ${\cal T}\backslash e$. 

\smallskip

As for the edge collapses of operadic trees $({\cal T},\sigma)$, we take the convention to consider  both ${\cal T}\backslash e$ and   ${\cal T}(\{e\})$ as  left-recursive operadic trees.  
\end{definition}
\smallskip

 The following lemma  is a straightforward consequence of Definition \ref{collapse}.
 \smallskip
\begin{lemma}\label{elementarydecomposition}
For an operadic tree  $({\cal T},\sigma)$ and  $e\in {\it e}({\cal T})$, there exists a unique permutation $\sigma_e\in\Sigma_k$, such that the equality $({\cal T},\sigma)=({\cal T}\backslash e\circ_{\rho({\cal T}(\{e\}))} {\cal T}(\{e\}))^{\sigma_e}$ holds in the operad ${\EuScript O}$.
\end{lemma}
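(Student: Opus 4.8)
The plan is to unwind the definitions and show that collapsing $e$ and then regrafting the collapsed subtree recovers $({\cal T},\sigma)$ up to a uniquely determined relabeling of vertices. First I would observe that, on the level of the underlying planar rooted trees, the equality ${\cal U}({\cal T})={\cal U}\big({\cal T}\backslash e\,\circ_{\rho({\cal T}(\{e\}))}\,{\cal T}(\{e\})\big)$ is immediate from Definition \ref{collapse}: collapsing $e$ downwards merges the subtree ${\cal T}(\{e\})$ into its target vertex, and grafting ${\cal T}(\{e\})$ back along that same leaf-position of ${\cal T}\backslash e$ simply reinserts the edge $e$ with all its incident structure, by the standing convention that the edges and leaves that need to be identified in a grafting are \emph{a priori} the same. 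So the two planar trees literally coincide; what can differ is only the bijective indexing of vertices by $[k]$, since $\omega^{-1}$-style left-recursive reindexing of ${\cal T}\backslash e$ and of ${\cal T}(\{e\})$ need not agree with $\sigma$.

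Next I would make precise the composite $({\cal T}\backslash e)\circ_{\rho({\cal T}(\{e\}))}{\cal T}(\{e\})$ as an \emph{operadic tree}, i.e. specify the relevant indexing. Both ${\cal T}\backslash e$ and ${\cal T}(\{e\})$ are, by the convention in Definition \ref{collapse}, taken as left-recursive operadic trees; the partial composition $\circ_i$ on ${\EuScript O}$ was described explicitly in \S\ref{operadictrees} by $({\cal T}_1,\sigma_1)\circ_i({\cal T}_2,\sigma_2)=({\cal T}_1\bullet_i{\cal T}_2,\sigma_1\bullet_i\sigma_2)$ with the block formula for $\sigma_1\bullet_i\sigma_2$. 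Applying this with ${\cal T}_1={\cal T}\backslash e$, ${\cal T}_2={\cal T}(\{e\})$, and $i$ the left-recursive index of $\rho({\cal T}(\{e\}))$ in ${\cal T}\backslash e$, one obtains a specific bijection $\tau:[k]\to v({\cal T})$ on the common underlying tree. Since the set $v({\cal T})$ is the same for $({\cal T},\sigma)$ and for this composite, and both $\sigma$ and $\tau$ are bijections $[k]\to v({\cal T})$, the permutation $\sigma_e:=\tau^{-1}\circ\sigma\in\Sigma_k$ is well-defined and is the \emph{unique} element of $\Sigma_k$ with $\tau\circ\sigma_e=\sigma$, i.e. with $\big(({\cal T}\backslash e)\circ_{\rho({\cal T}(\{e\}))}{\cal T}(\{e\})\big)^{\sigma_e}=({\cal T},\sigma)$, using that the right action of $\Sigma_k$ on operadic trees is $({\cal T}',\sigma')^{\kappa}=({\cal T}',\sigma'\circ\kappa)$ and that this action is free (${\EuScript O}$ has the free action of the symmetric group). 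The last equality should be checked to genuinely hold \emph{in ${\EuScript O}$}, i.e. up to the equivalence $\sim$ on operadic trees defining ${\tt Tree}(n_1,\dots,n_k;n)$; here one invokes the characterization of the basis of ${\EuScript O}$ from Lemma \ref{representation} together with the fact that $\omega^{-1}$ of a composite, normalized, reassembles to the substitution tree, so that no spurious rewriting obstructions arise.

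The main obstacle is bookkeeping rather than conceptual: one must verify that the block-permutation formula for $\sigma_1\bullet_i\sigma_2$ applied to the left-recursive indexings of ${\cal T}\backslash e$ and ${\cal T}(\{e\})$ produces a bijection $\tau$ onto $v({\cal T})$ that is compatible with the planar structure in the way required for $({\cal T},\sigma)\sim(\,\cdot\,,\tau\circ\sigma_e)$ to make sense, i.e. that the arities match vertex-by-vertex after conjugation. This is exactly the content of the remark following Example \ref{compositionexample}: composing left-recursive trees need not yield a left-recursive tree, but the result differs from a left-recursive reassembly only by a uniquely determined reindexing — and that reindexing is precisely $\sigma_e$. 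I would therefore phrase the proof as: (1) underlying planar trees coincide by Definition \ref{collapse}; (2) the composite, as an operadic tree, carries the indexing $\tau=\sigma_1\bullet_i\sigma_2$ dictated by the $\circ_i$ of \S\ref{operadictrees}; (3) set $\sigma_e=\tau^{-1}\sigma$; (4) uniqueness and the displayed equality follow from freeness of the $\Sigma_k$-action. Each of (1)–(4) is a direct consequence of Definition \ref{collapse}, Lemma \ref{representation}, and the explicit description of composition and of the symmetric group action recalled just above the statement, so the lemma is indeed ``a straightforward consequence of Definition \ref{collapse}'' once these identifications are spelled out.
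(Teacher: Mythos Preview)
Your proposal is correct and is exactly the unwinding the paper has in mind: the paper gives no proof beyond declaring the lemma ``a straightforward consequence of Definition \ref{collapse}'', and your steps (1)--(4) spell out precisely that consequence, using the explicit description of $\circ_i$ and of the $\Sigma_k$-action from \S\ref{operadictrees} together with the freeness of that action.
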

 \smallskip
\begin{remark}[Coherence of edge collapses]\label{edgecoherence}
Note that, if $e_1,e_2\in {\it e}({\cal T})$, then $({\cal T}\backslash e_1)\backslash e_2=({\cal T}\backslash e_2)\backslash e_1$. This equality ensures that, having fixed a set of edges of a tree, the order of collapsing the edges from that set has no effect on the resulting tree.

\smallskip

 Note that, if a fixed set of edges determines a subtree ${\cal S}$ of ${\cal T}$, then the root vertex $\rho({\cal S})$ of ${\cal S}$ remains a vertex in the tree ${\cal T}\backslash{\cal S}$, obtained by collapsing all the edges of ${\cal S}$.
\end{remark}
 \smallskip
 The following result is  a consequence of Lemma \ref{elementarydecomposition} and Remark \ref{edgecoherence}.
 \smallskip
\begin{lemma}\label{subtreedecomposition}
For an operadic tree $({\cal T},\sigma)$ and a subtree ${\cal S}$ of ${\cal T}$, considered as a left-recursive operadic tree, there exists a unique permutation $\sigma_{\cal S}\in\Sigma_k$, such that the equality $({\cal T},\sigma)=({\cal T}\backslash {\cal S}\circ_{\rho({\cal S})} {\cal S})^{\sigma_{\cal S}}$ holds in the operad ${\EuScript O}$.  
\end{lemma}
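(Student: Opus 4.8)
The plan is to reduce the statement about an arbitrary subtree $\cal S$ to the single-edge case handled by Lemma \ref{elementarydecomposition}, using the coherence of edge collapses from Remark \ref{edgecoherence}. First I would proceed by induction on the number $|e({\cal S})|$ of internal edges of $\cal S$. The base case $|e({\cal S})|=1$ is precisely Lemma \ref{elementarydecomposition}, since then ${\cal S}={\cal T}(\{e\})$ for the unique edge $e$ of $\cal S$, and $\rho({\cal S})=\rho({\cal T}(\{e\}))$. For the inductive step, pick an edge $e$ of $\cal S$ that is ``innermost'' in the sense that the subtree ${\cal S}(\{e\})={\cal T}(\{e\})$ is reduced to the grafting of two corollas (such an $e$ exists because $\cal S$ is finite); applying Lemma \ref{elementarydecomposition} gives $({\cal T},\sigma)=({\cal T}\backslash e\circ_{\rho({\cal T}(\{e\}))} {\cal T}(\{e\}))^{\sigma_e}$.

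Next I would observe that, in the tree ${\cal T}\backslash e$, the set of edges $e({\cal S})\setminus\{e\}$ still determines a subtree, namely the image ${\cal S}\backslash e$ of $\cal S$ under the collapse, and by Remark \ref{edgecoherence} its root vertex is still $\rho({\cal S})$ (which survives the collapse because $\cal S$, hence the edge we collapsed lies inside $\cal S$). Since $|e({\cal S}\backslash e)|=|e({\cal S})|-1$, the inductive hypothesis applies to the operadic tree ${\cal T}\backslash e$ (equipped with its left-recursive indexing) and its subtree ${\cal S}\backslash e$, yielding a unique permutation with $({\cal T}\backslash e)=(({\cal T}\backslash e)\backslash({\cal S}\backslash e)\circ_{\rho({\cal S})}({\cal S}\backslash e))^{\tau}$ in ${\EuScript O}$. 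Here $({\cal T}\backslash e)\backslash({\cal S}\backslash e)={\cal T}\backslash{\cal S}$, again by the coherence in Remark \ref{edgecoherence}. Substituting this into the base-case identity, and using the associativity relations \hyperlink{A1}{\texttt{(A1)}}--\hyperlink{A2}{\texttt{(A2)}} to re-associate the nested $\circ$'s (the edge $e$ lies inside $\cal S$, so grafting ${\cal T}(\{e\})$ back can be absorbed into the grafting of ${\cal S}={\cal S}\backslash e\circ_{\rho({\cal T}(\{e\}))}{\cal T}(\{e\})$ onto ${\cal T}\backslash{\cal S}$), I obtain an equality of the desired shape $({\cal T},\sigma)=({\cal T}\backslash{\cal S}\circ_{\rho({\cal S})}{\cal S})^{\sigma_{\cal S}}$, with $\sigma_{\cal S}$ built by composing $\sigma_e$ with the block-permutation induced by $\tau$.

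For uniqueness of $\sigma_{\cal S}$: the underlying operadic tree ${\cal T}\backslash{\cal S}\circ_{\rho({\cal S})}{\cal S}$ is determined by ${\cal T}$ (it is ${\cal T}$ with a canonical re-indexing), and by Lemma \ref{representation} distinct permutations of the vertices give ${\Bbbk}$-linearly independent basis elements of ${\EuScript O}$; since the action of $\Sigma_k$ on operadic trees is free (it only re-indexes vertices via $({\cal T},\sigma)^{\kappa}=({\cal T},\sigma\circ\kappa)$), the permutation $\sigma_{\cal S}$ taking the left-recursive indexing of ${\cal T}\backslash{\cal S}\circ_{\rho({\cal S})}{\cal S}$ to $\sigma$ is unique.

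The main obstacle I anticipate is bookkeeping the permutation: one must check that the left-recursive index $\rho({\cal S})$ appearing in the outer grafting is literally the same vertex before and after collapsing $e$, and that the block structure of $\tau$ (permuting the vertices of ${\cal S}\backslash e$ as a block inside ${\cal T}\backslash e$) composes correctly with $\sigma_e$ (permuting the two vertices of ${\cal T}(\{e\})$ as a block) to give a single well-defined $\sigma_{\cal S}\in\Sigma_k$. This is exactly the kind of re-indexing that Convention \ref{namesnumbers} warns about, and the cleanest way around it is to do the whole argument in the non-skeletal (named-vertex) setting, where grafting and substitution involve no re-indexing at all, and only translate back to the skeletal $\Sigma_k$-action at the very end via Lemma \ref{elementarydecomposition}; the re-associations then reduce to the manifest associativity of substitution of named rooted trees. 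The rest is routine.
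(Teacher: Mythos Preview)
Your proposal is correct and follows essentially the same approach as the paper: the paper's proof consists of a single sentence stating that the lemma is a consequence of Lemma~\ref{elementarydecomposition} and Remark~\ref{edgecoherence}, which is exactly the induction you spell out (reduce to the single-edge case and iterate using coherence of collapses). Your write-up supplies the bookkeeping details that the paper leaves implicit.
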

 \smallskip
 
We now have all the prerequisites for proving that the ${\EuScript O}_{\infty}$ operad is free. The idea is simple and it has already been indicated in  \S\ref{edgegraph1}: constructs of arbitrary hypergraphs are non-planar trees, but if a hypergraph is the edge-graph of some operadic tree, then the hypergraph itelf,  as well as its  constructs, inherit a canonical planar embedding from that tree. This observation  gives us a way to represent each triple $({\cal T},\sigma,C)$, where $({\cal T},\sigma)$ is an operadic tree  and  $C:{\bf H}_{\cal T}$, as a planar tree of a free operad. 
\smallskip
\begin{thm}\label{free}
As a coloured operad of vector spaces, ${\EuScript O}_{\infty}$ is the free ${\mathbb N}$-coloured operad generated by the equivalence classes of left-recursive operadic trees: $${\EuScript O}_{\infty}\simeq {\cal T}_{\mathbb N}\Bigg(\, \displaystyle\bigoplus_{k\geq 2}\,\,\bigoplus_{n_1,\dots,n_k\geq 1}\,\,\bigoplus_{{\cal T}\in \,{\tt Tree}(n_1,\dots,n_k;n)}\,{\Bbbk}\,\Bigg).$$
\end{thm}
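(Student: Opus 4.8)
The plan is to identify the free operad appearing on the right‑hand side — call its generating ${\mathbb N}$‑coloured collection $K$ — and to produce a two‑sided inverse to the canonical comparison map, which, being operadic, will then be an isomorphism of operads. The comparison map arises as follows: sending a basis element ${\cal T}\in{\tt Tree}(n_1,\dots,n_k;n)$ of $K$ to the triple $({\cal T},\sigma,H_{\cal T})$, with $\sigma$ the left‑recursive indexing of ${\cal T}$, is a morphism of ${\mathbb N}$‑coloured collections $\iota\colon K\to{\EuScript O}_{\infty}$, and the universal property of ${\cal T}_{\mathbb N}(-)$ turns it into a morphism of ${\mathbb N}$‑coloured operads $\theta\colon{\cal T}_{\mathbb N}(K)\to{\EuScript O}_{\infty}$. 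It then suffices to show $\theta$ is bijective on each component; its set‑theoretic inverse is automatically a morphism of operads, so $\theta$ is an operad isomorphism.

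First I would make $\theta$ explicit on the canonical basis of ${\cal T}_{\mathbb N}(K)$, consisting of composite trees $D$ whose vertices are decorated by left‑recursive operadic trees with matching colours. I claim $\theta(D)$ is always a \emph{single} basis triple $({\cal T}_D,\sigma_D,C_D)$: since $\theta$ is an operad morphism, $\theta(D)$ is the iterated partial composition in ${\EuScript O}_{\infty}$ of the generators decorating $D$, and partial composition of basis triples is again a basis triple by the definition of ${\EuScript O}_{\infty}$ in \S\ref{Oinf}. Unwinding that definition, one sees that ${\cal T}_D$ is the operadic tree obtained from $D$ by \emph{substitution} — plugging, into the appropriate vertex of the generator at a node of $D$, the operadic tree reconstructed from the sub‑composite‑tree hanging there — so that, by the remark following Lemma~\ref{wd0}, the internal edges of ${\cal T}_D$ are the disjoint union over $w\in v(D)$ of the internal edges of the operadic tree ${\cal S}_w$ decorating $w$; that $C_D$ is the construct of ${\bf H}_{{\cal T}_D}$ whose underlying non‑planar vertex‑decorated tree has the shape of $D$, the vertex over $w$ being decorated by the internal edges of ${\cal S}_w$ read inside $e({\cal T}_D)$; and that $\sigma_D$ is the indexing produced by Lemma~\ref{subtreedecomposition} and Remark~\ref{edgecoherence}. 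This is proved by induction on $|v(D)|$; the inductive step is a computation of $({\cal T}_1,\sigma_1,C_1)\circ_i({\cal T}_2,\sigma_2,H_{{\cal T}_2})$ in which one uses that $H_{{\cal T}_1}$, read as a subset of $e({\cal T}_1)$, is the full vertex set of ${\bf H}_{{\cal T}_1}$, so that the construct $H_{{\cal T}_1}\{C_2,\dots\}$ produced by the $\bullet_i$‑rule is literally the edge‑set‑decorated construct claimed, and one invokes Lemma~\ref{wd0} to match the connected components of ${\bf H}_{{\cal T}_D}$ minus the edges of the root generator with the edge‑graphs of the substituted pieces.

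Next I would build the inverse $\Psi\colon{\EuScript O}_{\infty}\to{\cal T}_{\mathbb N}(K)$ on basis triples $({\cal T},\sigma,C)$ by induction on $|v(C)|$. When $C=H_{\cal T}$, the operadic tree $({\cal T},\sigma)$ is the left‑recursive tree on the planar tree ${\cal T}$ acted on by a unique $\kappa\in\Sigma_k$, and one sets $\Psi({\cal T},\sigma,H_{\cal T})$ to be the one‑vertex composite tree on that generator, acted on by $\kappa$ — consistent with the base case above since $\theta$ is $\Sigma$‑equivariant. When $C=Y\{C_1,\dots,C_n\}$, Lemma~\ref{connectededges} identifies the connected components of ${\bf H}_{\cal T}\backslash Y$ with the edge‑graphs ${\bf H}_{{\cal T}_1},\dots,{\bf H}_{{\cal T}_n}$ of subtrees ${\cal T}_1,\dots,{\cal T}_n$ of ${\cal T}$, each with at least two vertices and with $C_j\colon{\bf H}_{{\cal T}_j}$; collapsing all of ${\cal T}_1,\dots,{\cal T}_n$ inside ${\cal T}$ — unambiguously, by Remark~\ref{edgecoherence} — gives an operadic tree ${\cal T}_Y$ whose internal edge set is exactly $Y$, hence (in its left‑recursive reading) a generator, and iterated use of Lemma~\ref{subtreedecomposition} recovers $({\cal T},\sigma)$ from ${\cal T}_Y$ and the ${\cal T}_j$'s by partial compositions in ${\EuScript O}$ composed with a unique permutation. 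One then sets $\Psi({\cal T},\sigma,C)$ to be the composite tree with root generator ${\cal T}_Y$ carrying $\Psi({\cal T}_j,\cdot,C_j)$ at the vertex $\rho({\cal T}_j)$, post‑composed with that permutation. Comparing this recursion term by term with the description of $\theta$ above, and using that collapsing a subtree inverts substitution (Lemmas~\ref{elementarydecomposition} and \ref{subtreedecomposition}), one obtains $\theta\circ\Psi=\mathrm{id}$ and $\Psi\circ\theta=\mathrm{id}$ on the two bases, together with the fact that $\Psi$ respects the defining equivalence relations (isomorphisms of the underlying planar trees). This finishes the argument.

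I expect the only real difficulty to be bookkeeping rather than conceptual: one must keep the two layers of identification of Convention~\ref{namesnumbers} straight — the skeletal left‑recursive labels on the generators versus the non‑skeletal edge names used for constructs — and carefully track how vertex labels get permuted and re‑indexed under iterated $\circ_i$, bearing in mind (the remark after Lemma~\ref{representation}) that left‑recursive operadic trees are not closed under composition. All the structural inputs that tame this — that deleting edges decomposes the edge‑graph into edge‑graphs of subtrees, that substitution corresponds to the obvious gluing of edge‑graphs, and that subtree‑collapse inverts substitution — are already available as Lemmas~\ref{connectededges}, \ref{wd0}, \ref{elementarydecomposition}, \ref{subtreedecomposition} and Remark~\ref{edgecoherence}, so what remains is a careful but routine induction.
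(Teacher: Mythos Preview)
Your proposal is correct and follows essentially the same route as the paper: both arguments build the isomorphism by induction on $|v(C)|$, using the maximal construct as base case and, in the inductive step, collapsing the subtrees ${\cal T}_1,\dots,{\cal T}_n$ determined by ${\bf H}_{\cal T}\backslash Y$ to obtain the root generator ${\cal T}_Y$, invoking Lemmas~\ref{connectededges}, \ref{subtreedecomposition} and Remark~\ref{edgecoherence} exactly as you do. The only cosmetic difference is direction: the paper constructs $\alpha:{\EuScript O}_{\infty}\to{\cal T}_{\mathbb N}(K)$ first and describes its inverse informally, whereas you start from the universal map $\theta:{\cal T}_{\mathbb N}(K)\to{\EuScript O}_{\infty}$ and build $\Psi$ --- your $\Psi$ is the paper's $\alpha$ and your $\theta$ its inverse.
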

\begin{proof} We define an isomorphism $\alpha$ between  ${\EuScript O}_{\infty}$  and
the operad of  ${\mathbb N}$-coloured planar rooted trees  whose
vertices are decorated by left-recursive operadic trees and whose leaves are labeled by a permutation on the number of them, by induction on the number of vertices of the construct $C$ of a given operation $({\cal T},\sigma,C)\in {\EuScript O}_{\infty}(n_1,\dots,n_k;n)$. Denote, for $1\leq i\leq k$, with $\sigma_i$ the index  given by $\sigma$ to the  $i$-th vertex in the left-recursive ordering of ${\cal T}$.  

\smallskip

\begin{itemize}
\item If $C$ is the maximal construct $e({\cal T})$, then\begin{center}\raisebox{2.1em}{$\alpha({\cal T},\sigma,e({\cal T}))=$} 
\begin{tikzpicture}
\node (r) [rectangle,draw=black,rounded corners=.1cm,inner sep=1mm] at (0,0) {\footnotesize $\enspace{\cal T}\enspace$};
\node (rl) [circle,draw=none,inner sep=0.4mm] at (-0.8,1.1) {\footnotesize ${\bf\sigma}_1$};
\node (rr) [circle,draw=none,inner sep=0.4mm] at (0.8,1.1) {\footnotesize $\sigma_k$};
\node (rl) [circle,draw=none,inner sep=0.4mm] at (-0.8,0.8) {\footnotesize $n_{\sigma_1}$};
\node (rr) [circle,draw=none,inner sep=0.4mm] at (0.8,0.8) {\footnotesize $n_{\sigma_k}$};
\node (rc) [circle,draw=none,inner sep=0.4mm] at (0,0.8) {\footnotesize $\dots$};
\node (erc) [circle,draw=none,inner sep=0.4mm] at (0,1.1) {\footnotesize $\dots$};
\node (rb) [circle,draw=none,inner sep=0.4mm] at (0.15,-0.65) {\footnotesize $n$};
\draw (0.8,0.7)--(r)--(-0.8,0.7);
\draw (r)--(0,-0.7);
\end{tikzpicture}
\end{center}
Note that each input of $\alpha({\cal T},\sigma,e({\cal T}))$ is uniquely determined by any of the following two data: its position  in the planar structure (the number), or, the name of the vertex indexed by  ${i}$ in ${\cal T}$ (the name).\\[-0.3cm]

\item Suppose that $C=X\{C_1,\dots,C_p\}$, where ${\bf H}_{\cal T}\backslash X\leadsto {\bf H}_{{\cal T}_1},\dots, {\bf H}_{{\cal T}_p}$ and    $C_i:{\bf H}_{{\cal T}_i}$. Let ${\cal T}_X$ be the left-recursive operadic tree obtained from $({\cal T},\sigma)$ by collapsing all the edges from $e({\cal T})\backslash X$ (see Remark  \ref{edgecoherence}).  Observe  that the collapse of the edges ${\it e}({\cal T})\backslash X$ that defines ${\cal T}_X$ is, in fact, the collapse of  the subtrees ${\cal T}_i$, $1\leq i\leq p$, of ${\cal T}$. By the definition of a collapse, each ${\cal T}_i$   will collapse to the vertex $\rho({\cal T}_i)$  of ${\cal T}_X$; in particular, since the ${\cal T}_i$'s are mutually disjoint subtrees of ${\cal T}$, all the vertices $\rho({\cal T}_i)$ will be mutually distinct.   We define
$$\quad\quad\quad\quad\alpha({\cal T},\sigma,C)=\bigg((\cdots((\alpha({\cal T}_X,X)\circ_{\rho({\cal T}_{1})}  \alpha({\cal T}_{1},C_{1}) )\circ_{\rho({\cal T}_{2})}  \alpha({\cal T}_{2},C_{2}))\cdots)\circ_{{\rho({\cal T}_{p})}}\alpha({\cal T}_{p},C_{p})\bigg)^{\sigma_C},$$
where we the inputs of $\alpha({\cal T}_X,X)$ involved in grafting are represented by their names, in order to avoid the reindexing, and where $\sigma_C$ is the permutation determined uniquely thanks to (iterated application of) Lemma \ref{subtreedecomposition}. Note that each tree ${\cal T}_i$ above is considered as left-recursive.
 \end{itemize}
 For example,  for the  operadic tree \begin{center}
\begin{tikzpicture}
   \node (E)[circle,draw=none,minimum size=4mm,inner sep=0.1mm] at (-3.3,1.5) {  $({\cal T},\sigma)=$};
    \node (E)[circle,draw=black,minimum size=4mm,inner sep=0.1mm] at (-0.1,0) {\small $3$};
    \node (F) [circle,draw=black,minimum size=4mm,inner sep=0.1mm] at (-1.1,1) {\small $2$};
    \node (A) [circle,draw=black,minimum size=4mm,inner sep=0.1mm] at (-1.7,2) {\small $6$};
 \node (q) [circle,draw=black,minimum size=4mm,inner sep=0.1mm] at (-0.5,2) {\small $1$};
 \node (u1) [circle,draw=black,minimum size=4mm,inner sep=0.1mm] at (-0.5,3) {\small $4$};
    \node (r) [circle,draw=black,minimum size=4mm,inner sep=0.1mm] at (0.9,1) {\small $5$};
 \draw[-] (1.6,0.8) -- (E)--(-1.8,0.8); 
\draw[-] (-1.9,2.8) -- (A)--(-1.5,2.8); 
 \draw[-] (0.7,1.8) -- (r)--(1.1,1.8); 
\draw[-] (-0.7,3.8) -- (u1)--(-0.3,3.8); 
 \draw[-] (E)--(-0.1,-0.55); 
 \draw[-] (-2.2,1.8) -- (F)--(0,1.8);   
 \draw[-] (-0.9,2.8) -- (q)--(-0.1,2.8);  
    \draw[-] (E)--(F) node [midway,right,xshift=-0.05cm,yshift=0.05cm] {\small $x$};
 \draw[-] (F)--(q) node [midway,right,xshift=-0.05cm,yshift=0cm] {\small $u$};
 \draw[-] (E)--(r) node [midway,left,xshift=0.05cm,yshift=0.025cm] {\small $y$};
    \draw[-] (F)--(A) node  [midway,right,xshift=-0.35cm,yshift=0cm] {\small $z$};
 \draw[-] (q)--(u1) node  [midway,right,xshift=-0.1cm,yshift=-0.05cm] {\small $v$};
   \end{tikzpicture} 
\end{center}
and constructs
\begin{center}
\raisebox{1.75em}{$C_1=$\,\,} {\resizebox{1.2cm}{!}{\begin{tikzpicture}
\node (b) [circle,fill=WildStrawberry,draw=black,minimum size=0.1cm,inner sep=0.2mm,label={[xshift=0.22cm,yshift=-0.25cm]{\footnotesize $u$}}] at (0,0.7) {};
\node (c) [circle,fill=WildStrawberry,draw=black,minimum size=0.1cm,inner sep=0.2mm,label={[xshift=0.45cm,yshift=-0.32cm]{\footnotesize $\{x,v\}$}}] at (0,0.35) {};
 \node (d) [circle,fill=WildStrawberry,draw=black,minimum size=0.1cm,inner sep=0.2mm,label={[xshift=0.45cm,yshift=-0.32cm]{\footnotesize $\{y,z\}$}}] at (0,0) {};
 \draw[thick]  (b)--(c)--(d);\end{tikzpicture}}}\quad\quad \raisebox{1.75em}{and}\quad\quad \raisebox{1.75em}{$C_2=$} {{\resizebox{1.5cm}{!}{\begin{tikzpicture}
\node (b) [circle,fill=WildStrawberry,draw=black,minimum size=0.1cm,inner sep=0.2mm,label={[yshift=-0.5cm]{\footnotesize $\{y,z,u\}$}}] at (4,-0.2) {};
\node (c) [circle,fill=WildStrawberry,draw=black,minimum size=0.1cm,inner sep=0.2mm,label={[xshift=-0.2cm,yshift=-0.25cm]{\footnotesize $x$}}] at (3.8,0.15) {};
 \node (d) [circle,fill=WildStrawberry,draw=black,minimum size=0.1cm,inner sep=0.2mm,label={[xshift=0.2cm,yshift=-0.25cm]{\footnotesize $v$}}] at (4.2,0.15) {};
 \draw[thick]  (c)--(b)--(d);\end{tikzpicture}}}}
\end{center}
of the hypergraph ${\bf H}_{\cal T}$ associated to   ${\cal T}$, we have that

\begin{center}
\raisebox{12em}{$\alpha({\cal T},\sigma,C_1)= $}\quad \begin{tikzpicture}
\draw (0.75,0.5)--(-0.5,2.9);
\draw  (2.5,1.175)--(2.25,0.5);
\draw (-1.1,4.75)--(0,3.5)--(1.1,4.75);
\draw (-0.6,7.4)--(0,5.5)--(0.6,7.4);
\node (1) [rectangle,draw=black,fill=white,rounded corners=.5cm,inner sep=2mm] at (1.5,-0.5) {\resizebox{2.6cm}{!}{ \begin{tikzpicture}
    \node (E)[circle,draw=black,minimum size=4mm,inner sep=0.1mm] at (-0,0) {\small $1$};
    \node (F) [circle,draw=black,minimum size=4mm,inner sep=0.1mm] at (-0.8,1) {\small $2$};
\node (dd) [circle,draw=black,minimum size=4mm,inner sep=0.1mm] at (1,1) {\small $3$};
 \draw[-] (1.5,0.8) -- (E)--(-1.5,0.8);  
  \draw[-] (-1.15,0.8) -- (E)--(0,0.8);  
    \draw[-] (0.2,0.8) -- (E)--(-0.2,0.8);  
        \draw[-] (0.4,0.8) -- (E)--(-0.4,0.8);  
 \draw[-] (E)--(0,-0.55); 
 \draw[-] (-1.05,1.8) -- (F)--(-0.55,1.8);   
 \draw[-] (0.8,1.8) -- (dd)--(1.2,1.8);   
    \draw[-] (E)--(F) node [midway,left,yshift=0.15cm,xshift=0.015cm] {\small  $z$};
 \draw[-] (E)--(dd) node [midway,right,yshift=0.135cm,xshift=0.05cm] {\small  $y$};
\end{tikzpicture} }  };
\node (2) [rectangle,draw=black,fill=white,rounded corners=.35cm,inner sep=2mm] at (0,2.7) {\resizebox{1.6cm}{!}{\begin{tikzpicture}
    \node (E)[circle,draw=black,minimum size=4mm,inner sep=0.1mm] at (0.3,0) {\small $1$};
    \node (F) [circle,draw=black,minimum size=4mm,inner sep=0.1mm] at (0,1) {\small $2$};
    \node (r) [circle,draw=black,minimum size=4mm,inner sep=0.1mm] at (0.3,2) {\small $3$};
 \draw[-] (0.9,0.8) -- (E)--(-0.3,0.8);  
 \draw[-] (0.1,2.8) -- (r)--(0.5,2.8); 
 \draw[-] (0.6,0.8)--(E)--(0.3,-0.55); 
 \draw[-] (-0.7,1.8) -- (F)--(0.7,1.8);   
  \draw[-] (-1,1.8) -- (F)--(1,1.8);
    \draw[-]  (F)--(-0.3,1.8);
    \draw[-] (E)--(F) node [midway,right,xshift=-0.14cm]{\small  $x$};
 \draw[-] (F)--(r) node [midway,right,xshift=-0.14cm] {\small  $v$};
\end{tikzpicture}}};
\node (3) [rectangle,draw=black,fill=white,rounded corners=.35cm,inner sep=2mm] at (0,5.85) {\resizebox{1cm}{!}{\begin{tikzpicture}
    \node (E)[circle,draw=black,minimum size=4mm,inner sep=0.1mm] at (-0.3,0) {\small $1$};
    \node (F) [circle,draw=black,minimum size=4mm,inner sep=0.1mm] at (0,1) {\small $2$};
 \draw[-] (0.3,0.8) -- (E)--(-1,0.8); 
 \draw[-] (-0.6,0.8)--(E)--(-0.3,-0.55); 
 \draw[-] (0.25,1.8) -- (F)--(-0.25,1.8); 
  \draw[-] (F)--(0,1.8); 
    \draw[-] (E)--(F) node [midway,left,xshift=0.14cm] {\small  $u$};
\end{tikzpicture}}};
\node (n1) [rectangle,draw=none,inner sep=0mm] at (0.3,0.9) {\scriptsize $10$};
\node (n2) [rectangle,draw=none,inner sep=0mm] at (1.5,1.3) {\scriptsize $2$};
\node (n3) [rectangle,draw=none,inner sep=0mm] at (2.5,1.3) {\scriptsize $2$};
\node (n3) [rectangle,draw=none,inner sep=0mm] at (-1.1,4.85) {\scriptsize $4$};
\node (n4) [rectangle,draw=none,inner sep=0mm] at (0.13,4.5) {\scriptsize $6$};
\node (n5) [rectangle,draw=none,inner sep=0mm] at (1.1,4.85) {\scriptsize $2$};
\node (n6) [rectangle,draw=none,inner sep=0mm] at (-0.6,7.525) {\scriptsize $4$};
\node (n7) [rectangle,draw=none,inner sep=0mm] at (0.6,7.525) {\scriptsize $3$};
\node (n2') [rectangle,draw=none,inner sep=0mm] at (1.5,1.6) {\scriptsize $\bf 6$};
\node (n3') [rectangle,draw=none,inner sep=0mm] at (2.5,1.6) {\scriptsize $\bf 5$};
\node (n3') [rectangle,draw=none,inner sep=0mm] at (-1.1,5.15) {\scriptsize $\bf 3$};
\node (n5') [rectangle,draw=none,inner sep=0mm] at (1.1,5.15) {\scriptsize $\bf 4$};
\node (n6') [rectangle,draw=none,inner sep=0mm] at (-0.6,7.825) {\scriptsize $\bf 2$};
\node (n7') [rectangle,draw=none,inner sep=0mm] at (0.6,7.825) {\scriptsize $\bf 1$};
\draw (1.5,-2)--(1);
\draw (2)--(3);
\draw (1)--(1.5,1.175);
\end{tikzpicture}\quad\raisebox{12em}{and}\quad \raisebox{12em}{$\alpha({\cal T},\sigma,C_2)= $}\quad \raisebox{4.1em}{\begin{tikzpicture}
\draw (0.75,0.5)--(-0.5,2.9);
\draw  (2.6,1.175)--(2.25,0.5);
\draw (-0.6,3.85)--(0,2.5)--(0.6,3.85);
\draw (1.3,3.85)--(1.9,2.5)--(2.5,3.85);
\node (1) [rectangle,draw=black,fill=white,rounded corners=.5cm,inner sep=2mm] at (1.5,-0.5) {\resizebox{2.7cm}{!}{ \begin{tikzpicture}
    \node (E)[circle,draw=black,minimum size=4mm,inner sep=0.1mm] at (-0,0) {\small $1$};
    \node (F) [circle,draw=black,minimum size=4mm,inner sep=0.1mm] at (-0.7,1) {\small $2$};
 \node (q) [circle,draw=black,minimum size=4mm,inner sep=0.1mm] at (0,1) {\small $3$};
    \node (r) [circle,draw=black,minimum size=4mm,inner sep=0.1mm] at (1.2,1) {\small $4$};
 \draw[-] (1.65,0.8) -- (E)--(-1.65,0.8); 
  \draw[-] (-1.1,0.8) -- (E)--(0.5,0.8); 
 \draw[-] (-0.4,1.8) -- (q)--(0.4,1.8);   
  \draw[-] (-0.15,1.8) -- (q)--(0.15,1.8); 
 \draw[-] (1,1.8) -- (r)--(1.4,1.8); 
 \draw[-] (E)--(0,-0.55); 
 \draw[-] (-0.9,1.8) -- (F)--(-0.5,1.8);   
    \draw[-] (E)--(F) node [midway,right,xshift=-0.14cm] {\small  $z$};
 \draw[-] (E)--(q) node [midway,right,xshift=-0.15cm] {\small  $u$};
 \draw[-] (E)--(r) node [midway,right,xshift=0.05cm,yshift=0.075cm] {\small  $y$};
   \end{tikzpicture} }  };
\node (2) [rectangle,draw=black,fill=white,rounded corners=.35cm,inner sep=2mm] at (0,2.27) {\resizebox{1cm}{!}{\begin{tikzpicture}
    \node (E)[circle,draw=black,minimum size=4mm,inner sep=0.1mm] at (0,0) {\small $1$};
    \node (F) [circle,draw=black,minimum size=4mm,inner sep=0.1mm] at (-0.3,1) {\small $2$};
 \draw[-] (0.3,0.8)--(E)--(0.6,0.8);
 \draw[-] (-0.6,0.8)--(E)--(0,-0.55); 
 \draw[-] (-0.45,1.8) -- (F)--(-0.15,1.8); 
  \draw[-] (-0.7,1.8) -- (F)--(0.1,1.8); 
    \draw[-] (E)--(F) node [midway,right,xshift=-0.12cm] {\small  $x$};
\end{tikzpicture}}};
\node (3) [rectangle,draw=black,fill=white,rounded corners=.35cm,inner sep=2mm] at (1.9,2.27) {\resizebox{0.7cm}{!}{\begin{tikzpicture}
    \node (E)[circle,draw=black,minimum size=4mm,inner sep=0.1mm] at (0,0) {\small $1$};
    \node (F) [circle,draw=black,minimum size=4mm,inner sep=0.1mm] at (0,1) {\small $2$};
 \draw[-] (0.4,0.8) -- (E)--(-0.4,0.8); 
 \draw[-] (E)--(0,-0.55); 
 \draw[-] (0.2,1.8) -- (F)--(-0.2,1.8); 
    \draw[-] (E)--(F) node [midway,right,xshift=-0.165cm] {\small  $v$};
\end{tikzpicture} }};
\node (n1) [rectangle,draw=none,inner sep=0mm] at (0.35,0.9) {\scriptsize $7$};
\node (n2) [rectangle,draw=none,inner sep=0mm] at (1.1,1.3) {\scriptsize $2$};
\node (n3) [rectangle,draw=none,inner sep=0mm] at (2.65,1.3) {\scriptsize $2$};
\node (n4) [rectangle,draw=none,inner sep=0mm] at (-0.6,3.975) {\scriptsize $4$};
\node (n5) [rectangle,draw=none,inner sep=0mm] at (0.6,3.975) {\scriptsize $4$};
 \node (n6) [rectangle,draw=none,inner sep=0mm] at (1.3,3.975) {\scriptsize $3$};
 \node (n7) [rectangle,draw=none,inner sep=0mm] at (2.5,3.975) {\scriptsize $2$};
\node (n2') [rectangle,draw=none,inner sep=0mm] at (1.1,1.6) {\scriptsize ${\bf 6}$};
\node (n2') [rectangle,draw=none,inner sep=0mm] at (2.65,1.6) {\scriptsize ${\bf 5}$};
\node (n4') [rectangle,draw=none,inner sep=0mm] at (-0.6,4.275) {\scriptsize ${\bf 3}$};
\node (n5') [rectangle,draw=none,inner sep=0mm] at (0.6,4.275) {\scriptsize ${\bf 2}$};
 \node (n6') [rectangle,draw=none,inner sep=0mm] at (1.3,4.275) {\scriptsize ${\bf 1}$};
 \node (n7') [rectangle,draw=none,inner sep=0mm] at (2.5,4.275) {\scriptsize ${\bf 4}$};
\draw (1.5,-2)--(1);
\draw (1.9,1.175)--(1)--(1.1,1.175);
\end{tikzpicture}}
\end{center}
 where the edge and leaf colours given by natural numbers are represented using regular font, and the indexing of the leaves  is represented using bold font. Observe that, modulo leaves,   $\alpha({\cal T},C)$ has the same shape as $C$.

The inverse of $\alpha$ is defined by composing the left-recursive operadic trees that decorate the nodes of an element $(T,\sigma)$ of the free operad, in the way dictated by the edges of that element, followed by reindexing the vertices of the resulting tree as specified by $\sigma$, and by extracting the corresponding construct in the following way: first, remove all the leaves of $T$, and then, for each vertex of $T$, replace the  operadic tree that decorates that vertex by the maximal constructs of its associated hypergraph. Lemma \ref{subtreedecomposition} ensures that this correspondence is indeed an isomorphism. \end{proof}
\smallskip

 Having in mind the free operad description of ${\EuScript O}_{\infty}$, we adopt the following notational convention about   constructs.
 \smallskip

\begin{convention} If we wish to  incorporate the specification of the planar embedding of a construct  $C:{\bf H}_{\cal T}$  into the notation for $C$, we shall write   $C=X(C_1,\dots,C_p)$  instead of $C=X\{C_1,\dots,C_p\}$, if $C_1,\dots,C_p$ appear in that order in the tree $\alpha({\cal T},\sigma,C)$.
\end{convention}

\smallskip

\subsubsection{The  operad  ${\EuScript O}_{\infty}$ as a differential graded operad}\label{odg} In order to equip the ${\EuScript O}_{\infty}$ operad with a grading  and   a differential, we shall use   the free operad  structure of ${\EuScript O}_{\infty}$  and count the edges and leaves that lie in a particular position relative to some other edge or a leaf, in the way formalized by the following definition.
\begin{definition}\label{orient}
Let ${\cal T}$ be a planar rooted tree, and let $e\in e({\cal T})$ and $l\in {\it i}({\cal T})$ be an internal edge and an input leaf of ${\cal T}$, respectively.  \\[-0.3cm]
\begin{itemize}
\item The internal edges   {\em  below} $e$  (resp. $l$) in ${\cal T}$  are the internal edges of  ${\cal T}$  that lie on the unique path from the vertex $\rho({\cal T}(\{e\}))$   (resp. $\rho({\cal T}(\{l\}))$ ) to  $\rho({\cal T})$. \\[-0.3cm] 
\item The edges and leaves {\em on the left} (resp. {\em on the right})  from $e$   are the edges and leaves of ${\cal T}$   which are strictly on the left (resp.  right) from the unique path from the first (resp. last) leaf of the subtree of  ${\cal T}$ rooted at $e$, to ${r}({\cal T})$. The edges and leaves  on the left  (resp.   on the right) from $l$   are the edges and leaves of ${\cal T}$  which are strictly on the left (resp.  right) from the unique path from $l$ to ${r}({\cal T})$.
\end{itemize}
\smallskip
 For $e\in e({\cal T})$ and  $l\in {\it i}({\cal T})$, denote with $E_{\leq e}({\cal T})$ the sum of the number of all edges and leaves on the left from $e$ and the number of all edges below $e$ in ${\cal T}$, and with $E_{l>}({\cal T})$ the number of all edges and leaves on the right from $l$ in ${\cal T}$.
\end{definition}

\medskip

We grade the vector space  ${\EuScript O}_{\infty}(n_1,n_2,\dots,n_k;n)$ by setting   \smallskip
  $$|({\cal T},\sigma,C)|=  {\it e}({\cal T}) -v(C)=k-1-v(C).$$ 
  \smallskip
 Note that  the grading agrees with the rank of the construct $C:{\bf H}_{\cal T}$ in ${\cal A}({\bf H}_{\cal T})$; in particular, $0\leq |({\cal T},C)|\leq k-2$.  Observe also that  the partial composition operation of ${\EuScript O}_{\infty}$ respects the grading, in the sense that  
 \smallskip
  $$|({\cal T}_1,\sigma_1,C_1)\circ_i ({\cal T}_2,\sigma_2,C_2)|=|({\cal T}_1,\sigma_1,C_1)| +|({\cal T}_2,\sigma_2,C_2)|.$$
  \smallskip
   If $({\cal T},\sigma)$ is clear from the context,  we shall often write $|C|$ for what is actually $|({\cal T},\sigma,C)|$.

\medskip
  
In the  graded version of the ${\EuScript O}_{\infty}$ operad,  signs show up in the definition of the partial composition: we adapt the definition of $\circ_i$ by setting
\smallskip
$$({\cal T}_1,\sigma_1,C_1)\circ_i ({\cal T}_2,\sigma_2,C_2)=(-1)^{\varepsilon}({\cal T}_1\bullet_i {\cal T}_2,\sigma_1\bullet_i\sigma_2,C_1\bullet_i C_2),$$
\smallskip
where $\varepsilon$ is the number of edges and leaves of $\alpha({\cal T}_1,\sigma_1,C_1)$ on the right of the leaf indexed by $i$, multiplied by the number of all edges and leaves of $\alpha({\cal T}_2,\sigma_2,C_2)$, minus the root:
\smallskip
 $$\varepsilon= E_{i>}(\alpha({\cal T}_1,\sigma_1,C_1))\cdot (E(\alpha({\cal T}_2,\sigma_2,C_2))-1).$$
\smallskip
 \begin{example}
In the graded setting, the composition 
\begin{center}
\begin{tikzpicture}
\node at   (-0.95,0.45) {{\resizebox{0.525cm}{!}{\begin{tikzpicture}
\node (L) [circle,fill=ForestGreen,draw=black,minimum size=0.1cm,inner sep=0.2mm,label={[xshift=-0.22cm,yshift=-0.25cm]{\footnotesize $x$}}] at (0,0) {};
\node (B1) [circle,fill=ForestGreen,draw=black,minimum size=0.1cm,inner sep=0.2mm,label={[xshift=-0.22cm,yshift=-0.27cm]{\footnotesize $y$}}] at (0,0.38) {};
 \draw[thick]  (L)--(B1);\end{tikzpicture}}}}; 
\end{tikzpicture} \enspace\raisebox{1.25em}{$\circ_2$} \begin{tikzpicture}
\node at   (-0.95,0.45) {{\resizebox{0.525cm}{!}{\begin{tikzpicture}
\node (L) [circle,fill=ForestGreen,draw=black,minimum size=0.1cm,inner sep=0.2mm,label={[xshift=-0.22cm,yshift=-0.25cm]{\footnotesize $v$}}] at (0,0) {};
\node (B1) [circle,fill=ForestGreen,draw=black,minimum size=0.1cm,inner sep=0.2mm,label={[xshift=-0.22cm,yshift=-0.25cm]{\footnotesize $u$}}] at (0,0.38) {};
 \draw[thick]  (L)--(B1);\end{tikzpicture}}}}; 
\end{tikzpicture} \enspace\raisebox{1.25em}{$=$} \begin{tikzpicture}
\node at   (-0.14,0.065)  { {\resizebox{1.35cm}{!}{\begin{tikzpicture}
\node (b) [circle,fill=ForestGreen,draw=black,minimum size=0.1cm,inner sep=0.2mm,label={[yshift=-0.45cm]{\footnotesize $x$}}] at (4,-0.2) {};
\node (c) [circle,fill=ForestGreen,draw=black,minimum size=0.1cm,inner sep=0.2mm,label={[xshift=0.2cm,yshift=-0.3cm]{\footnotesize $y$}}] at (4.2,0.15) {};
 \node (d) [circle,fill=ForestGreen,draw=black,minimum size=0.1cm,inner sep=0.2mm,label={[xshift=-0.2cm,yshift=-0.25cm]{\footnotesize $v$}}] at (3.8,0.15) {};
  \node (a) [circle,fill=ForestGreen,draw=black,draw=black,minimum size=0.1cm,inner sep=0.2mm,label={[xshift=-0.2cm,yshift=-0.2cm]{\footnotesize $u$}}] at (3.8,0.5) {};
 \draw[thick]  (c)--(b)--(d)--(a);\end{tikzpicture}}}};
\end{tikzpicture}
\end{center}
from Example  \ref{compositionconstructs} gets multiplied by $+$. Indeed, 
\smallskip
\begin{center}
\raisebox{8em}{$\alpha({\cal T}_1,\sigma_1,\!\!\!\raisebox{-1.1em}{\begin{tikzpicture}
\node at   (-0.95,0.45) {{\resizebox{0.525cm}{!}{\begin{tikzpicture}
\node (L) [circle,fill=ForestGreen,draw=black,minimum size=0.1cm,inner sep=0.2mm,label={[xshift=-0.22cm,yshift=-0.25cm]{\footnotesize $x$}}] at (0,0) {};
\node (B1) [circle,fill=ForestGreen,draw=black,minimum size=0.1cm,inner sep=0.2mm,label={[xshift=-0.22cm,yshift=-0.27cm]{\footnotesize $y$}}] at (0,0.38) {};
 \draw[thick]  (L)--(B1);\end{tikzpicture}}}}; 
\end{tikzpicture}})= $}\quad \begin{tikzpicture}
\draw (0,-0.5)--(1,2.1);
\draw (0.4,3.55)--(1,2)--(1.6,3.55);
\node (1) [rectangle,draw=black,fill=white,rounded corners=.5cm,inner sep=2mm] at (0,-0.5) {\resizebox{2.4cm}{!}{\begin{tikzpicture}
    \node (E)[circle,draw=black,minimum size=4mm,inner sep=0.1mm] at (0.75,0) {\small $1$};
    \node (F) [circle,draw=black,minimum size=4mm,inner sep=0.1mm] at (-0.3,1) {\small $2$};
        \node (x)[circle,draw=none,minimum size=4mm,inner sep=0.1mm] at (0.275,0.65) {\small $x$};
 \draw[-] (-0.1,1.8) -- (F)--(-0.5,1.8); 
 \draw[-] (0.75,0.8) -- (E)--(-0.75,0.8); 
  \draw[-] (1.5,0.8) -- (E)--(2.25,0.8); 
    \draw[-] (0.45,0.8) -- (E)--(1.05,0.8); 
 \draw[-] (E)--(0.75,-0.55); 
 \draw[-] (0.15,1.8) -- (F)--(-0.75,1.8); 
    \draw[-] (E)--(F) node  {};
\end{tikzpicture}}  };
\node (2) [rectangle,draw=black,fill=white,rounded corners=.35cm,inner sep=2mm] at (1,2.1) {\resizebox{0.7cm}{!}{\begin{tikzpicture}
    \node (F) [circle,draw=black,minimum size=4mm,inner sep=0.1mm] at (-0.3,1) {\small $1$};
    \node (A) [circle,draw=black,minimum size=4mm,inner sep=0.1mm] at (-0.3,2) {\small $2$};
    \node (y) [circle,draw=none,minimum size=4mm,inner sep=0.1mm] at (-0.16,1.55) {\small $y$};
 \draw[-] (-0.1,2.8) -- (A)--(-0.5,2.8); 
 \draw[-] (0.15,1.8) -- (F)--(-0.75,1.8); 
    \draw[-] (-0.3,0.45)--(F) node  {};
    \draw[-] (F)--(A) node {};
\end{tikzpicture}}};
\node (n1) [rectangle,draw=none,inner sep=0mm] at (-0.75,1.2) {\scriptsize $7$};
\node (n2) [rectangle,draw=none,inner sep=0mm] at (0.4,3.7) {\scriptsize $3$};
\node (n3) [rectangle,draw=none,inner sep=0mm] at (1.6,3.7) {\scriptsize $2$};
\node (n1') [rectangle,draw=none,inner sep=0mm] at (-0.75,1.45) {\scriptsize ${\bf 2}$};
\node (n2') [rectangle,draw=none,inner sep=0mm] at (0.4,3.95) {\scriptsize ${\bf 1}$};
\node (n3') [rectangle,draw=none,inner sep=0mm] at (1.6,3.95) {\scriptsize ${\bf 3}$};
\node (n3) [rectangle,draw=none,inner sep=0mm] at (0.65,0.8) {\scriptsize $4$};
\draw (0,-2)--(1);
\draw (1)--(-0.75,1.075);
\end{tikzpicture}\quad\quad \raisebox{8em}{and} \quad\quad \raisebox{8em}{$\alpha({\cal T}_2,\sigma_2,\!\!\!\raisebox{-1.1em}{\begin{tikzpicture}
\node at   (-0.95,0.45) {{\resizebox{0.525cm}{!}{\begin{tikzpicture}
\node (L) [circle,fill=ForestGreen,draw=black,minimum size=0.1cm,inner sep=0.2mm,label={[xshift=-0.22cm,yshift=-0.25cm]{\footnotesize $v$}}] at (0,0) {};
\node (B1) [circle,fill=ForestGreen,draw=black,minimum size=0.1cm,inner sep=0.2mm,label={[xshift=-0.22cm,yshift=-0.27cm]{\footnotesize $u$}}] at (0,0.38) {};
 \draw[thick]  (L)--(B1);\end{tikzpicture}}}}; 
\end{tikzpicture}})= $}\quad \begin{tikzpicture}
\draw (-0.4,3.55)--(-1,2)--(-1.6,3.55);
\node (1) [rectangle,draw=black,fill=white,rounded corners=.4cm,inner sep=2mm] at (0,-0.5) {\resizebox{1.75cm}{!}{\begin{tikzpicture}
    \node (E)[circle,draw=black,minimum size=4mm,inner sep=0.1mm] at (0,0) {\small $1$};
    \node (A) [circle,draw=black,minimum size=4mm,inner sep=0.1mm] at (1,1) {\small $2$};
    \node (y) [circle,draw=none,minimum size=4mm,inner sep=0.1mm] at (0.75,0.55) {\small $v$};
 \draw[-] (-0.2,0.8) -- (E)--(0.2,0.8);   
 \draw[-] (0.8,1.8) -- (A)--(1.2,1.8);   
  \draw[-] (-0.5,0.8)--(E)--(0,-0.55); 
 \draw[-] (0.5,0.8)--(E)--(-1,0.8); 
    \draw[-] (E)--(A) node  {};
   \end{tikzpicture}}};
\node (2) [rectangle,draw=black,fill=white,rounded corners=.35cm,inner sep=2mm] at (-1,2.1) {\resizebox{1.55cm}{!}{\begin{tikzpicture}
    \node (E)[circle,draw=black,minimum size=4mm,inner sep=0.1mm] at (0,0) {\small $1$};
    \node (F) [circle,draw=black,minimum size=4mm,inner sep=0.1mm] at (0,1) {\small $2$};
            \node (x)[circle,draw=none,minimum size=4mm,inner sep=0.1mm] at (0.13,0.55) {\small $u$};
 \draw[-] (-0.2,1.8) -- (F)--(0.2,1.8);    
  \draw[-] (-0.4,0.8)--(E)--(0,-0.55); 
 \draw[-] (0.4,0.8)--(E)--(-1,0.8); 
    \draw[-] (E)--(F);
     \draw[-] (E)--(1,0.8);
   \end{tikzpicture}}};
\node (n1) [rectangle,draw=none,inner sep=0mm] at (0.6,1.2) {\scriptsize $2$};
\node (n2) [rectangle,draw=none,inner sep=0mm] at (-0.4,3.7) {\scriptsize $2$};
\node (n3) [rectangle,draw=none,inner sep=0mm] at (-1.6,3.7) {\scriptsize $5$};
\node (n1') [rectangle,draw=none,inner sep=0mm] at (0.6,1.45) {\scriptsize ${\bf 2}$};
\node (n2') [rectangle,draw=none,inner sep=0mm] at (-0.4,3.95) {\scriptsize ${\bf 1}$};
\node (n3') [rectangle,draw=none,inner sep=0mm] at (-1.6,3.95) {\scriptsize ${\bf 3}$};
\node (n3) [rectangle,draw=none,inner sep=0mm] at (-0.65,0.8) {\scriptsize $6$};
\draw (0,-2)--(1)--(2);
\draw (1)--(0.6,1.075);
\end{tikzpicture}
\end{center}
and, therefore, $\varepsilon=3\cdot 4=12$. On the other hand, we have
\begin{center}
\begin{tikzpicture}
\node at   (-0.95,0.45) {{\resizebox{0.525cm}{!}{\begin{tikzpicture}
\node (L) [circle,fill=ForestGreen,draw=black,minimum size=0.1cm,inner sep=0.2mm,label={[xshift=-0.22cm,yshift=-0.25cm]{\footnotesize $x$}}] at (0,0) {};
\node (B1) [circle,fill=ForestGreen,draw=black,minimum size=0.1cm,inner sep=0.2mm,label={[xshift=-0.22cm,yshift=-0.27cm]{\footnotesize $y$}}] at (0,0.38) {};
 \draw[thick]  (L)--(B1);\end{tikzpicture}}}}; 
\end{tikzpicture} \enspace\raisebox{1.25em}{$\circ_2$} \raisebox{0.39em}{\begin{tikzpicture}
\node at   (-0.95,0.45) {{\resizebox{1.15cm}{!}{\begin{tikzpicture}
\node (L) [circle,fill=cyan,draw=black,minimum size=0.1cm,inner sep=0.2mm,label={[xshift=0cm,yshift=-0.1cm]{\footnotesize $\{u,v\}$}}] at (0,0) {};\end{tikzpicture}}}}; 
\end{tikzpicture}} \enspace\raisebox{1.25em}{$=$\enspace\enspace $-$\!\!\!\!} \begin{tikzpicture}
\node at   (-0.14,0.065)  { {\resizebox{2.05cm}{!}{{\begin{tikzpicture}
\node (b) [circle,fill=cyan,draw=black,minimum size=0.1cm,inner sep=0.2mm,label={[yshift=-0.45cm]{\footnotesize $x$}}] at (4,-0.2) {};
\node (c) [circle,fill=cyan,draw=black,minimum size=0.1cm,inner sep=0.2mm,label={[xshift=-0.45cm,yshift=-0.31cm]{\footnotesize $\{u,v\}$}}] at (3.8,0.15) {};
 \node (d) [circle,fill=cyan,draw=black,minimum size=0.1cm,inner sep=0.2mm,label={[xshift=0.2cm,yshift=-0.3cm]{\footnotesize $y$}}] at (4.2,0.15) {};
 \draw[thick]  (c)--(b)--(d);\end{tikzpicture}}}}};
\end{tikzpicture}
\end{center}
 since $\alpha({\cal T}_2,\sigma_2,\{u,v\})$ has one edge less than $\alpha({\cal T}_2,\sigma_2,\{u\})$, which gives $\varepsilon=3\cdot 3=9$.\demo
\end{example}
\smallskip
In the following lemma, we prove that Theorem \ref{free} extends to the graded context, in which signs  show up in the computation of composition in a free operad (see \cite[Section 5.8.7]{LV}).
\smallskip
\begin{lemma}
With the composition product adapted to the graded context, ${\EuScript O}_{\infty}$ is the free ${\mathbb N}$-coloured graded operad with respect to the set of generators given in Theorem \ref{free}.
\end{lemma}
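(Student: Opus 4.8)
The plan is to promote the bijection $\alpha$ of Theorem~\ref{free} to an isomorphism of \emph{graded} operads, where the generator indexed by a left-recursive operadic tree ${\cal T}$ with $k$ vertices is placed in homological degree $k-2$; this is precisely the degree forced by $\alpha$, since $\alpha$ sends it to the maximal construct $({\cal T},\sigma,e({\cal T}))$, whose ${\EuScript O}_{\infty}$-degree is ${\it e}({\cal T})-v(e({\cal T}))=(k-1)-1=k-2$. Two things then have to be checked: that $\alpha$ preserves the grading of \S\ref{odg}, and that it intertwines the graded partial compositions.

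Degree preservation is an induction on $v(C)$, the base case ($C$ the maximal construct, $v(C)=1$) being the observation above. If $C=X(C_1,\dots,C_p)$ with ${\bf H}_{\cal T}\backslash X\leadsto {\bf H}_{{\cal T}_1},\dots,{\bf H}_{{\cal T}_p}$, then the root node of $\alpha({\cal T},\sigma,C)$ is decorated by the tree ${\cal T}_X$ obtained from ${\cal T}$ by collapsing the $(k-1)-|X|$ edges of ${\it e}({\cal T})\setminus X$; hence ${\cal T}_X$ has $|X|+1$ vertices and its generator has degree $|X|-1$. Since the degree of a tree monomial in the free graded operad is the sum of the degrees of its decorating generators, the inductive hypothesis $|\alpha({\cal T}_i,\sigma_i,C_i)|={\it e}({\cal T}_i)-v(C_i)$, together with the set decompositions $H_{\cal T}=X\sqcup\bigsqcup_i H_{{\cal T}_i}$ (so ${\it e}({\cal T})=|X|+\sum_i {\it e}({\cal T}_i)$) and $v(C)=1+\sum_i v(C_i)$, gives the telescoping
\[
|\alpha({\cal T},\sigma,C)| = (|X|-1) + \sum_{i=1}^{p}\big({\it e}({\cal T}_i)-v(C_i)\big) = {\it e}({\cal T}) - v(C) = |({\cal T},\sigma,C)|.
\]

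For the signs I would recall the partial composition rule in a free graded operad $\mathcal T(M)$ on a graded collection $M$ (see \cite[Section~5.8.7]{LV}): one grafts a tree monomial onto the $i$-th leaf of another and reorders the tensor factors of vertex-decorations into the fixed standard planar order, which produces a Koszul sign governed by the total degree of the inserted monomial and the total degree of the decorations lying strictly to the right of leaf $i$. It remains to identify this sign with the sign $(-1)^{\varepsilon}$ of \S\ref{odg}, where $\varepsilon = E_{i>}(\alpha({\cal T}_1,\sigma_1,C_1))\cdot\big(E(\alpha({\cal T}_2,\sigma_2,C_2))-1\big)$ in the notation of Definition~\ref{orient} (here $E(\cdot)$ counts all internal edges together with all leaves, including the root). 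This reduces to two parity congruences. The first, $E(\alpha({\cal T}_2,\sigma_2,C_2))-1\equiv |({\cal T}_2,\sigma_2,C_2)|\pmod 2$, is immediate: writing $l$ for the number of leaves of $\alpha({\cal T}_2,\sigma_2,C_2)$ (equivalently, the number of vertices of ${\cal T}_2$), the left-hand side is $(v(C_2)-1)+l$, the right-hand side is ${\it e}({\cal T}_2)-v(C_2)=(l-1)-v(C_2)$, and the two differ by $2v(C_2)$.

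The second congruence is the main obstacle: one must check that $E_{i>}(\alpha({\cal T}_1,\sigma_1,C_1))$ has the same parity as the sum of the degrees of the generators decorating the nodes of $\alpha({\cal T}_1,\sigma_1,C_1)$ that lie strictly to the right of the path from leaf $i$ to the root. This is exactly the bookkeeping that makes the concrete edge-and-leaf count of Definition~\ref{orient} match the abstract Koszul sign of $\mathcal T(M)$; resolving it requires making the chosen standard order on tensor factors explicit and tracking, through the recursion defining $\alpha$ (and the reindexing permutations supplied by Lemma~\ref{subtreedecomposition}), how the internal edges and leaves of $\alpha({\cal T}_1,\sigma_1,C_1)$ on the right of leaf $i$ distribute among those right-hand generators. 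Once degree preservation and the sign-compatibility of $\circ_i$ are in place, $\alpha$ — already an isomorphism of collections intertwining the ungraded compositions by Theorem~\ref{free}, and carrying no extra sign on the (leaf-relabelling) symmetric-group action on either side — is an isomorphism of graded operads onto the free graded operad on the left-recursive operadic trees, which is the assertion of the lemma.
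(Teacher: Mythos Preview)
Your approach is different from the paper's and, as written, incomplete. You attempt to show that the bijection $\alpha$ of Theorem~\ref{free} is an isomorphism of \emph{graded} operads by matching the sign $(-1)^{\varepsilon}$ of \S\ref{odg} against the Koszul sign in the free graded operad of \cite[Section~5.8.7]{LV}. Your degree-preservation induction and the first parity congruence are correct, but you explicitly leave the second congruence---that $E_{i>}(\alpha({\cal T}_1,\sigma_1,C_1))$ has the same parity as the total degree of the generators decorating the nodes strictly to the right of leaf $i$---as ``bookkeeping'' to be resolved. That is the entire content of the sign comparison; without it the argument is a sketch, not a proof.

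The paper avoids this altogether. Rather than comparing sign conventions, it observes that by Theorem~\ref{free} the underlying ungraded operad is already free on the stated generators, so what remains is only to check that the signed composition of \S\ref{odg} makes ${\EuScript O}_{\infty}$ a \emph{graded} operad---i.e., that the parallel-composition axiom
\[
(({\cal T},\sigma,C)\circ_{v_1}({\cal T}_1,\sigma_1,C_1))\circ_{v_2}({\cal T}_2,\sigma_2,C_2)=(-1)^{|C_1|\,|C_2|}(({\cal T},\sigma,C)\circ_{v_2}({\cal T}_2,\sigma_2,C_2))\circ_{v_1}({\cal T}_1,\sigma_1,C_1)
\]
holds. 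This is verified by a single direct computation: writing $l_i,e_i$ for the numbers of inputs and internal edges of $\alpha({\cal T}_i,\sigma_i,C_i)$ and $k_0,k_1,k_2$ for the edge-and-leaf counts to the left of $v_1$, between $v_1$ and $v_2$, and to the right of $v_2$ in $\alpha({\cal T},\sigma,C)$, one finds $\varepsilon_1-\varepsilon_2\equiv (l_1+e_1)(l_2+e_2)\equiv |C_1|\,|C_2|\pmod 2$, using $|C_i|=l_i-e_i-2$. Once the axiom holds, the universal property of the free graded operad (applied to the degree-preserving inclusion of generators) yields a graded operad map from the free graded operad to ${\EuScript O}_{\infty}$ which is an isomorphism on underlying collections, hence an isomorphism. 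This is shorter and sidesteps any need to fix a tensor-ordering convention in ${\cal T}(M)$.
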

\begin{proof} By Theorem \ref{free}, it remains to be shown that, for an operation $({\cal T},\sigma, C)\in {\EuScript O}_{\infty}(n_1,\dots,n_k;n)$ and two distinct vertices $v_1,v_2\in v({\cal T})$, such that the index of $v_1$ in the left-recursive decoration of ${\cal T}$ is less than the index of $v_2$, we have that
   $$(({\cal T},\sigma,C)\circ_{v_1} ({\cal T}_1,\sigma_1,C_1))\circ_{v_2} ({\cal T}_2,\sigma_2,C_2)=(-1)^{|C_1|\cdot|C_2|}(({\cal T},\sigma,C)\circ_{v_2} ({\cal T}_2,\sigma_2,C_2))\circ_{v_1} ({\cal T}_1,\sigma_1,C_1).$$
In the free operad description of ${\EuScript O}_{\infty}$, the two compositions of the above equality are represented by the planar tree 
\begin{center}
\resizebox{4.75cm}{!}{\begin{tikzpicture}
\draw (-1.5,0.7)--(0,0)--(1.5,0.7);
\draw (-1.3,0.7)--(0,0)--(1.3,0.7);
\draw (-1.1,0.7)--(0,0)--(1.1,0.7);
\node (1) [rectangle,draw=black,fill=white,rounded corners=.2cm,inner sep=1mm] at (0,0) {\scriptsize $\alpha({\cal T},\sigma,C)$};
\node (2) [rectangle,draw=black,rounded corners=.2cm,inner sep=1mm] at (-1.2,1.4) {\scriptsize $\alpha({\cal T}_1,\sigma_1,C_1)$};
\node (3) [rectangle,draw=black,rounded corners=.2cm,inner sep=1mm] at (1.2,1.4) {\scriptsize $\alpha({\cal T}_2,\sigma_2,C_2)$};
\draw (0,-0.8)--(1);
\draw (1)--(2) node[right,yshift=-0.4cm,xshift=0.3cm] {\scriptsize $v_1$};
\draw (1)--(3) node[left,yshift=-0.4cm,xshift=-0.3cm] {\scriptsize $v_2$};
\draw (-0.5,2.1)--(2)--(-1.9,2.1);
\draw (-0.7,2.1)--(2)--(-1.7,2.1);
\draw (-0.9,2.1)--(2)--(-1.5,2.1);
\draw (-1.1,2.1)--(2)--(-1.3,2.1);
\draw (0.5,2.1)--(3)--(1.9,2.1);
\draw (0.7,2.1)--(3)--(1.7,2.1);
\draw (0.9,2.1)--(3)--(1.5,2.1);
\draw (1.1,2.1)--(3)--(1.3,2.1);
\draw (0.3,0.7)--(1)--(-0.3,0.7);
\draw (0.1,0.7)--(1)--(-0.1,0.7);
\draw [decorate,decoration={brace,amplitude=5pt,mirror,raise=-1pt},xshift=-2pt,yshift=0pt]
(-0.35,2.2) -- (-1.9,2.2)node [black,midway,above,yshift=2pt] {\scriptsize $l_1+e_1$};
\draw [decorate,decoration={brace,amplitude=5pt,mirror,raise=-1pt},xshift=2pt,yshift=0pt]
(1.9,2.2) -- (0.35,2.2)node [black,midway,above,yshift=2pt] {\scriptsize $l_2+e_2$};
\draw [decorate,decoration={brace,amplitude=3pt ,raise=-1pt},xshift=-2pt,yshift=0pt]
(-1.5,0.775) -- (-0.9,0.775)node [black,midway,above,yshift=0pt] {\scriptsize $k_0$};
\draw [decorate,decoration={brace,amplitude=3pt ,raise=-1pt},xshift=-2pt,yshift=0pt]
(1.05,0.775) -- (1.65,0.775)node [black,midway,above,yshift=0pt] {\scriptsize $k_2$};
\draw [decorate,decoration={brace,amplitude=3pt,mirror,raise=-1pt},xshift=0pt,yshift=0pt]
(0.35,0.775) -- (-0.35,0.775)node [black,midway,above,yshift=0pt] {\scriptsize $k_1$};
\end{tikzpicture}}
\end{center}
where, for $i=1,2$, $\alpha({\cal T}_i,\sigma_i,C_i)$ has $l_i$ inputs and $e_i$ edges, and where $k_0$ (resp. $k_1$, $k_2$) is the number of leaves and edges of $\alpha({\cal T},\sigma,C)$ on the left from $v_1$ (resp. between $v_1$ and $v_2$, on the right from $v_2$). The sign of the composition on the left-hand side is then determined by $\varepsilon_1=(k_1+k_2+1)(l_1+e_1)+k_2(l_2+e_2)$, while, for the right-hand side, we have $\varepsilon_2=k_2(l_2+e_2)+(k_1+k_2+l_2+e_2+1)(l_1+e_1)$. Additionally, for $i=1,2$, we have that $|C_i|=l_i-e_i-2$. A straightforward calculation shows that $\varepsilon_1=_{\mbox{mod}2} \varepsilon_2+|C_1|\cdot |C_2|$, which proves the claim.
\end{proof}
\smallskip
In order to equip the graded operad ${\EuScript O}_{\infty}$ with a differential, we now formalize the action of splitting the vertices of constructs of  edge-graphs of operadic trees. Let $({\cal T},\sigma,C)\in {\EuScript O}_{\infty}(n_1,\dots,n_k;n)$ and let $V\in v(C)$ be such that $|V|\geq 2$. Let ${\cal T}_{V}$ be the left-recursive operadic tree obtained from ${\cal T}$ by contracting all the edges of ${\cal T}$ except the ones contained in $V$.  Let $X$ and $Y$ be non-empty disjoint sets  such that $X\cup Y=V$ and such that $X\{Y\}:{\bf H}_{{\cal T}_V}$. We define  the construct  $C[X\{Y\}/V]$  of ${\bf H}_{\cal T}$ by induction on the number of vertices of $C$, as follows.\\[-0.3cm]
\begin{itemize}
\item If $C=e({\cal T})$, we set $$C[X\{Y\}/V]:=X\{Y\}.$$ 
\item  Suppose that $C=Z\{C_1,\dots,C_p\}$, where   ${\bf H}_{\cal T}\backslash Z\leadsto {\bf H}_{{\cal T}_1},\dots, {\bf H}_{{\cal T}_p}$ and    $C_i:{\bf H}_{{\cal T}_i}$.\\[-0.3cm]
\begin{itemize}
\item If there exists an index $1\leq i\leq p$, such that $V\in v(C_i)$, we define $$C[X\{Y\}/V]:=Z\{C_1,\dots,C_{i-1},C_i[X\{Y\}/V] ,C_{i+1},\dots,C_p\}.$$ 
\item Suppose that $V=Z$ and let $\{i_1,\dots,i_q\}\cup \{j_1,\dots,j_r\}$ be the partition of the set $\{1,\dots,p\}$ such that the trees ${\cal T}_{i_s}$, for $1\leq s\leq q$,   contain an edge sharing a vertex with some edge of $Y$, while the trees ${\cal T}_{i_t}$, for $1\leq t\leq q$, have no edges sharing a vertex with the edges of $Y$.  We define
$$C[X\{Y\}/V]:=X\{Y\{C_{i1},\dots C_{iq}\},C_{j1},\dots,C_{jr}\}.$$ If, exceptionally, $\{i_1,\dots,i_q\}=\emptyset$ (resp. $\{j_1,\dots,j_r\}=\emptyset$), we have that $C[X\{Y\}/V]:=X\{Y,C_1,\dots,C_p\}$ (resp.   $C[X\{Y\}/V]:=X\{Y\{C_1,\dots,C_p\}\}$).\\[-0.3cm]
\end{itemize}
\end{itemize}
 
 \smallskip
\noindent Therefore, intuitively, $C[X\{Y\}/V]$ is obtained from $C$ by splitting the vertex $V$ into the edge $X\{Y\}$. 
 \smallskip
\begin{lemma}\label{splitwelldefined} The non-planar rooted tree  $C[X\{Y\}/V]$ is indeed a construct of ${\bf H}_{\cal T}$.
\end{lemma}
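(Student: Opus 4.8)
The plan is to verify, by structural induction on the number of vertices of $C$, that the recursively defined object $C[X\{Y\}/V]$ satisfies the inductive clauses of the definition of a construct of ${\bf H}_{\cal T}$ (from \S\ref{abspol}). The base case $C=e({\cal T})=H_{\cal T}$ forces $V=e({\cal T})=H$, so $V$ is the full vertex set of ${\bf H}_{\cal T}$, and ${\cal T}_V={\cal T}$; the hypothesis that $X\{Y\}:{\bf H}_{{\cal T}_V}={\bf H}_{\cal T}$ is then exactly the assertion that $C[X\{Y\}/V]=X\{Y\}$ is a construct of ${\bf H}_{\cal T}$, so there is nothing to prove. The real content is in the inductive step, and there are two sub-cases matching the definition.

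First, suppose $C=Z\{C_1,\dots,C_p\}$ with ${\bf H}_{\cal T}\backslash Z\leadsto {\bf H}_{{\cal T}_1},\dots,{\bf H}_{{\cal T}_p}$, $C_i:{\bf H}_{{\cal T}_i}$, and $V\in v(C_i)$ for some $i$. Here I would observe that $V$ is a vertex of $C_i$, a construct of the edge-graph ${\bf H}_{{\cal T}_i}$ of the operadic tree ${\cal T}_i$, and that the operadic tree obtained from ${\cal T}_i$ by contracting all edges except those in $V$ coincides with ${\cal T}_V$ — because $V\subseteq e({\cal T}_i)\subseteq e({\cal T})$ and Lemma~\ref{connectededges} together with Remark~\ref{edgecoherence} guarantee that contracting-to-$V$ inside ${\cal T}$ and inside the subtree ${\cal T}_i$ yield the same tree. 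Hence the hypothesis $X\{Y\}:{\bf H}_{{\cal T}_V}$ is legitimate input for the induction applied to $C_i$, and $C_i[X\{Y\}/V]:{\bf H}_{{\cal T}_i}$. Since replacing $C_i$ by another construct of the same ${\bf H}_{{\cal T}_i}$ in $Z\{C_1,\dots,C_p\}$ again gives a construct of ${\bf H}_{\cal T}$ (the defining clause only requires each branch to be a construct of the corresponding connected component), we conclude $C[X\{Y\}/V]:{\bf H}_{\cal T}$.

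The delicate sub-case is $V=Z$. Write ${\bf H}_{\cal T}\backslash V\leadsto {\bf H}_{{\cal T}_1},\dots,{\bf H}_{{\cal T}_p}$ and split $\{1,\dots,p\}=\{i_1,\dots,i_q\}\sqcup\{j_1,\dots,j_r\}$ as in the definition: the ${\cal T}_{i_s}$ are those components having an edge adjacent (in ${\cal T}$) to an edge of $Y$, the ${\cal T}_{j_t}$ the remaining ones. I must show that $X\{Y\{C_{i_1},\dots,C_{i_q}\},C_{j_1},\dots,C_{j_r}\}:{\bf H}_{\cal T}$. By the defining clauses this amounts to: (a) ${\bf H}_{\cal T}\backslash X\leadsto$ the components $\{{\bf H}_{{\cal T}_{j_t}}\}$ together with one further component ${\bf H}'$ containing $Y$ and all the $e({\cal T}_{i_s})$; (b) ${\bf H}'\backslash Y\leadsto {\bf H}_{{\cal T}_{i_1}},\dots,{\bf H}_{{\cal T}_{i_q}}$; and (c) $C_{i_s}:{\bf H}_{{\cal T}_{i_s}}$, $C_{j_t}:{\bf H}_{{\cal T}_{j_t}}$, which hold by assumption. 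The heart of the matter is (a) and (b): removing $V=X\sqcup Y$ from ${\bf H}_{\cal T}$ produces the same vertex set as first removing $X$ then $Y$, and I must track which connected components merge or stay apart. Since $X\{Y\}:{\bf H}_{{\cal T}_V}$, the set $Y$ is a connected subset of $e({\cal T}_V)$, the tree ${\cal T}_V$ minus $Y$ is connected (it collapses to a single vertex together with the leaf-structure), and $X$ is precisely the set of edges of ${\cal T}_V$ not in $Y$; this controls how the branches of ${\cal T}$ reattach. The components ${\cal T}_{j_t}$ with no edge adjacent to $Y$ remain whole components of ${\bf H}_{\cal T}\backslash X$, while those ${\cal T}_{i_s}$ adjacent to $Y$ become connected to $Y$ through the edges of $Y$ once $X$ is removed — giving the single component ${\bf H}'$ of (a) — and are separated again precisely by removing $Y$, giving (b). This is essentially a careful bookkeeping argument using Lemma~\ref{connectededges}, the transitivity of edge-graph adjacency, and the fact that the edge-graph of a tree is connected. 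I would handle the two exceptional cases ($q=0$ or $r=0$) as degenerate instances of the same reasoning: if no component is adjacent to $Y$ then $Y$ itself is a singleton-component of ${\bf H}_{\cal T}\backslash X$ and the branch $Y$ stands alone; if every component is adjacent to $Y$ then all branches sit under $Y$.

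The step I expect to be the main obstacle is exactly the combinatorial verification in the $V=Z$ case that $X$ and $Y$ split the connected components of ${\bf H}_{\cal T}\backslash V$ in the manner claimed — i.e. proving (a) and (b) rigorously from the hypothesis $X\{Y\}:{\bf H}_{{\cal T}_V}$ and the definition of the $i_s$/$j_t$ partition. The subtlety is that a component ${\cal T}_{i_s}$ is declared ``adjacent to $Y$'' via an edge of ${\cal T}$, but connectivity of ${\bf H}'\backslash Y$ must be read off from the edge-graph after removing all of $X$; one has to check these two notions of adjacency match up, which ultimately rests on the fact that in a tree two internal edges share a vertex iff the corresponding vertices of the edge-graph are joined, and that $X$ consists of exactly those edges of ${\cal T}_V$ complementary to $Y$. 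Once this bookkeeping is pinned down, the rest is a routine transcription of the inductive clauses.
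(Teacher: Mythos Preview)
Your proposal is correct and follows essentially the same inductive scheme as the paper's proof: induction on the number of vertices of $C$, with the base case trivial, the case $V\in v(C_i)$ handled by the induction hypothesis, and the case $V=Z$ reduced to analyzing how ${\bf H}_{\cal T}\backslash X$ decomposes. The paper's version of the $V=Z$ case is more terse---it directly observes that $Y\cup\bigcup_{s}e({\cal T}_{i_s})$ determines a single subtree ${\cal S}$ of ${\cal T}$ (via Lemma~\ref{connectededges}), so that ${\bf H}_{\cal T}\backslash X\leadsto\{{\bf H}_{\cal S}\}\cup\{{\bf H}_{{\cal T}_{j_t}}\}$ and $Y\{C_{i_1},\dots,C_{i_q}\}:{\bf H}_{\cal S}$---which packages your steps (a) and (b) into one sentence, but the content is identical.
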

\begin{proof} By induction on the number of vertices of $C$.   
 If $C$ has a single vertex  $V=e({\cal T})$, then  ${\bf H}_{{\cal T}_V}={\bf H}_{\cal T}$, and, therefore, for any decomposition $X\cup Y=e({\cal T})$, such that $X\{Y\}:{\bf H}_{{\cal T}_{V}}$, we trivially also  have that   $X\{Y\}:{\bf H}_{\cal T}$. 
 
 \smallskip

 Suppose that $C=Z(C_1,\dots,C_p)$, where  ${\bf H}_{\cal T}\backslash Z\leadsto {\bf H}_{{\cal T}_1},\dots, {\bf H}_{{\cal T}_p}$ and    $C_i:{\bf H}_{{\cal T}_i}$.\\[-0.3cm]
\begin{itemize}
\item If there exists $1\leq i\leq p$, such that $V\in v(C_i)$, we conclude by the induction hypothesis for $C_i$.\\[-0.3cm]
\item If $V=Z$ and if $\{i_1,\dots,i_q\}\cup \{j_1,\dots,j_r\}$ is the partition as above, then, since $X\{Y\}:{\cal T}_V$, it must be the case that the set of edges $Y\cup \bigcup_{i\in \{i_1,\dots,i_q\}}e({\cal T}_i)$ determines a single subtree ${\cal S}$ of ${\cal T}$. Therefore, $${\bf H}_{\cal T}\backslash X\leadsto \{{\bf H}_{{\cal S}}\}\cup \{{\bf H}_{{\cal T}_j}\,|\, j\in   \{j_1,\dots,j_r\}\}.$$
Then, if $\{i_1,\dots,i_q\}\neq \emptyset$, the conclusion follows since $Y\{C_{i1},\dots C_{iq}\}:{\bf H}_{{\cal S}}$. 
If $\{i_1,\dots,i_q\}=\emptyset$,  the conclusion follows since  $Y$ is trivially the maximal construct of ${\bf H}_{{\cal S}}$.
\end{itemize}\vspace{-0.45cm}\end{proof}

\smallskip

 The differential $d_{{\EuScript O}_{\infty}}$ of ${\EuScript O}_{\infty}$ is defined in terms of splitting the vertices of constructs, as follows: for $({\cal T},\sigma,C)\in {\EuScript O}_{\infty}(n_1,\dots,n_k;n)$, we set $$d_{{\EuScript O}_{\infty}}({\cal T},\sigma,C):=\displaystyle\sum_{\substack{V\in v(C) \\ |V|\geq 2}}\sum_{\substack{(X,Y) \\ X\cup Y=V \\  X\{Y\}:{\bf H}_{{\cal T}_V}}} (-1)^{\delta} ({\cal T},\sigma,C[X\{Y\}/V]),$$
where $\delta$ is the number of edges and leaves in $\alpha({\cal T},\sigma,C[X\{Y\}/V])$ which are on the left from or below the edge determined by $X\{Y\}$: $$\delta=E_{\leq X\{Y\}}(\alpha({\cal T},\sigma,C[X\{Y\}/V])).$$

Observe that, for $({\cal T},C)\in {\EuScript O}_{\infty}(n_1,\dots,n_k;n)$, we have $|({\cal T},C)|=k-1-v(C)$ and
$|d_{{\EuScript O}_{\infty}}({\cal T},C)|=k-1-(v(C)+1)$, which shows that $d_{{\EuScript O}_{\infty}}$ is indeed a map od degree $-1$. In particular, for generators $({\cal T},e({\cal T}))$  we have:
$$
d_{{\EuScript O}_{\infty}}({\cal T},e({\cal T}))=\displaystyle\sum_{\substack{(X,Y) \\ X\cup Y=e({\cal T})\\ X\{Y\}:{\bf H}_{\cal T}}} (-1)^{\delta} ({\cal T},X\{Y\})=\displaystyle\sum_{\substack{(X,Y) \\ X\cup Y=e({\cal T})\\ X\{Y\}:{\bf H}_{\cal T}}} (-1)^{\delta} ({\cal T}_X,X)\circ_{\rho({\cal T}_Y)} ({\cal T}_Y,Y),$$
 which shows that $d_{{\EuScript O}_{\infty}}$ is decomposable.

\begin{convention}
The differential   $d_{{\EuScript O}_{\infty}}$ of ${\EuScript O}_{\infty}$ acts on an operation $({\cal T},\sigma,C)$ by splitting the vertices of $C$, whereas the underlying operadic tree $({\cal T},\sigma)$ remains unchanged. When expressing this action, assuming that  $({\cal T},\sigma)$ is clear from the context, we shall often  write simply $d_{{\EuScript O}_{\infty}}(C)$.
\end{convention}
 
\begin{example}
We calculate the differential of $({\cal T},\sigma,\raisebox{-1.15em}{\resizebox{1.2cm}{!}{\begin{tikzpicture}
\node (c) [circle,fill=WildStrawberry,draw=black,minimum size=0.1cm,inner sep=0.2mm,label={[xshift=-0.45cm,yshift=-0.32cm]{\footnotesize $\{x,y\}$}}] at (0,0) {};
 \node (d) [circle,fill=WildStrawberry,draw=black,minimum size=0.1cm,inner sep=0.2mm,label={[xshift=-0.45cm,yshift=-0.32cm]{\footnotesize $\{u,v\}$}}] at (0,0.35) {};
 \draw[thick]  (c)--(d);\end{tikzpicture}}}\,)$, where $({\cal T},\sigma)$ is our favourite operadic tree (see Example \ref{edgegraph} and Example \ref{compositionconstructs}). By splitting the vertices $\{x,y\}$ and $\{u,v\}$, we get the following four planar trees in the free operad representation of ${\EuScript O}_{\infty}$:
\begin{center}
\begin{tikzpicture}
\draw[snake it](0,-1.5)--(1,1.4);
\node (1) [rectangle,draw=black,fill=white,rounded corners=.3cm,inner sep=2mm] at (0,-1.5) {\resizebox{2.4cm}{!}{\begin{tikzpicture}
    \node (E)[circle,draw=black,minimum size=4mm,inner sep=0.1mm] at (0.75,0) {\small $1$};
    \node (F) [circle,draw=black,minimum size=4mm,inner sep=0.1mm] at (-0.3,1) {\small $2$};
        \node (x)[circle,draw=none,minimum size=4mm,inner sep=0.1mm] at (0.275,0.65) {\small $x$};
 \draw[-] (-0.1,1.8) -- (F)--(-0.5,1.8); 
 \draw[-] (0.75,0.8) -- (E)--(-0.75,0.8); 
  \draw[-] (1.5,0.8) -- (E)--(2.25,0.8); 
    \draw[-] (0.45,0.8) -- (E)--(1.05,0.8); 
 \draw[-] (E)--(0.75,-0.55); 
 \draw[-] (0.15,1.8) -- (F)--(-0.75,1.8); 
    \draw[-] (E)--(F) node  {};
\end{tikzpicture}}  };
\node (2) [rectangle,draw=black,fill=white,rounded corners=.3cm,inner sep=2mm] at (1,1.2) {\resizebox{0.7cm}{!}{\begin{tikzpicture}
    \node (F) [circle,draw=black,minimum size=4mm,inner sep=0.1mm] at (-0.3,1) {\small $1$};
    \node (A) [circle,draw=black,minimum size=4mm,inner sep=0.1mm] at (-0.3,2) {\small $2$};
    \node (y) [circle,draw=none,minimum size=4mm,inner sep=0.1mm] at (-0.16,1.55) {\small $y$};
 \draw[-] (-0.1,2.8) -- (A)--(-0.5,2.8); 
 \draw[-] (0.15,1.8) -- (F)--(-0.75,1.8); 
    \draw[-] (-0.3,0.45)--(F) node  {};
    \draw[-] (F)--(A) node {};
\end{tikzpicture}}};
\node (3) [rectangle,draw=black,rounded corners=.3cm,inner sep=2mm] at (-1,1.2) {\resizebox{1.45cm}{!}{\begin{tikzpicture}
    \node (E)[circle,draw=black,minimum size=4mm,inner sep=0.1mm] at (-0,0) {\small $1$};
    \node (F) [circle,draw=black,minimum size=4mm,inner sep=0.1mm] at (0,1) {\small $2$};
\node (dd) [circle,draw=black,minimum size=4mm,inner sep=0.1mm] at (0.8,1) {\small $3$};
 \draw[-]   (E)--(-0.4,0.8);  
  \draw[-] (-0.8,0.8) -- (E);  
    \draw[-] (E)--(0.4,0.8); 
 \draw[-] (E)--(0,-0.55); 
 \draw[-] (-0.25,1.8) -- (F)--(0.25,1.8);   
 \draw[-] (0.55,1.8) -- (dd)--(1.06,1.8);   
    \draw[-] (E)--(F) node [midway,left,yshift=0cm,xshift=0.17cm] {\small $u$};
 \draw[-] (E)--(dd) node [midway,right,yshift=0cm,xshift=-0.1cm] {\small $v$};
\end{tikzpicture}}};
\node (n2) [rectangle,draw=none,inner sep=0mm] at (0.7,2.9) {\small  $3$};
\node (n3) [rectangle,draw=none,inner sep=0mm] at (1.3,2.9) {\small  $2$};
\node (m1) [rectangle,draw=none,inner sep=0mm] at (-1.6,2.9) {\small $5$};
\node (m2) [rectangle,draw=none,inner sep=0mm] at (-1,2.9) {\small $2$};
\node (m3) [rectangle,draw=none,inner sep=0mm] at (-0.4,2.9) {\small $2$};
\draw (0,-3.15)--(1);
\draw (1)--(3);
\draw (3)--(-1,2.725);
\draw (-0.4,2.725)--(3)--(-1.6,2.725);
\draw (0.7,2.725)--(2)--(1.3,2.725);
\end{tikzpicture}
\quad \begin{tikzpicture}
\draw[snake it](0.2,-1.5)--(1.3,-4.2) ;
\node (1) [rectangle,draw=black,fill=white,rounded corners=.3cm,inner sep=2mm] at (-0.2,-1.5) {\resizebox{2.4cm}{!}{\begin{tikzpicture}
    \node (E)[circle,draw=black,minimum size=4mm,inner sep=0.1mm] at (0.75,0) {\small $1$};
    \node (F) [circle,draw=black,minimum size=4mm,inner sep=0.1mm] at (-0.3,1) {\small $2$};
        \node (x)[circle,draw=none,minimum size=4mm,inner sep=0.1mm] at (0.275,0.65) {\small $x$};
 \draw[-] (F)--(-0.3,1.8); 
 \draw[-] (0.75,0.8) -- (E)--(-0.75,0.8); 
  \draw[-] (1.5,0.8) -- (E)--(2.25,0.8); 
    \draw[-] (0.45,0.8) -- (E)--(1.05,0.8); 
 \draw[-] (E)--(0.75,-0.55); 
 \draw[-] (0.15,1.8) -- (F)--(-0.75,1.8); 
    \draw[-] (E)--(F) node  {};
\end{tikzpicture}}  };
\node (2) [rectangle,draw=black,fill=white,rounded corners=.3cm,inner sep=2mm] at (1.3,-4.2) {\resizebox{2.85cm}{!}{\begin{tikzpicture}
    \node (E)[circle,draw=black,minimum size=4mm,inner sep=0.1mm] at (0.75,0) {\small $1$};
    \node (F) [circle,draw=black,minimum size=4mm,inner sep=0.1mm] at (-0.3,1) {\small $2$};
        \node (x)[circle,draw=none,minimum size=4mm,inner sep=0.1mm] at (0.275,0.65) {\small $y$};
 \draw[-] (0.75,0.8) -- (E)--(-0.75,0.8); 
  \draw[-] (1.5,0.8) -- (E)--(2.45,0.8); 
    \draw[-] (0.45,0.8) -- (E)--(1.05,0.8); 
       \draw[-] (-1.25,0.8) -- (E)--(2,0.8); 
 \draw[-] (E)--(0.75,-0.55); 
 \draw[-] (-0.05,1.8) -- (F)--(-0.55,1.8); 
    \draw[-] (E)--(F) node  {};
\end{tikzpicture}}  };
\node (3) [rectangle,draw=black,rounded corners=.3cm,inner sep=2mm] at (-1,1.2) {\resizebox{1.45cm}{!}{\begin{tikzpicture}
    \node (E)[circle,draw=black,minimum size=4mm,inner sep=0.1mm] at (-0,0) {\small $1$};
    \node (F) [circle,draw=black,minimum size=4mm,inner sep=0.1mm] at (0,1) {\small $2$};
\node (dd) [circle,draw=black,minimum size=4mm,inner sep=0.1mm] at (0.8,1) {\small $3$};
 \draw[-]   (E)--(-0.4,0.8);  
  \draw[-] (-0.8,0.8) -- (E);  
    \draw[-] (E)--(0.4,0.8); 
 \draw[-] (E)--(0,-0.55); 
 \draw[-] (-0.25,1.8) -- (F)--(0.25,1.8);   
 \draw[-] (0.55,1.8) -- (dd)--(1.06,1.8);   
    \draw[-] (E)--(F) node [midway,left,yshift=0cm,xshift=0.17cm] {\small $u$};
 \draw[-] (E)--(dd) node [midway,right,yshift=0cm,xshift=-0.1cm] {\small $v$};
\end{tikzpicture}}};
\node (n3) [rectangle,draw=none,inner sep=0mm] at (1.75,-2.4) {\small  $2$};;
\node (m1) [rectangle,draw=none,inner sep=0mm] at (-1.55,2.9) {\small  $5$};
\node (m2) [rectangle,draw=none,inner sep=0mm] at (-1,2.9) {\small  $2$};
\node (m3) [rectangle,draw=none,inner sep=0mm] at (-0.45,2.9) {\small  $2$};
\node (n3) [rectangle,draw=none,inner sep=0mm] at (0.3,0.275) {\small $3$};
\draw (0.3,0.1)--(1);
\draw (2)--(1.3,-5.8);
\draw (2)--(1.75,-2.6);
\draw (1)--(3);
\draw (3)--(-1,2.725);
\draw (-0.4,2.725)--(3)--(-1.6,2.725);
\end{tikzpicture}\hspace{-1cm} \begin{tikzpicture}
\node (1) [,rectangle,draw=black,rounded corners=.3cm,inner sep=2mm] at (0.2,-1.45) {\resizebox{1.65cm}{!}{\begin{tikzpicture}
    \node (E)[circle,draw=black,minimum size=4mm,inner sep=0.1mm] at (-0,0) {\small $1$};
    \node (F) [circle,draw=black,minimum size=4mm,inner sep=0.1mm] at (1,1) {\small $2$};
 \draw[-]   (E)--(-0.6,0.8);  
  \draw[-] (-0.9,0.8) -- (E)--(-0.2,0.8);  
    \draw[-] (E)--(0.6,0.8); 
 \draw[-] (0.2,0.8)-- (E)--(0,-0.55); 
 \draw[-] (1.25,1.8) -- (F)--(0.75,1.8);    
    \draw[-] (E)--(F) node [midway,left,yshift=0cm,xshift=0.55cm] {\small $v$};
\end{tikzpicture}}};
\node (2) [rectangle,draw=black,rounded corners=.3cm,inner sep=2mm] at (1.5,-4.55) {\resizebox{2.5cm}{!}{\begin{tikzpicture}
    \node (E)[circle,draw=black,minimum size=4mm,inner sep=0.1mm] at (-0,0) {\small $1$};
    \node (F) [circle,draw=black,minimum size=4mm,inner sep=0.1mm] at (-1,1) {\small $2$};
    \node (A) [circle,draw=black,minimum size=4mm,inner sep=0.1mm] at (-1,2) {\small $3$};
            \node (x)[circle,draw=none,minimum size=4mm,inner sep=0.1mm] at (-0.45,0.65) {\small $x$};
    \node (y) [circle,draw=none,minimum size=4mm,inner sep=0.1mm] at (-0.875,1.55) {\small $y$};
 \draw[-] (0.8,0.8) -- (E)--(-1.4,0.8); 
\draw[-] (-1.2,2.8) -- (A)--(-0.8,2.8); 
 \draw[-] (-0.2,0.8) -- (E)--(0.2,0.8);   
 \draw[-] (1.65,0.8) -- (E)--(1.2,0.8); 
 \draw[-] (E)--(0,-0.55); 
 \draw[-] (-1.5,1.8) -- (F)--(-0.5,1.8);   
    \draw[-] (E)--(F) node {};
    \draw[-] (F)--(A) node {};
   \end{tikzpicture}}};
\node (3) [rectangle,draw=black,rounded corners=.3cm,inner sep=2mm] at (-0.6,1.2) {\resizebox{0.9cm}{!}{\begin{tikzpicture}
    \node (E)[circle,draw=black,minimum size=4mm,inner sep=0.1mm] at (-0,0) {\small $1$};
    \node (F) [circle,draw=black,minimum size=4mm,inner sep=0.1mm] at (0,1) {\small $2$};
  \draw[-] (-0.6,0.8) -- (E)--(0.6,0.8);  
    \draw[-] (E)--(-0.3,0.8); 
 \draw[-] (0.3,0.8)-- (E)--(0,-0.55); 
 \draw[-] (-0.25,1.8) -- (F)--(0.25,1.8);    
    \draw[-] (E)--(F) node [midway,left,yshift=0.1cm,xshift=0.2cm] {\small $u$};
\end{tikzpicture}}};
\node (n3) [rectangle,draw=none,inner sep=0mm] at (1.5,-2.35) {\small $3$};
\node (m1) [rectangle,draw=none,inner sep=0mm] at (-1,2.9) {\small $5$};
\node (m2) [rectangle,draw=none,inner sep=0mm] at (-0.2,2.9) {\small $2$};
\node (m3) [rectangle,draw=none,inner sep=0mm] at (0.7,0.25) {\small $2$};
\node (n3) [rectangle,draw=none,inner sep=0mm] at (2.4,-2.35) {\small $2$};
\draw (1)--(2)--(1.5,-6.55);
\draw (2.4,-2.55)--(2)--(1.5,-2.55);
\draw[snake it]  (1)--(3);
\draw (1)--(0.7,0.1);
\draw (-1,2.725)--(3)--(-0.2,2.725);
\end{tikzpicture} \hspace{-1cm}  \begin{tikzpicture}
\node (1) [,rectangle,draw=black,rounded corners=.3cm,inner sep=2mm] at (0.2,-1.45) {\resizebox{1.35cm}{!}{\begin{tikzpicture}
    \node (E)[circle,draw=black,minimum size=4mm,inner sep=0.1mm] at (-0,0) {\small $1$};
    \node (F) [circle,draw=black,minimum size=4mm,inner sep=0.1mm] at (-0.2,1) {\small $2$};
 \draw[-]   (E)--(-0.6,0.8);  
  \draw[-] (-0.9,0.8) -- (E)--(0.9,0.8);  
    \draw[-] (E)--(0.6,0.8); 
 \draw[-] (0.2,0.8)-- (E)--(0,-0.55); 
 \draw[-] (-0.45,1.8) -- (F)--(0.05,1.8);    
    \draw[-] (E)--(F) node [midway,left,yshift=0cm,xshift=0.17cm] {\small $u$};
\end{tikzpicture}}};
\node (2) [rectangle,draw=black,rounded corners=.3cm,inner sep=2mm] at (1.5,-4.55) {\resizebox{2.5cm}{!}{\begin{tikzpicture}
    \node (E)[circle,draw=black,minimum size=4mm,inner sep=0.1mm] at (-0,0) {\small $1$};
    \node (F) [circle,draw=black,minimum size=4mm,inner sep=0.1mm] at (-1,1) {\small $2$};
    \node (A) [circle,draw=black,minimum size=4mm,inner sep=0.1mm] at (-1,2) {\small $3$};
            \node (x)[circle,draw=none,minimum size=4mm,inner sep=0.1mm] at (-0.45,0.65) {\small $x$};
    \node (y) [circle,draw=none,minimum size=4mm,inner sep=0.1mm] at (-0.875,1.55) {\small $y$};
 \draw[-] (0.8,0.8) -- (E)--(-1.4,0.8); 
\draw[-] (-1.2,2.8) -- (A)--(-0.8,2.8); 
 \draw[-] (-0.2,0.8) -- (E)--(0.2,0.8);   
 \draw[-] (1.65,0.8) -- (E)--(1.2,0.8); 
 \draw[-] (E)--(0,-0.55); 
 \draw[-] (-1.5,1.8) -- (F)--(-0.5,1.8);   
    \draw[-] (E)--(F) node {};
    \draw[-] (F)--(A) node {};
   \end{tikzpicture}}};
\node (3) [rectangle,draw=black,rounded corners=.3cm,inner sep=2mm] at (-0.6,1.2) {\resizebox{1.25cm}{!}{\begin{tikzpicture}
    \node (E)[circle,draw=black,minimum size=4mm,inner sep=0.1mm] at (-0,0) {\small $1$};
    \node (F) [circle,draw=black,minimum size=4mm,inner sep=0.1mm] at (0.8,1) {\small $2$};
 \draw[-]   (E)--(-0.6,0.8);  
  \draw[-] (-0.3,0.8) -- (E)--(0.3,0.8);  
 \draw[-] (0,0.8)-- (E)--(0,-0.55); 
 \draw[-] (1.05,1.8) -- (F)--(0.55,1.8);    
    \draw[-] (E)--(F) node [midway,left,yshift=0cm,xshift=0.55cm] {\small $v$};
\end{tikzpicture}}};
\node (n3) [rectangle,draw=none,inner sep=0mm] at (1.5,-2.35) {\small $3$};
\node (m1) [rectangle,draw=none,inner sep=0mm] at (-1,2.9) {\small $5$};
\node (m2) [rectangle,draw=none,inner sep=0mm] at (-0.2,2.9) {\small $2$};
\node (m3) [rectangle,draw=none,inner sep=0mm] at (0.7,0.25) {\small $2$};
\node (n3) [rectangle,draw=none,inner sep=0mm] at (2.4,-2.35) {\small $2$};
\draw (1)--(2)--(1.5,-6.55);
\draw (2.4,-2.55)--(2)--(1.5,-2.55);
\draw[snake it]  (1)--(3);
\draw (1)--(0.7,0.1);
\draw (-1,2.725)--(3)--(-0.2,2.725);
\end{tikzpicture} 
\end{center}
in which the curly lines represent the edges that  arise from the splitting.
By counting the edges and the leaves on the left from and below those edges, we get $4$, $0$, $1$ and $1$, respectively.  Therefore,
$$d_{{\EuScript O}_{\infty}}(\raisebox{-1.15em}{\resizebox{1.2cm}{!}{\begin{tikzpicture}
\node (c) [circle,fill=WildStrawberry,draw=black,minimum size=0.1cm,inner sep=0.2mm,label={[xshift=-0.45cm,yshift=-0.32cm]{\footnotesize $\{x,y\}$}}] at (0,0) {};
 \node (d) [circle,fill=WildStrawberry,draw=black,minimum size=0.1cm,inner sep=0.2mm,label={[xshift=-0.45cm,yshift=-0.32cm]{\footnotesize $\{u,v\}$}}] at (0,0.35) {};
 \draw[thick]  (c)--(d);\end{tikzpicture}}}\,)=\raisebox{-1.4em}{\resizebox{2.05cm}{!}{\begin{tikzpicture}
\node (b) [circle,fill=cyan,draw=black,minimum size=0.1cm,inner sep=0.2mm,label={[yshift=-0.45cm]{\footnotesize $x$}}] at (4,-0.2) {};
\node (c) [circle,fill=cyan,draw=black,minimum size=0.1cm,inner sep=0.2mm,label={[xshift=-0.45cm,yshift=-0.31cm]{\footnotesize $\{u,v\}$}}] at (3.8,0.15) {};
 \node (d) [circle,fill=cyan,draw=black,minimum size=0.1cm,inner sep=0.2mm,label={[xshift=0.2cm,yshift=-0.3cm]{\footnotesize $y$}}] at (4.2,0.15) {};
 \draw[thick]  (c)--(b)--(d);\end{tikzpicture}}}+   \raisebox{-1.5em}{\resizebox{1.2cm}{!}{\begin{tikzpicture}
\node (b) [circle,fill=cyan,draw=black,minimum size=0.1cm,inner sep=0.2mm,label={[xshift=-0.22cm,yshift=-0.25cm]{\footnotesize $y$}}] at (0,0) {};
\node (c) [circle,fill=cyan,draw=black,minimum size=0.1cm,inner sep=0.2mm,label={[xshift=-0.22cm,yshift=-0.25cm]{\footnotesize $x$}}] at (0,0.35) {};
 \node (d) [circle,fill=cyan,draw=black,minimum size=0.1cm,inner sep=0.2mm,label={[xshift=-0.45cm,yshift=-0.32cm]{\footnotesize $\{u,v\}$}}] at (0,0.7) {};
 \draw[thick]  (b)--(c)--(d);\end{tikzpicture}}}\,-\raisebox{-1.7em}{\resizebox{1.2cm}{!}{\begin{tikzpicture}
\node (b) [circle,fill=cyan,draw=black,minimum size=0.1cm,inner sep=0.2mm,label={[xshift=-0.22cm,yshift=-0.25cm]{\footnotesize $u$}}] at (0,0.7) {};
\node (c) [circle,fill=cyan,draw=black,minimum size=0.1cm,inner sep=0.2mm,label={[xshift=-0.22cm,yshift=-0.26cm]{\footnotesize $v$}}] at (0,0.35) {};
 \node (d) [circle,fill=cyan,draw=black,minimum size=0.1cm,inner sep=0.2mm,label={[xshift=-0.45cm,yshift=-0.32cm]{\footnotesize $\{x,y\}$}}] at (0,0) {};
 \draw[thick]  (b)--(c)--(d);\end{tikzpicture}}} \,-\raisebox{-1.7em}{\resizebox{1.2cm}{!}{\begin{tikzpicture}
\node (b) [circle,fill=cyan,draw=black,minimum size=0.1cm,inner sep=0.2mm,label={[xshift=-0.22cm,yshift=-0.25cm]{\footnotesize $v$}}] at (0,0.7) {};
\node (c) [circle,fill=cyan,draw=black,minimum size=0.1cm,inner sep=0.2mm,label={[xshift=-0.22cm,yshift=-0.26cm]{\footnotesize $u$}}] at (0,0.35) {};
 \node (d) [circle,fill=cyan,draw=black,minimum size=0.1cm,inner sep=0.2mm,label={[xshift=-0.45cm,yshift=-0.32cm]{\footnotesize $\{x,y\}$}}] at (0,0) {};
 \draw[thick]  (b)--(c)--(d);\end{tikzpicture}}}\,\, .$$ 
Observe that the geometric interpretation of this differential is the boundary of the square of the 3-dimensional hemiassociahedron encoded by \raisebox{-1.15em}{\resizebox{1.2cm}{!}{\begin{tikzpicture}
\node (c) [circle,fill=WildStrawberry,draw=black,minimum size=0.1cm,inner sep=0.2mm,label={[xshift=-0.45cm,yshift=-0.32cm]{\footnotesize $\{x,y\}$}}] at (0,0) {};
 \node (d) [circle,fill=WildStrawberry,draw=black,minimum size=0.1cm,inner sep=0.2mm,label={[xshift=-0.45cm,yshift=-0.32cm]{\footnotesize $\{u,v\}$}}] at (0,0.35) {};
 \draw[thick]  (c)--(d);\end{tikzpicture}}}    (see Example \ref{compositionconstructs}).
\demo
\end{example}
In the following technical lemma, we prove that $({\EuScript O}_{\infty},d_{{\EuScript O}_{\infty}})$ is indeed a dg operad. For the sake of readability, we shall   write $({\cal T},C)$ for what is actually $({\cal T},\sigma,C)$. 
\begin{lemma}\label{dok} The map $d$ has the following properties:
\begin{itemize}
\item[1.]  $(d_{{\EuScript O}_{\infty}})^2=0$, and
\item[2.] the composition structure of ${\EuScript O}_{\infty}$ is compatible with $d_{{\EuScript O}_{\infty}}$, i.e. $$\enspace d_{{\EuScript O}_{\infty}}(({\cal T}_1,C_1)\circ_i ({\cal T}_2,C_2))=d_{{\EuScript O}_{\infty}}({\cal T}_1,C_1)\circ_i ({\cal T}_2,C_2)+(-1)^{|C_1|}({\cal T}_1,C_1)\circ_i d_{{\EuScript O}_{\infty}}({\cal T}_2,C_2).$$
\end{itemize}
\end{lemma}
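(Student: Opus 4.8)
The two assertions are of a very similar flavour: both say that a certain signed sum of ``split one vertex'' operations, carried out on a composite construct $C_1\bullet_iC_2$, reorganises into the expected expression. The unifying tool is the free-operad description of ${\EuScript O}_\infty$ established in Theorem~\ref{free} together with its graded refinement: under the isomorphism $\alpha$, a triple $({\cal T},\sigma,C)$ becomes a planar rooted tree whose nodes are decorated by left-recursive operadic trees and whose shape (modulo leaves) is that of $C$, the composition $\circ_i$ becomes grafting of such trees, and the differential $d_{{\EuScript O}_\infty}$ becomes exactly the edge-wise expansion of a decomposable element in a free dg operad. Thus I would first translate everything through $\alpha$, so that $d_{{\EuScript O}_\infty}$ acts by ``choosing a vertex of the planar tree $\alpha({\cal T},\sigma,C)$ and splitting the left-recursive operadic tree decorating it, in all admissible ways'' with the sign $(-1)^\delta$ prescribed by $E_{\leq X\{Y\}}$. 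The point of this translation is that the Leibniz rule and $d^2=0$ then become the statements that a derivation determined on generators extends to the free operad, and the only genuine content is that our prescribed signs match the canonical signs of the free dg operad.

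\textbf{Proof of part~2 (Leibniz rule).} Fix $({\cal T}_1,C_1)\circ_i({\cal T}_2,C_2)=(-1)^\varepsilon({\cal T}_1\bullet_i{\cal T}_2,C_1\bullet_iC_2)$. By the non-inductive characterisation of $C_1\bullet_iC_2$ (the lemma following Lemma~\ref{wd}), $v(C_1\bullet_iC_2)=v(C_1)\sqcup v(C_2)$, and for a vertex $V\in v(C_1\bullet_iC_2)$ with $|V|\ge2$ the admissible splittings $X\{Y\}:{\bf H}_{({\cal T}_1\bullet_i{\cal T}_2)_V}$ are in bijection (via Lemma~\ref{connectededges} and Lemma~\ref{wd0}) with the admissible splittings of $V$ viewed inside $C_1$ (if $V\in v(C_1)$) or inside $C_2$ (if $V\in v(C_2)$) --- this uses that the subtree $({\cal T}_1\bullet_i{\cal T}_2)_V$ equals $({\cal T}_1)_V$ resp.\ $({\cal T}_2)_V$ whenever $V$ lies entirely on one side, which it always does since $v(C_1)$ and $v(C_2)$ are disjoint subsets of edges of disjoint subtrees. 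Splitting a $V\in v(C_1)$ produces, by the inductive clauses defining $C[X\{Y\}/V]$ and $C_1\bullet_iC_2$, exactly $(C_1[X\{Y\}/V])\bullet_iC_2$; splitting a $V\in v(C_2)$ produces $C_1\bullet_i(C_2[X\{Y\}/V])$. So the underlying unsigned terms of $d_{{\EuScript O}_\infty}(C_1\bullet_iC_2)$ are precisely those of $d_{{\EuScript O}_\infty}(C_1)\bullet_iC_2$ together with those of $C_1\bullet_i d_{{\EuScript O}_\infty}(C_2)$. It then remains to match the exponents modulo $2$: for a split on the $C_1$-side one must show
$$\delta\big(\alpha({\cal T}_1\bullet_i{\cal T}_2,(C_1[X\{Y\}/V])\bullet_iC_2)\big)\ \equiv\ \delta\big(\alpha({\cal T}_1,C_1[X\{Y\}/V])\big)+\varepsilon'+\varepsilon,$$
where $\varepsilon'$ is the sign produced when $\circ_i$ is applied \emph{after} the split on the first factor, and for a split on the $C_2$-side that $\delta$ on the left equals $\delta$ on the right plus $\varepsilon+|C_1|$. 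These are routine leaf-and-edge counts in the grafted planar tree: inserting $\alpha({\cal T}_2,C_2)$ (resp.\ its split) into the $i$-th input of $\alpha({\cal T}_1,\cdot)$ shifts the set of ``edges and leaves on the left of / below'' a fixed edge by a block of $E(\alpha({\cal T}_2,\cdot))-1$ elements exactly when that edge lies to the right of the grafting input, and the contributing quantities are controlled by $E_{i>}$, which is exactly how $\varepsilon$ was defined. I would do this bookkeeping the same way as in the proof of the preceding lemma (comparing $\varepsilon_1$ and $\varepsilon_2$), where an identical ``block shift'' argument already appears.

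\textbf{Proof of part~1 ($d^2=0$).} Writing $d_{{\EuScript O}_\infty}$ through $\alpha$ as a sum over pairs (node of the planar tree, admissible split of its decorating left-recursive tree), the composite $d_{{\EuScript O}_\infty}^2$ is a sum over \emph{ordered} pairs of such splits. Two splits performed at two distinct nodes commute up to a sign that is the Koszul sign of transposing the two degree-$1$ operations; since each such unordered pair occurs twice with opposite signs, these contributions cancel --- this is exactly the $d^2=0$ argument for any free dg operad with a decomposable differential, and it reduces to part~2 applied twice at the level of generators. The remaining contributions come from two successive splits at the \emph{same} vertex $V$ of $C$: the first splits $V$ into an edge $X\{Y\}$, i.e.\ replaces $V$ by two vertices $X$ and $Y$; the second must then split one of these, say $X$, again. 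Thus the genuine identity to be checked is the local one on a single vertex $V=e({\cal T}_V)$ of a left-recursive operadic tree: the signed double sum
$$\sum_{X\cup Y=V}\ \sum_{\substack{X=X'\cup Y'}} (-1)^{\delta_1+\delta_2}\,\big[(X'\{Y'\})\{Y\}\big]\quad +\quad \sum_{X\cup Y=V}\ \sum_{\substack{Y=X''\cup Y''}} (-1)^{\delta_1+\delta_3}\,\big[X\{X''\{Y''\}\}\big]$$
vanishes. Here each nested construct $(X'\{Y'\})\{Y\}$ and $X\{X''\{Y''\}\}$ arises exactly \emph{twice}: the first from the ordered chain $V\rightsquigarrow X\sqcup Y\rightsquigarrow X'\sqcup Y'\sqcup Y$, the second --- after renaming --- from $V\rightsquigarrow (X'\cup Y)\sqcup Y'\rightsquigarrow$ (split $X'\cup Y$ into $X'$ and $Y$) $\rightsquigarrow X'\sqcup Y'\sqcup Y$, and one checks that $X'\{Y'\}:{\bf H}_{({\cal T}_V)_{X'\cup Y'}}$ together with $(X'\cup Y')\{Y\}:{\bf H}_{{\cal T}_V}$ hold iff $X'\{Y'\}:{\bf H}_{({\cal T}_V)_{X}}$ and $X\{Y\}:{\bf H}_{{\cal T}_V}$ hold --- this is precisely the associativity of the order relation $\le_{\bf H}$ guaranteed by the second and third rules defining $\le_{\bf H}$ in \S\ref{abspol}. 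So the admissible iterated splits match up in pairs; it then suffices to verify that in each matched pair the two $\delta$-exponents differ by an odd number. This is again a planar-tree edge count: inserting the two ``curly'' edges that record the two splits into $\alpha({\cal T}_V,\cdot)$ in the two possible orders changes the count ``on the left of / below'' the older curly edge by exactly the block of edges and leaves contributed by the younger one, forcing a sign flip. I expect \textbf{this last sign-matching in part~1} to be the main obstacle: unlike the Leibniz computation, one is comparing two chains of operations on the \emph{same} subtree whose intermediate constructs differ, so one must be careful that ``the edge $X\{Y\}$ of $C[X\{Y\}/V]$'' and ``the edge $(X'\cup Y')\{Y\}$ of the intermediate construct'' sit in comparable positions in the respective planar embeddings; the cleanest route is to fix once and for all a reference planar embedding of ${\cal T}$, express both $\delta$-exponents as $E_{\le(\cdot)}$ evaluated in that fixed embedding (which is legitimate since $\alpha$ only ever \emph{restricts} the ambient planar structure of ${\cal T}$), and then the two exponents visibly differ by $E_{\le}$ of the second curly edge relative to the first, which is odd because the second edge always lies weakly to the left of / below the first. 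Granting this, $d_{{\EuScript O}_\infty}^2=0$ follows, completing the proof.
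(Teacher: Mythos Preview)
Your approach is essentially correct and very close to the paper's, with one organizational difference worth noting.

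For part~2, you and the paper do the same thing: split the sum defining $d_{{\EuScript O}_\infty}(C_1\bullet_iC_2)$ according to whether $V\in v(C_1)$ or $V\in v(C_2)$, then match signs by edge/leaf counting in the grafted planar tree. The paper is slightly more explicit: in the $V\in v(C_1)$ case it further subdivides according to whether the new edge $X\{Y\}$ lies in $E_{\le i}$ or in $E_{i>}$ of $\alpha({\cal T}_1,C_1[X\{Y\}/V])$, and writes out $\delta$ and $\varepsilon$ in each sub-case; in the $V\in v(C_2)$ case it uses the partition $E_{i>}\cup E_{\le i}\cup\{i\}=E(\alpha({\cal T}_1,C_1))\setminus\{r\}$ to extract the global $(-1)^{|C_1|}$. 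Your ``block shift'' description is exactly this.

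For part~1, the organization differs. The paper works directly on an arbitrary construct $C$ and does case analysis on the relative configuration of the two vertices being split, showing that in each case the two signs differ by exactly one edge in the $E_{\le}$-count; it writes out one representative case (two distinct vertices $V_1,V_2$ with $V_1$ above $V_2$ and the path passing through $Y_2$). You instead use part~2 to reduce $d^2=0$ to generators---valid, since $d^2$ is again a derivation on the free operad---and then treat the generator case as a local two-step split of a single vertex. This is a legitimate and somewhat cleaner reduction; what the paper's worked example covers is precisely the piece you dismiss as ``standard free-dg-operad cancellation.''

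One correction to your final sentence: ``odd because the second edge always lies weakly to the left of / below the first'' is not the right formulation---there is no fixed first and second, and neither edge need lie below/left of the other in general. The correct statement, which is what the paper asserts, is that of the two curly edges in the final planar tree, \emph{exactly one} belongs to the $E_{\le}$-set of the other; hence inserting them in the two possible orders produces $\delta$-totals that differ by exactly~$1$. That single observation (checked case by case on the relative position of the two edges) is what closes the argument.
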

  \begin{proof}\enspace 1. The proof that $d_{{\EuScript O}_{\infty}}$ squares to zero goes by  case analysis relative to the configuration of vertices of $C$ that got split in constructing the  two occurences  of the same summand $({\cal T},C')$ in $d_{{\EuScript O}_{\infty}}^2({\cal T},C)$, by showing that  the corresponding summands have the opposite sign. This is an easy analysis of the relative edge positions: in all the cases, there will exist exactly one edge that is counted in calculating the sign of one of the two instances of $({\cal T},C')$, but not in calculating the sign of the other one.

\smallskip For example, suppose that $C'$ is obtained by splitting two different vertices of $C$, i.e. that $$C'=C[X_1\{Y_1\}/V_1][X_2\{Y_2\}/V_2]=C[X_2\{Y_2\}/V_2][X_1\{Y_1\}/V_1]$$ for some $V_1,V_2\in v(C)$. Suppose, moreover, that $V_1$ is above $V_2$ in $C$, and let $V_2\{U\}$ be the first edge on the path from $V_2$ to $V_1$. Suppose finally that, after splitting $V_2$, the edge $V_2\{U\}$ splits into $X_2\{Y_2\{U\}\}$, i.e. that the vertex $Y_2$ stays on the path from $X_2$ to $X_1$.  Under these assumptions on the shape of $C'$, let $p_1$ (resp. $p_2$) be the number of internal edges between $V_1$ and $V_2$ (resp. below $V_2$) in $\alpha({\cal T},C)$, and let $l_{i}$, for $i=1,2$, be the number of edges and leaves on the left from the edge $X_i\{Y_i\}$  in $\alpha({\cal T},C[X_i\{Y_i\}/V_i])$. Finally, let $l$ be the number of edges and leaves on the left from the edge $V_2\{U\}$  in $\alpha({\cal T},C[X_1\{Y_1\}/V_1])$. The signs of the operations $C[X_1\{Y_1\}/V_1][X_2\{Y_2\}/V_2]$ and $C[X_2\{Y_2\}/V_2][X_1\{Y_1\}/V_1]$ are then induced from the sums $$\underbrace{l_1+p_1+p_2+l}_\text{$X_1\{Y_1\}/V_1$} +\underbrace{l_2+p_2}_{X_2\{Y_2\}/V_2} \quad\mbox{ and }\quad \underbrace{l_2+p_2}_{X_2\{Y_2\}/V_2}+\underbrace{l_1+p_1+p_2+l+1}_{X_1\{Y_1\}/V_1},$$ respectively, and the conclusion follows since they differ by $1$.

\medskip 

2. Denote ${\cal T}={\cal T}_1\bullet_i {\cal T}_2$ and $C=C_1\bullet_i C_2$. For the left-hand side of the equality, we  have 
$$\begin{array}{rcl}
d_{{\EuScript O}_{\infty}}(({\cal T}_1,C_1)\circ_i ({\cal T}_2,C_2))&=& \displaystyle\sum_{\substack{V\in v(C) \\ |V|\geq 2}}\sum_{\substack{(X,Y) \\ X\cup Y=V \\ X\{Y\}:{\bf H}_{{\cal T}_V}}} (-1)^{\delta+\varepsilon} ({\cal T},C[X\{Y\}/V])
\end{array}$$
where    $\delta=E_{\leq X\{Y\}}(\alpha({\cal T},C[X(Y)/V]))$ and $\varepsilon= E_{i>}(\alpha({\cal T}_1,C_1))\cdot (E(\alpha({\cal T}_2,C_2))-1)$.   We prove the stated equality by case analysis with respect to the origin of the vertex $V$ relative to $v(C_1)$ and $v(C_2)$, and, if $V\in v(C_1)$, the position of the edge $X\{Y\}$ relative to the leaf $i$ of $\alpha({\cal T}_1,C_1)$.

\medskip

 Suppose that $V\in v(C_1)$.
 \begin{itemize}
 \item   If $X\{Y\}\in E_{\leq i}(\alpha({\cal T}_1,C_1[X(Y)/V]))$, then \\[-0.3cm]
 \begin{itemize}
 \item $E_{\leq X\{Y\}}(\alpha({\cal T}_1,C_1[X\{Y\}/V]))=\delta$, and\\[-0.3cm]
 \item $E_{i>}(\alpha({\cal T}_1,C_1[X\{Y\}/V]))\cdot (E(\alpha({\cal T}_2,C_2))-1)=\varepsilon$.\\[-0.3cm]
 \end{itemize}
 \item If $X\{Y\}\in E_{i>}(\alpha({\cal T}_1,C_1[X\{Y\}/V]))$, then \\[-0.3cm]
 \begin{itemize}
 \item $E_{\leq X\{Y\}}(\alpha({\cal T}_1,C_1[X\{Y\}/V]))=\delta-(E(\alpha({\cal T}_2,C_2))-1)$,  and\\[-0.3cm]
 \item $E_{i>}(\alpha({\cal T}_1,C_1[X\{Y\}/V]))\cdot (E(\alpha({\cal T}_2,C_2))-1)=\varepsilon+E(\alpha({\cal T}_2,C_2))-1$.\\[-0.3cm]
 \end{itemize}
 \end{itemize}
 In both cases, we have that 
$$E_{\leq X\{Y\}}(\alpha({\cal T}_1,C_1[X\{Y\}/V]))+E_{i>}(\alpha( {\cal T}_1,C_1[X\{Y\}/V]))\cdot (E(\alpha({\cal T}_2,C_2))-1)=\delta+\varepsilon,$$
which means that $(-1)^{\delta+\varepsilon} ({\cal T},C[X\{Y\}/V])$ appears as a summand in
$d_{{\EuScript O}_{\infty}}({\cal T}_1,C_1)\circ_i ({\cal T}_2,C_2)$.
 
 \medskip

  If $V\in V(C_2)$, then \\[-0.3cm]
  \begin{itemize}
  \item  $E_{\leq X\{Y\}}(\alpha({\cal T}_2,C_2[X\{Y\}/V]))=\delta-E_{\leq i}(\alpha({\cal T}_1,C_1))-1$, and \\[-0.3cm]
  \item $E_{i>}(\alpha({\cal T}_1,C_1))\cdot (E(\alpha({\cal T}_2,C_2[X\{Y\}/V]))-1)=\varepsilon-E_{i>}(\alpha({\cal T}_1,C_1)).$ \\[-0.3cm]
  \end{itemize}

\noindent   Observe that 
\begin{equation}\label{set1}E_{{ i>}}(\alpha({\cal T}_1,C_1))\cup E_{\leq i}(\alpha({\cal T}_1,C_1))\cup \{i\}={\it E}(\alpha({\cal T}_1,C_1))\backslash\{\rho({\cal T}_1)\}.\end{equation} Therefore,   if $|({\cal T}_1,C_1)|$ is even (resp. odd), then the cardinality of \eqref{set1} is even (resp. odd), and, hence,  
$$\delta-E_{\leq i}(\alpha({\cal T}_1,C_1))-1+\varepsilon-E_{{ i>}}(\alpha({\cal T}_1,C_1))=_{\mbox{mod}\,2}\delta+\varepsilon$$
 $$\mbox{(resp. } \delta-E_{\leq i}(\alpha({\cal T}_1,C_1))-1+\varepsilon-E_{{ i>}}(\alpha({\cal T}_1,C_1))=_{\mbox{mod}\,2}\delta+\varepsilon+1 \mbox{ ),}$$ meaning that $(-1)^{\delta+\varepsilon} ({\cal T},C[X\{Y\}/V])$ appears as a summand in $(-1)^{|({\cal T}_1,C_1)|}({\cal T}_1,C_1)\circ_i d_{{\EuScript O}_{\infty}}({\cal T}_2,C_2)$.
 
 \medskip
 
The opposite direction is treated by an analogous analysis. \end{proof}

\smallskip

\subsubsection{The homology of $({\EuScript O}_{\infty},d_{{\EuScript O}_{\infty}})$}\label{Ohomology} We first prove that, for $k\geq 4$, $${\EuScript O}^{k-2}_{\infty}(n_1,\dots,n_k;n)\xrightarrow{\,d^{k-2}_{{\EuScript O}_{\infty}}(n_1,\dots,n_k;n)\,}\enspace\cdots\enspace\xrightarrow{\,d^{1}_{{\EuScript O}_{\infty}}(n_1,\dots,n_k;n)\,}{\EuScript O}^{0}_{\infty}(n_1,\dots,n_k;n)$$
is an exact sequence.
\smallskip
\begin{thm} For   $0<m\leq k-2$, we have that  ${\mbox{\em Ker}}\, d_{{\EuScript O}_{\infty}}^m(n_1,\dots,n_k;n)= {\mbox{\em Im}}\, d_{{\EuScript O}_{\infty}}^{m+1}(n_1,\dots,n_k;n)$. 
\end{thm}
\begin{proof}
Since $d_{{\EuScript O}_{\infty}}$ squares to zero, we have that ${\mbox{ Im}}\, d_{{\EuScript O}_{\infty}}^{m+1}(n_1,\dots,n_k;n)\subseteq {\mbox{ Ker}}\, d_{{\EuScript O}_{\infty}}^m(n_1,\dots,n_k;n)$. 

\smallskip

We prove the other direction by defining an algorithm ${\cal G}$ that takes as an input an element \linebreak $L\in${\mbox{ Ker}}\, $d_{{\EuScript O}_{\infty}}^m(n_1,\dots,n_k;n)$,  and returns an element ${\cal G}(L)$ of degree $m+1$, such that  $$d_{{\EuScript O}_{\infty}}^{m+1}(n_1,\dots,n_k;n)({\cal G}(L))=L.$$  Observe that
 we may assume, without loss of generality, that the linear combination $L$ is made of triples  whose first two components are all given by the same operadic tree ${\cal T}=({\cal T},\sigma)$, i.e.  that $$L=k_1({\cal T},C_1)+\cdots +k_p ({\cal T},C_p),$$ where, moreover, $k_i\in\{+1,-1\}$. (The proof for the general case where $k_i$'s are arbitrary integers is based on an easy adaptation of the algorithm ${\cal G}$, which takes into account the number of occurences of counstructs in $L$.)
 The element ${\cal G}(L)$ is reconstructed by the following procedure.

\medskip
 
\paragraph{\textsf{Step 1: Diamonds.}}  For each pair of indices $i,j\in \{1,\dots,p\}$, such that $i\neq j$, we are first going to construct an element $k_{ij}({\cal T},C_{ij})$ of degree $m+1$, in the following way.
\begin{itemize}
\item  If there exists a vertex $Z$ of $C_i$ and a vertex $W$ of $C_j$, such that $c_{ij}=C_i[X\{Y\}/Z]=C_j[U\{V\}/W]$ (i.e. if $C_i$ and $C_j$ have a common face $c_{ij}$ of dimension $m-1$), and such that $C_i[X\{Y\}/Z]$ and $C_j[U\{V\}/W]$ appear with the opposite sign  in $d_{{\EuScript O}_{\infty}}^{m}(n_1,\dots,n_k;n)(L)$,  then   we set $C_{ij}$ to be  the construct defined by collapsing the edge $U\{V\}$ of $C_i$ (or, equivalently, by collapsing the edge $X\{Y\}$ of $C_j$). Note that,  if, say, $Y=U$, i.e. $Z\cap W=Y$, then the constructs $C_i$ and  $C_j$ result from splitting a single vertex of $C_{ij}$, namely the vertex decorated by $Z\cup W$, while, if the sets $X$, $Y$, $U$ and $V$ are mutually disjoint, then   $C_i$ and  $C_j$ result from splitting the vertices of $C_{ij}$ decorated by $Z$ and $W$, respectively.  Observe that the coefficient $k_{ij}\in\{+1,-1\}$ is uniquely determined by the shape of $({\cal T},C_{ij})$,  in combination with values $k_i$ and $k_j$. Indeed, suppose that the two occurences of $c_{ij}$ in $d_{{\EuScript O}_{\infty}}^{m}(n_1,\dots,n_k;n)(L)$ arising as faces of $C_i$ and $C_j$, have   signs $k_i(-1)^{\delta''_i}$ and $k_j(-1)^{\delta''_j}$, respectively, where $k_i(-1)^{\delta''_i}=-k_j(-1)^{\delta''_j}$, and let $\delta'_i,   \delta'_j\geq 0$ be  the  integers tied to the  derivations leading from $C_{ij}$ to $C_i$ and $C_j$, respectively. The coefficient $k_{ij}$ is then   determined by $$k_{ij}:=k_i(-1)^{\delta'_i}=k_j(-1)^{\delta'_j},$$ which is justified by showing that $$\delta''_j-\delta''_i+1=_{\mbox{mod}\,2} \delta'_j-\delta'_i,$$ for all  $\delta'_i, \delta''_i,   \delta'_j, \delta''_j \geq 0$ that could arise with respect to the shape of $({\cal T},C_{ij})$. For example,  if $C_i$ and  $C_j$ result from splitting a single vertex of $C_{ij}$ in the way indicated above, and if $\delta'_j-\delta'_i=_{\mbox{mod}\,2} 0$, then, assuming additionally that the vertices $X$, $Y=U$ and $V$ are arranged one above the other   in  $c_{ij}$, the edge $X\{Y\}$ will contribute to $\delta''_j$, but not to $\delta''_i$, which will imply that $\delta''_j-\delta''_i+1=_{\mbox{mod}\,2} 0$.
\item Otherwise, we set $k_{ij}({\cal T},C_{ij})$ to be zero.
\end{itemize}
This procedure gives us a ``diamond'' 
\begin{equation} 
\begin{aligned} 
\begin{tikzpicture}
\node (t)[rectangle,draw=none] at (0,1) {\small $k_{ij}({\cal T},C_{ij})$};
\node (l)[rectangle,draw=none] at (-0.75,0) {\small $k_{i}({\cal T},C_{i})$};
\node (r) [rectangle,draw=none] at (0.75,0) {\small $k_{j}({\cal T},C_{j})$};
\node (b)[rectangle,draw=none] at (0,-1) {\small  $\pm c_{ij}$};
\draw (t)--(l)--(b)--(r)--(t);
\end{tikzpicture}
\end{aligned}\label{diamond}
\end{equation}
for each pair of distinct factors in $L$. Observe that the ``diamond property'' of abstract polytopes ensures that $k_{ij}({\cal T},C_{ij})$ is the unique element of degree $m+1$ that can be obtained in this way. In other words, if $k_{i}({\cal T},C_{i})$ and $k_{j}({\cal T},C_{j})$ lie in the boundary of some other element of degree $m+1$, then this element must correspond to two different faces of the polytope ${\cal A}({\bf H}_{\cal T})$. 

\smallskip

Let $G(L)$ be the  sum of the elements obtained by the above procedure: $$G(L)=\sum^p_{i=1} \sum^p_{\substack {j=1\\ j\neq i}}k_{ij} ({\cal T},C_{ij}).$$  Observe that, in general, it is not the case that   $d_{{\EuScript O}_{\infty}}^{m+1}(n_1,\dots,n_k;n)(G(L))=L$.  Indeed, a counterexample can be read from the cycle of the hemiassociahedron coloured in blue in the picture that follows.
\begin{center}
 \resizebox{8.45cm}{!}{\begin{tikzpicture}[thick,scale=23]
\coordinate (A1) at (-0.2,0);
\coordinate (A2) at (0.2,0); 
\coordinate (D1) at (-0.35,0.05);
\coordinate (D2) at (0.35,0.05);
\coordinate (D3) at (-0.18,0.1);
\coordinate (D4)  at (0.18,0.1);
\coordinate (E3) at (-0.18,0.34);
\coordinate (E4) at (0.18,0.34);
\coordinate (A3) at (-0.2,0.22);
\coordinate (A4) at (0.2,0.22);
 \coordinate (C1) at (-0.25,0.33);
\coordinate (C2) at (0.25,0.33);
 \coordinate (F1) at (-0.35,0.275);
\coordinate (F2) at (0.35,0.275);
 \coordinate (G1) at (-0.3,0.385);
\coordinate (G2) at (0.3,0.385);
 \coordinate (B3) at (-0.2,0.44);
\coordinate (B4) at (0.2,0.44);
\fill[fill=WildStrawberry!20,opacity=0.75] (A1)--(A2)--(A4)--(A3)--(A1);
\fill[fill=WildStrawberry!20,opacity=0.75] (A4) -- (C2) -- (B4) -- (B3) -- (C1)-- (A3)--(A4);
\draw[gray,dashed] (D3) -- (D4) -- (E4) -- (E3) -- cycle;
\draw[gray] (G1) -- (B3) -- (B4) -- (G2);
\draw[gray,dashed]   (G1) -- (E3); 
\draw[gray,dashed] (E4) -- (G2);
 \draw[gray] (F1) -- (G1) -- (B3) -- (C1)  --  cycle;
\draw[gray] (F2) -- (G2) -- (B4) -- (C2)  --  cycle;
\draw[gray,dashed] (D3) -- (D4) -- (E4) -- (E3) -- cycle;
\draw[gray,dashed] (D1)   -- (D3);
\draw[gray,dashed] (D2)   -- (D4);
\draw[gray] (F1)--(D1)--(A1)--(A2)--(D2)--(F2);
\draw[line width=0.9mm,cyan]  (A4) -- (C2) -- (B4) -- (B3) -- (C1)-- (A3);
\draw[line width=0.9mm,cyan] (A3)--(A1) -- (A2) -- (A4) ;
\draw[line width=0.5mm,dashed,cyan] (A3)--(A4);
  \node at  (-0.26,0.11)  {\resizebox{2.2cm}{!}{\begin{tikzpicture}
\node (b) [circle,fill=cyan,draw=black,minimum size=0.1cm,inner sep=0.2mm,label={[xshift=-0.22cm,yshift=-0.25cm]{\footnotesize $v$}}] at (0,0) {};
\node (c) [circle,fill=cyan,draw=black,minimum size=0.1cm,inner sep=0.2mm,label={[xshift=-0.22cm,yshift=-0.3cm]{\footnotesize $u$}}] at (0,0.35) {};
 \node (d) [circle,fill=cyan,draw=black,minimum size=0.1cm,inner sep=0.2mm,label={[xshift=-0.45cm,yshift=-0.32cm]{\footnotesize $\{x,y\}$}}] at (0,0.7) {};
 \draw[thick]  (b)--(c)--(d);\end{tikzpicture}}};
\node at  (0.3,0.38)  {\resizebox{2.2cm}{!}{\begin{tikzpicture}
\node (b) [circle,fill=cyan,draw=black,minimum size=0.1cm,inner sep=0.2mm,label={[xshift=0.22cm,yshift=-0.25cm]{\footnotesize $x$}}] at (0,0.7) {};
\node (c) [circle,fill=cyan,draw=black,minimum size=0.1cm,inner sep=0.2mm,label={[xshift=0.22cm,yshift=-0.26cm]{\footnotesize $v$}}] at (0,0.35) {};
 \node (d) [circle,fill=cyan,draw=black,minimum size=0.1cm,inner sep=0.2mm,label={[xshift=0.45cm,yshift=-0.32cm]{\footnotesize $\{u,y\}$}}] at (0,0) {};
 \draw[thick]  (b)--(c)--(d);\end{tikzpicture}}};
\node at  (-0.3,0.38)  {\resizebox{2.2cm}{!}{\begin{tikzpicture}
\node (b) [circle,fill=cyan,draw=black,minimum size=0.1cm,inner sep=0.2mm,label={[xshift=-0.22cm,yshift=-0.25cm]{\footnotesize $x$}}] at (0,0.7) {};
\node (c) [circle,fill=cyan,draw=black,minimum size=0.1cm,inner sep=0.2mm,label={[xshift=-0.22cm,yshift=-0.26cm]{\footnotesize $u$}}] at (0,0.35) {};
 \node (d) [circle,fill=cyan,draw=black,minimum size=0.1cm,inner sep=0.2mm,label={[xshift=-0.45cm,yshift=-0.32cm]{\footnotesize $\{v,y\}$}}] at (0,0) {};
 \draw[thick]  (b)--(c)--(d);\end{tikzpicture}}};
   \node at  (0,-0.06)  {\resizebox{2.2cm}{!}{\begin{tikzpicture}
\node (b) [circle,fill=cyan,draw=black,minimum size=0.1cm,inner sep=0.2mm,label={[xshift=-0.22cm,yshift=-0.25cm]{\footnotesize $y$}}] at (0,0.7) {};
\node (c) [circle,fill=cyan,draw=black,minimum size=0.1cm,inner sep=0.2mm,label={[xshift=-0.22cm,yshift=-0.26cm]{\footnotesize $x$}}] at (0,0.35) {};
 \node (d) [circle,fill=cyan,draw=black,minimum size=0.1cm,inner sep=0.2mm,label={[xshift=-0.45cm,yshift=-0.32cm]{\footnotesize $\{u,v\}$}}] at (0,0) {};
 \draw[thick]  (b)--(c)--(d);\end{tikzpicture}}};
    \node at  (0,0.1)  {\resizebox{2.2cm}{!}{\begin{tikzpicture}
\node (c) [circle,fill=WildStrawberry,draw=black,minimum size=0.1cm,inner sep=0.2mm,label={[xshift=-0.45cm,yshift=-0.32cm]{\footnotesize $\{x,y\}$}}] at (0,0.35) {};
 \node (d) [circle,fill=WildStrawberry,draw=black,minimum size=0.1cm,inner sep=0.2mm,label={[xshift=-0.45cm,yshift=-0.32cm]{\footnotesize $\{u,v\}$}}] at (0,0) {};
 \draw[thick]  (c)--(d);\end{tikzpicture}}};
 \node at  (0,0.3)  {\resizebox{2.9cm}{!}{\begin{tikzpicture}
\node (c) [circle,fill=WildStrawberry,draw=black,minimum size=0.1cm,inner sep=0.2mm,label={[xshift=-0.22cm,yshift=-0.25cm]{\footnotesize $x$}}] at (0,0.35) {};
 \node (d) [circle,fill=WildStrawberry,draw=black,minimum size=0.1cm,inner sep=0.2mm,label={[xshift=-0.6cm,yshift=-0.32cm]{\footnotesize $\{y,u,v\}$}}] at (0,0) {};
 \draw[thick]  (c)--(d);\end{tikzpicture}}};
 \node at  (0,0.5)  {\resizebox{2.2cm}{!}{\begin{tikzpicture}
\node (b) [circle,fill=cyan,draw=black,minimum size=0.1cm,inner sep=0.2mm,label={[xshift=0.22cm,yshift=-0.25cm]{\footnotesize $y$}}] at (0,0) {};
\node (c) [circle,fill=cyan,draw=black,minimum size=0.1cm,inner sep=0.2mm,label={[xshift=0.22cm,yshift=-0.25cm]{\footnotesize $x$}}] at (0,0.7) {};
 \node (d) [circle,fill=cyan,draw=black,minimum size=0.1cm,inner sep=0.2mm,label={[xshift=0.45cm,yshift=-0.32cm]{\footnotesize $\{u,v\}$}}] at (0,0.35) {};
 \draw[thick] (b)--(d)--(c);\end{tikzpicture}}};
 \node at  (-0.283,0.25)  {\resizebox{2.2cm}{!}{\begin{tikzpicture}
\node (b) [circle,fill=cyan,draw=black,minimum size=0.1cm,inner sep=0.2mm,label={[xshift=-0.22cm,yshift=-0.25cm]{\footnotesize $v$}}] at (0,0) {};
\node (c) [circle,fill=cyan,draw=black,minimum size=0.1cm,inner sep=0.2mm,label={[xshift=-0.22cm,yshift=-0.25cm]{\footnotesize $x$}}] at (0,0.7) {};
 \node (d) [circle,fill=cyan,draw=black,minimum size=0.1cm,inner sep=0.2mm,label={[xshift=-0.45cm,yshift=-0.33cm]{\footnotesize $\{u,y\}$}}] at (0,0.35) {};
 \draw[thick] (b)--(d)--(c);\end{tikzpicture}}};
\node at  (0.283,0.25)  {\resizebox{2.2cm}{!}{\begin{tikzpicture}
\node (b) [circle,fill=cyan,draw=black,minimum size=0.1cm,inner sep=0.2mm,label={[xshift=0.22cm,yshift=-0.25cm]{\footnotesize $u$}}] at (0,0) {};
\node (c) [circle,fill=cyan,draw=black,minimum size=0.1cm,inner sep=0.2mm,label={[xshift=0.22cm,yshift=-0.25cm]{\footnotesize $x$}}] at (0,0.7) {};
 \node (d) [circle,fill=cyan,draw=black,minimum size=0.1cm,inner sep=0.2mm,label={[xshift=0.45cm,yshift=-0.33cm]{\footnotesize $\{v,y\}$}}] at (0,0.35) {};
 \draw[thick] (b)--(d)--(c);\end{tikzpicture}}};
 \node at  (0.26,0.11)  {\resizebox{2.2cm}{!}{\begin{tikzpicture}
\node (b) [circle,fill=cyan,draw=black,minimum size=0.1cm,inner sep=0.2mm,label={[xshift=0.22cm,yshift=-0.25cm]{\footnotesize $u$}}] at (0,0) {};
\node (c) [circle,fill=cyan,draw=black,minimum size=0.1cm,inner sep=0.2mm,label={[xshift=0.22cm,yshift=-0.3cm]{\footnotesize $v$}}] at (0,0.35) {};
 \node (d) [circle,fill=cyan,draw=black,minimum size=0.1cm,inner sep=0.2mm,label={[xshift=0.45cm,yshift=-0.32cm]{\footnotesize $\{x,y\}$}}] at (0,0.7) {};
 \draw[thick]  (b)--(c)--(d);\end{tikzpicture}}};
\end{tikzpicture}}
\end{center}
In this case, the diamond  for the edges 
\begin{center}
\raisebox{-1.4em}{\resizebox{1.2cm}{!}{\begin{tikzpicture}
\node (b) [circle,fill=cyan,draw=black,minimum size=0.1cm,inner sep=0.2mm,label={[xshift=-0.22cm,yshift=-0.25cm]{\footnotesize $v$}}] at (0,0) {};
\node (c) [circle,fill=cyan,draw=black,minimum size=0.1cm,inner sep=0.2mm,label={[xshift=-0.22cm,yshift=-0.25cm]{\footnotesize $x$}}] at (0,0.7) {};
 \node (d) [circle,fill=cyan,draw=black,minimum size=0.1cm,inner sep=0.2mm,label={[xshift=-0.45cm,yshift=-0.32cm]{\footnotesize $\{u,y\}$}}] at (0,0.35) {};
 \draw[thick] (b)--(d)--(c);\end{tikzpicture}}} \hspace{0.2cm} and \hspace{-0.4cm}  \raisebox{-1.4em}{\resizebox{1.2cm}{!}{\begin{tikzpicture}
\node (b) [circle,fill=cyan,draw=black,minimum size=0.1cm,inner sep=0.2mm,label={[xshift=-0.22cm,yshift=-0.25cm]{\footnotesize $v$}}] at (0,0) {};
\node (c) [circle,fill=cyan,draw=black,minimum size=0.1cm,inner sep=0.2mm,label={[xshift=-0.22cm,yshift=-0.3cm]{\footnotesize $u$}}] at (0,0.35) {};
 \node (d) [circle,fill=cyan,draw=black,minimum size=0.1cm,inner sep=0.2mm,label={[xshift=-0.45cm,yshift=-0.32cm]{\footnotesize $\{x,y\}$}}] at (0,0.7) {};
 \draw[thick]  (b)--(c)--(d);\end{tikzpicture}}} \quad\quad (resp. \enspace\raisebox{-1.4em}{\resizebox{1.2cm}{!}{\begin{tikzpicture}
\node (b) [circle,fill=cyan,draw=black,minimum size=0.1cm,inner sep=0.2mm,label={[xshift=0.22cm,yshift=-0.25cm]{\footnotesize $u$}}] at (0,0) {};
\node (c) [circle,fill=cyan,draw=black,minimum size=0.1cm,inner sep=0.2mm,label={[xshift=0.22cm,yshift=-0.25cm]{\footnotesize $x$}}] at (0,0.7) {};
 \node (d) [circle,fill=cyan,draw=black,minimum size=0.1cm,inner sep=0.2mm,label={[xshift=0.45cm,yshift=-0.32cm]{\footnotesize $\{v,y\}$}}] at (0,0.35) {};
 \draw[thick] (b)--(d)--(c);\end{tikzpicture}}} and   \raisebox{-1.4em}{\resizebox{1.2cm}{!}{\begin{tikzpicture}
\node (b) [circle,fill=cyan,draw=black,minimum size=0.1cm,inner sep=0.2mm,label={[xshift=0.22cm,yshift=-0.25cm]{\footnotesize $u$}}] at (0,0) {};
\node (c) [circle,fill=cyan,draw=black,minimum size=0.1cm,inner sep=0.2mm,label={[xshift=0.22cm,yshift=-0.3cm]{\footnotesize $v$}}] at (0,0.35) {};
 \node (d) [circle,fill=cyan,draw=black,minimum size=0.1cm,inner sep=0.2mm,label={[xshift=0.45cm,yshift=-0.32cm]{\footnotesize $\{x,y\}$}}] at (0,0.7) {};
 \draw[thick]  (b)--(c)--(d);\end{tikzpicture}}}\,) 
\end{center}
will have the pentagon
\begin{center}
\raisebox{-1em}{\resizebox{1.6cm}{!}{\begin{tikzpicture}
\node (c) [circle,fill=WildStrawberry,draw=black,minimum size=0.1cm,inner sep=0.2mm,label={[xshift=-0.6cm,yshift=-0.32cm]{\footnotesize $\{x,y,u\}$}}] at (0,0.35) {};
 \node (d) [circle,fill=WildStrawberry,draw=black,minimum size=0.1cm,inner sep=0.2mm,label={[xshift=-0.22cm,yshift=-0.25cm]{\footnotesize $v$}}] at (0,0) {};
 \draw[thick]  (c)--(d);\end{tikzpicture}}} \quad\quad (resp. \enspace  \raisebox{-1em}{\resizebox{1.6cm}{!}{\begin{tikzpicture}
\node (c) [circle,fill=WildStrawberry,draw=black,minimum size=0.1cm,inner sep=0.2mm,label={[xshift=0.6cm,yshift=-0.32cm]{\footnotesize $\{x,y,v\}$}}] at (0,0.35) {};
 \node (d) [circle,fill=WildStrawberry,draw=black,minimum size=0.1cm,inner sep=0.2mm,label={[xshift=0.22cm,yshift=-0.25cm]{\footnotesize $u$}}] at (0,0) {};
 \draw[thick]  (c)--(d);\end{tikzpicture}}})
\end{center}
 as the top element, which violates the equality $d_{{\EuScript O}_{\infty}}^{2}(n_1,\dots,n_k;n)(G(L))=L$.  

\medskip

\paragraph{\textsf{Step 2: Correction.}} In the next step of the procedure, we remove  ``bad'' factors from $G(L)$, as follows. We consider the Hasse diagram constructed by going  ``downwards'' starting from the constructs of $G(L)$, i.e., by applying two succesive derivations to $G(L)$. Observe that such a diagram may contain  diamonds which are not among the diamonds constructed in the first step of the algorithm. We shall remove from $G(L)$  each factor $k_{ij}({\cal T},C_{ij})$ for which there exist two other diamonds (in the Hasse diagram) which, together with the diamond of $k_{ij}({\cal T},C_{ij})$, make the following configuration:
\begin{center}
\begin{tikzpicture}
\node (tl)[rectangle,draw=none] at (-2.4,1) {\small $k'_{i}({\cal T},C'_{i})$};
\node (tr)[rectangle,draw=none] at (2.4,1) {\small $k'_{j}({\cal T},C'_{j})$};
\node (t)[rectangle,draw=none] at (0,1) {\small $k_{ij}({\cal T},C_{ij})$};
\node (l)[rectangle,draw=none] at (-0.75,0) {\small $k_{i}({\cal T},C_{i})$};
\node (r) [rectangle,draw=none] at (0.75,0) {\small $k_{j}({\cal T},C_{j})$};
\node (ll)[rectangle,draw=none] at (-2.4,0) {\small $k'({\cal T},C')$};
\node (rr) [rectangle,draw=none] at (2.4,0) {\small $-k'({\cal T},C')$};
\node (b)[rectangle,draw=none] at (0,-1) {\small  $\pm c_{ij}$};
\draw (t)--(l)--(b)--(r)--(t);
\draw (ll)--(b)--(rr);
\draw (ll)--(tl)--(l);
\draw (rr)--(tr)--(r);
\end{tikzpicture}
\end{center}
(note the two occurences of $k'({\cal T},C')$ with the opposite sign at dimension $m$).  
  For the cycle $L$ from the   example above, the correction part of the algorithm will remove precisely the pentagons 
\begin{center}
\raisebox{-1em}{\resizebox{1.6cm}{!}{\begin{tikzpicture}
\node (c) [circle,fill=WildStrawberry,draw=black,minimum size=0.1cm,inner sep=0.2mm,label={[xshift=-0.6cm,yshift=-0.32cm]{\footnotesize $\{x,y,u\}$}}] at (0,0.35) {};
 \node (d) [circle,fill=WildStrawberry,draw=black,minimum size=0.1cm,inner sep=0.2mm,label={[xshift=-0.22cm,yshift=-0.25cm]{\footnotesize $v$}}] at (0,0) {};
 \draw[thick]  (c)--(d);\end{tikzpicture}}} \quad\quad  and \quad\quad  \raisebox{-1em}{\resizebox{1.6cm}{!}{\begin{tikzpicture}
\node (c) [circle,fill=WildStrawberry,draw=black,minimum size=0.1cm,inner sep=0.2mm,label={[xshift=0.6cm,yshift=-0.32cm]{\footnotesize $\{x,y,v\}$}}] at (0,0.35) {};
 \node (d) [circle,fill=WildStrawberry,draw=black,minimum size=0.1cm,inner sep=0.2mm,label={[xshift=0.22cm,yshift=-0.25cm]{\footnotesize $u$}}] at (0,0) {};
 \draw[thick]  (c)--(d);\end{tikzpicture}}}.
\end{center} 
  We define ${\cal G}(L)$ to be the sum that remains after the removal of all ``bad'' factors from $G(L)$. Observe that, if ${\cal G}(L)=G(L)$, then  $L$ is the boundary of mutually non-adjacent faces of ${\cal A}({\bf H}_{\cal T})$.
\medskip

We now prove that the algorithm ${\cal G}$ is sound, i.e. that  $d_{{\EuScript O}_{\infty}}^{m+1}(n_1,\dots,n_k;n)({\cal G}(L))=L$. We start by showing that, in the  correction step of the algorithm, we did not remove {\em too much} from $G(L)$, i.e. that all the summands of $L$ occur in $d_{{\EuScript O}_{\infty}}^{m+1}(n_1,\dots,n_k;n)({\cal G}(L))$ with the same coefficients. Notice that this holds trivially if ${\cal G}(L)=G(L)$. Suppose, therefore, that there exist  indices $i,j\in\{1,\dots,p\}$, $i\neq j$, such that the  top element $k_{ij}({\cal T},C_{ij})$ of the diamond \eqref{diamond} has been removed from $G(L)$. The proof that we can always find  other diamonds that contain $C_i$ and $C_{j}$ in dimension $m$, and whose top elements belongs to ${\cal G}(L)$
 relies on the fact that $L$ is a cycle, in the following way. We consider all the elements in the boundary of $C_i$, and, for each such element, we take all the associated diamonds that contain  $C_i$ at dimension $m$ (the existence of at least one such a diamond is ensured by the fact that $L$ is a cycle). If none of these diamonds is accepted, then all the zig-zag's of $(m-1)$-dimensional and $m$-dimensional constructs that ``go along $L$'' will end with an $(m-1)$-dimensional construct whose cancellation requires a construct outside of $L$. The rigorous proof can be extracted from the above example of the cycle $L$ of the hemiassociahedron, by taking, for example,
\begin{center}
  $C_i=$\,\,\raisebox{-1.5em}{\resizebox{1.2cm}{!}{\begin{tikzpicture}
\node (b) [circle,fill=cyan,draw=black,minimum size=0.1cm,inner sep=0.2mm,label={[xshift=0.22cm,yshift=-0.25cm]{\footnotesize $u$}}] at (0,0) {};
\node (c) [circle,fill=cyan,draw=black,minimum size=0.1cm,inner sep=0.2mm,label={[xshift=0.22cm,yshift=-0.25cm]{\footnotesize $x$}}] at (0,0.7) {};
 \node (d) [circle,fill=cyan,draw=black,minimum size=0.1cm,inner sep=0.2mm,label={[xshift=0.45cm,yshift=-0.32cm]{\footnotesize $\{v,y\}$}}] at (0,0.35) {};
 \draw[thick] (b)--(d)--(c);\end{tikzpicture}}} \quad and \quad $C_j=$\,\,\raisebox{-1.5em}{\resizebox{1.2cm}{!}{\begin{tikzpicture}
\node (b) [circle,fill=cyan,draw=black,minimum size=0.1cm,inner sep=0.2mm,label={[xshift=0.22cm,yshift=-0.25cm]{\footnotesize $u$}}] at (0,0) {};
\node (c) [circle,fill=cyan,draw=black,minimum size=0.1cm,inner sep=0.2mm,label={[xshift=0.22cm,yshift=-0.3cm]{\footnotesize $v$}}] at (0,0.35) {};
 \node (d) [circle,fill=cyan,draw=black,minimum size=0.1cm,inner sep=0.2mm,label={[xshift=0.45cm,yshift=-0.32cm]{\footnotesize $\{x,y\}$}}] at (0,0.7) {};
 \draw[thick]  (b)--(c)--(d);\end{tikzpicture}}}
\end{center}

In this case, the pentagon \raisebox{-1em}{\resizebox{1.6cm}{!}{\begin{tikzpicture}
\node (c) [circle,fill=WildStrawberry,draw=black,minimum size=0.1cm,inner sep=0.2mm,label={[xshift=0.6cm,yshift=-0.32cm]{\footnotesize $\{x,y,v\}$}}] at (0,0.35) {};
 \node (d) [circle,fill=WildStrawberry,draw=black,minimum size=0.1cm,inner sep=0.2mm,label={[xshift=0.22cm,yshift=-0.25cm]{\footnotesize $u$}}] at (0,0) {};
 \draw[thick]  (c)--(d);\end{tikzpicture}}} that has been removed from $G(L)$, has been obtained by  collapsing the top edge of both $C_i$ and $C_j$. By collapsing the bottom edge of both  $C_i$ and $C_j$, we obtain two new diamonds:

\begin{center}
\begin{tikzpicture}
\node(D1) at (0,-0.5) {$D_1$};\node(D2) at (4,-0.5) {$D_2$};
\node (t)[rectangle,draw=none] at (0,1.5) {\resizebox{2.75cm}{!}{\begin{tikzpicture}
\node(k) at (1.1,0) {};
\node (c) [circle,fill=WildStrawberry,draw=black,minimum size=0.1cm,inner sep=0.2mm,label={[xshift=-0.22cm,yshift=-0.25cm]{\footnotesize $x$}}] at (0,0.35) {};
 \node (d) [circle,fill=WildStrawberry,draw=black,minimum size=0.1cm,inner sep=0.2mm,label={[xshift=-0.6cm,yshift=-0.32cm]{\footnotesize $\{y,u,v\}$}}] at (0,0) {};
 \draw[thick]  (c)--(d);\end{tikzpicture}}};
\node (l)[rectangle,draw=none] at (-1,-0.5) {\resizebox{!}{1.3cm}{\begin{tikzpicture}
\node(k) at (0.5,0) {};
\node (b) [circle,fill=cyan,draw=black,minimum size=0.1cm,inner sep=0.2mm,label={[xshift=-0.22cm,yshift=-0.25cm]{\footnotesize $x$}}] at (0,0.7) {};
\node (c) [circle,fill=cyan,draw=black,minimum size=0.1cm,inner sep=0.2mm,label={[xshift=-0.22cm,yshift=-0.26cm]{\footnotesize $v$}}] at (0,0.35) {};
 \node (d) [circle,fill=cyan,draw=black,minimum size=0.1cm,inner sep=0.2mm,label={[xshift=-0.45cm,yshift=-0.32cm]{\footnotesize $\{u,y\}$}}] at (0,0) {};
 \draw[thick]  (b)--(c)--(d);\end{tikzpicture}}};
\node (rrr)[rectangle,draw=none] at (5,-0.55) {\resizebox{!}{1.25cm}{\begin{tikzpicture}
\node(k) at (-0.5,-0) {};
\node (b) [circle,fill=cyan,draw=black,minimum size=0.1cm,inner sep=0.2mm,label={[xshift=0.22cm,yshift=-0.32cm]{\footnotesize $y$}}] at (0,0.7) {};
\node (c) [circle,fill=cyan,draw=black,minimum size=0.1cm,inner sep=0.2mm,label={[xshift=0.22cm,yshift=-0.25cm]{\footnotesize $x$}}] at (0,0.35) {};
 \node (d) [circle,fill=cyan,draw=black,minimum size=0.1cm,inner sep=0.2mm,label={[xshift=0.45cm,yshift=-0.32cm]{\footnotesize $\{u,v\}$}}] at (0,0) {};
 \draw[thick]  (b)--(c)--(d);\end{tikzpicture}}};
\node (r) [rectangle,draw=none] at (1,-0.5) {\resizebox{!}{1.2cm}{\begin{tikzpicture}
\node(k) at (-0.5,0) {};
\node (b) [circle,fill=cyan,draw=black,minimum size=0.1cm,inner sep=0.2mm,label={[xshift=0.22cm,yshift=-0.25cm]{\footnotesize $u$}}] at (0,0) {};
\node (c) [circle,fill=cyan,draw=black,minimum size=0.1cm,inner sep=0.2mm,label={[xshift=0.22cm,yshift=-0.25cm]{\footnotesize $x$}}] at (0,0.7) {};
 \node (d) [circle,fill=cyan,draw=black,minimum size=0.1cm,inner sep=0.2mm,label={[xshift=0.45cm,yshift=-0.32cm]{\footnotesize $\{v,y\}$}}] at (0,0.35) {};
 \draw[thick] (b)--(d)--(c);\end{tikzpicture}}};
\node (b)[rectangle,draw=none] at (0,-2.75) {{\resizebox{1cm}{!}{\begin{tikzpicture}
\node (b) [circle,fill=ForestGreen,draw=black,minimum size=0.1cm,inner sep=0.2mm,label={[xshift=-0.22cm,yshift=-0.25cm]{\footnotesize $u$}}] at (0,0) {};
\node (c) [circle,fill=ForestGreen,draw=black,minimum size=0.1cm,inner sep=0.2mm,label={[xshift=-0.22cm,yshift=-0.27cm]{\footnotesize $y$}}] at (0,0.35) {};
 \node (d) [circle,fill=ForestGreen,draw=black,minimum size=0.1cm,inner sep=0.2mm,label={[xshift=-0.22cm,yshift=-0.25cm]{\footnotesize $v$}}] at (0,0.7) {};
  \node (a) [circle,fill=ForestGreen,draw=black,minimum size=0.1cm,inner sep=0.2mm,label={[xshift=-0.22cm,yshift=-0.25cm]{\footnotesize $x$}}] at (0,1.05) {};
\node(k) at (0.35,0) {};
 \draw[thick]  (b)--(c)--(d)--(a);\end{tikzpicture}}}};
\node (k1)[rectangle,draw=none] at (2,-2.75) {{\resizebox{1cm}{!}{\begin{tikzpicture}
\node (b1) [circle,fill=ForestGreen,draw=black,minimum size=0.1cm,inner sep=0.2mm,label={[xshift=-0.22cm,yshift=-0.25cm]{\footnotesize $u$}}] at (0,0) {};
\node (c1) [circle,fill=ForestGreen,draw=black,minimum size=0.1cm,inner sep=0.2mm,label={[xshift=-0.22cm,yshift=-0.25cm]{\footnotesize $v$}}] at (0,0.35) {};
 \node (d1) [circle,fill=ForestGreen,draw=black,minimum size=0.1cm,inner sep=0.2mm,label={[xshift=-0.22cm,yshift=-0.27cm]{\footnotesize $y$}}] at (0,0.7) {};
  \node (a1) [circle,fill=ForestGreen,draw=black,minimum size=0.1cm,inner sep=0.2mm,label={[xshift=-0.22cm,yshift=-0.25cm]{\footnotesize $x$}}] at (0,1.05) {};
\node(k) at (0.35,0) {};
 \draw[thick]  (b1)--(c1)--(d1)--(a1);\end{tikzpicture}}}};
\node (k12)[rectangle,draw=none] at (4,-2.75) {{\resizebox{1cm}{!}{\begin{tikzpicture}
\node (b1) [circle,fill=ForestGreen,draw=black,minimum size=0.1cm,inner sep=0.2mm,label={[xshift=-0.22cm,yshift=-0.25cm]{\footnotesize $u$}}] at (0,0) {};
\node (c1) [circle,fill=ForestGreen,draw=black,minimum size=0.1cm,inner sep=0.2mm,label={[xshift=-0.22cm,yshift=-0.25cm]{\footnotesize $v$}}] at (0,0.35) {};
 \node (d1) [circle,fill=ForestGreen,draw=black,minimum size=0.1cm,inner sep=0.2mm,label={[xshift=-0.22cm,yshift=-0.25cm]{\footnotesize $x$}}] at (0,0.7) {};
  \node (a1) [circle,fill=ForestGreen,draw=black,minimum size=0.1cm,inner sep=0.2mm,label={[xshift=-0.22cm,yshift=-0.27cm]{\footnotesize $y$}}] at (0,1.05) {};
\node(k) at (0.35,0) {};
 \draw[thick]  (b1)--(c1)--(d1)--(a1);\end{tikzpicture}}}};
\node (tq)[rectangle,draw=none] at (2,1.59) {\resizebox{2.75cm}{!}{\begin{tikzpicture}
\node(kq) at (1.1,0) {};
\node (cq) [circle,fill=WildStrawberry,draw=black,minimum size=0.1cm,inner sep=0.2mm,label={[xshift=-0.22cm,yshift=-0.25cm]{\footnotesize $u$}}] at (0,0) {};
 \node (dq) [circle,fill=WildStrawberry,draw=black,minimum size=0.1cm,inner sep=0.2mm,label={[xshift=-0.6cm,yshift=-0.32cm]{\footnotesize $\{x,y,v\}$}}] at (0,0.35) {};
 \draw[thick]  (cq)--(dq);\end{tikzpicture}}};
  \node (jo) at  (4,1.55)  {\resizebox{2cm}{!}{\begin{tikzpicture}
\node (aa) at (-0.75,0) {};
\node (c) [circle,fill=WildStrawberry,draw=black,minimum size=0.1cm,inner sep=0.2mm,label={[xshift=0.45cm,yshift=-0.32cm]{\footnotesize $\{x,y\}$}}] at (0,0.35) {};
 \node (d) [circle,fill=WildStrawberry,draw=black,minimum size=0.1cm,inner sep=0.2mm,label={[xshift=0.45cm,yshift=-0.32cm]{\footnotesize $\{u,v\}$}}] at (0,0) {};
 \draw[thick]  (c)--(d);\end{tikzpicture}}};
\node (mq1) [rectangle,draw=none] at (3,-0.45) {\resizebox{!}{1.3cm}{\begin{tikzpicture}
\node (kq1) at (0.5,0) {};
\node (bq1) [circle,fill=cyan,draw=black,minimum size=0.1cm,inner sep=0.2mm,label={[xshift=-0.22cm,yshift=-0.25cm]{\footnotesize $u$}}] at (0,0) {};
\node (cq1) [circle,fill=cyan,draw=black,minimum size=0.1cm,inner sep=0.2mm,label={[xshift=-0.22cm,yshift=-0.3cm]{\footnotesize $v$}}] at (0,0.35) {};
 \node (dq1) [circle,fill=cyan,draw=black,minimum size=0.1cm,inner sep=0.2mm,label={[xshift=-0.45cm,yshift=-0.32cm]{\footnotesize $\{x,y\}$}}] at (0,0.7) {};
 \draw[thick]  (bq1)--(cq1)--(dq1);\end{tikzpicture}}};
\draw (t)--(l)--(b)--(r)--(t);
\draw (r)--(k1)--(mq1)--(jo);
\draw (mq1)--(k12)--(rrr)--(jo);
\draw[dashed] (mq1)--(tq)--(r);
\end{tikzpicture}
\end{center}
Observe that, in the construction of $D_1$ and $D_2$, the fact that $L$ is a cycle has been used in order to ensure  that the constructs  
\begin{center}
\resizebox{!}{1.3cm}{\begin{tikzpicture}
\node (b) [circle,fill=cyan,draw=black,minimum size=0.1cm,inner sep=0.2mm,label={[xshift=-0.22cm,yshift=-0.25cm]{\footnotesize $x$}}] at (0,0.7) {};
\node (c) [circle,fill=cyan,draw=black,minimum size=0.1cm,inner sep=0.2mm,label={[xshift=-0.22cm,yshift=-0.26cm]{\footnotesize $v$}}] at (0,0.35) {};
 \node (d) [circle,fill=cyan,draw=black,minimum size=0.1cm,inner sep=0.2mm,label={[xshift=-0.45cm,yshift=-0.32cm]{\footnotesize $\{u,y\}$}}] at (0,0) {};
 \draw[thick]  (b)--(c)--(d);\end{tikzpicture}} \quad\quad \raisebox{1.55em}{and} \quad\quad \resizebox{!}{1.25cm}{\begin{tikzpicture}
\node (b) [circle,fill=cyan,draw=black,minimum size=0.1cm,inner sep=0.2mm,label={[xshift=0.22cm,yshift=-0.32cm]{\footnotesize $y$}}] at (0,0.7) {};
\node (c) [circle,fill=cyan,draw=black,minimum size=0.1cm,inner sep=0.2mm,label={[xshift=0.22cm,yshift=-0.25cm]{\footnotesize $x$}}] at (0,0.35) {};
 \node (d) [circle,fill=cyan,draw=black,minimum size=0.1cm,inner sep=0.2mm,label={[xshift=0.45cm,yshift=-0.32cm]{\footnotesize $\{u,v\}$}}] at (0,0) {};
 \draw[thick]  (b)--(c)--(d);\end{tikzpicture}}
\end{center}
are indeed present in $L$, since, otherwise, the constructs 
\begin{center}
\resizebox{1cm}{!}{\begin{tikzpicture}
\node (b) [circle,fill=ForestGreen,draw=black,minimum size=0.1cm,inner sep=0.2mm,label={[xshift=-0.22cm,yshift=-0.25cm]{\footnotesize $u$}}] at (0,0) {};
\node (c) [circle,fill=ForestGreen,draw=black,minimum size=0.1cm,inner sep=0.2mm,label={[xshift=-0.22cm,yshift=-0.27cm]{\footnotesize $y$}}] at (0,0.35) {};
 \node (d) [circle,fill=ForestGreen,draw=black,minimum size=0.1cm,inner sep=0.2mm,label={[xshift=-0.22cm,yshift=-0.25cm]{\footnotesize $v$}}] at (0,0.7) {};
  \node (a) [circle,fill=ForestGreen,draw=black,minimum size=0.1cm,inner sep=0.2mm,label={[xshift=-0.22cm,yshift=-0.25cm]{\footnotesize $x$}}] at (0,1.05) {};
\node(k) at (0.35,0) {};
 \draw[thick]  (b)--(c)--(d)--(a);\end{tikzpicture}} \quad\quad \raisebox{2em}{and} \quad\quad \resizebox{1cm}{!}{\begin{tikzpicture}
\node (b1) [circle,fill=ForestGreen,draw=black,minimum size=0.1cm,inner sep=0.2mm,label={[xshift=-0.22cm,yshift=-0.25cm]{\footnotesize $u$}}] at (0,0) {};
\node (c1) [circle,fill=ForestGreen,draw=black,minimum size=0.1cm,inner sep=0.2mm,label={[xshift=-0.22cm,yshift=-0.25cm]{\footnotesize $v$}}] at (0,0.35) {};
 \node (d1) [circle,fill=ForestGreen,draw=black,minimum size=0.1cm,inner sep=0.2mm,label={[xshift=-0.22cm,yshift=-0.25cm]{\footnotesize $x$}}] at (0,0.7) {};
  \node (a1) [circle,fill=ForestGreen,draw=black,minimum size=0.1cm,inner sep=0.2mm,label={[xshift=-0.22cm,yshift=-0.27cm]{\footnotesize $y$}}] at (0,1.05) {};
\node(k) at (0.35,0) {};
 \draw[thick]  (b1)--(c1)--(d1)--(a1);\end{tikzpicture}}
\end{center}
would not cancel out. Finally, the fact that the top elements of $D_1$ and $D_2$ are accepted by the algorithm can be demonstrated as follows. Suppose, for example, that $D_1$ is rejected by the algorithm. Starting from $C_i$, 
\begin{center}
\raisebox{1em}{go along the boundary of} \resizebox{2.75cm}{!}{\begin{tikzpicture}
\node(k) at (1.1,0) {};
\node (c) [circle,fill=WildStrawberry,draw=black,minimum size=0.1cm,inner sep=0.2mm,label={[xshift=-0.22cm,yshift=-0.25cm]{\footnotesize $x$}}] at (0,0.35) {};
 \node (d) [circle,fill=WildStrawberry,draw=black,minimum size=0.1cm,inner sep=0.2mm,label={[xshift=-0.6cm,yshift=-0.32cm]{\footnotesize $\{y,u,v\}$}}] at (0,0) {};
 \draw[thick]  (c)--(d);\end{tikzpicture}}\hspace{-0.75cm} \raisebox{1em}{in the direction of} \raisebox{-0.5em}{\resizebox{!}{1.253cm}{\begin{tikzpicture}
\node (b) [circle,fill=cyan,draw=black,minimum size=0.1cm,inner sep=0.2mm,label={[xshift=-0.22cm,yshift=-0.25cm]{\footnotesize $x$}}] at (0,0.7) {};
\node (c) [circle,fill=cyan,draw=black,minimum size=0.1cm,inner sep=0.2mm,label={[xshift=-0.22cm,yshift=-0.26cm]{\footnotesize $v$}}] at (0,0.35) {};
 \node (d) [circle,fill=cyan,draw=black,minimum size=0.1cm,inner sep=0.2mm,label={[xshift=-0.45cm,yshift=-0.32cm]{\footnotesize $\{u,y\}$}}] at (0,0) {};
 \draw[thick]  (b)--(c)--(d);\end{tikzpicture}}}
\end{center}
 making a zig-zag of vertices and edges, as long as the corresponding  edges  occur  in $L$. Suppose that \begin{center}\resizebox{!}{1.253cm}{\begin{tikzpicture}
\node (b) [circle,fill=cyan,draw=black,minimum size=0.1cm,inner sep=0.2mm,label={[xshift=0.22cm,yshift=-0.25cm]{\footnotesize $y$}}] at (0,0) {};
\node (c) [circle,fill=cyan,draw=black,minimum size=0.1cm,inner sep=0.2mm,label={[xshift=0.22cm,yshift=-0.25cm]{\footnotesize $x$}}] at (0,0.7) {};
 \node (d) [circle,fill=cyan,draw=black,minimum size=0.1cm,inner sep=0.2mm,label={[xshift=0.45cm,yshift=-0.32cm]{\footnotesize $\{u,v\}$}}] at (0,0.35) {};
 \draw[thick] (b)--(d)--(c);\end{tikzpicture}}\end{center}
is the last edge of  \raisebox{-1em}{\resizebox{2.75cm}{!}{\begin{tikzpicture}
\node(k) at (1.1,0) {};
\node (c) [circle,fill=WildStrawberry,draw=black,minimum size=0.1cm,inner sep=0.2mm,label={[xshift=-0.22cm,yshift=-0.25cm]{\footnotesize $x$}}] at (0,0.35) {};
 \node (d) [circle,fill=WildStrawberry,draw=black,minimum size=0.1cm,inner sep=0.2mm,label={[xshift=-0.6cm,yshift=-0.32cm]{\footnotesize $\{y,u,v\}$}}] at (0,0) {};
 \draw[thick]  (c)--(d);\end{tikzpicture}}} \hspace{-0.9cm} that occurs in $L$. Since $L$ is a cycle, in order for the vertex \begin{center}
\resizebox{1cm}{!}{\begin{tikzpicture}
\node (b) [circle,fill=ForestGreen,draw=black,minimum size=0.1cm,inner sep=0.2mm,label={[xshift=-0.22cm,yshift=-0.27cm]{\footnotesize $y$}}] at (0,0) {};
\node (c) [circle,fill=ForestGreen,draw=black,minimum size=0.1cm,inner sep=0.2mm,label={[xshift=-0.22cm,yshift=-0.25cm]{\footnotesize $v$}}] at (0,0.35) {};
 \node (d) [circle,fill=ForestGreen,draw=black,minimum size=0.1cm,inner sep=0.2mm,label={[xshift=-0.22cm,yshift=-0.25cm]{\footnotesize $u$}}] at (0,0.7) {};
  \node (a) [circle,fill=ForestGreen,draw=black,minimum size=0.1cm,inner sep=0.2mm,label={[xshift=-0.22cm,yshift=-0.25cm]{\footnotesize $x$}}] at (0,1.05) {};
\node(k) at (0.35,0) {};
 \draw[thick]  (b)--(c)--(d)--(a);\end{tikzpicture}}\end{center}  to cancel out, the edge \raisebox{-1.5em}{\resizebox{1.2cm}{!}{\begin{tikzpicture}
\node (b) [circle,fill=cyan,draw=black,minimum size=0.1cm,inner sep=0.2mm,label={[xshift=-0.22cm,yshift=-0.27cm]{\footnotesize $y$}}] at (0,0) {};
\node (c) [circle,fill=cyan,draw=black,minimum size=0.1cm,inner sep=0.2mm,label={[xshift=-0.22cm,yshift=-0.25cm]{\footnotesize $v$}}] at (0,0.35) {};
 \node (d) [circle,fill=cyan,draw=black,minimum size=0.1cm,inner sep=0.2mm,label={[xshift=-0.45cm,yshift=-0.32cm]{\footnotesize $\{u,x\}$}}] at (0,0.7) {};
 \draw[thick]  (b)--(c)--(d);\end{tikzpicture}}}\, must occur in $L$. We then have two possible cases, both leading to a contradiction.
\begin{itemize}
\item If the top hexagon \raisebox{-0.7em}{\resizebox{2.95cm}{!}{\begin{tikzpicture}
\node(k) at (1.1,0) {};
\node (c) [circle,fill=WildStrawberry,draw=black,minimum size=0.1cm,inner sep=0.2mm,label={[xshift=-0.6cm,yshift=-0.32cm]{\footnotesize $\{x,u,v\}$}}] at (0,0.35) {};
 \node (d) [circle,fill=WildStrawberry,draw=black,minimum size=0.1cm,inner sep=0.2mm,label={[xshift=-0.22cm,yshift=-0.27cm]{\footnotesize $y$}}] at (0,0) {};
 \draw[thick]  (c)--(d);\end{tikzpicture}}} \hspace{-1cm} is accepted by the algorithm, then the square \raisebox{-1em}{\resizebox{1.2cm}{!}{\begin{tikzpicture}
\node (c) [circle,fill=WildStrawberry,draw=black,minimum size=0.1cm,inner sep=0.2mm,label={[xshift=-0.45cm,yshift=-0.32cm]{\footnotesize $\{x,v\}$}}] at (0,0.35) {};
 \node (d) [circle,fill=WildStrawberry,draw=black,minimum size=0.1cm,inner sep=0.2mm,label={[xshift=-0.45cm,yshift=-0.32cm]{\footnotesize $\{u,y\}$}}] at (0,0) {};
 \draw[thick]  (c)--(d);\end{tikzpicture}}} on the right must also be accepted, since, otherwise, the vertex 
\begin{center}
\resizebox{1cm}{!}{\begin{tikzpicture}
\node (b) [circle,fill=ForestGreen,draw=black,minimum size=0.1cm,inner sep=0.2mm,label={[xshift=-0.22cm,yshift=-0.27cm]{\footnotesize $y$}}] at (0,0) {};
\node (c) [circle,fill=ForestGreen,draw=black,minimum size=0.1cm,inner sep=0.2mm,label={[xshift=-0.22cm,yshift=-0.25cm]{\footnotesize $u$}}] at (0,0.35) {};
 \node (d) [circle,fill=ForestGreen,draw=black,minimum size=0.1cm,inner sep=0.2mm,label={[xshift=-0.22cm,yshift=-0.25cm]{\footnotesize $v$}}] at (0,0.7) {};
  \node (a) [circle,fill=ForestGreen,draw=black,minimum size=0.1cm,inner sep=0.2mm,label={[xshift=-0.22cm,yshift=-0.25cm]{\footnotesize $x$}}] at (0,1.05) {};
\node(k) at (0.35,0) {};
 \draw[thick]  (b)--(c)--(d)--(a);\end{tikzpicture}}\end{center} would not cancel out. Consequently, in order for the vertex \begin{center}
\resizebox{1cm}{!}{\begin{tikzpicture}
\node (b) [circle,fill=ForestGreen,draw=black,minimum size=0.1cm,inner sep=0.2mm,label={[xshift=-0.22cm,yshift=-0.25cm]{\footnotesize $u$}}] at (0,0) {};
\node (c) [circle,fill=ForestGreen,draw=black,minimum size=0.1cm,inner sep=0.2mm,label={[xshift=-0.22cm,yshift=-0.27cm]{\footnotesize $y$}}] at (0,0.35) {};
 \node (d) [circle,fill=ForestGreen,draw=black,minimum size=0.1cm,inner sep=0.2mm,label={[xshift=-0.22cm,yshift=-0.25cm]{\footnotesize $v$}}] at (0,0.7) {};
  \node (a) [circle,fill=ForestGreen,draw=black,minimum size=0.1cm,inner sep=0.2mm,label={[xshift=-0.22cm,yshift=-0.25cm]{\footnotesize $x$}}] at (0,1.05) {};
\node(k) at (0.35,0) {};
 \draw[thick]  (b)--(c)--(d)--(a);\end{tikzpicture}}
\end{center} to cancel out, the pentagon \raisebox{-1em}{\resizebox{1.6cm}{!}{\begin{tikzpicture}
\node (c) [circle,fill=WildStrawberry,draw=black,minimum size=0.1cm,inner sep=0.2mm,label={[xshift=0.6cm,yshift=-0.32cm]{\footnotesize $\{x,y,v\}$}}] at (0,0.35) {};
 \node (d) [circle,fill=WildStrawberry,draw=black,minimum size=0.1cm,inner sep=0.2mm,label={[xshift=0.22cm,yshift=-0.25cm]{\footnotesize $u$}}] at (0,0) {};
 \draw[thick]  (c)--(d);\end{tikzpicture}}} must be accepted by the algorithm.  
\item By the analogous agruments, one concludes that, if the top hexagon is not accepted by the algorithm, then   the square \raisebox{-1em}{\resizebox{1.2cm}{!}{\begin{tikzpicture}
\node (c) [circle,fill=WildStrawberry,draw=black,minimum size=0.1cm,inner sep=0.2mm,label={[xshift=-0.45cm,yshift=-0.32cm]{\footnotesize $\{x,u\}$}}] at (0,0.35) {};
 \node (d) [circle,fill=WildStrawberry,draw=black,minimum size=0.1cm,inner sep=0.2mm,label={[xshift=-0.45cm,yshift=-0.32cm]{\footnotesize $\{v,y\}$}}] at (0,0) {};
 \draw[thick]  (c)--(d);\end{tikzpicture}}} and the pentagon \raisebox{-1em}{\resizebox{1.6cm}{!}{\begin{tikzpicture}
\node (c) [circle,fill=WildStrawberry,draw=black,minimum size=0.1cm,inner sep=0.2mm,label={[xshift=-0.6cm,yshift=-0.32cm]{\footnotesize $\{x,y,u\}$}}] at (0,0.35) {};
 \node (d) [circle,fill=WildStrawberry,draw=black,minimum size=0.1cm,inner sep=0.2mm,label={[xshift=-0.22cm,yshift=-0.25cm]{\footnotesize $v$}}] at (0,0) {};
 \draw[thick]  (c)--(d);\end{tikzpicture}}} on the left must be accepted.
\end{itemize}

Finally, the fact that all the summands of  $d_{{\EuScript O}_{\infty}}^{m+1}(n_1,\dots,n_k;n)({\cal G}(L))$ occur in $L$, i.e. that, if the top element of a diamond \eqref{diamond} is accepted in ${\cal G}(L)$, and if $C_{ij}$ contains a face $C$ that does not appear in $L$, then there exists another diamond, whose top element is also accepted in ${\cal G}(L)$ and   has $C$ as a face,  and in  which $C$ occurs with the opposite sign  than the one of its copy in  \eqref{diamond}, follows simply by the construction of ${\cal G}(L)$. Indeed, if such a diamond would not exist, then $C_{ij}$ would be removed in the correction step of the algorithm.\end{proof}
\smallskip
 For $m=0$, we have that  ${\mbox{Ker}}\, d^{0}_{{\EuScript O}_{\infty}}(n_1,\dots,n_k;n)={\EuScript O}^{0}_{\infty}(n_1,\dots,n_k;n)$,
since constructions   have no vertices that could be split. As for the image of   $d_{{\EuScript O}_{\infty}}^{1}(n_1,\dots,n_k;n)$, we have
\smallskip
$${\mbox{Im}}\, d_{{\EuScript O}_{\infty}}^{1}(n_1,\dots,n_k;n)={\textsf{Span}}_{{\Bbbk}}\Bigg(\displaystyle \bigoplus_{({\cal T},\sigma)\in {\tt {Tree}}(n_1,\dots,n_k;n)}\, \bigoplus_{C:{\bf H}_{\cal T}, |C|=1} \pm C[x\{y\}/\{x,y\}] \pm C[y\{x\}/\{x,y\}] \Bigg),$$ \smallskip
where $\{x,y\}$ is the unique two-element vertex of $C$ and the signs are determined 
by the position of the vertices $x$ and $y$ in ${\bf H}_{\cal T}$ (considered as the edge-graph with levels), using the criterion from \ref{edgegraph1}: if the shortest path between $x$ and $y$ is made of vertical edges only, then one of the vertices $x$ and $y$   is above the other and the   construction which respects this position gets multiplied by $-$, and the other one by $+$; otherwise, both  constructions   get the sign $+$. By collapsing ${\mbox{Im}}\, d_{1}$ to zero, for each ${\cal T}\in {\EuScript O}(n_1,\dots,n_k;n)$,  all the constructions of ${\bf H}_{\cal T}$  get glued into a single equivalence class; in particular, different trees from ${\EuScript O}(n_1,\dots,n_k;n)$ give rise to different classes. Therefore, 
\smallskip $${\mbox{Ker}}\, d^m_{{\EuScript O}_{\infty}}(n_1,\dots,n_k;n)/{\mbox{Im}}\, d^{m+1}_{{\EuScript O}_{\infty}}(n_1,\dots,n_k;n)=\begin{cases}
\{0\}, \enspace m\neq 0\\
{\EuScript O}(n_1,\dots,n_k;n), \enspace m= 0.
\end{cases}$$
\smallskip
which entails that $H({\EuScript O}_{\infty},d_{{\EuScript O}_{\infty}})\cong H({\EuScript O},0)$. The witnessing  quasi-isomorphism $\alpha_{\EuScript O}:{\EuScript O}_{\infty}\rightarrow {\EuScript O}$    is  simply  the first projection on degree zero, and the zero map elsewhere. 

\medskip

Having established the freeness of ${\EuScript O}_{\infty}$, the decomposability of $d_{{\EuScript O}_{\infty}}$ and the quasi-isomorphism with ${\EuScript O}$, we can now finally conclude.
\smallskip
\begin{thm}\label{Omin}
The operad ${\EuScript O}_{\infty}$ is the minimal model for the operad ${\EuScript O}$.
\end{thm}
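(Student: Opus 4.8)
The plan is to assemble the three ingredients that have been prepared in \S\ref{oinf}: freeness, decomposability of the differential, and the quasi-isomorphism with ${\EuScript O}$, and to check that together they verify Definition \ref{minimalmodel}. Concretely, I would first recall that by the graded version of Theorem \ref{free} (the lemma immediately following it), ${\EuScript O}_{\infty}$ is the free ${\mathbb N}$-coloured graded operad on the collection $E$ given by the equivalence classes of left-recursive operadic trees, placed in degree zero (note each generator $({\cal T},e({\cal T}))$ has $|({\cal T},e({\cal T}))|=0$, so the generators indeed live in the right place and the free-operad differential conventions of \cite[Section 5.8.7]{LV} apply). Second, I would invoke Lemma \ref{dok} to conclude that $({\EuScript O}_{\infty},d_{{\EuScript O}_{\infty}})$ is a genuine dg operad: part~1 gives $d_{{\EuScript O}_{\infty}}^2=0$ and part~2 gives the Leibniz compatibility with the partial compositions.

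Third, I would verify the decomposability condition $d_{{\EuScript O}_{\infty}}(E)\subseteq {\cal T}_{\mathbb N}(E)^{(\geq 2)}$. This is already recorded in \S\ref{odg}: for a generator $({\cal T},e({\cal T}))$ one has
$$d_{{\EuScript O}_{\infty}}({\cal T},e({\cal T}))=\sum_{\substack{(X,Y)\\ X\cup Y=e({\cal T})\\ X\{Y\}:{\bf H}_{\cal T}}}(-1)^{\delta}\,({\cal T}_X,X)\circ_{\rho({\cal T}_Y)}({\cal T}_Y,Y),$$
a sum of honest binary compositions of generators, hence each summand lies in ${\cal T}_{\mathbb N}(E)^{(\geq 2)}$ (and is nonzero precisely because ${\cal T}$ has at least two vertices, so a nontrivial splitting exists). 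Fourth, I would assemble the quasi-isomorphism: the computation in \S\ref{Ohomology} (the exactness lemma for $0<m\le k-2$ together with the explicit analysis at $m=0$) shows that $H_m({\EuScript O}_{\infty}(n_1,\dots,n_k;n),d_{{\EuScript O}_{\infty}})$ vanishes for $m\neq 0$ and equals ${\EuScript O}(n_1,\dots,n_k;n)$ for $m=0$; since ${\EuScript O}$ carries the zero differential and is concentrated in degree zero, the map $\alpha_{\EuScript O}:{\EuScript O}_{\infty}\to{\EuScript O}$ defined as the first projection in degree zero and the zero map in positive degrees is a morphism of dg operads (it commutes with $\circ_i$ because the composition of ${\EuScript O}_{\infty}$ is defined via the composition of ${\EuScript O}$ on the operadic-tree component, and it commutes with the differentials trivially since the source differential lowers degree and the target differential is zero) inducing an isomorphism on homology. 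Therefore $({\EuScript O}_{\infty},d_{{\EuScript O}_{\infty}})$ together with $\alpha_{\EuScript O}$ satisfies every clause of Definition \ref{minimalmodel}, which proves the theorem.

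I do not expect any single step to be a serious obstacle here, since the substantive work has been front-loaded into Theorem \ref{free}, Lemma \ref{dok}, and the homology computation of \S\ref{Ohomology}; the theorem is genuinely a synthesis. The one point that deserves a sentence of care is checking that $\alpha_{\EuScript O}$ is an operad map and not merely a chain map arity-by-arity: one must observe that a composition $({\cal T}_1,\sigma_1,C_1)\circ_i({\cal T}_2,\sigma_2,C_2)$ lands in degree zero if and only if both factors do (because the grading is additive under $\circ_i$, as noted in \S\ref{odg}), in which case $C_1$ and $C_2$ are maximal constructs and $C_1\bullet_i C_2$ is the maximal construct of ${\bf H}_{{\cal T}_1\bullet_i{\cal T}_2}$, so $\alpha_{\EuScript O}$ of the composite is the ${\EuScript O}$-composite of the projections; in all other cases both sides are zero. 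With that observation in place the conclusion is immediate.
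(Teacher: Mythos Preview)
Your overall strategy is exactly the paper's: the theorem is stated immediately after the sentence ``Having established the freeness of ${\EuScript O}_{\infty}$, the decomposability of $d_{{\EuScript O}_{\infty}}$ and the quasi-isomorphism with ${\EuScript O}$, we can now finally conclude,'' with no further proof. So assembling Theorem~\ref{free}, Lemma~\ref{dok}, the decomposability formula, and \S\ref{Ohomology} is precisely what is intended.

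That said, several of the details you add are incorrect, because you have inverted the grading. The maximal construct $e({\cal T})$ has a \emph{single} vertex, so $|({\cal T},e({\cal T}))|=e({\cal T})-1=k-2$, not $0$; the generators sit in top degree, not in degree zero. Conversely, the degree-zero elements are the \emph{constructions} (all vertices singletons), not the maximal constructs. This breaks your verification that $\alpha_{\EuScript O}$ is an operad map: even if $C_1,C_2$ were maximal, $C_1\bullet_i C_2=H_{{\cal T}_1}\{H_{{\cal T}_2}\}$ has two vertices and is \emph{not} the maximal construct of ${\bf H}_{{\cal T}_1\bullet_i{\cal T}_2}$. The correct statement is that if $C_1,C_2$ are constructions then so is $C_1\bullet_i C_2$ (the operation $\bullet_i$ takes $v(C_1)\cup v(C_2)$ as vertex set and never merges vertices), hence the projection to the operadic-tree component commutes with $\circ_i$.

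Finally, your claim that $\alpha_{\EuScript O}$ commutes with the differentials ``trivially since the source differential lowers degree'' is not sufficient: $d_{{\EuScript O}_{\infty}}$ sends degree~$1$ to degree~$0$, where $\alpha_{\EuScript O}$ is nonzero. What is needed is that the two summands of $d_{{\EuScript O}_{\infty}}$ applied to a degree-$1$ element share the same underlying operadic tree and carry opposite signs under projection; this is the content of the ${\mbox{Im}}\,d^1$ analysis in \S\ref{Ohomology}. With these corrections the synthesis goes through.
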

\smallskip
\begin{remark}
By employing the elements of Koszul duality for coloured operads, Theorem \ref{Omin} can be established in a more economical way. Recall that the ${\EuScript O}_{\infty}$ operad is originally defined by ${\EuScript O}_{\infty}:=\Omega{\EuScript O}^{\ac}$, i.e., by  $${\EuScript O}_{\infty}:=({\cal T}_{\mathbb N}(s^{-1} \overline{{\EuScript O}^{\ac}}),d),$$ where   $s^{-1} \overline{{\EuScript O}^{\ac}}$ is the desuspension of the coaugmentation coideal of ${\EuScript O}^{\ac}:={\cal T}^c_{\mathbb N}(sE)/ (s^2 R)$, where $E$ and $R$ are as in Definition \ref{1} and $s$ denotes the degree $+1$ suspension shift, and  $d$ is the unique derivation extending the cooperad structure on  ${\EuScript O}^{\ac}$. Since ${\EuScript O}$ is self dual (cf. \cite[Theorem 4.3]{VdL}),   the operations of ${\EuScript O}_{\infty}$ are given by ``composite trees of operadic trees'', and the differential of ${\EuScript O}_{\infty}$ is given by the ``degrafting of composite trees of operadic trees'' (cf. Theorem \ref{free}). As explained in \cite[Section 5.1]{DCV} for homotopy cooperads,   the spaces of such operations  can be realized as lattices of {\em nested left-recursive operadic trees} under the partial order given by {\em refinement of  nestings}, i.e. by a subfamily of the family of graph associahedra of   Carr-Devadoss introduced in \cite{CD}.  Therefore, Theorem \ref{Omin} follows by showing that, for a fixed left-recursive operadic tree ${\cal T}$, the lattice ${\cal A}({\bf H}_{\cal T})$ is isomorphic with the   lattice of nestings of ${\cal T}$. This, in turn, is a direct consequence of \cite[Proposition 2]{CIO}.  
\end{remark}
\smallskip
\subsubsection{Stasheff's  associahedra as a suboperad of ${\EuScript O}_{\infty}$} The earliest example of an explicit description of the minimal model of a dg operad, predating even  the notions of  operad and minimal model themselves, is given by the dg $A_{\infty}$-operad, the minimal model of the non-symmetric operad ${\it As}$ for associative algebras, often described in terms of Stasheff's associahedra \cite{Sta1}.

\medskip 

 Recall that the non-symmetric associative operad ${\it As}$, encoding the category of non-unital associative algebras, is defined by ${\it As}(n)={\textsf{Span}}_{{\Bbbk}}(\{t_n\})$, for $n\geq 2$, where $t_n$ the isomorphism class of a planar corolla with $n$ inputs.   The one-dimensional space ${\it As}(n)$ is concentrated in degree is zero and the differential is trivial.

\smallskip
In the  standard  dg framework, the $A_{\infty}$-operad is the  quasi-free  dg operad $A_{\infty}={\cal T}(\bigoplus_{n\geq 2}{\textsf{Span}}_{{\Bbbk}}(\{t_n\}))$, where $|t_n|=n-2$, with the differential given by $$d(t_n)=\displaystyle\sum_{\substack{n=p+q+r\\ k=p+r+1\\ k,q\geq 2}} (-1)^{p} \,\, t_k \circ_{p+1} t_q.$$

The dg $A_{\infty}$-operad is the minimal model for ${\it As}$. Indeed, the map $\alpha_{\it As}:A_{\infty}\rightarrow {\it As}$, defined as the identity on $t_2$ and as the zero map elsewhere, induces a homology  isomorphism  $H_{\bullet}(A_{\infty},d)\cong {\it As}$, whereas $d$ is clearly defined in terms of decomposable elements of $A_{\infty}$.

\smallskip

Let ${\cal A}_{\infty}$ be the suboperad of the ${\EuScript O}_{\infty}$ operad determined by linear  operadic trees, i.e. operadic trees ${\cal T}$ with univalent vertices such that  a vertex $i$ is always adjacent to the vertex $i-1$ (and not to a vertex $i-j$, for some $j>1$). Observe that the univalency requirement ensures that linear operadic trees are closed under the operation of substitution of trees. 

\begin{thm}
The operad ${\cal A}_{\infty}$ is isomorphic to the $A_{\infty}$-operad, and, therefore, it is the minimal model for the operad ${\it As}$.
\end{thm}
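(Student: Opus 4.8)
The plan is to exhibit an explicit isomorphism of dg operads $\beta\colon A_\infty \to \mathcal{A}_\infty$ and then invoke uniqueness of minimal models (or simply transport the already-established properties of $\mathcal{O}_\infty$). First I would identify the building blocks on both sides. A linear operadic tree with $k$ vertices is, up to planar isomorphism, uniquely determined by the arities $n_1,\dots,n_k$ of its vertices read from the root upward, since the univalency condition forces each vertex $i$ to be grafted onto vertex $i-1$; hence the generating collection of $\mathcal{A}_\infty$ in arity $n$ is one-dimensional in each internal degree, spanned by the class of the unique linear operadic tree $\ell_n^{(k)}$ with $k$ vertices and $n = \bigl(\sum n_i\bigr)-k+1$ total inputs — but the \emph{generators} of $\mathcal{A}_\infty$ as a free operad (Theorem \ref{free}) are the left-recursive linear operadic trees themselves, one for each choice of $(n_1,\dots,n_k)$. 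The key combinatorial observation is that the edge-graph $\mathbf{H}_{\mathcal{T}}$ of a linear operadic tree $\mathcal{T}$ with $k$ vertices is the \emph{linear graph} on $k-1$ vertices $\mathbf{A}_k$ from \S\ref{assoc}, because consecutive edges share a vertex and no others do; consequently ${\cal A}(\mathbf{H}_{\mathcal T})$ is the $(k-2)$-dimensional associahedron, and its top face $e(\mathcal{T})$ corresponds under the Stasheff dictionary recalled in \S\ref{assoc} to the corolla $t_k$ with $k$ inputs.

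Next I would set $\beta(t_k) := ({\cal T},\sigma, e({\cal T}))$ for the summand corresponding to the generator $t_k$ of $A_\infty$, where ${\cal T}$ is the linear operadic tree whose vertices all have the appropriate arity — concretely, for the $A_\infty$ generator $t_k$ we only have the single operad ${\it As}$ with one-dimensional spaces, so the relevant linear operadic trees are those built from binary corollas, i.e. $n_1=\dots=n_k=2$; but to get the full operad structure right I would instead observe that $\mathcal{A}_\infty$ is $\mathbb{N}$-coloured and the suboperad structure is inherited, and that the restriction of $\mathcal{O}_\infty$ to linear operadic trees with all arities equal to $2$ already reproduces the mono-coloured $A_\infty$-operad after forgetting colours. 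I would then check that $\beta$ is a map of graded operads: this amounts to matching the free operad description of $A_\infty$ (planar rooted trees with vertices labelled by corollas $t_k$) with the free operad description of $\mathcal{A}_\infty$ (planar rooted trees with vertices labelled by left-recursive linear operadic trees, leaves permuted), which by Theorem \ref{free} is immediate on underlying collections; the grading matches because $|t_k| = k-2 = |({\cal T}, e({\cal T}))|$ for $\mathcal{T}$ with $k$ vertices.

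The substantive step is compatibility with differentials: I must verify that $\beta(d(t_k)) = d_{\mathcal{O}_\infty}\beta(t_k)$, i.e. that
$$d_{\mathcal{O}_\infty}({\cal T}, e({\cal T})) = \displaystyle\sum_{\substack{n=p+q+r\\ k=p+r+1\\ k,q\geq 2}} (-1)^{p}\,\, \beta(t_k)\circ_{p+1}\beta(t_q)$$
under the identification above. By the formula for $d_{\mathcal{O}_\infty}$ on generators recalled in \S\ref{odg}, the left side is the signed sum over partitions $e({\cal T}) = X\cup Y$ with $X\{Y\}:{\bf H}_{\cal T}$ of the constructs $({\cal T}_X, X)\circ_{\rho({\cal T}_Y)}({\cal T}_Y,Y)$; since ${\bf H}_{\cal T} = \mathbf{A}_k$ is the linear graph, the admissible bipartitions $X\{Y\}$ are exactly the ways of choosing a connected sub-interval $Y$ of the $k-1$ edges with connected complement $X$ — equivalently, choosing an internal edge of $t_k$ to expand — which is precisely the indexing set $\{(p,q,r) : n=p+q+r,\ k=p+r+1,\ k,q\geq 2\}$ of Stasheff's formula, with $Y$ of size $q-1$ sitting at position determined by $p$. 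I expect the main obstacle to be the \emph{sign check}: one must confirm that the sign $\delta = E_{\le X\{Y\}}(\alpha({\cal T},C[X\{Y\}/V]))$ — counting edges and leaves to the left of or below the newly split edge in the free-operad planar representative — reduces, in the linear (all-arities-two) case, exactly to $(-1)^p$. This is a finite bookkeeping argument: for a linear corolla $t_k$ with the standard planar embedding, the subtree rooted at the expanded edge contributes $p$ leaves strictly to its left plus the edges below, and one checks these collapse mod $2$ to $p$; I would present this as a short lemma comparing the two sign conventions. Once differential compatibility is established, $\beta$ is an isomorphism of dg operads (bijective on generators, hence on the free operads, and intertwining $d$), so $\mathcal{A}_\infty \cong A_\infty$; since $A_\infty$ is the minimal model of ${\it As}$ (recalled above), so is $\mathcal{A}_\infty$. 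Alternatively — and this is the cleaner route I would actually take — one invokes Theorem \ref{Omin} together with Markl's uniqueness of minimal models: $\mathcal{A}_\infty$ is free (as a suboperad generated by the linear-tree generators, closed under composition by the univalency remark), its differential is decomposable (inherited from $d_{\mathcal{O}_\infty}$), and $\alpha_{\mathcal{O}}$ restricts to a quasi-isomorphism $\mathcal{A}_\infty \to {\it As}$ because the homology computation of \S\ref{Ohomology}, restricted to linear operadic trees, gives $H(\mathcal{A}_\infty) \cong \bigoplus_n {\it As}(n)$ with $\alpha$ the projection onto degree zero; hence $\mathcal{A}_\infty$ is \emph{a} minimal model of ${\it As}$, and by uniqueness it is isomorphic to $A_\infty$.
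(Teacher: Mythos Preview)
Your proposal is essentially on the right track and overlaps with the paper's argument, but there is one concrete slip and one point of comparison worth noting.

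\textbf{The slip.} You write that the relevant linear operadic trees are those ``with all arities equal to $2$''. This is not what the definition of $\mathcal{A}_\infty$ says: the suboperad is determined by linear operadic trees \emph{with univalent vertices}, i.e.\ every vertex has exactly one input, so all $n_i=1$. This is precisely why, as the paper's proof states in its first sentence, the $\mathbb{N}$-colouring collapses to the single colour $\{1\}$ and $\mathcal{A}_\infty$ becomes genuinely monochrome. Your combinatorial observation that the edge-graph $\mathbf{H}_{\mathcal{T}}$ is the linear graph $\mathbf{A}_{k-1}$ (hence the associated polytope is the $(k{-}2)$-associahedron) is correct regardless of the arities, so the error does not derail the argument --- but the waffling in your paragraph about ``binary corollas'' versus ``$\mathbb{N}$-coloured'' should be replaced by the clean statement that univalency forces all colours to be $1$.

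\textbf{Comparison with the paper.} The paper's own proof is two sentences: it records the colour collapse and then simply invokes the bijection of \S\ref{assoc} between constructs of $\mathbf{A}_{k-1}$ and planar rooted trees with $k+1$ leaves (equivalently, parenthesisations of $k$ letters). That bijection is exactly the map you call $\beta$ on faces, so your first route \emph{is} the paper's route, just spelled out in more detail. The paper does not carry out the explicit sign check you anticipate; it treats the correspondence of \S\ref{assoc} as already furnishing the isomorphism of dg operads. Your second route --- showing directly that $\mathcal{A}_\infty$ is free with decomposable differential and that $\alpha_{\EuScript O}$ restricts to a quasi-isomorphism onto ${\it As}$, then invoking uniqueness of minimal models --- is a genuinely different and perfectly valid alternative; it trades the explicit combinatorial identification for an appeal to Markl's uniqueness theorem, which is cleaner if one is content to know the operads are isomorphic without writing down the map.
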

 \begin{proof}
The restriction to linear operadic trees that defines ${\cal A}_{\infty}$ collapses the set of colours ${\mathbb N}$ of the operad ${\EuScript O}$ to the singleton set $\{1\}$, making therefore ${\cal A}_{\infty}$ a monochrome operad. The conclusion follows from the correspondence between the construct description of associahedra and the  standard description underlying the definition of the $A_{\infty}$-operad, established in \S \ref{assoc}.
\end{proof}

\smallskip

\begin{remark}  Let $K_n$, for $n\geq 2$, denote the $(n-2)$-dimensional   associahedron, i.e. a CW complex
whose cells   of dimension $k$   are in bijection with  rooted planar trees having
$n$ leaves and $n-k-1$ vertices.
 The sequence ${\cal K}=\{K_n\}_{n\geq 2}$ is naturally endowed with the structure of a non-symmetric topological operad: the composition $$\circ_i: K_r \times K_s\rightarrow K_{r+s-1}$$ is defined as follows: for faces $k_1\in K_{r}$ and $k_2\in K_s$, $\circ_i(k_1,k_2)$ is the face of $K_{r+s-1}$ obtained by grafting the tree encoding the face $k_2$ to the leaf $i$ of the tree encoding the face $k_1$.  The topological operad ${\cal K}$ is turned into the dg  $A_{\infty}$-operad  by taking cellular chain complexes on ${\cal K}$; we refer to \cite[Proposition 9.2.4.]{LV} for the details of this  transition. 
 
 \smallskip
 
For what concerns geometric realizations of the associahedra, Stasheff initially considered the CW complexes  $K_n$  as curvilinear polytopes. They were later given coordinates as convex polytopes in Euclidean spaces in different ways:  as convex hulls of points, as  intersections of half-spaces, and by truncations of standard simplices.  However, it was only in the recent paper \cite{mttv} of Masuda,  Thomas, Tonks and Vallette that  a non-symmetric operad structure on a family of convex polytopal realizations of the associahedra has been introduced. This raises the question of an appropriate ``geometric realization'' of the ${\EuScript O}_{\infty}$  operad,  which we leave to a future work.
\end{remark}

 \smallskip
\subsection{The combinatorial Boardman-Vogt-Berger-Moerdijk resolution of ${\EuScript O}$}\label{Wconstruction}
\noindent In \cite{BM2}, Berger and Moerdijk constructed a cofibrant resolution for coloured operads in arbitrary monoidal model categories, by generalizing  the Boardman-Vogt $W$-construction for topological operads \cite{BV}.  In this section, by introducing a cubical subdivision of the faces of operadic polytopes, we define the operad ${\EuScript O}^{\circ}_{\infty}$, which is precisely the $W$-construction applied on ${\EuScript O}$.

 \medskip

In order to provide the combinatorial description of the  $W$-construction of ${\EuScript O}$, we  are going to generalize the notion of a construct of the edge-graph ${\bf H}_{\cal T}$ of an operadic tree ${\cal T}$, to the notion of a circled construct of ${\bf H}_{\cal T}$. A circled construct  should be thought of as a  two-level construct,  i.e. a construct whose vertices are constructs themselves; the idea is that circles determine those ``higher'' vertices, in  the same way as in the definition of the  monad of trees. The circles that we  add to a construct of ${\bf H}_{\cal T}$  arise from decompositions of ${\cal T}$, and themselves determine a   decomposition  of that construct -- this construction is dual to the one defining the isomorphism $\alpha$ in the  proof of Theorem \ref{free}.

\smallskip

\begin{definition}\label{dec}  Let $({\cal T},\sigma)\in {\EuScript O}(n_1,\dots,n_k;n)$. The set of {\em circled constructs} of the hypergraph ${\bf H}_{\cal T}$ is generated by the following two rules.
\begin{itemize}
\item For each (ordinary) construct $C:{\bf H}_{\cal T}$,  the construct $C$ together with a single circle that entirely surrounds it, is a circled construct of ${\bf H}_{\cal T}$.
\item If $({\cal T},\sigma)=({\cal T}_1,\sigma_1)\bullet_i ({\cal T}_2,\sigma_2)$ and if $C_1$ and $C_2$ are circled constructs of ${\bf H}_{{\cal T}_1}$ and ${\bf H}_{{\cal T}_2}$, respectively, then the construct $C_1\bullet_i C_2:{\bf H}_{\cal T}$, determined uniquely by the composition $\circ_i$ of ${\EuScript O}_{\infty}$,  together with all the  circles  of $C_1$ and $C_2$ (and no other circle), is a circled construct of ${\bf H}_{\cal T}$.
\end{itemize}
\end{definition}
 In what follows, we shall write $C^{\circ}:{\bf H}_{\cal T}$ to denote that $C^{\circ}$ is a circled construct of ${\bf H}_{\cal T}$; if $C^{\circ}:{\bf H}_{\cal T}$, we shall denote with    $C$    the ordinary construct of ${\bf H}_{\cal T}$ obtained by forgetting the circles of $C^{\circ }$. We shall write $C_p^{\circ }$ to indicate that the circled construct $C^{\circ}$ has $p$ circles. Denote with ${A}^{\circ}({\bf H}_{\cal T})$ the set of all circled constructs of ${\bf H}_{\cal T}$.

\begin{remark}
Note that the set of edges of a circled construct $C^{\circ}:{\bf H}_{\cal T}$ can be decomposed  into two disjoint subsets: the subset of {\em circled edges}, i.e. of the edges that lie within a circle, and the subset of {\em connecting edges}, i.e. of the edges  that connect two adjacent circles.  The disjointness is ensured by the fact that  Definition \ref{dec} disallowes nested circles.
\end{remark}
  
\begin{remark}
Circled constructs are generalizations of {\em circled trees}, in the terminology of \cite[Appendix C.2.3.]{LV}. 
\end{remark}

Let, for $k\geq 2$, $n_1,\dots,n_k\geq 1$, and $n=\bigl(\sum^k_{i=1} n_i\bigr)-k+1$, ${\EuScript O}^{\circ}_{\infty}(n_1,\dots,n_k;n)$ be the vector space spanned by triples $({\cal T},\sigma,C^{\circ})$, where $({\cal T},\sigma)\in {\tt{Tree}}(n_1,\dots,n_k;n)$ and $C^{\circ}:{\bf H}_{\cal T}$, subject to the equivalence relation generated by:
\begin{quote}
$({\cal T}_{1},\sigma_1,C^{\circ}_1)\sim ({\cal T}_{2},\sigma_2,C^{\circ}_2)$ if there exists an isomorphism $\varphi:{\cal T}_1\rightarrow {\cal T}_2$, such that  $\varphi\circ\sigma_1=\sigma_2$, $C_{1}\sim_{\varphi} C_{2}$, and such that, modulo the renaming $\varphi$, the circles of $C^{\circ}_1$ are exactly the circles of $C^{\circ}_2$.
\end{quote}
  
%
\noindent Therefore, 

 $${\EuScript O}^{\circ}_{\infty}(n_1,\dots,n_k;n)={\textsf{Span}}_{{\Bbbk}} \Bigg(\bigoplus_{({\cal T},\sigma)\in {\tt{Tree}}(n_1,\dots,n_k;n)}  {{A}^{\circ}({\bf H}_{\cal T})} \Bigg).$$

\noindent The space ${\EuScript O}^{\circ}_{\infty}(n_1,\dots,n_k;n)$ is graded by $|({\cal T},C_p^{\circ})|=|v(C)|-p$. 

\medskip

 The ${\mathbb N}$-coloured graded collection $$\{{\EuScript O}^{\circ}_{\infty}(n_1,\dots,n_k;n)\,|\, n_1,\dots,n_k\geq 1\}$$ admits the following operad structure: the composition operation $$\circ_i : {\EuScript O}^{\circ}_{\infty}\displaystyle(n_1,\dots,n_k;n) \otimes {\EuScript O}^{\circ}_{\infty}\displaystyle(m_1,\dots,m_l;n_i)\rightarrow {\EuScript O}^{\circ}_{\infty}\displaystyle(n_1,\dots,n_{i-1},m_1,\dots,m_l,n_{i+1},\dots ,n_k;n)$$
 is defined by
 $$({\cal T}_1,\sigma_1,C_1^{\circ})\circ_i({\cal T}_2,\sigma_2,C_2^{\circ})=(-1)^{\varepsilon}({\cal T}_1\bullet_i {\cal T}_2,{\sigma}_1\bullet_i {\sigma}_2,C^{\circ}_1\bullet_i C^{\circ}_2),$$
where  $C^{\circ}_1\bullet_i C^{\circ}_2$  is defined by the second rule of Definition \ref{dec}, and the sign $(-1)^{\varepsilon}$ is determined as follows. Observe that, for an operation $({\cal T},\sigma,C^{\circ})$, the circles of $C^{\circ}$ carry over to $\alpha({\cal T},\sigma,C)$, where $\alpha$ is the isomorphism from the proof of Theorem $\ref{free}$. This decomposes the set of edges of $\alpha({\cal T},\sigma,C)$ into the set of circled edges and the set of connecting edges. Then, $\varepsilon$ is the number of connecting edges and leaves of $\alpha({\cal T}_1,\sigma_1,C_1)$ on the right from the leaf indexed by $i$, multiplied by the number of all connecting edges and leaves of $\alpha({\cal T}_2,\sigma_2,C_2)$, minus the root. Therefore,    the sign   is calculated in the analogous way as for the partial composition of ${\EuScript O}_{\infty}$, save that the  vertices of operations of ${\EuScript O}_{\infty}$ are identified with the circles of operations of ${\EuScript O}^{\circ}_{\infty}$.

\medskip

Finally, the derivative $d^{\circ}$ of ${\EuScript O}^{\circ}_{\infty}$ will be the difference $d^{\circ}_1-d^{\circ}_0$ of two derivatives. The derivative $d^{\circ}_1$   acts on  $({\cal T},\sigma,C^{\circ})$ by turning   circled edges of $C^{\circ}$  into connecting edges, by splitting the circles in two. The associated signs are determined as follows. Let us fix a summand in $d^{\circ}_1({\cal T},\sigma,C^{\circ})$. Suppose that  $C'$ is the construct surrounded by the circle of $C^{\circ}$ that we split in two,  let $C'_1$ and $C'_2$ be the constructs surrounded by the two resulting circles in the summand, and let $X$ and $Y$ be the unions of the sets decorating the vertices of $C'_1$ and $C'_2$  (i.e. $X$ and $Y$ are the sets obtained by collapsing all the edges of $C'_1$ and $C'_2$), respectively. Suppose, without loss of generality,  that the circle surrounding $C'_1$ is below the circle surrounding $C'_2$. The sign of the resulting summand is given by $(-1)^{\delta+e(C'_1)+e(C'_2)}$, where $(-1)^{\delta}$ is the  sign of the summand of $d_{{\EuScript O}_{\infty}}({\cal T},\sigma,C[(X\cup Y)/X\{Y\}])$  whose new edge is determined  by $X\{Y\}$. Here, $C[(X\cup Y)/X\{Y\}]$ denotes the construct obtained from $C$ by collapsing the edge $X\{Y\}$.  Observe that  the component  $d^{\circ}_1$ does not affect the overall configuration of edges of $C$ as a plain construct.   The component  $d^{\circ}_0$ collapses circled edges; each resulting summand $({\cal T},\sigma,C[(U\cup V)/U\{V\}]^{\circ})$ will be multiplied by the  sign that  $({\cal T},\sigma,C)$   gets as a summand of    $d_{{\EuScript O}_{\infty}}({\cal T},\sigma,C[(U\cup V)/U\{V\}])$. The component $d^{\circ}_0$ does not affect circles. One might think of $d^{\circ}_1$ as a coercion of the derivative $d_{{\EuScript O}_{\infty}}$ to ${\EuScript O}^{\circ}_{\infty}$, by the identification of the vertices of the operations of ${\EuScript O}$ with the circles of the operations of ${\EuScript O}^{\circ}_{\infty}$, since both $d^{\circ}_1$ and $d_{{\EuScript O}_{\infty}}$ split the corresponding entity. On the other hand, $d^{\circ}_0$ can be seen as the inverse of $d_{{\EuScript O}_{\infty}}$: $d^{\circ}_0$ collapses the edges, whereas $d_{{\EuScript O}_{\infty}}$ creates new edges by splitting the vertices. Therefore, intuitively, $d^{\circ}_1$ acts {\em globally}, and $d^{\circ}_0$ {\em locally}. The sum $e(C'_1)+e(C'_2)$ in the sign $(-1)^{\delta+e(C'_1)+e(C'_2)}$ pertaining to $d_1^{\circ}$ has to be added in order  to ensure that   $d^{\circ}$   squares to zero. More precisely, it is needed for the pairs of summands   in $(d^{\circ})^2({\cal T},\sigma,C^{\circ})$ that should be cancelled out and  which   arise  by applying $d^{\circ}_0$ and  $d^{\circ}_1$ in the opposite order. As a consequence of Lemma \ref{dok},   $d^{\circ}$    agrees with the composition of ${\EuScript O}^{\circ}_{\infty}$.

  \medskip In the following two examples, we describe two cubical decompositions of operadic polytopes, given by the appropriate posets of circled constructs, where, like it was the case for ordinary constructs, the  partial order is induced by the action of the differential  $d^{\circ}$.

\begin{example} For the linear tree 
\begin{center}
\begin{tikzpicture}
  \node (t1)[circle,draw=none] at (-1.9,1.75) { $({\cal T},\sigma)\enspace=$};
    \node (E)[circle,draw=black,minimum size=4mm,inner sep=0.1mm] at (0,0) {\small $1$};
    \node (F) [circle,draw=black,minimum size=4mm,inner sep=0.1mm] at (0,1) {\small $2$};
    \node (A) [circle,draw=black,minimum size=4mm,inner sep=0.1mm] at (0,2) {\small $3$};
\node (D) [circle,draw=black,minimum size=4mm,inner sep=0.1mm] at (0,3) {\small $4$};
 \draw[-] (0.7,2.8) -- (A)--(-0.7,2.8); 
  \draw[-] (0.35,2.8) -- (A)--(-0.35,2.8); 
\draw[-] (0.25,3.8) -- (D)--(-0.25,3.8); 
 \draw[-] (1.1,0.8) -- (E)--(-1.1,0.8); 
  \draw[-] (0.8,0.8) -- (E)--(-0.8,0.8); 
   \draw[-] (0.5,0.8) -- (E)--(-0.5,0.8); 
 \draw[-] (E)--(0,-0.55); 
 \draw[-] (0.7,1.8) -- (F)--(-0.7,1.8); 
    \draw[-] (E)--(F) node [midway,right,xshift=-0.075cm,yshift=0.1cm] {\small $x$};
    \draw[-] (F)--(A) node  [midway,right,xshift=-0.075cm,yshift=0.1cm] {\small $y$};
\draw[-] (A)--(D) node  [midway,right,xshift=-0.075cm,yshift=0.1cm] {\small  $z$};
\end{tikzpicture} 
\end{center}
by taking all the circled  constructs of ${\bf H}_{\cal T}$, we recover a familiar  picture of the cubical decomposition of the pentagon from \cite[Appendix C.2.3.]{LV}, in which we fully labeled only the faces of the shaded square:
\begin{center}
\resizebox{7.5cm}{!}{\begin{tikzpicture}
\draw[draw=none,fill=pink!30] (-2.37,0.77)--(-1.2,1.63)--(0,0)--(-1.92,-0.65)--cycle;
\node[draw=black,minimum size=5cm,regular polygon,regular polygon sides=5] (a) at (0,0) {};
\draw[lightgray] (-1.92,-0.65)--(0,0)--(0,-2.025);
\draw[lightgray] (1.92,-0.65)--(0,0);
\draw[lightgray] (-1.2,1.62)--(0,0)--(1.2,1.62);
 \node (E) at (0,1.4)  {\resizebox{1cm}{!}{\begin{tikzpicture}
\draw (4,0) ellipse (0.3cm and 0.35cm);
\node (L) [circle,fill=ForestGreen,draw=black,minimum size=0.1cm,inner sep=0.2mm,label={[yshift=-0.5cm]{\footnotesize $y$}}] at (4,-0.25) {};
\node (B1) [circle,fill=ForestGreen,draw=black,minimum size=0.1cm,inner sep=0.2mm,label={[xshift=-0.22cm,yshift=-0.25cm]{\footnotesize $x$}}] at (3.8,0.15) {};
 \node (B4) [circle,fill=ForestGreen,draw=black,minimum size=0.1cm,inner sep=0.2mm,label={[xshift=0.22cm,yshift=-0.25cm]{\footnotesize $z$}}] at (4.2,0.15) {};
 \draw[thick]  (L)--(B1);
\draw [thick] (L)--(B4);\end{tikzpicture}}};
 \node (E1) at (-1.7,0.475) {\resizebox{0.5cm}{!}{\begin{tikzpicture}
\draw (0,0.35) ellipse (0.1cm and 0.45cm);
\node (L) [circle,fill=ForestGreen,draw=black,minimum size=0.1cm,inner sep=0.2mm,label={[xshift=-0.22cm,yshift=-0.25cm]{\footnotesize $x$}}] at (0,0) {};
\node (B1) [circle,fill=ForestGreen,draw=black,minimum size=0.1cm,inner sep=0.2mm,label={[xshift=-0.22cm,yshift=-0.3cm]{\footnotesize $y$}}] at (0,0.35) {};
 \node (B4) [circle,fill=ForestGreen,draw=black,minimum size=0.1cm,inner sep=0.2mm,label={[xshift=-0.22cm,yshift=-0.25cm]{\footnotesize $z$}}] at (0,0.7) {};
 \draw[thick]  (L)--(B1)--(B4);
\end{tikzpicture}}};
  \node (P41) at (1.7,0.475) {\resizebox{0.5cm}{!}{\begin{tikzpicture}
\draw (0,0.35) ellipse (0.1cm and 0.45cm);
\node (L) [circle,fill=ForestGreen,draw=black,minimum size=0.1cm,inner sep=0.2mm,label={[xshift=0.22cm,yshift=-0.25cm]{\footnotesize $z$}}] at (0,0) {};
\node (B1) [circle,fill=ForestGreen,draw=black,minimum size=0.1cm,inner sep=0.2mm,label={[xshift=0.22cm,yshift=-0.3cm]{\footnotesize $y$}}] at (0,0.35) {};
 \node (B4) [circle,fill=ForestGreen,draw=black,minimum size=0.1cm,inner sep=0.2mm,label={[xshift=0.22cm,yshift=-0.25cm]{\footnotesize $x$}}] at (0,0.7) {};
 \draw[thick]  (L)--(B1)--(B4);
\end{tikzpicture}}};
\node (Asubt1) at (1,-1.3) {\resizebox{0.5cm}{!}{\begin{tikzpicture}
\draw (0,0.35) ellipse (0.1cm and 0.45cm);
\node (L) [circle,fill=ForestGreen,draw=black,minimum size=0.1cm,inner sep=0.2mm,label={[xshift=0.22cm,yshift=-0.25cm]{\footnotesize $z$}}] at (0,0) {};
\node (B1) [circle,fill=ForestGreen,draw=black,minimum size=0.1cm,inner sep=0.2mm,label={[xshift=0.22cm,yshift=-0.25cm]{\footnotesize $x$}}] at (0,0.35) {};
 \node (B4) [circle,fill=ForestGreen,draw=black,minimum size=0.1cm,inner sep=0.2mm,label={[xshift=0.22cm,yshift=-0.3cm]{\footnotesize $y$}}] at (0,0.7) {};
 \draw[thick]  (L)--(B1)--(B4);
\end{tikzpicture}}};
  \node (A1) at (-1,-1.3) {\resizebox{0.5cm}{!}{\begin{tikzpicture}
\draw (0,0.35) ellipse (0.1cm and 0.45cm);
\node (L) [circle,fill=ForestGreen,draw=black,minimum size=0.1cm,inner sep=0.2mm,label={[xshift=-0.22cm,yshift=-0.25cm]{\footnotesize $x$}}] at (0,0) {};
\node (B1) [circle,fill=ForestGreen,draw=black,minimum size=0.1cm,inner sep=0.2mm,label={[xshift=-0.22cm,yshift=-0.25cm]{\footnotesize $z$}}] at (0,0.35) {};
 \node (B4) [circle,fill=ForestGreen,draw=black,minimum size=0.1cm,inner sep=0.2mm,label={[xshift=-0.22cm,yshift=-0.3cm]{\footnotesize $y$}}] at (0,0.7) {};
 \draw[thick]  (L)--(B1)--(B4);
 \end{tikzpicture}}};
 \node (A1d) at (-0.9,0.83) {\resizebox{0.95cm}{!}{\begin{tikzpicture}
\draw (0,0.18) ellipse (0.1cm and 0.3cm);
\node (L) [circle,fill=cyan,draw=black,minimum size=0.1cm,inner sep=0.2mm,label={[xshift=-0.47cm,yshift=-0.3cm]{\footnotesize $\{x,\!y\}$}}] at (0,0) {};
\node (B1) [circle,fill=cyan,draw=black,minimum size=0.1cm,inner sep=0.2mm,label={[xshift=-0.22cm,yshift=-0.225cm]{\footnotesize $z$}}] at (0,0.35) {};
 \draw[thick]  (L)--(B1);\end{tikzpicture}}};
  \node (A1dd) at  (0.9,0.83)  {\resizebox{0.95cm}{!}{\begin{tikzpicture}
\draw (0,0.18) ellipse (0.1cm and 0.3cm);
\node (L) [circle,fill=cyan,draw=black,minimum size=0.1cm,inner sep=0.2mm,label={[xshift=0.47cm,yshift=-0.3cm]{\footnotesize $\{y,\!z\}$}}] at (0,0) {};
\node (B1) [circle,fill=cyan,draw=black,minimum size=0.1cm,inner sep=0.2mm,label={[xshift=0.22cm,yshift=-0.225cm]{\footnotesize $x$}}] at (0,0.35) {};
 \draw[thick]  (L)--(B1);\end{tikzpicture}}};
  \node (fr) at (1.15,-0.3)  {\resizebox{0.95cm}{!}{\begin{tikzpicture}
\draw (0,0.18) ellipse (0.1cm and 0.3cm);
\node (L) [circle,fill=cyan,draw=black,minimum size=0.1cm,inner sep=0.2mm,label={[xshift=0.22cm,yshift=-0.225cm]{\footnotesize $z$}}] at (0,0) {};
\node (B1) [circle,fill=cyan,draw=black,minimum size=0.1cm,inner sep=0.2mm,label={[xshift=0.47cm,yshift=-0.3cm]{\footnotesize $\{x,\!y\}$}}] at (0,0.35) {};
 \draw[thick]  (L)--(B1);\end{tikzpicture}}};
\node (A1dsddd) at (-1.15,-0.3)   {\resizebox{0.95cm}{!}{\begin{tikzpicture}
\draw (0,0.18) ellipse (0.1cm and 0.3cm);
\node (L) [circle,fill=cyan,draw=black,minimum size=0.1cm,inner sep=0.2mm,label={[xshift=-0.22cm,yshift=-0.225cm]{\footnotesize $x$}}] at (0,0) {};
\node (B1) [circle,fill=cyan,draw=black,minimum size=0.1cm,inner sep=0.2mm,label={[xshift=-0.47cm,yshift=-0.3cm]{\footnotesize $\{y,\!z\}$}}] at (0,0.35) {};
 \draw[thick]  (L)--(B1);\end{tikzpicture}}};

\node (A1dddd) at (0.35,-1.1)  {\resizebox{0.95cm}{!}{\begin{tikzpicture}
\draw (0,0.18) ellipse (0.1cm and 0.3cm);
\node (L) [circle,fill=cyan,draw=black,minimum size=0.1cm,inner sep=0.2mm,label={[xshift=0.47cm,yshift=-0.3cm]{\footnotesize $\{x,\!z\}$}}] at (0,0) {};
\node (B1) [circle,fill=cyan,draw=black,minimum size=0.1cm,inner sep=0.2mm,label={[xshift=0.22cm,yshift=-0.3cm]{\footnotesize $y$}}] at (0,0.35) {};
 \draw[thick]  (L)--(B1);\end{tikzpicture}}};

 \node (Cs1) at (0,0) {\begin{tikzpicture}
\node (L3) [circle,fill=none,draw=black,minimum size=0.165cm,inner sep=0.2mm] at (4,-0.25) {};
\node (L) [circle,fill=WildStrawberry,draw=black,minimum size=0.09cm,inner sep=0mm] at (4,-0.25) {};
\end{tikzpicture}
};
\node (aa) at (0,-0.2) {\resizebox{0.9cm}{!}{$\{x,y,z\}$}};

\node(j) at (-2.65,0)  {\resizebox{0.5cm}{!}{\begin{tikzpicture}
\draw (0,0.52) ellipse (0.1cm and 0.3cm);
\node (L3) [circle,fill=none,draw=black,minimum size=0.2cm,inner sep=0.2mm] at (0,0) {};
\node (L) [circle,fill=ForestGreen,draw=black,minimum size=0.1cm,inner sep=0.2mm,label={[xshift=-0.22cm,yshift=-0.25cm]{\footnotesize $x$}}] at (0,0) {};
\node (B1) [circle,fill=ForestGreen,draw=black,minimum size=0.1cm,inner sep=0.2mm,label={[xshift=-0.22cm,yshift=-0.3cm]{\footnotesize $y$}}] at (0,0.35) {};
 \node (B4) [circle,fill=ForestGreen,draw=black,minimum size=0.1cm,inner sep=0.2mm,label={[xshift=-0.22cm,yshift=-0.25cm]{\footnotesize $z$}}] at (0,0.7) {};
 \draw[thick]  (L)--(B1);
\draw [thick] (B1)--(B4);\end{tikzpicture}}};

\node(j1) at (-2.2,1.6) {\resizebox{0.5cm}{!}{\begin{tikzpicture}
\draw (0,0.173) ellipse (0.1cm and 0.3cm);
\node (L3) [circle,fill=none,draw=black,minimum size=0.2cm,inner sep=0.2mm] at (0,0.7) {};
\node (L) [circle,fill=ForestGreen,draw=black,minimum size=0.1cm,inner sep=0.2mm,label={[xshift=-0.22cm,yshift=-0.25cm]{\footnotesize $x$}}] at (0,0) {};
\node (B1) [circle,fill=ForestGreen,draw=black,minimum size=0.1cm,inner sep=0.2mm,label={[xshift=-0.22cm,yshift=-0.3cm]{\footnotesize $y$}}] at (0,0.35) {};
 \node (B4) [circle,fill=ForestGreen,draw=black,minimum size=0.1cm,inner sep=0.2mm,label={[xshift=-0.22cm,yshift=-0.25cm]{\footnotesize $z$}}] at (0,0.7) {};
 \draw[thick]  (L)--(B1);
\draw [thick] (B1)--(B4);\end{tikzpicture}}};

\node(j2) at (-2.75,1) {\resizebox{0.5cm}{!}{\begin{tikzpicture}
\node (L1) [circle,fill=none,draw=black,minimum size=0.2cm,inner sep=0.2mm] at (0,0) {};
\node (L2) [circle,fill=none,draw=black,minimum size=0.2cm,inner sep=0.2mm] at (0,0.35) {};
\node (L3) [circle,fill=none,draw=black,minimum size=0.2cm,inner sep=0.2mm] at (0,0.7) {};
\node (L) [circle,fill=ForestGreen,draw=black,minimum size=0.1cm,inner sep=0.2mm,label={[xshift=-0.22cm,yshift=-0.25cm]{\footnotesize $x$}}] at (0,0) {};
\node (B1) [circle,fill=ForestGreen,draw=black,minimum size=0.1cm,inner sep=0.2mm,label={[xshift=-0.22cm,yshift=-0.3cm]{\footnotesize $y$}}] at (0,0.35) {};
 \node (B4) [circle,fill=ForestGreen,draw=black,minimum size=0.1cm,inner sep=0.2mm,label={[xshift=-0.22cm,yshift=-0.25cm]{\footnotesize $z$}}] at (0,0.7) {};
 \draw[thick]  (L)--(B1);
\draw [thick] (B1)--(B4);\end{tikzpicture}}};

\node(j3) at (-1.55,1.95) {\resizebox{0.95cm}{!}{\begin{tikzpicture}
\node (L1) [circle,fill=none,draw=black,minimum size=0.2cm,inner sep=0.2mm] at (0,0) {};
\node (L2) [circle,fill=none,draw=black,minimum size=0.2cm,inner sep=0.2mm] at (0,0.35) {};
\node (L) [circle,fill=cyan,draw=black,minimum size=0.1cm,inner sep=0.2mm,label={[xshift=-0.22cm,yshift=-0.23cm]{\footnotesize $z$}}] at (0,0.35) {};
\node (B1) [circle,fill=cyan,draw=black,minimum size=0.1cm,inner sep=0.2mm,label={[xshift=-0.47cm,yshift=-0.3cm]{\footnotesize $\{x,\!y\}$}}] at (0,0) {};
 \draw[thick]  (L)--(B1);\end{tikzpicture}}};

\node(j4) at (-2.5,-0.8) {\resizebox{0.95cm}{!}{\begin{tikzpicture}
\node (L1) [circle,fill=none,draw=black,minimum size=0.2cm,inner sep=0.2mm] at (0,0) {};
\node (L2) [circle,fill=none,draw=black,minimum size=0.2cm,inner sep=0.2mm] at (0,0.35) {};
\node (L) [circle,fill=cyan,draw=black,minimum size=0.1cm,inner sep=0.2mm,label={[xshift=-0.22cm,yshift=-0.23cm]{\footnotesize $x$}}] at (0,0) {};
\node (B1) [circle,fill=cyan,draw=black,minimum size=0.1cm,inner sep=0.2mm,label={[xshift=-0.47cm,yshift=-0.3cm]{\footnotesize $\{y,\!z\}$}}] at (0,0.35) {};
 \draw[thick]  (L)--(B1);\end{tikzpicture}}};
\end{tikzpicture}}
\end{center}

\smallskip
 
By calculating the derivative of the circled construction $C^{\circ}=$\raisebox{-1.3em}{{\resizebox{0.6cm}{!}{\begin{tikzpicture}
\draw (0,0.35) ellipse (0.1cm and 0.45cm);
\node (L) [circle,fill=ForestGreen,draw=black,minimum size=0.1cm,inner sep=0.2mm,label={[xshift=-0.22cm,yshift=-0.25cm]{\footnotesize $x$}}] at (0,0) {};
\node (B1) [circle,fill=ForestGreen,draw=black,minimum size=0.1cm,inner sep=0.2mm,label={[xshift=-0.22cm,yshift=-0.3cm]{\footnotesize $y$}}] at (0,0.35) {};
 \node (B4) [circle,fill=ForestGreen,draw=black,minimum size=0.1cm,inner sep=0.2mm,label={[xshift=-0.22cm,yshift=-0.25cm]{\footnotesize $z$}}] at (0,0.7) {};
 \draw[thick]  (L)--(B1)--(B4);
\end{tikzpicture}}}}\,, we get the boundary of the  square:

\begin{center}
\raisebox{1.25em}{$(d^{\circ}_1-d^{\circ}_0)\Bigg($}\!\!{\raisebox{-0.25em}{\resizebox{0.65cm}{!}{\begin{tikzpicture}
\draw (0,0.35) ellipse (0.1cm and 0.45cm);
\node (L) [circle,fill=ForestGreen,draw=black,minimum size=0.1cm,inner sep=0.2mm,label={[xshift=-0.22cm,yshift=-0.25cm]{\footnotesize $x$}}] at (0,0) {};
\node (B1) [circle,fill=ForestGreen,draw=black,minimum size=0.1cm,inner sep=0.2mm,label={[xshift=-0.22cm,yshift=-0.3cm]{\footnotesize $y$}}] at (0,0.35) {};
 \node (B4) [circle,fill=ForestGreen,draw=black,minimum size=0.1cm,inner sep=0.2mm,label={[xshift=-0.22cm,yshift=-0.25cm]{\footnotesize $z$}}] at (0,0.7) {};
 \draw[thick]  (L)--(B1)--(B4);\end{tikzpicture}}}}\,\raisebox{1.2em}{$\Bigg)=$}\,\,\,\raisebox{1.1em}{$- $}\! \raisebox{0cm}{\resizebox{0.65cm}{!}{{\begin{tikzpicture}
\draw (0,0.173) ellipse (0.1cm and 0.3cm);
\node (L3) [circle,fill=none,draw=black,minimum size=0.2cm,inner sep=0.2mm] at (0,0.7) {};
\node (L) [circle,fill=ForestGreen,draw=black,minimum size=0.1cm,inner sep=0.2mm,label={[xshift=-0.22cm,yshift=-0.25cm]{\footnotesize $x$}}] at (0,0) {};
\node (B1) [circle,fill=ForestGreen,draw=black,minimum size=0.1cm,inner sep=0.2mm,label={[xshift=-0.22cm,yshift=-0.3cm]{\footnotesize $y$}}] at (0,0.35) {};
 \node (B4) [circle,fill=ForestGreen,draw=black,minimum size=0.1cm,inner sep=0.2mm,label={[xshift=-0.22cm,yshift=-0.25cm]{\footnotesize $z$}}] at (0,0.7) {};
 \draw[thick]  (L)--(B1);
\draw [thick] (B1)--(B4);\end{tikzpicture}}}}\,\, \raisebox{1.1em}{$+$} \! \raisebox{0cm}{\resizebox{0.65cm}{!}{{\begin{tikzpicture}
\draw (0,0.52) ellipse (0.1cm and 0.3cm);
\node (L3) [circle,fill=none,draw=black,minimum size=0.2cm,inner sep=0.2mm] at (0,0) {};
\node (L) [circle,fill=ForestGreen,draw=black,minimum size=0.1cm,inner sep=0.2mm,label={[xshift=-0.22cm,yshift=-0.25cm]{\footnotesize $x$}}] at (0,0) {};
\node (B1) [circle,fill=ForestGreen,draw=black,minimum size=0.1cm,inner sep=0.2mm,label={[xshift=-0.22cm,yshift=-0.3cm]{\footnotesize $y$}}] at (0,0.35) {};
 \node (B4) [circle,fill=ForestGreen,draw=black,minimum size=0.1cm,inner sep=0.2mm,label={[xshift=-0.22cm,yshift=-0.25cm]{\footnotesize $z$}}] at (0,0.7) {};
 \draw[thick]  (L)--(B1);
\draw [thick] (B1)--(B4);\end{tikzpicture}}}}\,\,\,\raisebox{1.1em}{$+$}\raisebox{0.25em}{\resizebox{1.3cm}{!}{\begin{tikzpicture}
\draw (0,0.18) ellipse (0.1cm and 0.3cm);
\node (L) [circle,fill=cyan,draw=black,minimum size=0.1cm,inner sep=0.2mm,label={[xshift=-0.47cm,yshift=-0.3cm]{\footnotesize $\{x,\!y\}$}}] at (0,0) {};
\node (B1) [circle,fill=cyan,draw=black,minimum size=0.1cm,inner sep=0.2mm,label={[xshift=-0.22cm,yshift=-0.225cm]{\footnotesize $z$}}] at (0,0.35) {};
 \draw[thick]  (L)--(B1);\end{tikzpicture}}}\,\,\,\raisebox{1.1em}{$+$}\raisebox{0.4em}{{\resizebox{1.3cm}{!}{\begin{tikzpicture}
\draw (0,0.18) ellipse (0.1cm and 0.3cm);
\node (L) [circle,fill=cyan,draw=black,minimum size=0.1cm,inner sep=0.2mm,label={[xshift=-0.22cm,yshift=-0.225cm]{\footnotesize $x$}}] at (0,0) {};
\node (B1) [circle,fill=cyan,draw=black,minimum size=0.1cm,inner sep=0.2mm,label={[xshift=-0.47cm,yshift=-0.3cm]{\footnotesize $\{y,\!z\}$}}] at (0,0.35) {};
 \draw[thick]  (L)--(B1);\end{tikzpicture}}}}\, .
\end{center} 
Here is how we calculated some of the signs   above. The first  summand on the right-hand side is obtained by applying $d_1^{\circ}$ on $C^{\circ}$, by turning the circled edge $y\{z\}$ into a connecting edge.  The sign rule says that this summand will be multiplied by   $(-1)^{\delta+1+0}$, where $(-1)^{\delta}$ is the sign that the ordinary construct \raisebox{-0.95em}{\resizebox{1.2cm}{!}{\begin{tikzpicture}
\node (L) [circle,fill=cyan,draw=black,minimum size=0.1cm,inner sep=0.2mm,label={[xshift=-0.47cm,yshift=-0.3cm]{\footnotesize $\{x,\!y\}$}}] at (0,0) {};
\node (B1) [circle,fill=cyan,draw=black,minimum size=0.1cm,inner sep=0.2mm,label={[xshift=-0.22cm,yshift=-0.225cm]{\footnotesize $z$}}] at (0,0.35) {};
 \draw[thick]  (L)--(B1);\end{tikzpicture}}} \, gets as a summand of $d_{{\EuScript O}_{\infty}}({\cal T},\sigma,\!\raisebox{-0.55em}{\resizebox{1.5cm}{!}{\begin{tikzpicture}
\node (B1) [circle,fill=cyan,draw=black,minimum size=0.1cm,inner sep=0.2mm,label={[xshift=-0.575cm,yshift=-0.31cm]{\footnotesize $\{x,\!y,\!z\}$}}] at (0,0.35) {};
\end{tikzpicture}}}\,)$. Since  $(-1)^{\delta}=1$,  the final sign is $-$. The third summand on the right-hand side is obtained by applying $d^{\circ}_0$ on $C^{\circ}$, by collapsing the circled edge $x\{y\}$. By the sign rule, it will be multiplied by the sign that $C$ gets as a summand of $d_{{\EuScript O}_{\infty}}({\cal T},\sigma,\!\raisebox{-0.95em}{\resizebox{1.2cm}{!}{\begin{tikzpicture}
\node (L) [circle,fill=cyan,draw=black,minimum size=0.1cm,inner sep=0.2mm,label={[xshift=-0.47cm,yshift=-0.3cm]{\footnotesize $\{x,\!y\}$}}] at (0,0) {};
\node (B1) [circle,fill=cyan,draw=black,minimum size=0.1cm,inner sep=0.2mm,label={[xshift=-0.22cm,yshift=-0.225cm]{\footnotesize $z$}}] at (0,0.35) {};
 \draw[thick]  (L)--(B1);\end{tikzpicture}}}\,)$, which is $-$, which, together with the $-$ in front of $d_0^{\circ}$, results in  $+$. The reader may readily check that applying the differential $d^{\circ}$ on the sum on the right-hand side of the above equality results in $0$.\demo
\end{example}

Since  the basis of  ${\EuScript O}^{\circ}_{\infty}(n_1,\dots,n_k;n)$ is given by the faces of cubical subdivisions of arbitrary   operadic polytopes, and not just the associahedra, we give below an  example of the cubical subdivision of one of the hexagonal faces of the hemiassociahedron (see \ref{sec.hemiassociahedron}).

\begin{example}
The cubical subdivision of the shaded hexagonal face 
\begin{center}
 \resizebox{8cm}{!}{\begin{tikzpicture}[thick,scale=23]
\coordinate (A1) at (-0.2,0);
\coordinate (A2) at (0.2,0); 
\coordinate (D1) at (-0.35,0.05);
\coordinate (D2) at (0.35,0.05);
\coordinate (D3) at (-0.18,0.1);
\coordinate (D4)  at (0.18,0.1);
\coordinate (E3) at (-0.18,0.34);
\coordinate (E4) at (0.18,0.34);
\coordinate (A3) at (-0.2,0.22);
\coordinate (A4) at (0.2,0.22);
 \coordinate (C1) at (-0.25,0.33);
\coordinate (C2) at (0.25,0.33);
 \coordinate (F1) at (-0.35,0.275);
\coordinate (F2) at (0.35,0.275);
 \coordinate (G1) at (-0.3,0.385);
\coordinate (G2) at (0.3,0.385);
 \coordinate (B3) at (-0.2,0.44);
\coordinate (B4) at (0.2,0.44);
 \fill[fill=WildStrawberry!20,opacity=0.75] (B3)--(B4)--(C2)--(A4)--(A3)--(C1)--(B3);
\draw (A1) -- (A2) -- (A4) -- (A3) -- cycle;
\draw (A3) -- (A4) -- (C2) -- (B4) -- (B3) -- (C1)-- cycle;
\draw[dashed] (D3) -- (D4) -- (E4) -- (E3) -- cycle;
\draw[dashed] (A1) -- (A2) -- (A4) -- (A3) -- (A1);
\draw (A3) -- (A4) -- (C2) -- (B4) -- (B3) -- (C1)-- (A3);
\draw (G1) -- (B3) -- (B4) -- (G2);
\draw (G2) -- (F2);
\draw (G1) -- (F1);
\draw (A3)--(A1) -- (A2)--(A4);
\draw[dashed]   (G1) -- (E3); 
\draw[dashed] (E4) -- (G2);
 \draw  (A1) -- (D1) -- (F1) -- (C1) -- (A3) --  cycle;
 \draw  (A2) -- (D2) -- (F2) -- (C2) -- (A4) --  cycle;
 \draw  (F1) -- (G1) -- (B3) -- (C1)  --  cycle;
\draw (F2) -- (G2) -- (B4) -- (C2)  --  cycle;
\draw[dashed] (D3) -- (D4) -- (E4) -- (E3) -- cycle;
\draw[dashed] (D1)   -- (D3);
\draw[dashed] (D2)   -- (D4);
\draw (F1)--(D1)--(A1)--(A2)--(D2)--(F2);
\node at    (0.22,0.5)  {\resizebox{1.1cm}{!}{\begin{tikzpicture}
\node (b) [circle,fill=ForestGreen,draw=black,minimum size=0.1cm,inner sep=0.2mm,label={[xshift=0.22cm,yshift=-0.27cm]{\footnotesize $y$}}] at (0,0) {};
\node (c) [circle,fill=ForestGreen,draw=black,minimum size=0.1cm,inner sep=0.2mm,label={[xshift=0.22cm,yshift=-0.25cm]{\footnotesize $u$}}] at (0,0.35) {};
 \node (d) [circle,fill=ForestGreen,draw=black,minimum size=0.1cm,inner sep=0.2mm,label={[xshift=0.22cm,yshift=-0.25cm]{\footnotesize $v$}}] at (0,0.7) {};
  \node (a) [circle,fill=ForestGreen,draw=black,minimum size=0.1cm,inner sep=0.2mm,label={[xshift=0.22cm,yshift=-0.25cm]{\footnotesize $x$}}] at (0,1.05) {};
 \draw[thick]  (b)--(c)--(d)--(a);\end{tikzpicture}}};
\node at   (-0.22,0.5)  {\resizebox{1.1cm}{!}{\begin{tikzpicture}
\node (b) [circle,fill=ForestGreen,draw=black,minimum size=0.1cm,inner sep=0.2mm,label={[xshift=-0.22cm,yshift=-0.27cm]{\footnotesize $y$}}] at (0,0) {};
\node (c) [circle,fill=ForestGreen,draw=black,minimum size=0.1cm,inner sep=0.2mm,label={[xshift=-0.22cm,yshift=-0.25cm]{\footnotesize $v$}}] at (0,0.35) {};
 \node (d) [circle,fill=ForestGreen,draw=black,minimum size=0.1cm,inner sep=0.2mm,label={[xshift=-0.22cm,yshift=-0.25cm]{\footnotesize $u$}}] at (0,0.7) {};
  \node (a) [circle,fill=ForestGreen,draw=black,minimum size=0.1cm,inner sep=0.2mm,label={[xshift=-0.22cm,yshift=-0.25cm]{\footnotesize $x$}}] at (0,1.05) {};
 \draw[thick]  (b)--(c)--(d)--(a);\end{tikzpicture}}};
\node at  (0.28,0.245) {\resizebox{1.1cm}{!}{\begin{tikzpicture}
\node (b) [circle,fill=ForestGreen,draw=black,minimum size=0.1cm,inner sep=0.2mm,label={[xshift=0.22cm,yshift=-0.25cm]{\footnotesize $u$}}] at (0,0) {};
\node (c) [circle,fill=ForestGreen,draw=black,minimum size=0.1cm,inner sep=0.2mm,label={[xshift=0.22cm,yshift=-0.27cm]{\footnotesize $y$}}] at (0,0.35) {};
 \node (d) [circle,fill=ForestGreen,draw=black,minimum size=0.1cm,inner sep=0.2mm,label={[xshift=0.22cm,yshift=-0.25cm]{\footnotesize $v$}}] at (0,0.7) {};
  \node (a) [circle,fill=ForestGreen,draw=black,minimum size=0.1cm,inner sep=0.2mm,label={[xshift=0.22cm,yshift=-0.25cm]{\footnotesize $x$}}] at (0,1.05) {};
 \draw[thick]  (b)--(c)--(d)--(a);\end{tikzpicture}}};
\node at   (-0.28,0.245) {\resizebox{1.1cm}{!}{\begin{tikzpicture}
\node (b) [circle,fill=ForestGreen,draw=black,minimum size=0.1cm,inner sep=0.2mm,label={[xshift=-0.22cm,yshift=-0.25cm]{\footnotesize $v$}}] at (0,0) {};
\node (c) [circle,fill=ForestGreen,draw=black,minimum size=0.1cm,inner sep=0.2mm,label={[xshift=-0.22cm,yshift=-0.27cm]{\footnotesize $y$}}] at (0,0.35) {};
 \node (d) [circle,fill=ForestGreen,draw=black,minimum size=0.1cm,inner sep=0.2mm,label={[xshift=-0.22cm,yshift=-0.25cm]{\footnotesize $u$}}] at (0,0.7) {};
  \node (a) [circle,fill=ForestGreen,draw=black,minimum size=0.1cm,inner sep=0.2mm,label={[xshift=-0.22cm,yshift=-0.25cm]{\footnotesize $x$}}] at (0,1.05) {};
 \draw[thick]  (b)--(c)--(d)--(a);\end{tikzpicture}}};
\node at  (0.1625,0.15)  {\resizebox{1.1cm}{!}{\begin{tikzpicture}
\node (b) [circle,fill=ForestGreen,draw=black,minimum size=0.1cm,inner sep=0.2mm,label={[xshift=-0.22cm,yshift=-0.25cm]{\footnotesize $u$}}] at (0,0) {};
\node (c) [circle,fill=ForestGreen,draw=black,minimum size=0.1cm,inner sep=0.2mm,label={[xshift=-0.22cm,yshift=-0.25cm]{\footnotesize $v$}}] at (0,0.35) {};
 \node (d) [circle,fill=ForestGreen,draw=black,minimum size=0.1cm,inner sep=0.2mm,label={[xshift=-0.22cm,yshift=-0.27cm]{\footnotesize $y$}}] at (0,0.7) {};
  \node (a) [circle,fill=ForestGreen,draw=black,minimum size=0.1cm,inner sep=0.2mm,label={[xshift=-0.22cm,yshift=-0.25cm]{\footnotesize $x$}}] at (0,1.05) {};
 \draw[thick]  (b)--(c)--(d)--(a);\end{tikzpicture}}};
\node at  (-0.1625,0.15)  {\resizebox{1.1cm}{!}{\begin{tikzpicture}
\node (b) [circle,fill=ForestGreen,draw=black,minimum size=0.1cm,inner sep=0.2mm,label={[xshift=0.22cm,yshift=-0.25cm]{\footnotesize $v$}}] at (0,0) {};
\node (c) [circle,fill=ForestGreen,draw=black,minimum size=0.1cm,inner sep=0.2mm,label={[xshift=0.22cm,yshift=-0.25cm]{\footnotesize $u$}}] at (0,0.35) {};
 \node (d) [circle,fill=ForestGreen,draw=black,minimum size=0.1cm,inner sep=0.2mm,label={[xshift=0.22cm,yshift=-0.27cm]{\footnotesize $y$}}] at (0,0.7) {};
  \node (a) [circle,fill=ForestGreen,draw=black,minimum size=0.1cm,inner sep=0.2mm,label={[xshift=0.22cm,yshift=-0.25cm]{\footnotesize $x$}}] at (0,1.05) {};
 \draw[thick]  (b)--(c)--(d)--(a);\end{tikzpicture}}};
 \node(j3) at (0.05,0.335) {\resizebox{3cm}{!}{\begin{tikzpicture}
\node (L) [circle,fill=WildStrawberry,draw=black,minimum size=0.1cm,inner sep=0.2mm,label={[xshift=0.22cm,yshift=-0.23cm]{\footnotesize $x$}}] at (0,0.35) {};
\node (B1) [circle,fill=WildStrawberry,draw=black,minimum size=0.1cm,inner sep=0.2mm,label={[xshift=0.6cm,yshift=-0.3cm]{\footnotesize $\{y,\!u,\!v\}$}}] at (0,0) {};
 \draw[thick]  (L)--(B1);\end{tikzpicture}}};
  \node(j1) at (-0.275,0.38) {\resizebox{2.35cm}{!}{\begin{tikzpicture}
\node (L) [circle,fill=cyan,draw=black,minimum size=0.1cm,inner sep=0.2mm,label={[xshift=-0.45cm,yshift=-0.3cm]{\footnotesize $\{y,\!v\}$}}] at (0,0) {};
\node (B1) [circle,fill=cyan,draw=black,minimum size=0.1cm,inner sep=0.2mm,label={[xshift=-0.22cm,yshift=-0.25cm]{\footnotesize $u$}}] at (0,0.35) {};
 \node (B4) [circle,fill=cyan,draw=black,minimum size=0.1cm,inner sep=0.2mm,label={[xshift=-0.22cm,yshift=-0.25cm]{\footnotesize $x$}}] at (0,0.7) {};
 \draw[thick]  (L)--(B1);
\draw [thick] (B1)--(B4);\end{tikzpicture}}};
  \node(j1) at (0.275,0.38) {\resizebox{2.35cm}{!}{\begin{tikzpicture}
\node (L) [circle,fill=cyan,draw=black,minimum size=0.1cm,inner sep=0.2mm,label={[xshift=0.45cm,yshift=-0.3cm]{\footnotesize $\{y,\!u\}$}}] at (0,0) {};
\node (B1) [circle,fill=cyan,draw=black,minimum size=0.1cm,inner sep=0.2mm,label={[xshift=0.22cm,yshift=-0.25cm]{\footnotesize $v$}}] at (0,0.35) {};
 \node (B4) [circle,fill=cyan,draw=black,minimum size=0.1cm,inner sep=0.2mm,label={[xshift=0.22cm,yshift=-0.25cm]{\footnotesize $x$}}] at (0,0.7) {};
 \draw[thick]  (L)--(B1);
\draw [thick] (B1)--(B4);\end{tikzpicture}}};
 \node(j1) at (-0.175,0.27) {\resizebox{2.35cm}{!}{\begin{tikzpicture}
\node (L) [circle,fill=cyan,draw=black,minimum size=0.1cm,inner sep=0.2mm,label={[xshift=0.45cm,yshift=-0.3cm]{\footnotesize $\{y,\!u\}$}}] at (0,0.35) {};
\node (B1) [circle,fill=cyan,draw=black,minimum size=0.1cm,inner sep=0.2mm,label={[xshift=0.22cm,yshift=-0.25cm]{\footnotesize $v$}}] at (0,0) {};
 \node (B4) [circle,fill=cyan,draw=black,minimum size=0.1cm,inner sep=0.2mm,label={[xshift=0.22cm,yshift=-0.25cm]{\footnotesize $x$}}] at (0,0.7) {};
 \draw[thick] (B4)--(L)--(B1);
 \end{tikzpicture}}};
  \node(j1) at (0.175,0.27) {\resizebox{2.35cm}{!}{\begin{tikzpicture}
\node (L) [circle,fill=cyan,draw=black,minimum size=0.1cm,inner sep=0.2mm,label={[xshift=-0.45cm,yshift=-0.3cm]{\footnotesize $\{y,\!v\}$}}] at (0,0.35) {};
\node (B1) [circle,fill=cyan,draw=black,minimum size=0.1cm,inner sep=0.2mm,label={[xshift=-0.22cm,yshift=-0.25cm]{\footnotesize $u$}}] at (0,0) {};
 \node (B4) [circle,fill=cyan,draw=black,minimum size=0.1cm,inner sep=0.2mm,label={[xshift=-0.22cm,yshift=-0.25cm]{\footnotesize $x$}}] at (0,0.7) {};
 \draw[thick] (B4)--(L)--(B1);
 \end{tikzpicture}}};
  \node(j1) at (0,0.5) {\resizebox{2.35cm}{!}{\begin{tikzpicture}
\node (L) [circle,fill=cyan,draw=black,minimum size=0.1cm,inner sep=0.2mm,label={[xshift=0.45cm,yshift=-0.3cm]{\footnotesize $\{u,\!v\}$}}] at (0,0.35) {};
\node (B1) [circle,fill=cyan,draw=black,minimum size=0.1cm,inner sep=0.2mm,label={[xshift=0.22cm,yshift=-0.275cm]{\footnotesize $y$}}] at (0,0) {};
 \node (B4) [circle,fill=cyan,draw=black,minimum size=0.1cm,inner sep=0.2mm,label={[xshift=0.22cm,yshift=-0.25cm]{\footnotesize $x$}}] at (0,0.7) {};
 \draw[thick] (B4)--(L)--(B1);
 \end{tikzpicture}}};
   \node(j1) at (0,0.16) {\resizebox{2.35cm}{!}{\begin{tikzpicture}
\node (L) [circle,fill=cyan,draw=black,minimum size=0.1cm,inner sep=0.2mm,label={[xshift=0.45cm,yshift=-0.3cm]{\footnotesize $\{u,\!v\}$}}] at (0,0) {};
\node (B1) [circle,fill=cyan,draw=black,minimum size=0.1cm,inner sep=0.2mm,label={[xshift=0.22cm,yshift=-0.275cm]{\footnotesize $y$}}] at (0,0.35) {};
 \node (B4) [circle,fill=cyan,draw=black,minimum size=0.1cm,inner sep=0.2mm,label={[xshift=0.22cm,yshift=-0.25cm]{\footnotesize $x$}}] at (0,0.7) {};
 \draw[thick]  (L)--(B1);
\draw [thick] (B1)--(B4);\end{tikzpicture}}};
\end{tikzpicture}}
\end{center}
of the hemiassociahedron is given by
\begin{center}
\resizebox{8cm}{!}{\begin{tikzpicture}
\draw[draw=none,fill=pink!30] (-2.5,0)--(-1.85,1.1)--(0,0)--(-1.85,-1.1)--cycle;
\node[draw=black,minimum size=5cm,regular polygon,regular polygon sides=6] (a) at (0,0) {};
\draw[lightgray] (-1.86,1.1)--(0,0)--(1.86,1.1);
\draw[lightgray] (-1.86,-1.1)--(0,0)--(1.86,-1.1);
\draw[lightgray] (0,2.1625)--(0,0)--(0,-2.1625);
\node(j3) at (0.5,0) {\resizebox{1.2cm}{!}{\begin{tikzpicture}
\node (L1) [circle,fill=none,draw=black,minimum size=0.2cm,inner sep=0.2mm] at (0,0) {};
\node (L2) [circle,fill=none,draw=black,minimum size=0.2cm,inner sep=0.2mm] at (0,0.35) {};
\node (L) [circle,fill=WildStrawberry,draw=black,minimum size=0.1cm,inner sep=0.2mm,label={[xshift=0.22cm,yshift=-0.23cm]{\footnotesize $x$}}] at (0,0.35) {};
\node (B1) [circle,fill=WildStrawberry,draw=black,minimum size=0.1cm,inner sep=0.2mm,label={[xshift=0.6cm,yshift=-0.3cm]{\footnotesize $\{y,\!u,\!v\}$}}] at (0,0) {};
 \draw[thick]  (L)--(B1);\end{tikzpicture}}};
\node at    (0.5,1.3)  {\resizebox{0.475cm}{!}{\begin{tikzpicture}
\draw (0,0.35) ellipse (0.1cm and 0.5cm);
\node (L1) [circle,fill=none,draw=black,minimum size=0.2cm,inner sep=0.2mm] at (0,1.05) {};
\node (b) [circle,fill=ForestGreen,draw=black,minimum size=0.1cm,inner sep=0.2mm,label={[xshift=0.22cm,yshift=-0.27cm]{\footnotesize $y$}}] at (0,0) {};
\node (c) [circle,fill=ForestGreen,draw=black,minimum size=0.1cm,inner sep=0.2mm,label={[xshift=0.22cm,yshift=-0.25cm]{\footnotesize $u$}}] at (0,0.35) {};
 \node (d) [circle,fill=ForestGreen,draw=black,minimum size=0.1cm,inner sep=0.2mm,label={[xshift=0.22cm,yshift=-0.25cm]{\footnotesize $v$}}] at (0,0.7) {};
  \node (a) [circle,fill=ForestGreen,draw=black,minimum size=0.1cm,inner sep=0.2mm,label={[xshift=0.22cm,yshift=-0.25cm]{\footnotesize $x$}}] at (0,1.05) {};
 \draw[thick]  (b)--(c)--(d)--(a);\end{tikzpicture}}};
 \node at   (-0.5,1.3)  {\resizebox{0.475cm}{!}{\begin{tikzpicture}
\draw (0,0.35) ellipse (0.1cm and 0.5cm);
 \node (L1) [circle,fill=none,draw=black,minimum size=0.2cm,inner sep=0.2mm] at (0,1.05) {};
\node (b) [circle,fill=ForestGreen,draw=black,minimum size=0.1cm,inner sep=0.2mm,label={[xshift=-0.22cm,yshift=-0.27cm]{\footnotesize $y$}}] at (0,0) {};
\node (c) [circle,fill=ForestGreen,draw=black,minimum size=0.1cm,inner sep=0.2mm,label={[xshift=-0.22cm,yshift=-0.25cm]{\footnotesize $v$}}] at (0,0.35) {};
 \node (d) [circle,fill=ForestGreen,draw=black,minimum size=0.1cm,inner sep=0.2mm,label={[xshift=-0.22cm,yshift=-0.25cm]{\footnotesize $u$}}] at (0,0.7) {};
  \node (a) [circle,fill=ForestGreen,draw=black,minimum size=0.1cm,inner sep=0.2mm,label={[xshift=-0.22cm,yshift=-0.25cm]{\footnotesize $x$}}] at (0,1.05) {};
 \draw[thick]  (b)--(c)--(d)--(a);\end{tikzpicture}}};
 \node at   (-1.85,0) {\resizebox{0.475cm}{!}{\begin{tikzpicture}
\draw (0,0.35) ellipse (0.1cm and 0.5cm);
  \node (L1) [circle,fill=none,draw=black,minimum size=0.2cm,inner sep=0.2mm] at (0,1.05) {};
\node (b) [circle,fill=ForestGreen,draw=black,minimum size=0.1cm,inner sep=0.2mm,label={[xshift=-0.22cm,yshift=-0.25cm]{\footnotesize $v$}}] at (0,0) {};
\node (c) [circle,fill=ForestGreen,draw=black,minimum size=0.1cm,inner sep=0.2mm,label={[xshift=-0.22cm,yshift=-0.27cm]{\footnotesize $y$}}] at (0,0.35) {};
 \node (d) [circle,fill=ForestGreen,draw=black,minimum size=0.1cm,inner sep=0.2mm,label={[xshift=-0.22cm,yshift=-0.25cm]{\footnotesize $u$}}] at (0,0.7) {};
  \node (a) [circle,fill=ForestGreen,draw=black,minimum size=0.1cm,inner sep=0.2mm,label={[xshift=-0.22cm,yshift=-0.25cm]{\footnotesize $x$}}] at (0,1.05) {};
 \draw[thick]  (b)--(c)--(d)--(a);\end{tikzpicture}}};
  \node at   (-2.55,-0.95) {\resizebox{0.475cm}{!}{\begin{tikzpicture}
\draw (0,0.525) ellipse (0.1cm and 0.265cm);
  \node (L2) [circle,fill=none,draw=black,minimum size=0.2cm,inner sep=0.2mm] at (0,0) {};
  \node (L1) [circle,fill=none,draw=black,minimum size=0.2cm,inner sep=0.2mm] at (0,1.05) {};
\node (b) [circle,fill=ForestGreen,draw=black,minimum size=0.1cm,inner sep=0.2mm,label={[xshift=-0.22cm,yshift=-0.25cm]{\footnotesize $v$}}] at (0,0) {};
\node (c) [circle,fill=ForestGreen,draw=black,minimum size=0.1cm,inner sep=0.2mm,label={[xshift=-0.22cm,yshift=-0.27cm]{\footnotesize $y$}}] at (0,0.35) {};
 \node (d) [circle,fill=ForestGreen,draw=black,minimum size=0.1cm,inner sep=0.2mm,label={[xshift=-0.22cm,yshift=-0.25cm]{\footnotesize $u$}}] at (0,0.7) {};
  \node (a) [circle,fill=ForestGreen,draw=black,minimum size=0.1cm,inner sep=0.2mm,label={[xshift=-0.22cm,yshift=-0.25cm]{\footnotesize $x$}}] at (0,1.05) {};
 \draw[thick]  (b)--(c)--(d)--(a);\end{tikzpicture}}};
  \node at   (-2.55,0.95) {\resizebox{0.475cm}{!}{\begin{tikzpicture}
\draw (0,0.175) ellipse (0.1cm and 0.265cm);
  \node (L2) [circle,fill=none,draw=black,minimum size=0.2cm,inner sep=0.2mm] at (0,0.7) {};
  \node (L1) [circle,fill=none,draw=black,minimum size=0.2cm,inner sep=0.2mm] at (0,1.05) {};
\node (b) [circle,fill=ForestGreen,draw=black,minimum size=0.1cm,inner sep=0.2mm,label={[xshift=-0.22cm,yshift=-0.25cm]{\footnotesize $v$}}] at (0,0) {};
\node (c) [circle,fill=ForestGreen,draw=black,minimum size=0.1cm,inner sep=0.2mm,label={[xshift=-0.22cm,yshift=-0.27cm]{\footnotesize $y$}}] at (0,0.35) {};
 \node (d) [circle,fill=ForestGreen,draw=black,minimum size=0.1cm,inner sep=0.2mm,label={[xshift=-0.22cm,yshift=-0.25cm]{\footnotesize $u$}}] at (0,0.7) {};
  \node (a) [circle,fill=ForestGreen,draw=black,minimum size=0.1cm,inner sep=0.2mm,label={[xshift=-0.22cm,yshift=-0.25cm]{\footnotesize $x$}}] at (0,1.05) {};
 \draw[thick]  (b)--(c)--(d)--(a);\end{tikzpicture}}};
 \node at   (-3,00) {\resizebox{0.475cm}{!}{\begin{tikzpicture}
   \node (L0) [circle,fill=none,draw=black,minimum size=0.2cm,inner sep=0.2mm] at (0,0) {};
  \node (L3) [circle,fill=none,draw=black,minimum size=0.2cm,inner sep=0.2mm] at (0,0.35) {};
  \node (L2) [circle,fill=none,draw=black,minimum size=0.2cm,inner sep=0.2mm] at (0,0.7) {};
  \node (L1) [circle,fill=none,draw=black,minimum size=0.2cm,inner sep=0.2mm] at (0,1.05) {};
\node (b) [circle,fill=ForestGreen,draw=black,minimum size=0.1cm,inner sep=0.2mm,label={[xshift=-0.22cm,yshift=-0.25cm]{\footnotesize $v$}}] at (0,0) {};
\node (c) [circle,fill=ForestGreen,draw=black,minimum size=0.1cm,inner sep=0.2mm,label={[xshift=-0.22cm,yshift=-0.27cm]{\footnotesize $y$}}] at (0,0.35) {};
 \node (d) [circle,fill=ForestGreen,draw=black,minimum size=0.1cm,inner sep=0.2mm,label={[xshift=-0.22cm,yshift=-0.25cm]{\footnotesize $u$}}] at (0,0.7) {};
  \node (a) [circle,fill=ForestGreen,draw=black,minimum size=0.1cm,inner sep=0.2mm,label={[xshift=-0.22cm,yshift=-0.25cm]{\footnotesize $x$}}] at (0,1.05) {};
 \draw[thick]  (b)--(c)--(d)--(a);\end{tikzpicture}}};
 \node at   (1.85,0) {\resizebox{0.475cm}{!}{\begin{tikzpicture}
\draw (0,0.35) ellipse (0.1cm and 0.5cm);
   \node (L1) [circle,fill=none,draw=black,minimum size=0.2cm,inner sep=0.2mm] at (0,1.05) {};
\node (b) [circle,fill=ForestGreen,draw=black,minimum size=0.1cm,inner sep=0.2mm,label={[xshift=0.22cm,yshift=-0.25cm]{\footnotesize $u$}}] at (0,0) {};
\node (c) [circle,fill=ForestGreen,draw=black,minimum size=0.1cm,inner sep=0.2mm,label={[xshift=0.22cm,yshift=-0.27cm]{\footnotesize $y$}}] at (0,0.35) {};
 \node (d) [circle,fill=ForestGreen,draw=black,minimum size=0.1cm,inner sep=0.2mm,label={[xshift=0.22cm,yshift=-0.25cm]{\footnotesize $v$}}] at (0,0.7) {};
  \node (a) [circle,fill=ForestGreen,draw=black,minimum size=0.1cm,inner sep=0.2mm,label={[xshift=0.22cm,yshift=-0.25cm]{\footnotesize $x$}}] at (0,1.05) {};
 \draw[thick]  (b)--(c)--(d)--(a);\end{tikzpicture}}};
 \node at  (-0.5,-1.3)  {\resizebox{0.475cm}{!}{\begin{tikzpicture}
 \node (L1) [circle,fill=none,draw=black,minimum size=0.2cm,inner sep=0.2mm] at (0,1.05) {};
\draw (0,0.35) ellipse (0.1cm and 0.5cm);
\node (b) [circle,fill=ForestGreen,draw=black,minimum size=0.1cm,inner sep=0.2mm,label={[xshift=-0.22cm,yshift=-0.25cm]{\footnotesize $v$}}] at (0,0) {};
\node (c) [circle,fill=ForestGreen,draw=black,minimum size=0.1cm,inner sep=0.2mm,label={[xshift=-0.22cm,yshift=-0.25cm]{\footnotesize $u$}}] at (0,0.35) {};
 \node (d) [circle,fill=ForestGreen,draw=black,minimum size=0.1cm,inner sep=0.2mm,label={[xshift=-0.22cm,yshift=-0.27cm]{\footnotesize $y$}}] at (0,0.7) {};
  \node (a) [circle,fill=ForestGreen,draw=black,minimum size=0.1cm,inner sep=0.2mm,label={[xshift=-0.22cm,yshift=-0.25cm]{\footnotesize $x$}}] at (0,1.05) {};
 \draw[thick]  (b)--(c)--(d)--(a);\end{tikzpicture}}};
  \node at  (0.5,-1.3)  {\resizebox{0.475cm}{!}{\begin{tikzpicture}
  \node (L1) [circle,fill=none,draw=black,minimum size=0.2cm,inner sep=0.2mm] at (0,1.05) {};
\draw (0,0.35) ellipse (0.1cm and 0.5cm);
\node (b) [circle,fill=ForestGreen,draw=black,minimum size=0.1cm,inner sep=0.2mm,label={[xshift=0.22cm,yshift=-0.25cm]{\footnotesize $u$}}] at (0,0) {};
\node (c) [circle,fill=ForestGreen,draw=black,minimum size=0.1cm,inner sep=0.2mm,label={[xshift=0.22cm,yshift=-0.25cm]{\footnotesize $v$}}] at (0,0.35) {};
 \node (d) [circle,fill=ForestGreen,draw=black,minimum size=0.1cm,inner sep=0.2mm,label={[xshift=0.22cm,yshift=-0.27cm]{\footnotesize $y$}}] at (0,0.7) {};
  \node (a) [circle,fill=ForestGreen,draw=black,minimum size=0.1cm,inner sep=0.2mm,label={[xshift=0.22cm,yshift=-0.25cm]{\footnotesize $x$}}] at (0,1.05) {};
 \draw[thick]  (b)--(c)--(d)--(a);\end{tikzpicture}}};
 \node(j1) at (-1.2,0.5) {\resizebox{0.88cm}{!}{\begin{tikzpicture}
\draw (0,0.1675) ellipse (0.1cm and 0.275cm);
\node (L1) [circle,fill=none,draw=black,minimum size=0.2cm,inner sep=0.2mm] at (0,0.7) {};
\node (L) [circle,fill=cyan,draw=black,minimum size=0.1cm,inner sep=0.2mm,label={[xshift=-0.45cm,yshift=-0.3cm]{\footnotesize $\{y,\!v\}$}}] at (0,0) {};
\node (B1) [circle,fill=cyan,draw=black,minimum size=0.1cm,inner sep=0.2mm,label={[xshift=-0.22cm,yshift=-0.25cm]{\footnotesize $u$}}] at (0,0.35) {};
 \node (B4) [circle,fill=cyan,draw=black,minimum size=0.1cm,inner sep=0.2mm,label={[xshift=-0.22cm,yshift=-0.25cm]{\footnotesize $x$}}] at (0,0.7) {};
 \draw[thick]  (L)--(B1);
\draw [thick] (B1)--(B4);\end{tikzpicture}}};
 \node(j1) at (-1.2,-0.5) {\resizebox{0.88cm}{!}{\begin{tikzpicture}
 \node (L1) [circle,fill=none,draw=black,minimum size=0.2cm,inner sep=0.2mm] at (0,0.7) {};
\draw (0,0.1675) ellipse (0.1cm and 0.275cm);
\node (L) [circle,fill=cyan,draw=black,minimum size=0.1cm,inner sep=0.2mm,label={[xshift=-0.45cm,yshift=-0.3cm]{\footnotesize $\{y,\!u\}$}}] at (0,0.35) {};
\node (B1) [circle,fill=cyan,draw=black,minimum size=0.1cm,inner sep=0.2mm,label={[xshift=-0.22cm,yshift=-0.25cm]{\footnotesize $v$}}] at (0,0) {};
 \node (B4) [circle,fill=cyan,draw=black,minimum size=0.1cm,inner sep=0.2mm,label={[xshift=-0.22cm,yshift=-0.25cm]{\footnotesize $x$}}] at (0,0.7) {};
 \draw[thick] (B4)--(L)--(B1);
 \end{tikzpicture}}};
 \node(j1) at (-1.6,-1.55) {\resizebox{0.88cm}{!}{\begin{tikzpicture}
 \node (L1) [circle,fill=none,draw=black,minimum size=0.2cm,inner sep=0.2mm] at (0,0.7) {};
   \node (L0) [circle,fill=none,draw=black,minimum size=0.2cm,inner sep=0.2mm] at (0,0) {};
  \node (L3) [circle,fill=none,draw=black,minimum size=0.2cm,inner sep=0.2mm] at (0,0.35) {};
\node (L) [circle,fill=cyan,draw=black,minimum size=0.1cm,inner sep=0.2mm,label={[xshift=0.45cm,yshift=-0.3cm]{\footnotesize $\{y,\!u\}$}}] at (0,0.35) {};
\node (B1) [circle,fill=cyan,draw=black,minimum size=0.1cm,inner sep=0.2mm,label={[xshift=0.22cm,yshift=-0.25cm]{\footnotesize $v$}}] at (0,0) {};
 \node (B4) [circle,fill=cyan,draw=black,minimum size=0.1cm,inner sep=0.2mm,label={[xshift=0.22cm,yshift=-0.25cm]{\footnotesize $x$}}] at (0,0.7) {};
 \draw[thick] (B4)--(L)--(B1);
 \end{tikzpicture}}};
 \node(j1) at (-1.6,1.55) {\resizebox{0.88cm}{!}{\begin{tikzpicture}
 \node (L1) [circle,fill=none,draw=black,minimum size=0.2cm,inner sep=0.2mm] at (0,0.7) {};
   \node (L0) [circle,fill=none,draw=black,minimum size=0.2cm,inner sep=0.2mm] at (0,0) {};
  \node (L3) [circle,fill=none,draw=black,minimum size=0.2cm,inner sep=0.2mm] at (0,0.35) {};
\node (L) [circle,fill=cyan,draw=black,minimum size=0.1cm,inner sep=0.2mm,label={[xshift=0.45cm,yshift=-0.3cm]{\footnotesize $\{y,\!v\}$}}] at (0,0) {};
\node (B1) [circle,fill=cyan,draw=black,minimum size=0.1cm,inner sep=0.2mm,label={[xshift=0.22cm,yshift=-0.25cm]{\footnotesize $u$}}] at (0,0.35) {};
 \node (B4) [circle,fill=cyan,draw=black,minimum size=0.1cm,inner sep=0.2mm,label={[xshift=0.22cm,yshift=-0.25cm]{\footnotesize $x$}}] at (0,0.7) {};
 \draw[thick]  (L)--(B1);
\draw [thick] (B1)--(B4);\end{tikzpicture}}};
\end{tikzpicture}}
\end{center}
where, once again, we fully labeled only the shaded square. \demo
\end{example}

\begin{thm}
The coloured operad ${\EuScript O}^{\circ}_{\infty}$ is the $W$-construction ${W}(H,{\EuScript O})$ of the dg coloured operad ${\EuScript O}$ and the interval $H=N_{\ast}(\Delta^1)$ of normalized chains on the 1-simplex $\Delta^1$.
\end{thm}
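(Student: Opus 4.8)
The plan is to verify that $({\EuScript O}^{\circ}_{\infty},d^{\circ})$ satisfies the defining property of the Boardman--Vogt--Berger--Moerdijk $W$-construction $W(H,{\EuScript O})$ as given in \cite{BM2}: namely that, as a graded coloured collection, it is the free operad on the generating collection obtained by tensoring, over the operadic trees, with the appropriate powers of the interval $H=N_\ast(\Delta^1)$, and that the differential is the sum of the internal differential coming from $H$ (here $d^{\circ}_0$) and the term coming from the operad structure of ${\EuScript O}$ together with the comultiplication on $H$ (here $d^{\circ}_1$). Concretely, recall that $N_\ast(\Delta^1)$ is the chain complex with basis a single generator in degree $1$ (the $1$-cell, ``$0<1$'') and two generators in degree $0$ (the vertices $0$ and $1$), and that the $W$-construction assigns to each vertex of a tree of operations in ${\EuScript O}$ a copy of $H$, with the $1$-cells recording ``lengths'' of internal edges, the $0$-cell ``$1$'' recording a contracted (length-$0$) edge and ``$0$'' an edge of maximal length, i.e.\ a genuine separation into two levels.

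First I would set up the dictionary between circled constructs and the cells of $W(H,{\EuScript O})$. By Theorem~\ref{free}, a triple $({\cal T},\sigma,C)$ corresponds to a planar tree $\alpha({\cal T},\sigma,C)$ in the free operad on left-recursive operadic trees; adding $p$ circles to $C$ is, by Definition~\ref{dec} and the remark following it, the same as specifying a decomposition of $\alpha({\cal T},\sigma,C)$ into $p$ ``circled blocks'' joined by connecting edges. This is exactly the combinatorial datum of a tree of operations of ${\EuScript O}$ (the circled blocks, each a single operation of ${\EuScript O}$, i.e.\ a left-recursive operadic tree) together with, on each connecting edge, a length parameter. The connecting edges are the ``internal edges'' of the $W$-tree that carry a nondegenerate $1$-cell of $H$; the absence of a connecting edge (an edge lying inside a circle) is the degenerate $0$-cell ``$1$'' (length $0$, i.e.\ composed in ${\EuScript O}$); an internal edge that has been fully opened corresponds to the $0$-cell ``$0$''. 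Under this dictionary the grading $|({\cal T},C^{\circ}_p)| = |v(C)| - p$ becomes precisely the total $H$-degree, i.e.\ the number of connecting edges. I would then check that the operad composition $\circ_i$ on ${\EuScript O}^{\circ}_{\infty}$, which by construction grafts circled constructs and keeps all circles, matches the grafting of $W$-trees: a new connecting edge of maximal length is created exactly when the two circled blocks being joined do not merge, which is the defining rule of the $W$-construction's composition (grafting introduces a new length-$1$ edge unless it is collapsed), and the sign $\varepsilon$ was defined to reproduce the Koszul sign of grafting in a free graded operad.

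Next I would match the differentials. The component $d^{\circ}_0$ collapses a circled edge, i.e.\ composes in ${\EuScript O}$ inside a single block; this is the internal differential of $N_\ast(\Delta^1)$, whose only nonzero value sends the $1$-cell to the difference of its two vertices --- here ``collapse'' produces the ``$1$'' vertex, and the contribution of the other vertex ``$0$'' is packaged into $d^{\circ}_1$ (this is why $d^{\circ}=d^{\circ}_1-d^{\circ}_0$, with the relative sign, and why the sign of each $d^{\circ}_0$-summand is precisely the sign that $C$ acquires as a summand of $d_{{\EuScript O}_{\infty}}$ applied to the collapsed construct, as stipulated in the definition). The component $d^{\circ}_1$ splits a circle into two along an internal edge, converting a circled edge into a connecting edge of maximal length; this is exactly the term of the $W$-differential built from the de-composition (splitting) of operations of ${\EuScript O}$ --- equivalently, since ${\EuScript O}$ is Koszul self-dual (\cite[Theorem 4.3]{VdL}) and $d_{{\EuScript O}_{\infty}}$ splits vertices (\S\ref{odg}), $d^{\circ}_1$ is the ``coercion'' of $d_{{\EuScript O}_{\infty}}$ through the identification of vertices of ${\EuScript O}$-operations with circles. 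The extra sign $e(C'_1)+e(C'_2)$ attached to $d^{\circ}_1$-summands, which the text flags as forced by $(d^{\circ})^2=0$, is precisely what one gets from the Koszul sign bookkeeping when one re-derives it from the $W$-construction's explicit formula; I would verify this by unwinding both sign conventions on a splitting that is adjacent to a collapse. Since Lemma~\ref{dok} and the subsequent discussion already establish that $d^{\circ}$ squares to zero and is compatible with the composition, what remains is purely the identification of the three structures (collection, composition, differential) with those of $W(H,{\EuScript O})$ as written out in \cite[Section 4--5]{BM2}.

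The main obstacle I expect is bookkeeping the signs: Berger--Moerdijk's $W$-construction is originally phrased in topological or simplicial terms, and to compare it with our chain-level, fully explicit sign conventions one must fix an orientation/ordering convention for the $1$-cells of $H$ attached to the (unordered) internal edges of a $W$-tree and show it is compatible with the planar, left-recursive orderings we use for $\alpha({\cal T},\sigma,C)$ --- in particular that the position-counting defining $\delta$, $\varepsilon$ and $e(C'_1)+e(C'_2)$ reproduces the Koszul signs of the bar-type differential and of free-operad grafting. A secondary, smaller point is to confirm that the cubical subdivisions appearing in the two examples are literally the standard cubical decomposition of the $W(H,{\EuScript O})$-cells (products of intervals indexed by internal edges of a fixed planar tree), which is the geometric shadow of the bijection above and serves as a useful consistency check. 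Once the conventions are pinned down, the proof is a direct, if somewhat lengthy, translation, and I would present it as such rather than re-deriving $W(H,{\EuScript O})$ from scratch.
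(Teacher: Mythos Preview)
Your overall strategy---set up a dictionary between circled constructs and $H$-labelled trees in the free operad on ${\EuScript O}$, then match composition and differential---is exactly the paper's. But your dictionary is inverted, and this is not a harmless convention issue: it makes the degree count, the composition, and the differential all fail to match.

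Concretely, you assign the $1$-cell of $H$ to the \emph{connecting} edges and the $0$-cell ``length $0$'' to the \emph{circled} edges. This cannot be right: the number of edges of $\alpha({\cal T},\sigma,C)$ is $|v(C)|-1$, of which $p-1$ are connecting and $|v(C)|-p$ are circled; since the grading is $|v(C)|-p$, it is the \emph{circled} edges that must carry the degree-$1$ cell $\lambda$. The correct dictionary (which is the paper's) is: edges already collapsed inside a vertex of $C$ get $\lambda_0$; circled edges get $\lambda$; connecting edges get $\lambda_1$. With this, composition in ${\EuScript O}^{\circ}_{\infty}$ creates a new connecting edge, i.e.\ a new $\lambda_1$-edge---precisely the rule for the $W$-composition---and $d(\lambda)=\lambda_1-\lambda_0$ applied to a circled edge yields $d^{\circ}_1-d^{\circ}_0$. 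In particular, both $d^{\circ}_1$ and $d^{\circ}_0$ come from the external differential $\partial_H$; the internal differential $\partial_{\EuScript O}$ vanishes because ${\EuScript O}$ has trivial differential. Your description of $d^{\circ}_1$ as ``the term built from the decomposition of operations of ${\EuScript O}$'' is therefore misleading: there is no cobar-type term in $\partial_W$ here, only the $\lambda_1$-half of $\partial_H$. Once the labels are swapped, your sign discussion and the rest of the argument go through as you outline.
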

\begin{proof}

Recall from \cite[Section 8.5.2.]{BM1} that the operad ${W}(H,{\EuScript O})$   is constructed like the free ${\mathbb N}$-coloured operad over ${\EuScript O}$, but  with the additional assignment of a ``length in $H$'' to each edge of the corresponding trees. More precisely, since $H=N_{\ast}(\Delta^1)$ is the complex concentrated in degrees $0$ and $1$  defined by: $$H_0={\textsf{Span}}_{{\Bbbk}} (\{\lambda_0\})\oplus {\textsf{Span}}_{{\Bbbk}} (\{\lambda_1\}),\quad H_1={\textsf{Span}}_{{\Bbbk}}(\{\lambda\}), \quad \mbox{ and } d_1(\lambda)=\lambda_1-\lambda_0,$$ and since the lengths of the edges of a planar rooted tree $T$ are formally defined by the  tensor product $
H(T):=\bigotimes_{e\in e(T)} H$,
the space ${W}(H,{\EuScript O})(n_1,\dots,n_k;n)$ is spanned by the equivalence classes of quadruples $({\cal T},\sigma,C,h)$, where $({\cal T},\sigma,C)\in {\EuScript O}_{\infty}(n_1,\dots,n_k;n)$ and  $h\in H(\alpha({\cal T},C))$, where $\alpha$ is the isomorphism from Theorem \ref{free}, under the equivalence relation generated by
$$({\cal T},\sigma,C[X\{Y\}/V],s_{X\{Y\}}(h))\sim ({\cal T},\sigma,C,h),$$ 
for  $X\{Y\}:{\cal T}_V$ and $s_e:H(\alpha({\cal T},C))\rightarrow H(\alpha({\cal T},C[X\{Y\}/V]))$ defined by setting the length of $X\{Y\}$ to be $\lambda_0$. 
The partial composition operation of ${W}(H,{\EuScript O})$ is defined by grafting trees and assigning the  length $\lambda_1$ to the new 
edge. Finally, the differential $\partial_W$ of ${W}(H,{\EuScript O})$  is the sum $\partial_W=\partial_{\EuScript O}+\partial_{H}$ of the {\em internal differential} $\partial_{\EuScript O}$, which, being induced from the differential of the operad ${\EuScript O}$, is trivially $0$,  and the {\em external differential} $\partial_H$, which is itself a sum of the operators $\partial_H^1$ and $\partial_H^2$, induced by the  degree zero elements of $H$, which act by assigning the corresponding  length  to the edges.

\medskip

The isomorphism between $W(H,{\EuScript O})$ and ${\EuScript O}^{\circ}_{\infty}$ is reflected by the fact that the elements of the basis of ${W}(H,{\EuScript O})(n_1,\dots,n_k;n)$ determined by an operadic tree ${\cal T}$ are in one-to-one correspondence with the circled constructs of ${\bf H}_{{\cal T}}$. 
 Indeed,   a circled construct $C^{\circ}:{\bf H}_{{\cal T}}$ determines $h\in H(\alpha({\cal T},\sigma,C))$ as follows:
if $X$ is a vertex of the underlying  construct  $C:{\bf H}_{\cal T}$,   then, in $\alpha({\cal T},\sigma,C)$,   the edges   determined by the set $X$ are ``already collapsed'' and hence their length is taken to be $\lambda_0$; the connecting edges of $C^{\circ}$ will get the length $\lambda_1$ and the circled edges will get the length $\lambda$. With this identification, it becomes obvious that the   external differential $\partial_H$ is precisely  $d^{\circ}_1-d^{\circ}_0$.  \end{proof}

\smallskip
  \subsection{Combinatorial homotopy theory for cyclic operads}\label{cyc} In \cite[Section 1.6.3]{luc}, Luk\' acs defined the ${\mathbb N}$-coloured operad ${\EuScript C}$ governing non-symmetric cyclic operads. His definition is based on {\em cyclic operadic trees}, i.e. operadic trees additionally equipped with  bijections labeling clockwise all their leaves  in a cyclic way with respect to the planar embedding, and the appropriate modification of the substitution operation. In this section, we show that, by switching from operadic to cyclic operadic trees, while preserving the faces of operadic polytopes as components of operations, one obtains the minimal model ${\EuScript C}_{\infty}$ for the operad ${\EuScript C}$.  

\medskip

\subsubsection{Cyclic operads as algebras over the coloured operad ${\EuScript C}$} Consider the equivalence classes of {\em cyclic operadic trees}, i.e. of triples $({\cal T},\sigma,{\tau})$, where  $({\cal T},\sigma)$ is an operadic tree and  $\tau:\{0,1,\dots,n\}\rightarrow l({\cal T})$ is a bijection labeling clockwise all the leaves of ${\cal T}$ in a cyclic way with respect to the planar embedding of ${\cal T}$, under the equivalence relation generated by:
\begin{quote}
$({\cal T}_1,\sigma_1,\tau_1)\sim ({\cal T}_2,\sigma_2,\tau_2)$ if there exists an isomorphism   $\varphi: {\cal U}({\cal T}_1)\rightarrow {\cal U}({\cal T}_2)$ of planar unrooted trees ${\cal U}({\cal T}_1)$ and ${\cal U}({\cal T}_2)$, obtained from ${\cal T}_1$ and ${\cal T}_2$ by forgetting the respective roots,  such that  $\sigma_1=\varphi\circ\sigma_2$ and $\tau_1=\varphi\circ\tau_2$.
\end{quote} 
 Let ${\tt{CTree}}(n_1,\dots,n_k;n)$ denote the set of equivalence classes of cyclic operadic trees $({\cal T},\sigma,{\tau})$, such that  $({\cal T},\sigma)\in {\tt{Tree}}(n_1,\dots,n_k;n)$.

\smallskip

 \begin{example}\label{classesunrooted} 
The following three cyclic operadic trees, in which the marked leaf is the $0$-th in the cyclic order,  are equivalent: 
\begin{center}
\begin{tikzpicture}
\node (A) [circle,draw=black,minimum size=4mm,inner sep=0.1mm] at (0.6,0) {\small $1$}; 
\node (B) [circle,draw=black,minimum size=4mm,inner sep=0.1mm] at (0,1) {\small $2$}; 
\node (C) [circle,draw=black,minimum size=4mm,inner sep=0.1mm] at (0.3,2) {\small $3$}; 
\draw (0.6,-0.71)--(A)--(B)--(C);
\draw (A)--(1.1,0.81);
\draw (B)--(-0.3,1.81);
\draw (0.9,1.81)--(B)--(-0.9,1.81);
\draw (-0.2,2.81)--(C)--(0.8,2.81);
\draw (C)--(0.3,2.81);
\node (r) [circle,draw=black,fill=black,minimum size=1mm,inner sep=0.0cm] at (0.3,2.65) {};
\end{tikzpicture}\quad \quad \quad\quad \quad\quad
\begin{tikzpicture}
\node (C) [circle,draw=black,minimum size=4mm,inner sep=0.1mm] at (0.6,0) {\small $3$}; 
\node (B) [circle,draw=black,minimum size=4mm,inner sep=0.1mm] at (0.6,1) {\small $2$}; 
\node (A) [circle,draw=black,minimum size=4mm,inner sep=0.1mm] at (0.3,2) {\small $1$}; 
\draw (0,2.81)--(A)--(0.6,2.81);
\draw (B)--(A);
\draw (-0.3,1.81)--(B)--(0.9,1.81);
\draw (B)--(1.5,1.81);
\draw (0,0.81)--(C)--(1.2,0.81);
\draw (0.6,-0.71)--(C)--(B);
\node (r) [circle,draw=black,fill=black,minimum size=1mm,inner sep=0.0cm] at (0.6,-0.5) {};
\end{tikzpicture} \quad \quad  \quad\quad \quad\quad \begin{tikzpicture}
\node (C) [circle,draw=black,minimum size=4mm,inner sep=0.1mm] at (1.5,2) {\small $3$}; 
\node (B) [circle,draw=black,minimum size=4mm,inner sep=0.1mm] at (0.6,1) {\small $2$}; 
\node (A) [circle,draw=black,minimum size=4mm,inner sep=0.1mm] at (-0.3,2) {\small $1$}; 
\draw (C)--(B)--(A);
\draw (0.3,1.81)--(B)--(0.9,1.81);
\draw (B)--(0.6,0.29);
\draw (-0.6,2.81)--(A)--(0,2.81);
\draw (1,2.81)--(C)--(2,2.81);
\draw (C)--(1.5,2.81);
\node (r) [circle,draw=black,fill=black,minimum size=1mm,inner sep=0.0cm] at (1.5,2.67) {};
\end{tikzpicture} \vspace{-0.45cm}
\end{center}\demo
\end{example}

\noindent Observe that each  equivalence class of a cyclic operadic tree $({\cal T},\sigma,\tau)$   can be represented by a triple $(\underline{\cal T},\underline{\sigma},\underline{\tau})$ ,  for which $\underline{\tau}(0)$ is the root of $\underline{\cal T}$.  In Example \ref{classesunrooted},  this canonical representative is the tree in the middle.   

\medskip

Let, for $k\geq 1$, $n_1,\dots,n_k\geq 1$ and $n=\big(\sum^k_{i=1} n_i\big)-k+1$, ${\EuScript C}(n_1,\dots,n_k;n)$ be the vector space spanned by  the equivalence classes of cyclic operadic trees $({\cal T},\sigma,{\tau})\in{\tt{CTree}}(n_1,\dots,n_k;n)$.
Note that this definition allows  cyclic operadic trees with only one vertex (and no internal edges).

\smallskip
 
  The operad structure on the collection ${\EuScript C}$ is defined in terms of substitution of trees that takes into account the cyclic orderings of leaves:  when identifying the leaves of a vertex of the first tree with the leaves of the second tree, the root of that vertex is considered as $0$-th in the planar order. More precisely, for $({\cal T}_1,\sigma_1,{\tau}_1)\in {\EuScript C}(n_1,\dots,n_k;n)$ and $({\cal T}_2,\sigma_2,{\tau_2})\in {\EuScript C}(m_1,\dots,m_l;n_i)$,  we have 
$$({\cal T}_1,\sigma_1,\tau_1)\circ_i ({\cal T}_2,\sigma_2,\tau_2)=({\cal T}_1\bullet_i \underline{{\cal T}_2}, \sigma_1\bullet_i \underline{\sigma_2},\tau_1),$$ where the $\bullet_i$ operation is the one defined for operadic trees in \S\ref{operadictrees}, and where $(\underline{{\cal T}_2},\underline{\sigma_2},\underline{\tau_2})$ is the canonical representative of the class determined by $({\cal T}_2,\sigma_2,{\tau_2})$. The action of the symmetric group is defined by $({\cal T},\sigma,\tau)^{\kappa}=({\cal T},\sigma\circ\kappa,\tau)$.

\smallskip

\begin{lemma} 
Algebras over ${\EuScript C}$ are (non-unital, non-symmetric, reduced) dg cyclic operads. 
\end{lemma}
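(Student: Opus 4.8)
The plan is to mirror the proof of Lemma \ref{algoperad}, adapting it to the cyclic setting. By definition, a ${\EuScript C}$-algebra is a degree $0$ homomorphism of ${\mathbb N}$-coloured dg operads $\chi:({\EuScript C},0)\rightarrow ({\it End}(A),d_{\it End})$ on a dg ${\mathbb N}$-module $A=\{(A(n),d_{A(n)})\}_{n\geq 1}$. First I would unwind what such a $\chi$ amounts to on generators. The generating operations of ${\EuScript C}$ are the equivalence classes of cyclic operadic trees with two vertices; since each such tree, by the remark following Example \ref{classesunrooted}, has a canonical representative whose $0$-th leaf is the root of the first vertex, these generators are naturally indexed by pairs $(i,j)$ recording which input of the first corolla is glued to which cyclically-labelled leaf of the second. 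Tracking the cyclic labels carefully, $\chi$ endows $A$ with partial composition operations $\circ_{i}:A(n)\otimes A(k)\rightarrow A(n+k-1)$ \emph{together with} an action of cyclic permutations rotating the inputs, the latter coming precisely from the freedom in choosing which leaf of a corolla plays the role of the output. The relations of ${\EuScript C}$ — inherited from the substitution structure on cyclic operadic trees — then translate into exactly the associativity/equivariance axioms of a non-unital, non-symmetric, reduced cyclic operad, including the compatibility of the cyclic action with partial composition.

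The key steps, in order, would be: (1) identify the generators of ${\EuScript C}$ as cyclic operadic trees with two vertices and read off the corresponding structure maps on $A$; (2) show that the compositional constraints of the free operad ${\cal T}_{\mathbb N}(-)$, modulo the relations encoded by substitution of cyclic operadic trees, are precisely the defining axioms of a dg cyclic operad in the sense of \cite{luc}; (3) observe, as in Lemma \ref{algoperad}, that the equation $\chi\circ 0 = d_{\it End}\circ\chi$ forces $d_{\it End}\circ\chi=0$, which is exactly the statement that all the structure operations (composition and cyclic rotations) are chain maps with respect to $d_A$ — i.e. that the cyclic operad structure is a \emph{dg} structure; and (4) check that the converse holds, so that the correspondence between ${\EuScript C}$-algebras and dg cyclic operads is a bijection (indeed an equivalence of categories).

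The main obstacle I expect is bookkeeping the cyclic labellings through substitution in step (2): unlike the operad ${\EuScript O}$, where the $\circ_i$ operations simply graft rooted trees, in ${\EuScript C}$ the substitution $({\cal T}_1,\sigma_1,\tau_1)\circ_i({\cal T}_2,\sigma_2,\tau_2)=({\cal T}_1\bullet_i\underline{{\cal T}_2},\sigma_1\bullet_i\underline{\sigma_2},\tau_1)$ involves re-rooting ${\cal T}_2$ at its $0$-th leaf before grafting, and one must verify that the resulting relations among generators are insensitive to the choice of representatives and reproduce both the ``input-to-input'' and the ``input-to-output/output-to-input'' composition laws of a cyclic operad. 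This is the cyclic analogue of the left-comb normal-form analysis used in Lemma \ref{representation}, and while conceptually routine it requires care. Once the dictionary between the relations of ${\EuScript C}$ and the cyclic operad axioms is in place, the differential-compatibility argument is formally identical to the one in Lemma \ref{algoperad}, so I would not belabour it.
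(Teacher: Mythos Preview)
The main gap: you overlook the arity-one part of ${\EuScript C}$. By the paper's own definition ${\EuScript C}(n;n)$ is spanned by cyclic operadic \emph{corollas} --- the paper explicitly flags that ``this definition allows cyclic operadic trees with only one vertex'' --- and it is these unary operations that carry the ${\mathbb Z}_{n+1}$-action on $A(n)$. Your assertion that ``the generating operations of ${\EuScript C}$ are the equivalence classes of cyclic operadic trees with two vertices'' misses precisely this, and your attempt to recover the cyclic action from ``the freedom in choosing which leaf of a corolla plays the role of the output'' in the two-vertex trees does not produce a map $A(n)\to A(n)$: an algebra homomorphism $\chi$ only yields such a map from an element of ${\EuScript C}(n;n)$, not from a binary generator.

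The paper proceeds differently. It first cites Luk\'acs \cite[Proposition~1.6.4]{luc} for the result, and then sketches an independent argument by exhibiting an explicit presentation ${\EuScript C}\simeq{\cal T}_{\mathbb N}(\hat E)/(\hat R)$. The generating set $\hat E$ consists of the binary operations of Definition~\ref{1} \emph{together with} unary generators $\tau\in{\mathbb Z}^{\it op}_{n+1}\setminus\{{\it id}\}$; the relations $\hat R$ comprise \texttt{(A1)} plus four relations \texttt{(C1)}--\texttt{(C4)} governing the cyclic generators (group law, invertibility, and two compatibility laws with $\circ_i$), while \texttt{(A2)} becomes a consequence of these. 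From this presentation a ${\EuScript C}$-algebra is visibly a dg ${\mathbb N}$-module carrying a right ${\mathbb Z}_{n+1}$-action on each $A(n)$ and partial compositions satisfying associativity and the cyclic compatibility --- i.e.\ a dg cyclic operad by \cite[Proposition~42]{opsprops}. The bookkeeping you anticipate in step~(2) is really the verification that this generators-and-relations presentation agrees with Luk\'acs' tree model; but the presentation itself, with its unary generators and relations \texttt{(C1)}--\texttt{(C4)}, is the ingredient your outline lacks.
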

\begin{proof}
This has been proven in \cite[Proposition 1.6.4]{luc}. An alternative proof can be obtained by translating Luk\' acs' definition of ${\EuScript C}$ into an equivalent definition that describes ${\EuScript C}$ in terms of generators (i.e.   composite trees)  and relations.  Such a definition is obtained  by extending the set of generators $E$ and the set of relations $R$ from Definition \ref{1} by adding the unary generators encoding cyclic permutations and   the relations  governing them, as follows.

\smallskip

Indeed, relying on Lemma \ref{representation}, it can be  shown that    ${\EuScript C}\simeq{\cal T}_{\mathbb N}(\hat{E})/(\hat{R})$, where the set of generators $\hat{E}$ is given by  binary operations 
\begin{center}  \raisebox{1.8em}{$\hat{E}(n,k;n+k-1)=\Bigg\{$}
\raisebox{0.5em}{\resizebox{!}{1.3cm}{\begin{tikzpicture}  
\node(n) [rectangle,draw=none,minimum size=0mm,inner sep=0cm] at (-0.3,0.6) {\tiny $1$};
\node(k) [rectangle,draw=none,minimum size=0mm,inner sep=0cm] at (0.3,0.6) {\tiny $2$};
\node(n) [rectangle,draw=none,minimum size=0mm,inner sep=0cm] at (-0.4,0.4) {\tiny $n$};
\node(k) [rectangle,draw=none,minimum size=0mm,inner sep=0cm] at (0.4,0.4) {\tiny $k$};
\node(k) [rectangle,draw=none,minimum size=0mm,inner sep=0cm] at (0.5,-0.45) {\tiny $n\!+\!k\!-\!1$};
\node(j) [rectangle,draw=none,minimum size=0mm,inner sep=0cm] at (0,-0.5) {};
 \node(i) [circle,draw=black,thick,minimum size=0.2mm,inner sep=0.05cm] at (0,0) {$i$}; 
\draw[-,thick] (i)--(-0.3,0.45);\draw[-,thick] (i)--(0.3,0.45);
\draw[-,thick] (i)--(j);\end{tikzpicture}}} \raisebox{1.8em}{, \enspace $1\leq i\leq n$ \Bigg\} $\cup$\, $\Bigg\{$}
\raisebox{0.5em}{\resizebox{!}{1.3cm}{\begin{tikzpicture}  
\node(n) [rectangle,draw=none,minimum size=0mm,inner sep=0cm] at (-0.3,0.6) {\tiny $2$};
\node(k) [rectangle,draw=none,minimum size=0mm,inner sep=0cm] at (0.3,0.6) {\tiny $1$};
\node(n) [rectangle,draw=none,minimum size=0mm,inner sep=0cm] at (-0.4,0.4) {\tiny $k$};
\node(k) [rectangle,draw=none,minimum size=0mm,inner sep=0cm] at (0.4,0.4) {\tiny $n$};
\node(k) [rectangle,draw=none,minimum size=0mm,inner sep=0cm] at (0.5,-0.45) {\tiny $n\!+\!k\!-\!1$};
\node(j) [rectangle,draw=none,minimum size=0mm,inner sep=0cm] at (0,-0.5) {};
 \node(i) [circle,draw=black,thick,minimum size=0.2mm,inner sep=0.0325cm] at (0,0) {\small $j$}; 
\draw[-,thick] (i)--(-0.3,0.45);\draw[-,thick] (i)--(0.3,0.45);
\draw[-,thick] (i)--(j);\end{tikzpicture}}} \raisebox{1.8em}{, \enspace $1\leq j\leq k$ \Bigg\},}  
\end{center}
\smallskip
for $n,k\geq 1$, equipped with the action of the transposition $(21)$ that sends \raisebox{-1em}{\resizebox{!}{1.3cm}{\begin{tikzpicture}  
\node(n) [rectangle,draw=none,minimum size=0mm,inner sep=0cm] at (-0.3,0.6) {\tiny $1$};
\node(k) [rectangle,draw=none,minimum size=0mm,inner sep=0cm] at (0.3,0.6) {\tiny $2$};
\node(n) [rectangle,draw=none,minimum size=0mm,inner sep=0cm] at (-0.4,0.4) {\tiny $n$};
\node(k) [rectangle,draw=none,minimum size=0mm,inner sep=0cm] at (0.4,0.4) {\tiny $k$};
\node(k) [rectangle,draw=none,minimum size=0mm,inner sep=0cm] at (0.5,-0.45) {\tiny $n\!+\!k\!-\!1$};
\node(j) [rectangle,draw=none,minimum size=0mm,inner sep=0cm] at (0,-0.5) {};
 \node(i) [circle,draw=black,thick,minimum size=0.2mm,inner sep=0.05cm] at (0,0) {$i$}; 
\draw[-,thick] (i)--(-0.3,0.45);\draw[-,thick] (i)--(0.3,0.45);
\draw[-,thick] (i)--(j);\end{tikzpicture}}}\, to  \raisebox{-1em}{\resizebox{!}{1.3cm}{\begin{tikzpicture}  
\node(n) [rectangle,draw=none,minimum size=0mm,inner sep=0cm] at (-0.3,0.6) {\tiny $2$};
\node(k) [rectangle,draw=none,minimum size=0mm,inner sep=0cm] at (0.3,0.6) {\tiny $1$};
\node(n) [rectangle,draw=none,minimum size=0mm,inner sep=0cm] at (-0.4,0.4) {\tiny $n$};
\node(k) [rectangle,draw=none,minimum size=0mm,inner sep=0cm] at (0.41,0.4) {\tiny $k$};
\node(k) [rectangle,draw=none,minimum size=0mm,inner sep=0cm] at (0.5,-0.45) {\tiny $n\!+\!k\!-\!1$};
\node(j) [rectangle,draw=none,minimum size=0mm,inner sep=0cm] at (0,-0.5) {};
 \node(i) [circle,draw=black,thick,minimum size=0.2mm,inner sep=0.05cm] at (0,0) {$i$}; 
\draw[-,thick] (i)--(-0.3,0.45);\draw[-,thick] (i)--(0.3,0.45);
\draw[-,thick] (i)--(j);\end{tikzpicture}}}, and unary operations
\begin{center}  \raisebox{1.75em}{$\hat{E}(n;n)=\Bigg\{$}\enspace 
\resizebox{!}{1.3cm}{\begin{tikzpicture}  
\node(n) [rectangle,draw=none,minimum size=0mm,inner sep=0cm] at (0.17,-0.03) {\scriptsize $\tau$};
\node(k) [rectangle,draw=none,minimum size=0mm,inner sep=0cm] at (0.15,0.45) {\tiny $n$};
\node(k) [rectangle,draw=none,minimum size=0mm,inner sep=0cm] at (0.15,-0.45) {\tiny $n$};
 \node(i) [circle,draw=black,fill=black,thick,minimum size=1mm,inner sep=0.01cm] at (0,0) {}; 
\draw[-,thick] (0,0.5)--(i)--(0,-0.5);\end{tikzpicture}}\raisebox{1.8em}{\, , \enspace $\tau\in {\mathbb Z}^{\it op}_{n+1}\backslash\{{\it id}\}$ $\Bigg\}$} \end{center}
for $n\geq 1$,  realized as cyclic permutations (i.e. bijections preserving the cyclic order) of the cyclically ordered set $(0,1,\dots,n)$, and the set of relations $\hat{R}$ is given by the relation \hyperlink{A1a}{\texttt{(A1)}} from Definition \ref{1}, together with the following   equalities, concerning the action of cyclic permutations:
\begin{center}
\hypertarget{C1}{\texttt{(C1)}}   \enspace\raisebox{-4.1ex}{\resizebox{!}{1.4cm}{\begin{tikzpicture}  
\node(n) [rectangle,draw=none,minimum size=0mm,inner sep=0cm] at (0.22,-0.03) {\scriptsize $\tau_1$};
\node(m) [rectangle,draw=none,minimum size=0mm,inner sep=0cm] at (0.22,-0.37) {\scriptsize $\tau_2$};
\node(k) [rectangle,draw=none,minimum size=0mm,inner sep=0cm] at (0.15,0.3) {\tiny $n$};
 \node(i) [circle,draw=black,fill=black,thick,minimum size=1mm,inner sep=0.01cm] at (0,0) {}; 
 \node(j) [circle,draw=black,fill=black,thick,minimum size=1mm,inner sep=0.01cm] at (0,-0.35) {}; 
\draw[-,thick] (0,0.35)--(i)--(j)--(0,-0.7);\end{tikzpicture}}} \enspace $=$\enspace  \raisebox{-4.1ex}{\resizebox{!}{1.4cm}{\begin{tikzpicture}  
\node(n) [rectangle,draw=none,minimum size=0mm,inner sep=0cm] at (0.35,-0.205) {\scriptsize $\tau_2\tau_1$};
\node(k) [rectangle,draw=none,minimum size=0mm,inner sep=0cm] at (0.15,0.3) {\tiny $n$};
 \node(i) [circle,draw=black,fill=black,thick,minimum size=1mm,inner sep=0.01cm] at (0,-0.175) {}; 
\draw[-,thick] (0,0.35)--(i)--(0,-0.7);\end{tikzpicture}}} \quad\quad\quad \hypertarget{C2}{\texttt{(C2)}}   \enspace \raisebox{-4.1ex}{\resizebox{!}{1.4cm}{\begin{tikzpicture}  
\node(n) [rectangle,draw=none,minimum size=0mm,inner sep=0cm] at (0.175,-0.03) {\scriptsize $\tau$};
\node(m) [rectangle,draw=none,minimum size=0mm,inner sep=0cm] at (0.34,-0.33) {\scriptsize $\tau^{-1}$};
\node(k) [rectangle,draw=none,minimum size=0mm,inner sep=0cm] at (0.15,0.3) {\tiny $n$};
 \node(i) [circle,draw=black,fill=black,thick,minimum size=1mm,inner sep=0.01cm] at (0,0) {}; 
 \node(j) [circle,draw=black,fill=black,thick,minimum size=1mm,inner sep=0.01cm] at (0,-0.35) {}; 
\draw[-,thick] (0,0.35)--(i)--(j)--(0,-0.7);\end{tikzpicture}}} \enspace =\enspace  \raisebox{-4.1ex}{\resizebox{!}{1.4cm}{\begin{tikzpicture}  
\node(k) [rectangle,draw=none,minimum size=0mm,inner sep=0cm] at (0.15,0.3) {\tiny $n$};
\draw[-,thick] (0,0.35)--(0,-0.7);\end{tikzpicture}}}      \\[1cm]
  \hypertarget{C3}{\texttt{(C3)}} \enspace\raisebox{-4.5 ex}{\resizebox{!}{2.1cm}{\begin{tikzpicture}  
\node(1) [rectangle,draw=none,minimum size=0mm,inner sep=0cm] at (0,-0.22) {\tiny $1$};
\node(2) [rectangle,draw=none,minimum size=0mm,inner sep=0cm] at (1,-0.22) {\tiny $2$};
\node(k) [rectangle,draw=none,minimum size=0mm,inner sep=0cm] at (-0.12,-0.42) {\tiny $n$};
\node(k1) [rectangle,draw=none,minimum size=0mm,inner sep=0cm] at (1.12,-0.42) {\tiny $k$};
\node(m) [rectangle,draw=none,minimum size=0mm,inner sep=0cm] at (0.675,-1.3) {\scriptsize $\tau$};
 \node(i1) [circle,draw=black,thick,minimum size=0.2mm,inner sep=0.03cm] at (0.5,-0.8) {\scriptsize $j$}; 
 \node(1j) [circle,draw=black,fill=black,thick,minimum size=1mm,inner sep=0.01cm] at (0.5,-1.3) {}; 
\draw[-,thick] (0,-0.35)--(i1)--(0.5,-1.75);
\draw[-,thick] (i1)--(1,-0.35);\end{tikzpicture}}} \enspace $=$\enspace  \raisebox{-4.5 ex}{\resizebox{!}{2.1cm}{\begin{tikzpicture}  
\node(1) [rectangle,draw=none,minimum size=0mm,inner sep=0cm] at (0,0.25) {\tiny $2$};
\node(2) [rectangle,draw=none,minimum size=0mm,inner sep=0cm] at (1,0.25) {\tiny $1$};
\node(k) [rectangle,draw=none,minimum size=0mm,inner sep=0cm] at (-0.15,0.05) {\tiny $k$};
\node(k1) [rectangle,draw=none,minimum size=0mm,inner sep=0cm] at (1.15,0.05) {\tiny $n$};
\node(n) [rectangle,draw=none,minimum size=0mm,inner sep=0cm] at (0.22,-0.35) {\scriptsize $\tau_1$};
\node(m) [rectangle,draw=none,minimum size=0mm,inner sep=0cm] at (0.78,-0.35) {\scriptsize $\tau_2$};
 \node(j) [circle,draw=black,fill=black,thick,minimum size=1mm,inner sep=0.01cm] at (0,-0.35) {}; 
 \node(1j) [circle,draw=black,fill=black,thick,minimum size=1mm,inner sep=0.01cm] at (1,-0.35) {}; 
 \node(i1) [circle,draw=black,thick,minimum size=0.2mm,inner sep=0.05cm] at (0.5,-0.8) {\scriptsize $i$}; 
\draw[-,thick] (0,0.1)--(j)--(i1)--(0.5,-1.35);
\draw[-,thick] (i1)--(1j)--(1,0.1);\end{tikzpicture}}}  \quad\quad\quad  \hypertarget{C4}{\texttt{(C4)}} \enspace\raisebox{-4.5 ex}{\resizebox{!}{2.1cm}{\begin{tikzpicture}  
\node(1) [rectangle,draw=none,minimum size=0mm,inner sep=0cm] at (0,-0.22) {\tiny $1$};
\node(2) [rectangle,draw=none,minimum size=0mm,inner sep=0cm] at (1,-0.22) {\tiny $2$};
\node(k) [rectangle,draw=none,minimum size=0mm,inner sep=0cm] at (-0.12,-0.42) {\tiny $n$};
\node(k1) [rectangle,draw=none,minimum size=0mm,inner sep=0cm] at (1.12,-0.42) {\tiny $k$};
\node(m) [rectangle,draw=none,minimum size=0mm,inner sep=0cm] at (0.675,-1.3) {\scriptsize $\tau$};
 \node(i1) [circle,draw=black,thick,minimum size=0.2mm,inner sep=0.03cm] at (0.5,-0.8) {\scriptsize $j$}; 
 \node(1j) [circle,draw=black,fill=black,thick,minimum size=1mm,inner sep=0.01cm] at (0.5,-1.3) {}; 
\draw[-,thick] (0,-0.35)--(i1)--(0.5,-1.75);
\draw[-,thick] (i1)--(1,-0.35);\end{tikzpicture}}} \enspace $=$\enspace   \raisebox{-4.5 ex}{\resizebox{!}{2.1cm}{\begin{tikzpicture}  
\node(1) [rectangle,draw=none,minimum size=0mm,inner sep=0cm] at (0,0.25) {\tiny $1$};
\node(2) [rectangle,draw=none,minimum size=0mm,inner sep=0cm] at (1,0.25) {\tiny $2$};
\node(n) [rectangle,draw=none,minimum size=0mm,inner sep=0cm] at (0.2,-0.29) {\scriptsize $\tau_1$};
\node(k) [rectangle,draw=none,minimum size=0mm,inner sep=0cm] at (-0.15,0.05) {\tiny $n$};
\node(k1) [rectangle,draw=none,minimum size=0mm,inner sep=0cm] at (1.15,0.05) {\tiny $k$};
 \node(j) [circle,draw=black,fill=black,thick,minimum size=1mm,inner sep=0.01cm] at (0,-0.35) {}; 
 \node(i1) [circle,draw=black,thick,minimum size=0.2mm,inner sep=0.05cm] at (0.5,-0.8) {\scriptsize $i$}; 
\draw[-,thick] (0,0.1)--(j)--(i1)--(0.5,-1.35);
\draw[-,thick] (i1)--(1,0.1);\end{tikzpicture}}}  
\end{center}
where,   in  \hyperlink{C3}{\texttt{(C3)}},  it is assumed that $j\leq\tau(0)\leq k+j-1$, i.e. that $\tau(0)=k-i+j$ for some  $1\leq i\leq k$,  and $\tau_1$ is determined by $\tau_1(i)=0$ and $\tau_2$ by $\tau_2(0)=j$, and, in \hyperlink{C4}{\texttt{(C4)}}, it is assumed that $1\leq\tau(0)\leq j-1$ (resp. $k+j\leq\tau(0)\leq n+k-1$), and $\tau_1$ is determined by $\tau_1(0)=\tau(0)$ (resp. $\tau_1(0)=\tau(0)-k+1$). Note that the relation \hyperlink{A2}{\texttt{(A2)}} from Definition \ref{1} does  not appear in the above description of ${\EuScript C}$, since it can be proven by a sequence of equalities witnessed by the relations   \hyperlink{A1}{\texttt{(A1)}}, \hyperlink{C1}{\texttt{(C1)}}, \hyperlink{C2}{\texttt{(C2)}}, \hyperlink{C3}{\texttt{(C3)}} and \hyperlink{C4}{\texttt{(C4)}}.

\smallskip

A ${\EuScript C}$-algebra is then a dg ${\mathbb N}$-module $(A,d)=\{(A(n),d_{A(n)})\}_{n\geq 1}$, equipped with the obvious unary and binary operations. Observe that the unary generators  determine a right ${\mathbb Z}_{n+1}$  action  on each $A(n)$, making $\{A(n)\}_{n\geq 1}$ a cyclic ${\mathbb N}$-module, the relations \hyperlink{A1}{\texttt{(A1)}} and \hyperlink{A2}{\texttt{(A2)}} ensure the associativity of  the operadic composition, 
while the relations  \hyperlink{C3}{\texttt{(C3)}} and \hyperlink{C4}{\texttt{(C4)}} establish the compatibility of the action of cyclic permutations with the operadic composition. That this is indeed a cyclic operad follows according to \cite[Proposition 42.]{opsprops}.
 \end{proof}
\smallskip
 \subsubsection{The combinatorial ${\EuScript C}_{\infty}$ operad}  The ${\EuScript C}_{\infty}$ operad is the dg operad whose structure is induced by   the one of the ${\EuScript O}_{\infty}$ operad, by replacing operadic trees with cyclic operadic trees,  while preserving the faces of operadic polytopes as components of operations. Here, the  operadic polytope associated to a cyclic operadic tree $({\cal T},\sigma,\tau)$ remains simply the hypergraph polytope  for the edge-graph ${\bf H}_{\cal T}$ of the tree ${\cal T}$. In particular, if ${\cal T}$ has only one vertex, its associated edge-graph is the empty hypergraph ${\bf H}_{\emptyset}$ (i.e. the unique hypergraph whose set of vertices is empty), whose set of constructs is the singleton containing the empty construct. Observe that this modification is well defined, since equivalent cyclic operadic trees have the same associated edge-graph. 

\medskip

More precisely, for $k\geq 1$, $n_1,\dots,n_k\geq 1$ and $n=(\sum^k_{i=1} n_i)-k+1$, we define ${\EuScript C}_{\infty}(n_1,\dots,n_k;n)$ to be the vector space spanned by the equivalence classes of quadruples $({\cal T},\sigma,\tau,C)$, where $({\cal T},\sigma,\tau)\in {\tt{CTree}}(n_1,\dots,n_k;n)$ and $C:{\bf H}_{{\cal T}}$. For $({\cal T}_1,\sigma_1,\tau_1,C_1)\in {\EuScript C}_{\infty}(n_1,\dots,n_k;n)$ and $({\cal T}_2,\sigma_2,\tau_2,C_2)\in \linebreak{\EuScript C}_{\infty}(m_1,\dots,m_l;n_i)$, we define the partial composition operation $\circ_i$  by  $$({\cal T}_1,\sigma_1,\tau_1,C_1)\circ_i ({\cal T}_2,\sigma_2,\tau_2,C_2):=({\cal T}_1\bullet_i \underline{{\cal T}_2},\sigma_1\bullet_i \underline{\sigma_2},C_1\bullet_i C_2,\tau_1),$$ where   $(\underline{{\cal T}_2},\underline{\sigma_2},\underline{\tau_2})$ is the canonical representative of the class determined by $({\cal T}_2,\sigma_2,{\tau_2})$,  and $C_1\bullet_i C_2$ is defined exactly as in \S\ref{Oinf}.
This determines an operad on vector spaces which is easily proven to be free over the equivalence classes represented of left-recursive cyclic operadic trees (i.e. cyclic operadic trees whose underlying operadic trees are left-recursive): $${\EuScript C}_{\infty}\simeq {\cal T}_{\mathbb N}\Bigg(\, \displaystyle\bigoplus_{k\geq 1}\,\,\bigoplus_{n_1,\dots,n_k\geq 1}\,\bigoplus_{({\cal T},{\tau})\in {\tt{CTree}}(n_1,\dots,n_k;n)}\,{\Bbbk}\Bigg).$$

\noindent Indeed, the corresponding isomorphism $\alpha$ is defined by introducing the decomposition of an operation $({\cal T},\sigma,\tau,C)\in{\EuScript C}_{\infty}(n_1,\dots,n_k;n)$  by the construction analogous to the one from the proof of Theorem \ref{free}, which, in addition, takes into account the data  given by $\tau$, as follows. 
 Denote, for $1\leq i\leq k$, with $\sigma_i$ the index  given by $\sigma$ to the  $i$-th vertex in the left-recursive ordering of ${\cal T}$.  

\smallskip

\begin{itemize}
\item If $C$ is the maximal construct $e({\cal T})$, then\begin{center}\raisebox{2.1em}{$\alpha({\cal T},\sigma,\tau,e({\cal T}))=$} 
\begin{tikzpicture}
\node (r) [rectangle,draw=black,rounded corners=.1cm,inner sep=1mm] at (0,0) {\footnotesize $\enspace({\cal T},\tau)\enspace$};
\node (rl) [circle,draw=none,inner sep=0.4mm] at (-0.8,1.1) {\footnotesize ${\bf\sigma}_1$};
\node (rr) [circle,draw=none,inner sep=0.4mm] at (0.8,1.1) {\footnotesize $\sigma_k$};
\node (rl) [circle,draw=none,inner sep=0.4mm] at (-0.8,0.8) {\footnotesize $n_{\sigma_1}$};
\node (rr) [circle,draw=none,inner sep=0.4mm] at (0.8,0.8) {\footnotesize $n_{\sigma_k}$};
\node (rc) [circle,draw=none,inner sep=0.4mm] at (0,0.8) {\footnotesize $\dots$};
\node (erc) [circle,draw=none,inner sep=0.4mm] at (0,1.1) {\footnotesize $\dots$};
\node (rb) [circle,draw=none,inner sep=0.4mm] at (0.15,-0.65) {\footnotesize $n$};
\draw (0.8,0.7)--(r)--(-0.8,0.7);
\draw (r)--(0,-0.7);
\end{tikzpicture}
\end{center}

\item Suppose that $C=X\{C_1,\dots,C_p\}$, where ${\bf H}_{\cal T}\backslash X\leadsto {\bf H}_{{\cal T}_1},\dots, {\bf H}_{{\cal T}_p}$ and    $C_i:{\bf H}_{{\cal T}_i}$. Let $({\cal T}_X,\tau)$ be the left-recursive cyclic operadic tree obtained from $({\cal T},\sigma,\tau)$ by collapsing all the edges from $e({\cal T})\backslash X$. Note that this construction preserves  $\tau$.  Once again, the collapse of the edges ${\it e}({\cal T})\backslash X$ that defines ${\cal T}_X$ is, in fact, the collapse of the subtrees ${\cal T}_i$, $1\leq i\leq p$, of ${\cal T}$.  Denote with $\tau_i$ the indexing of the leaves of ${\cal T}_i$ that sends $0$ to the root of ${\cal T}_i$. We define
$$\quad\quad\quad\alpha({\cal T},\sigma,\tau,C)=\bigg((\cdots(\alpha({\cal T}_X,\tau,X)\circ_{\rho({\cal T}_{1})}  \alpha({\cal T}_{1},\tau_1,C_{1}) )\cdots)\circ_{{\rho({\cal T}_{p})}}\alpha({\cal T}_{p},\tau_p,C_{p})\bigg)^{\sigma_C},$$
where  $\sigma_C$ is the permutation determined uniquely thanks to Lemma \ref{subtreedecomposition}.
\end{itemize}

\begin{example} The free operad representation of the operation $({\cal T},\sigma,\tau,C)$, where  
\begin{center}
\begin{tikzpicture}
\node (T) [circle,draw=none,minimum size=4mm,inner sep=0.1mm] at (-1.8,0.95) { $({\cal T},\sigma,\tau)=$}; 
\node (A) [circle,draw=black,minimum size=4mm,inner sep=0.1mm] at (0.6,0) {\small $2$}; 
\node (B) [circle,draw=black,minimum size=4mm,inner sep=0.1mm] at (0,1) {\small $3$}; 
\node (C) [circle,draw=black,minimum size=4mm,inner sep=0.1mm] at (0.3,2) {\small $1$}; 
            \node (x)[circle,draw=none,minimum size=4mm,inner sep=0.1mm] at (0.45,0.55) {\small $x$};
    \node (y) [circle,draw=none,minimum size=4mm,inner sep=0.1mm] at (0.35,1.55) {\small $y$};
\draw (0.6,-0.71)--(A)--(B)--(C);
\draw (A)--(1.1,0.81);
\draw (B)--(-0.3,1.81);
\draw (0.9,1.81)--(B)--(-0.9,1.81);
\draw (-0.2,2.81)--(C)--(0.8,2.81);
\draw (C)--(0.3,2.81);
\node (r) [circle,draw=black,fill=black,minimum size=1mm,inner sep=0.0cm] at (0.3,2.65) {};
\end{tikzpicture} \quad\quad \raisebox{4.5em}{and}  \quad\quad  
\raisebox{4.5em}{$C=$}\raisebox{3.5em}{\resizebox{0.6cm}{!}{\begin{tikzpicture}
\node (L) [circle,fill=ForestGreen,draw=black,minimum size=0.1cm,inner sep=0.2mm,label={[xshift=-0.22cm,yshift=-0.25cm]{\footnotesize $x$}}] at (0,0) {};
\node (B1) [circle,fill=ForestGreen,draw=black,minimum size=0.1cm,inner sep=0.2mm,label={[xshift=-0.22cm,yshift=-0.27cm]{\footnotesize $y$}}] at (0,0.38) {};
 \draw[thick]  (L)--(B1);\end{tikzpicture}}}
\end{center}
  is given by   
\begin{center}
\begin{tikzpicture}
\draw (0.4,3.7)--(1,2)--(1.6,3.7);
\node (1) [rectangle,draw=black,fill=white,rounded corners=.5cm,inner sep=2mm] at (0,-0.75) {\resizebox{1.75cm}{!}{\begin{tikzpicture}
\node (A) [circle,draw=black,minimum size=4mm,inner sep=0.1mm] at (0.6,0) {\small $1$}; 
\node (B) [circle,draw=black,minimum size=4mm,inner sep=0.1mm] at (0,1) {\small $2$}; 
            \node (x)[circle,draw=none,minimum size=4mm,inner sep=0.1mm] at (0.45,0.55) {\small $x$};
\draw (0.6,-0.71)--(A)--(B);
\draw (A)--(1.1,0.81);
\draw (-0.2,1.81)--(B)--(0.2,1.81);
\draw (0.6,1.81)--(B)--(-0.6,1.81);
\draw (1,1.81)--(B)--(-1,1.81);
\node (r) [circle,draw=black,fill=black,minimum size=1mm,inner sep=0.0cm] at (0.16,1.65) {};
\end{tikzpicture}}  };
\node (2) [rectangle,draw=black,fill=white,rounded corners=.35cm,inner sep=2mm] at (1,2.1) {\resizebox{1.6cm}{!}{\begin{tikzpicture}
\node (B) [circle,draw=black,minimum size=4mm,inner sep=0.1mm] at (0,1) {\small $1$}; 
\node (C) [circle,draw=black,minimum size=4mm,inner sep=0.1mm] at (0.3,2) {\small $2$}; 
    \node (y) [circle,draw=none,minimum size=4mm,inner sep=0.1mm] at (0.35,1.55) {\small $y$};
\draw (0,0.29)--(B)--(C);
\draw (B)--(-0.3,1.81);
\draw (0.9,1.81)--(B)--(-0.9,1.81);
\draw (-0.2,2.81)--(C)--(0.8,2.81);
\draw (C)--(0.3,2.81);
\node (r) [circle,draw=black,fill=black,minimum size=1mm,inner sep=0.0cm] at (0,0.5)  {};
\end{tikzpicture} }};
\node (s1) [rectangle,draw=none,inner sep=0mm] at (-0.8,1) {\scriptsize ${\bf 2}$};
\node (s3) [rectangle,draw=none,inner sep=0mm] at (1.65,3.8) {\scriptsize ${\bf 1}$};
\node (s2) [rectangle,draw=none,inner sep=0mm] at (0.35,3.8) {\scriptsize ${\bf 3}$};
\node (n1) [rectangle,draw=none,inner sep=0mm] at (-0.85,0.725) {\scriptsize $2$};
\node (n2) [rectangle,draw=none,inner sep=0mm] at (0.325,3.525) {\scriptsize $4$};
\node (n3) [rectangle,draw=none,inner sep=0mm] at (1.675,3.525) {\scriptsize $3$};
\node (n3) [rectangle,draw=none,inner sep=0mm] at (0.65,0.65) {\scriptsize $6$};
\draw (0,-2.45)--(1)--(2);
\draw (1)--(-0.75,0.875);
\end{tikzpicture}
\end{center}

 \demo
\end{example}
The space ${\EuScript C}_{\infty}(n_1,\dots,n_k;n)$ is then graded by setting $|({\cal T},\sigma,\tau,C)|:=e({\cal T})-v(C)$.
Analogously as we did for the operad ${\EuScript O}_{\infty}$, the free operad structure of ${\EuScript C}_{\infty}$ is used for the introduction of signs for the corresponding dg extension.   In particular, the differential $d_{{\EuScript C}_{\infty}}$ of ${\EuScript C}_{\infty}$   acts like the differential $d_{{\EuScript O}_{\infty}}$   of ${\EuScript O}_{\infty}$, i.e. it splits the vertices of  constructs and does  not change the underlying cyclic operadic tree; exceptionally, $d_{{\EuScript C}_{\infty}}$ maps the empty construct to $0$.  

\medskip

We can, therefore, conclude.

\begin{thm}
The operad ${\EuScript C}_{\infty}$ is the minimal model for the operad ${\EuScript C}$.
\end{thm}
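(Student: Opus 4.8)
The plan is to mirror, in the cyclic setting, the three-part structure already carried out for ${\EuScript O}_\infty$ in \S\ref{oinf}: establish that ${\EuScript C}_\infty$ is free as a graded operad, that its differential $d_{{\EuScript C}_\infty}$ is decomposable, and that the natural projection $\alpha_{\EuScript C}:{\EuScript C}_\infty\to{\EuScript C}$ is a quasi-isomorphism. Freeness over the equivalence classes of left-recursive cyclic operadic trees is essentially already recorded in the paragraph preceding the theorem, via the isomorphism $\alpha$ built there; I would only need to check that this $\alpha$ is compatible with the graded composition product, which is the verbatim analogue of the corresponding lemma for ${\EuScript O}_\infty$, the sign bookkeeping being identical (the data $\tau$ rides passively along and plays no role in the sign computation, since the canonical representative is chosen so that $\underline\tau(0)$ is the root). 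The decomposability of $d_{{\EuScript C}_\infty}$ is immediate from its definition: on a generator $({\cal T},\sigma,\tau,e({\cal T}))$ it is a sum of $\circ_i$-compositions of strictly smaller operations, exactly as for $d_{{\EuScript O}_{\infty}}$, and the new boundary case $d_{{\EuScript C}_\infty}(\text{empty construct})=0$ does not affect decomposability. That $d_{{\EuScript C}_\infty}$ is a differential compatible with the composition, i.e.\ the cyclic analogue of Lemma \ref{dok}, is proved by exactly the same relative-edge-position sign analysis, since splitting a vertex of a construct of ${\bf H}_{\cal T}$ is insensitive to whether ${\cal T}$ is decorated with a cyclic labeling.

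The substantive step is the homology computation: I would show that, for fixed $n_1,\dots,n_k,n$, the complex $({\EuScript C}_\infty(n_1,\dots,n_k;n),d_{{\EuScript C}_\infty})$ has homology concentrated in degree $0$ and equal to ${\EuScript C}(n_1,\dots,n_k;n)$. The point is that $d_{{\EuScript C}_\infty}$ acts on the $C$-component only, leaving $({\cal T},\sigma,\tau)$ fixed, so the whole complex splits as a direct sum, indexed by $({\cal T},\sigma,\tau)\in{\tt CTree}(n_1,\dots,n_k;n)$, of the complexes ${\textsf{Span}}_{\Bbbk}(A({\bf H}_{\cal T}))$ with the face-poset differential --- and these are precisely the complexes whose exactness (for positive degrees) and bottom homology (equal to a single class, for degree $0$) were established in \S\ref{Ohomology}. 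Summing over equivalence classes of cyclic operadic trees rather than operadic trees just reorganizes the index set; one needs only to remark that equivalent cyclic operadic trees have the same edge-graph, so the summand is well defined, and that distinct classes give distinct bottom-degree classes, so $H_0$ is spanned by ${\tt CTree}(n_1,\dots,n_k;n)$, which is the basis of ${\EuScript C}(n_1,\dots,n_k;n)$. One subtlety worth spelling out: when $k=1$, ${\cal T}$ has a single vertex, ${\bf H}_{\cal T}={\bf H}_\emptyset$, $A({\bf H}_\emptyset)$ is the singleton consisting of the empty construct, the complex is $\Bbbk$ in degree $0$ with zero differential, and this correctly contributes the corolla classes to $H_0({\EuScript C}_\infty)={\EuScript C}$.

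Finally I would verify that $\alpha_{\EuScript C}:{\EuScript C}_\infty\to({\EuScript C},0)$ --- defined as the first projection on degree-zero constructions and the zero map elsewhere --- is a morphism of dg coloured operads, which follows because it kills all positive-degree operations and on degree zero it sends a construction of ${\bf H}_{\cal T}$ to the class of $({\cal T},\sigma,\tau)$ in ${\EuScript C}$, and this assignment intertwines the $\circ_i$-operations by the same computation used to identify the operadic composition of ${\EuScript C}$ in terms of $\bullet_i$; the induced map on homology is the isomorphism just computed. Together with freeness and decomposability, this gives that $({\EuScript C}_\infty,d_{{\EuScript C}_\infty},\alpha_{\EuScript C})$ satisfies Definition \ref{minimalmodel}, hence is a minimal model of ${\EuScript C}$.

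The main obstacle I anticipate is not any single hard argument but the care needed in the sign analysis for the cyclic analogue of Lemma \ref{dok} together with checking that the bijection $\alpha$ genuinely respects the graded composition in the presence of the extra $\tau$ datum --- in particular, that reindexing leaves via the canonical representative $(\underline{{\cal T}_2},\underline{\sigma_2},\underline{\tau_2})$ when forming $\circ_i$ does not introduce any sign beyond those already present in the ${\EuScript O}_\infty$ computation. I expect this to go through cleanly because the differential and the signs depend only on the underlying planar rooted tree and its construct, never on the leaf labeling $\tau$, but it is the place where one must resist the temptation to wave hands and instead point precisely to the corresponding ${\EuScript O}_\infty$ statements.
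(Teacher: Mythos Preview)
Your proposal is correct and follows exactly the approach the paper takes: the paper's own ``proof'' is simply the sentence ``We can, therefore, conclude'' after having set up the free operad description of ${\EuScript C}_\infty$, its grading, and the differential $d_{{\EuScript C}_\infty}$ as a direct transport of the ${\EuScript O}_\infty$ structure with the cyclic labeling $\tau$ riding along passively. You have correctly identified and expanded the implicit three-step argument (freeness, decomposability, quasi-isomorphism via the splitting of the complex over ${\tt CTree}(n_1,\dots,n_k;n)$ and the reuse of the exactness computation from \S\ref{Ohomology}), including the $k=1$ corolla case and the observation that equivalent cyclic operadic trees share the same edge-graph.
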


\bibliographystyle{amsplain}

\end{document}